\theoremstyle{plain}
\newtheorem{theorem}{Theorem}[subsection]
\newtheorem{corollary}[theorem]{Corollary}
\newtheorem{lemma}[theorem]{Lemma}
\newtheorem{proposition}[theorem]{Proposition}
\newtheorem{definition}[theorem]{Definition}
\theoremstyle{remark}
\newtheorem{remark}[theorem]{Remark}
\newtheorem{example}[theorem]{Example}
\renewcommand{\O}{\mathcal O}
\DeclareSymbolFontAlphabet{\mathbb}{AMSb} 
\DeclareSymbolFontAlphabet{\mathbbl}{bbold}
\newcommand{\Prism}{{\mathlarger{\mathbbl{\Delta}}}}
\newcommand{\wh}{\widehat}
\newcommand{\Z}{\mathbb Z}
\newcommand{\Q}{\mathbb Q}
\newcommand{\C}{\mathbb C}
\newcommand{\N}{\mathbb N}
\newcommand{\MM}{\mathfrak{M}}
\newcommand{\calR}{{\mathcal R}}
\newcommand{\Ainf}{A_{\mathrm{inf}}}
\newcommand{\cris}{\mathrm{cris}}
\newcommand{\Ker}{\operatorname{Ker}}
\newcommand{\Hom}{\operatorname{Hom}}
\newcommand{\coker}{\operatorname{coker}}
\newcommand{\st}{\mathrm{st}}
\newcommand{\id}{\operatorname{id}}
\newcommand{\Shv}{\operatorname{Shv}}
\newcommand{\Gal}{\operatorname{Gal}}
\newcommand{\Spd}{\operatorname{Spd}}
\newcommand{\Spa}{\operatorname{Spa}}
\newcommand{\Perfd}{\mathrm{Perfd}}
\newcommand{\Spf}{\operatorname{Spf}}
\newcommand{\opp}{\mathrm{opp}}
\newcommand{\perf}{\mathrm{perf}}
\newcommand{\Cont}{\operatorname{Cont}}
\newcommand{\At}{A^{(2)}}
\newcommand{\SSS}{\mathfrak{S}}
\newcommand{\ho}{\wh \otimes} 
\newcommand{\Omax}{{O}_{\mathrm{max}}} 
\newcommand{\Amax}{{\A}_{\mathrm{max}}} 
\newcommand{\deltalog}{\delta_{\log}} 
\newcommand{\z}{\mathfrak z}
\newcommand{\wt}{\widetilde}
\newcommand{\Fil}{{\textnormal{Fil}}}
\newcommand{\M}{\mathcal M}
\newcommand{\y}{\mathfrak y}
\newcommand{\BdR}{B_{\mathrm{dR}}} 
\newcommand{\A}{\mathbb A}  
\newcommand{\m}{\mathfrak m}
\newcommand{\ueps}{{\underline{\varepsilon}}}
\newcommand{\calD}{{\mathcal D}}
\newcommand{\Acris}{A_{\mathrm{cris}}}
\newcommand{\gA}{\mathfrak A}
\newcommand{\II}{\mathcal{I}}
\newcommand{\RepZp}{\mathrm{Rep}_{\Z_p}}
\newcommand{\RepQp}{\mathrm{Rep}_{\Q_p}}
\newcommand{\colim@}[2]{%
  \vtop{\m@th\ialign{##\cr
    \hfil$#1\operator@font colim$\hfil\cr
    \noalign{\nointerlineskip\kern1.5\ex@}#2\cr
    \noalign{\nointerlineskip\kern-\ex@}\cr}}%
}
\newcommand{\colim}{%
  \mathop{\mathpalette\colim@{\rightarrowfill@\textstyle}}\nmlimits@
}
\newcommand*{\da@rightarrow}{\mathchar"0\hexnumber@\symAMSa 4B }
\newcommand*{\da@leftarrow}{\mathchar"0\hexnumber@\symAMSa 4C }
\newcommand*{\xdashrightarrow}[2][]{%
  \mathrel{%
    \mathpalette{\da@xarrow{#1}{#2}{}\da@rightarrow{\,}{}}{}%
  }%
}
\newcommand{\xdashleftarrow}[2][]{%
  \mathrel{%
    \mathpalette{\da@xarrow{#1}{#2}\da@leftarrow{}{}{\,}}{}%
  }%
}
\newcommand*{\da@xarrow}[7]{%
  \sbox0{$\ifx#7\scriptstyle\scriptscriptstyle\else\scriptstyle\fi#5#1#6\m@th$}%
  \sbox2{$\ifx#7\scriptstyle\scriptscriptstyle\else\scriptstyle\fi#5#2#6\m@th$}%
  \sbox4{$#7\dabar@\m@th$}%
  \dimen@=\wd0 %
  \ifdim\wd2 >\dimen@
    \dimen@=\wd2 %
26

  \fi
  \count@=2 %
  \def\da@bars{\dabar@\dabar@}%
  \@whiledim\count@\wd4<\dimen@\do{%
    \advance\count@\@ne
    \expandafter\def\expandafter\da@bars\expandafter{%
      \da@bars
      \dabar@ 
    }%
  }%
  \mathrel{#3}%
  \mathrel{%
26

    \mathop{\da@bars}\limits
    \ifx\\#1\\%
    \else
      _{\copy0}%
    \fi
    \ifx\\#2\\%
    \else
      ^{\copy2}%
    \fi
  }%
  \mathrel{#4}%
}
\begin{document}
\author{Heng Du}
\address[Heng Du]{Department of Mathematics, Purdue University}
\email{du136@purdue.edu}

\author{Tong Liu}
\address[Tong Liu]{Department of Mathematics, Purdue University}
\email{tongliu@math.purdue.edu}

\title[A prismatic approach to $(\varphi, \hat G)$-modules and $F$-crystals]{A prismatic approach to $(\varphi, \hat G)$-modules and $F$-crystals}
\maketitle
\begin{abstract}
We give a new construction of $(\varphi, \hat G)$-modules using the theory of prisms developed by Bhatt and Scholze. We give two applications of our results. Firstly, we provide a new proof for the equivalence between the category of prismatic $F$-crystals in finite locally free $\O_\Prism$-modules over $(\O_K)_\Prism$ and the category of lattices in crystalline representations of $G_K$, where $K$ is a complete discretely valued field of mixed characteristic with perfect residue field. Moreover, we will generalize this result to semi-stable representations using the absolute logarithmic prismatic site defined by Koshikawa.
\end{abstract}
\tableofcontents
\section{Introduction}
\subsection{Overview and main results} Let $K$ be a complete discretely valued field of mixed characteristics with perfect residue field $k$. Fix a separable closure $\overline{K}$ of $K$ and let $G_K$ be the absolute Galois group of $K$. The study of stable lattices in crystalline representations of $G_K$ plays an important role in number theory. For example, in many modularity lifting results, one wants to understand liftings of mod $p$ representations of the Galois group of a number field $F$ to Galois representations over $\Z_p$-lattices with nice properties when restricted to the Galois groups of $F_v$ for all places $v$ of $F$. And a reasonable property at places over $p$ is that the representation of the Galois group of the local field is crystalline. There are various theories about characterizing $G_K$-stable lattices in crystalline representations, for example, the theory of strongly divisible lattices of Breuil(cf. \cite{BreuilIntegral}), Wach modules(cf. \cite{Wach96} and \cite{Berger}), Kisin modules(cf. \cite{KisinFcrystal}), Kisin-Ren's theory(cf. \cite{KisinRen}) and the theory of $(\varphi, \hat{G})$-modules(cf. \cite{liu-notelattice}). The theories above state that one can describe lattices in crystalline representations using linear algebraic data over certain commutative rings $A$. 

A recent work of Bhatt-Scholze\cite{BS2021Fcrystals} gives a different characterization of the category of lattices in crystalline representations. To explain their result, let $\O_K$ be the ring of integers in $K$, and they consider the absolute prismatic site $(\O_K)_{\Prism}$, which is defined as the opposite category of all bounded prisms over $\O_K$ and equipped with the faithfully flat topology. Let $\O_\Prism$ be the structure sheaf on $(\O_K)_{\Prism}$, and $\mathcal{I}_{\Prism} \subset \O_\Prism$ be the ideal sheaf of Hodge-Tate divisor, then $\O_\Prism$ carries a $\varphi$-action coming from the $\delta$-structures. A prismatic $F$-crystal in finite locally free $\O_\Prism$-modules over $(\O_K)_{\Prism}$ is defined as a crystal $\MM_\Prism$ over $(\O_K)_{\Prism}$ in finite locally free $\O_\Prism$-modules together with an isomorphism $(\varphi^\ast \MM_\Prism)[1/\mathcal{I}_{\Prism}] \simeq \MM_\Prism[1/\mathcal{I}_{\Prism}]$. The main result in \cite{BS2021Fcrystals} is the following:

\begin{theorem}\label{thm-intro-main-1}(\cite[Theorem 1.2]{BS2021Fcrystals} and Theorem~\ref{Thm-main-1})
There is an equivalence of the category of prismatic $F$-crystals in finite locally free $\O_\Prism$-modules over $(\O_K)_{\Prism}$ and the category of Galois stable lattices in crystalline representations of $G_K$.
\end{theorem}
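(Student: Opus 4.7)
The plan is to factor the equivalence through the category of $(\varphi, \hat{G})$-modules of Liu, using prismatic evaluation functors as the bridge. Concretely, I would fix a uniformizer $\pi \in \O_K$ with Eisenstein polynomial $E(u)$, giving the Breuil--Kisin prism $(\mathfrak{S}, (E))$ with $\mathfrak{S} = W(k)\llbracket u \rrbracket$, and a compatible system $(\pi_n)$ of $p^n$-th roots of $\pi$ defining $K_\infty$, $\hat K$, and $\hat G = \Gal(\hat K/K)$. Evaluating a prismatic $F$-crystal $\mathfrak{M}_\Prism$ at $(\mathfrak{S}, (E))$ produces a finite free $\mathfrak{S}$-module $\mathfrak{M}$ equipped with a Frobenius whose cokernel is killed by $E$, i.e.\ a Kisin module of finite $E$-height. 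Essentially this step unfolds the definition of an $F$-crystal on a single prism.

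Next, to recover the Galois action, I would evaluate $\mathfrak{M}_\Prism$ on a second, larger prism related to $A_{\mathrm{inf}}$ (or a suitable perfection of the Breuil--Kisin prism together with its natural $\hat G$-equivariant structure). The crystal property of $\mathfrak{M}_\Prism$ supplies canonical isomorphisms between the two evaluations after base change, and $\hat G$ acts on the second prism over $\O_K$; transporting this action through the crystal axiom gives a semilinear $\hat G$-action on $\mathfrak{M} \otimes_\mathfrak{S} \widehat{R}$ satisfying the compatibility conditions in Liu's definition of a $(\varphi, \hat G)$-module. Thus the output of Steps 1--2 is a $(\varphi, \hat G)$-module in the sense of \cite{liu-notelattice}.

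The next step is to show this $(\varphi, \hat G)$-module is \emph{of crystalline type}, so that Liu's equivalence between $(\varphi,\hat G)$-modules and lattices in semistable representations produces a lattice in a crystalline representation. I expect this to come from the fact that an $F$-crystal exists on the whole prismatic site (not merely the logarithmic prismatic site considered later in the paper), which should force the monodromy/connection data to be trivial and thereby cut out the crystalline locus inside the semistable one. Combining with Liu's functor then produces the Galois lattice and gives a fully faithful functor from prismatic $F$-crystals to lattices in crystalline representations.

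The hard part is essential surjectivity: given a lattice in a crystalline representation, one first produces via Liu's theory the corresponding $(\varphi, \hat G)$-module, and then must upgrade this data to a crystal on the entire site $(\O_K)_\Prism$. I would attempt this by showing that the Breuil--Kisin prism and the $A_{\mathrm{inf}}$-type prism are ``generating'' in a suitable sense (every bounded prism over $\O_K$ admits a cover from one of these after perfection), reducing the problem to a descent/gluing statement for the cocycle data between the two chosen prisms. The principal obstacle will be verifying this descent and checking that the crystalline condition on the $(\varphi, \hat G)$-module is exactly what is needed for the cocycle to extend uniquely across all prisms; delicate analysis of the comparison between $\mathfrak{S}$ and $A_{\mathrm{inf}}$ (and the difference between crystalline and semistable conditions) will be needed here.
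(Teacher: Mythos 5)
Your high-level plan --- factoring through $(\varphi,\hat G)$-modules, using the Breuil--Kisin prism for the Kisin module and a second ``$A_{\mathrm{inf}}$-type'' prism to recover the Galois action, and reducing essential surjectivity to a descent/gluing statement --- runs parallel to the paper's strategy, but two of your steps conceal genuine obstructions that the paper's argument exists precisely to overcome, and your plan as written does not address them.

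First, you propose transporting the crystal data to $\MM\otimes_{\SSS}\wh{\calR}$ to produce a classical $(\varphi,\hat G)$-module of Liu. But $\wh\calR$ is not known to be $p$-adically complete and has not been exhibited as a prism, so it cannot appear as an evaluation of a prismatic crystal; this is exactly the difficulty the paper flags in the introduction. The paper's replacement is the ring $\At_{\st}$, realized as a prismatic envelope (and, in \S\ref{sec-logprismandsemistablereps}, as the self-product of $(\SSS,(E))$ in the \emph{log} prismatic site), together with the separate check $\varphi(\At_{\st})\subset\wh\calR$ to recover the classical theory. Correspondingly, the descent datum attached to a crystal is not a $\hat G$-action over $A_{\mathrm{inf}}$, but an isomorphism over the self-product $\At$ of $(\SSS,(E))$ in $(\O_K)_\Prism$, and the bridge to Galois theory is the identification of this isomorphism with the $\tilde\tau$-action (for one chosen lift $\tilde\tau$ of $\tau\in\hat G$) read inside $T^\vee\otimes A_{\mathrm{inf}}$. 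Your ``non-log site kills monodromy'' heuristic points in the right direction --- $\At$ vs.\ $\At_{\st}$ is exactly the crystalline vs.\ semistable distinction --- but the actual criterion is the containment $\tau(\MM)\subset\At\otimes_A\MM$ (Corollary~\ref{cor-crystalline}), and establishing it is a concrete ring-theoretic comparison of $\At$, $\At_{\st}$ and their embeddings in $A_{\mathrm{inf}}$, not a formal statement about the site.

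Second, and more seriously, your essential-surjectivity sketch does not confront the integrality problem. Starting from a lattice $T$, the $\tilde\tau$-action on $\MM$, rewritten via Griffith transversality, a priori produces a matrix with entries only in $\At_{\st}[\tfrac1p]$ (and in $\At[\tfrac1p]$ when $T$ is crystalline). Removing the $\tfrac1p$ is the heart of the matter, and Remark~\ref{rem-inputofWu} records that the naive route --- proving $\At[\tfrac1p]\cap A_{\mathrm{inf}}=\At$ --- does not go through. The actual argument combines a Frobenius-descent estimate over the auxiliary rings $\wt S$, $\widehat S$ inside $\At_{\max}$ (Proposition~\ref{prop-desecnt} and Lemma~\ref{lem-auxiliaryrings}) with a prismatic $(\varphi,\tau)$-module theory (\S\ref{subsec-phi-tau}): the evaluation-at-$\tau$ map to $W(\hat L^\flat)$ lets one match the Galois-theoretic descent datum over $B^{(2)}=\At[\tfrac1E]^\wedge_p$ against the $\tilde\tau$-action over $\At[\tfrac1p]$, after which Lemma~\ref{lem-intersection} ($B^{(2)}\cap\At[\tfrac1p]=\At$) closes the gap. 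Without a substitute for this chain, your construction yields only a $p$-isogeny of crystals, not the integral equivalence claimed in the theorem.
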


It is known that the prismatic theory of Bhatt-Scholze was first developed to give a new cohomology theory in \cite{BS19} that behaves like the ``$p$-adic motivic cohomology" for varieties $X$ over $\Q_p$. More concretely, prismatic cohomology theory unifies a lot of $p$-adic cohomology theories in the sense that one can get various cohomology theories using ``evaluation maps" \cite[Example 1.9]{BS19}. And such a procedure of evaluation can be seen more clearly in the absolute prismatic cohomology theory developed in \cite{Bhatt-Lurie-Absoluteprismaticcohomology}. Theorem~\ref{thm-intro-main-1} should also be regarded as a ``unified"-integral $p$-adic Hodge theory for crystalline representations. That is one should be able to recover classical integral $p$-adic Hodge theories from the result of Bhatt-Scholze using evaluation maps. If a classical integral $p$-adic Hodge theory over $\O_K$ is defined using linear algebraic data over a commutative ring $A$, then one should first realize $A$ as certain prisms $(A,I)$ over $\O_K$, then expect that evaluating the prismatic $F$-crystals on $(A,I)$ recovers the corresponding theory. For example, in the theory of Kisin \cite{KisinFcrystal}, he uses the base ring $A=\mathfrak{S}:=W(k)[\![u]\!]$ with $\delta(u)=0$, and he needs to fix a uniformizer $\varpi$ of $\O_K$ which is a zero of an Eisenstein polynomial $E \in W(k)[u]$. Then it is well-known that $(A,(E))$ is the so-called Breuil--Kisin prism, and $(A,(E))$ is inside $(\O_K)_{\Prism}$. Kisin was able to attach any lattice $T$ in a crystalline representation of $G_K$ a finite free $A$-module together with an isomorphism $(\varphi^\ast \MM)[1/E] \simeq \MM[1/E]$. Now, if $\mathfrak{M}_{\Prism}$ is the prismatic $F$-crystal attached to $T$ under Theorem~\ref{thm-intro-main-1}, then Bhatt-Scholze show that the evaluation of $\mathfrak{M}_\Prism$ on $(A,(E))$ recovers Kisin's theory (cf. \cite[Theorem 1.3]{BS2021Fcrystals}).

\begin{figure}[h]
\tikzset{every picture/.style={line width=0.75pt}} 
\begin{tikzpicture}[x=1pt,y=1pt,yscale=-1,xscale=1]

\draw   (166.09,69.03) -- (219.52,161.29) -- (113.6,161.29) -- cycle ;
\draw    (19.22,118.5) -- (128.17,118.5) ;
\draw [shift={(130.17,118.5)}, rotate = 180] [color={rgb, 255:red, 0; green, 0; blue, 0 }  ][line width=0.75]    (10.93,-3.29) .. controls (6.95,-1.4) and (3.31,-0.3) .. (0,0) .. controls (3.31,0.3) and (6.95,1.4) .. (10.93,3.29)   ;
\draw    (221.81,94.19) -- (265.69,77.74) ;
\draw [shift={(267.56,77.04)}, rotate = 159.44] [color={rgb, 255:red, 0; green, 0; blue, 0 }  ][line width=0.75]    (10.93,-3.29) .. controls (6.95,-1.4) and (3.31,-0.3) .. (0,0) .. controls (3.31,0.3) and (6.95,1.4) .. (10.93,3.29)   ;
\draw    (199.22,116.03) -- (264.99,116.48) ;
\draw [shift={(266.99,116.5)}, rotate = 180.39] [color={rgb, 255:red, 0; green, 0; blue, 0 }  ][line width=0.75]    (10.93,-3.29) .. controls (6.95,-1.4) and (3.31,-0.3) .. (0,0) .. controls (3.31,0.3) and (6.95,1.4) .. (10.93,3.29)   ;
\draw    (221.81,138.8) -- (265.13,155.8) ;
\draw [shift={(266.99,156.53)}, rotate = 201.43] [color={rgb, 255:red, 0; green, 0; blue, 0 }  ][line width=0.75]    (10.93,-3.29) .. controls (6.95,-1.4) and (3.31,-0.3) .. (0,0) .. controls (3.31,0.3) and (6.95,1.4) .. (10.93,3.29)   ;
\draw    (119.22,161.03) -- (168.22,74.03) ;

\draw (10,97.4) node [anchor=north west][inner sep=0.75pt]   [align=left] {{\tiny Lattices in crystalline representations}};
\draw (132,147) node [anchor=north west][inner sep=0.75pt]   [align=left] {{\tiny Prismatic $F$-crystals}};
\draw (268.24,73) node [anchor=north west][inner sep=0.75pt]   [align=left] {{\tiny Breuil-Kisin theory}};
\draw (268.04,110) node [anchor=north west][inner sep=0.75pt]   [align=left] {{\tiny $(\varphi, \hat G)$-module theory}};
\draw (267.76,150) node [anchor=north west][inner sep=0.75pt]   [align=left] {{\tiny Wach theory $(K=K_0)$}\\{\tiny ...}};
\draw (199.96,103) node [anchor=north west][inner sep=0.75pt]   [align=left] {{\tiny evaluation maps }};
\end{tikzpicture}
    \caption{A cartoon that shows the expectations that prismatic $F$-crystals will reproduce many classical integral $p$-adic Hodge theories. And our prismatic $(\varphi,\hat{G})$-module theory can be regarded as reverse engineering of this procedure.}
    \label{fig:1}
    \centering
\end{figure}

\bigskip
\noindent
\textbf{Prismatic $(\varphi,\hat{G})$-modules.}
Figure~\ref{fig:1} shows an expectation of the relation between the prismatic result of Bhatt-Scholze with other works of characterizing lattices in crystalline representations. The first question that we consider here  is whether and how one can recover the theory of $(\varphi,\hat{G})$-modules from Theorem~\ref{thm-intro-main-1}. The category of $(\varphi,\hat{G})$-modules, developed by Liu in \cite{liu-Fontaine} roughly speaking, consisting of pairs $((\mathfrak{M},\varphi_{\mathfrak{M}}),\hat{G})$, where $(\mathfrak{M},\varphi_{\mathfrak{M}})$ is a Kisin module, and $\hat{G}$ is a $G_K$-action on $\mathfrak{M}\otimes_{\mathfrak{S},\varphi} \wh \calR$ that commutes with $\varphi_{\mathfrak{M}}$ and satisfying some additional properties. Here $\wh \calR$ is a subring of $\Ainf$ that is stable under $\varphi$ and $G_K$, where $\Ainf=W(\O_{\widehat{\overline{K}}}^\flat)$ is the infinitesimal period ring introduced by Fontaine. However, the period ring $\wh \calR$ introduced by Liu is not known to be $p$-adically complete, and it is even harder to determine whether it can appear as a prism. So in order to relate the category of $(\varphi,\hat{G})$-modules with the category of prismatic $F$-crystals of Bhatt-Scholze, we develop a theory of prismatic $(\varphi,\hat{G})$-modules, in which theory the ring $\wh \calR$ is replaced by $\At_{\st}$, a subring of $\Ainf$ constructed as a certain prismatic envelope in \S \ref{subsec-Ast}. 

We have the following result.

\begin{theorem}[Theorem~\ref{thm-log-main-1}]\label{thm-intro-2}
There is an equivalence between the category of prismatic $(\varphi,\hat{G})$-modules and lattices in semi-stable representations of $G_K$. Moreover, we have a necessary and sufficient condition about prismatic $(\varphi,\hat{G})$-modules to determine whether it corresponds to lattices in crystalline representations of $G_K$. 
\end{theorem}

Here a prismatic $(\varphi,\hat{G})$-module, defined similarly to the classical theory, is a pair $((\mathfrak{M},\varphi_{\mathfrak{M}}),\hat{G})$ consisting of a Kisin module $(\mathfrak{M},\varphi_{\mathfrak{M}})$, and $\hat{G}$ is a $G_K$-action on $\mathfrak{M}\otimes_{\mathfrak{S}} \At_{\st}$ satisfying several additional conditions. The ring $\At_{\st}$ here is indeed coming from a prism $(\At_{\st},(E))$ inside $(\O_K)_{\Prism}$ that lies over $(A,(E))$, and $(\At_{\st},(E))$ carries an action of $G_K$ inside $(\O_K)_\Prism$. For a $G_K$-stable lattice $T$ in a crystalline representation, if $\MM_{\Prism}$ is the prismatic $F$-crystal attached to $T$, then evaluating $\MM_{\Prism}$ on the diagram $(A,(E)) \to (\At_{\st},(E))$ recovers the prismatic $(\varphi,\hat{G})$-module attached to $T$. We can also show the map $\At_{\st} \to \Ainf \xrightarrow{\varphi} \Ainf$ factors through $\wh \calR$, so the theory of prismatic $(\varphi,\hat{G})$-modules recovers the classical theory. The ring $\At_{\st}$ is simpler than $\wh \calR$ in many ways. Although it is still very complicated and non-noetherian, it is $p$-adic complete and we can give an explicit description of $\At_{\st}$ modulo $E$. In particular, our new theory can be used to fix the gap in \cite{liu-Fontaine} indicated by \cite[Appendix B]{gao2021breuilkisin}.

{
Another benefit of having the period rings $A^{(2)}$ and $\At_{\st}$ used in the theory of $(\varphi, \hat G)$-modules in the absolute prismatic site is the ability to establish the concept of ``$(\varphi, \hat G)$-module cohomology" theory. Specifically, given a smooth formal scheme $\mathfrak{X}$ over $\O_K$, Bhatt-Scholze have demonstrated in \cite[Theorem 1.8]{BS19} that the Breuil--Kisin cohomology ${R\Gamma}_{\mathfrak{S}}(\mathfrak{X})$ previously studied in \cite{BMS2} can be realized by prismatic cohomology ${R\Gamma}_{\Prism}\big(\mathfrak{X}/(\mathfrak{S},(E))\big)$. This provides a geometric interpretation of the Breuil--Kisin modules associated with the $p$-adic \'etale cohomology of the adic generic fiber of $\mathfrak{X}$. Additionally, by making use of the base change property of prismatic cohomology, we can establish that ${R\Gamma}_{\Prism}\big(\mathfrak{X}/(\At_{\st},(E))\big)$ yields similarly a geometric interpretation of the $(\varphi, \hat G)$-modules associated with the $p$-adic \'etale cohomology of the adic generic fiber of $\mathfrak{X}$. It is worth noting that ${R\Gamma}_{\Prism}\big(\mathfrak{X}/(\At_{\st},(E))\big)$ not only possesses a Frobenius structure, but also admits the action of the full Galois group $G_K$. We anticipate that this observation will prove useful in studying integral $p$-adic cohomology theories.
}

By proving Theorem~\ref{thm-intro-2}, we actually provide reverse engineering of the procedure described in Figure~\ref{fig:1}. That is, using the known equivalence between lattices in semi-stable representations and prismatic $(\varphi, \hat G)$-modules, we can establish a functor from the category of prismatic $(\varphi, \hat G)$-modules that correspond to crystalline representations to prismatic $F$-crystals. Moreover, we show this functor is an equivalence, thus we give a different proof of the result of Bhatt-Scholze stated in Theorem~\ref{thm-intro-main-1}. 

To be more precise, let $T$ be a $G_K$-stable lattice in a crystalline representation with positive Hodge-Tate weights, let $(A, E)$ be the Breuil--Kisin prism, and let $(\At,(E))$ (resp. $(A^{(3)},(E))$) be the self-product (self-triple-product) of $(A, (E))$ in $(\O_K)_\Prism$. It is known that evaluating prismatic $F$-crystals on the diagram $(A,(E)) \xrightarrow{i_1} (\At,(E)) \xleftarrow{i_2} (A,(E))$ induces an equivalence of the category of prismatic $F$-crystals and Kisin modules with descent data, that is pairs $((\mathfrak{M},\varphi_{\mathfrak{M}}),f)$ where $(\mathfrak{M},\varphi_{\mathfrak{M}})$ is a Kisin module and 
$$
f: \mathfrak{M}\otimes_{\mathfrak{S},i_1} \At \simeq \mathfrak{M}\otimes_{\mathfrak{S},i_2} \At
$$
is an isomorphism of $\At$-modules that is compatible with $\varphi$ and satisfies cocycle condition over $A^{(3)}$. Using this, to establish an equivalence between prismatic $(\varphi, \hat G)$-modules that correspond to crystalline representations and prismatic $F$-crystals, it remains to find certain correspondence between the $\hat{G}$-action and the descent isomorphism $f$. 

We will show the descent isomorphism can be obtained by looking at the $G_K$-action of the $(\varphi,\wh{G})$-module at a specific element. To be more precise, fix a Kummer tower $K _\infty = \bigcup_{n = 1}^\infty K (\varpi _n )$ used in the theory of Kisin, where $\{\varpi _n\}_{n\geq 0}$ is a compatible system of $p^n$-th roots of $\varpi_0=\varpi$. Choose $\tilde{\tau} \in G_K$ satisfying $\tilde{\tau}(\varpi_n)=\zeta_{p^n}\varpi_n$ where $\{\zeta_{p^n}\}_{n\geq 0}$ is a compatible system of primitive $p^n$-th roots of $1$, then our slogan is that the descent isomorphism $f$ comes from the $\tilde{\tau}$-action on the Kisin module $\MM$ inside $T^{\vee}\otimes \Ainf$. We will call this the prismatic $(\varphi,\tau)$-module theory, which can be regarded as the $(\varphi,\tau)$-module version of the result of Wu \cite{wu2021galois}.

Actually we have the maps $u\mapsto [{\varpi}^\flat]$ and $u\mapsto [\tilde{\tau}({\varpi}^\flat)]$ defines two morphisms of $(A,(E))$ to $(\Ainf,E\Ainf)$. By the universal property of $(\At,(E))$, these two maps induce a morphism $(\At,(E)) \to (\Ainf,E\Ainf)$. We can show this map $\At \to \Ainf$ is injective, and it factors through $\At_{\st}$, which is the base ring used in our prismatic $(\varphi, \hat G)$-module theory. That is,  we have a chain of subrings $A\subset \At \subset \At_{\st}$ of $\Ainf$, such that $\tilde{\tau}(A)$ is also contained in $\At$. We can show a prismatic $(\varphi, \hat G)$-module corresponds to a crystalline representation if and only if the coefficients of the $\tilde{\tau}$-action on $\mathfrak{M}$ inside $T^{\vee}\otimes_{\Z_p} \Ainf$ lie in $\At$. And once this is proved, the $\tilde{\tau}$-action will induce an isomorphism:
$$
f_{\tau}: \mathfrak{M}\otimes_{\mathfrak{S},\tilde{\tau}} \At \simeq \mathfrak{M}\otimes_{\mathfrak{S}} \At.
$$
We will see $f_{\tau}$ as above gives a descent isomorphism. Consequently, this leads to a new proof for Theorem~\ref{thm-intro-main-1}.

\bigskip
\noindent
\textbf{Logarithmic prismatic $F$-crystals and lattices in semi-stable representations.} Another advantage of our approach is that our new method can be easily generalized to semi-stable representation cases. It turns out that the prism $(\At_{\st},(E))$ is isomorphic to the self-product of $(A,(E))$ if both of them are given proper log structures and realized as log prisms inside the absolute logarithmic prismatic site over $\O_K$ define by Koshikawa in \cite{Koshikawa2021log-prism}, for details one can see \S\ref{sec-logprismandsemistablereps}. Using the equivalence between prismatic $(\varphi, \hat G)$-modules and lattices in semi-stable representations of $G_K$. we will show in \S\ref{sec-logprismandsemistablereps} the following generalization of Theorem~\ref{thm-intro-main-1} for semi-stable representations. 

\begin{theorem}\label{thm-intro-log-main}(Theorem~\ref{thm-log-main-1})
There is an equivalence of the category of prismatic $F$-crystals in finite locally free $\O_{\Prism}$-modules over $(\O_K)_{\Prism_{\log}}$ and the category of Galois stable lattices in semi-stable representations of $G_K$.
\end{theorem}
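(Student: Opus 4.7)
The plan is to mirror the strategy used for the crystalline case (Theorem~\ref{thm-intro-main-1}), but now working inside Koshikawa's log prismatic site $(\O_K)_{\Prism_{\log}}$, where the Breuil-Kisin prism $(\SSS,(E))$ carries the natural log structure $\N \to \SSS$ sending $1 \mapsto u$. Concretely, I would produce an equivalence
\[
\{\text{prismatic } F\text{-crystals over }(\O_K)_{\Prism_{\log}}\} \;\longleftrightarrow\; \{\text{prismatic }(\varphi,\Ghat)\text{-modules}\},
\]
and then compose with the equivalence between prismatic $(\varphi,\Ghat)$-modules and $G_K$-stable lattices in semi-stable representations already established earlier in the paper.

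First I would verify that $(\SSS,(E))$, viewed as a log prism via $u$, is a cover of the final object in $(\O_K)_{\Prism_{\log}}$. The crucial structural input is the identification of the self-coproduct of $(\SSS,(E))$ in the log prismatic site: as indicated in the introduction, it is isomorphic to $(\At_{\st},(E))$, the same prism whose construction via a prismatic envelope governs the prismatic $(\varphi,\Ghat)$-theory of \S\ref{subsec-Ast}. I would prove this by checking the universal property: any pair of log prism maps $(\SSS,(E))\rightrightarrows (B,J)$ in $(\O_K)_{\Prism_{\log}}$ extends uniquely through $\At_{\st}$, because $\At_{\st}$ is built by freely adjoining the relevant $\delta$-variable and then taking a log-prismatic envelope that forces both images of $u$ to have the same logarithmic image modulo $E$. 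The analogous identification of the triple self-coproduct with $A^{(3)}_{\st}$ would follow by the same argument, giving the cocycle object.

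With these identifications in hand, evaluation of a log prismatic $F$-crystal on the covering $(\SSS,(E))$ and its self-products produces precisely a Kisin module $(\MM,\varphi_\MM)$ together with a $\varphi$-compatible descent isomorphism
\[
f\colon \MM\otimes_{\SSS,i_1}\At_{\st}\;\isoto\;\MM\otimes_{\SSS,i_2}\At_{\st}
\]
satisfying the cocycle condition over $A^{(3)}_{\st}$. To match this with prismatic $(\varphi,\Ghat)$-modules, I would use the slogan from the introduction: the descent datum is the $\widetilde{\tau}$-action on $\MM$ viewed inside $T^\vee\otimes \Ainf$, where $\widetilde{\tau}\in G_K$ lifts the distinguished generator $\tau\in\Gal(L/K_\infty)$. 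The point is that the two composites $\SSS \rightrightarrows \Ainf$ given by $u\mapsto[\varpi^\flat]$ and $u\mapsto[\tau(\varpi^\flat)]$ factor through an embedding $\At_{\st}\hookrightarrow \Ainf$, and the semi-stability of the associated representation is precisely the condition that the $\widetilde{\tau}$-action has matrix coefficients in $\At_{\st}$ (whereas crystallinity refined this further to $\At$). Thus any prismatic $(\varphi,\Ghat)$-module produces a canonical $f_\tau$, and conversely any descent datum $f$ recovers the $\widetilde{\tau}$-action by specialization.

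The main obstacle I expect is the identification of the log self-coproduct with $(\At_{\st},(E))$ together with the compatibility between the cocycle condition on $A^{(3)}_{\st}$ and the full $\Ghat$-action on a prismatic $(\varphi,\Ghat)$-module. For the cocycle, one must show that compatibility of $f$ with itself on the triple product forces the image of $\widetilde{\tau}$ to extend to the whole procyclic group $\Gal(L/K_\infty)\rtimes \Gal(K_\infty^{\mathrm{ur}}/K_\infty)$, and moreover interacts correctly with $\Gal(K_\infty/K)$ to produce a continuous $G_K$-action; this reduces to a computation inside $\Ainf$ controlling how iterated images of $\tau$ generate the full $\hat G$, using the topological generation of $\Gal(L/K_\infty)$ by $\tau$ up to a pro-$\ell$ piece ($\ell\ne p$) that acts trivially on all relevant $p$-adic objects. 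Once this compatibility is in place, fully faithfulness is formal from the known fully faithfulness of the $(\varphi,\Ghat)$-functor, and essential surjectivity follows because every semi-stable lattice already produces, via the classical theory, a prismatic $(\varphi,\Ghat)$-module whose descent datum manufactures the desired log prismatic $F$-crystal.
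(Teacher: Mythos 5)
Your plan follows the paper's route at the level of strategy: identify $\At_{\st}$ and $A^{(3)}_{\st}$ as the self-product and triple product of the Breuil--Kisin log prism in $(\O_K)_{\Prism_{\log}}$, show this prism covers the final object, convert log $F$-crystals into Kisin modules with descent data over $\At_{\st}$, and match those descent data to the $\tilde\tau$-action on a prismatic $(\varphi,\Ghat)$-module. That is exactly the paper's outline (Lemma~\ref{lem-Ast2islogprism}, Proposition~\ref{prop-logcoverfinalobject}, Theorem~\ref{thm-log-phitau}, followed by a ``translate \S\ref{subsec-pris-crystal-proof} with $\At$ replaced by $\At_{\st}$'' step).

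There is, however, a genuine gap at the most delicate point, which your proposal glosses over. In the crystalline case the reconciliation between ``cocycle condition over $A^{(3)}$'' and ``full continuous $\hat G$-action'' is not a direct group-theoretic computation in $\Ainf$; it runs through the $(\varphi,\tau)$-module formalism, Wu's equivalence, and the evaluation-at-$\tau$ criterion (Lemmas~\ref{lem-evaluation-1} and~\ref{lem-evaluation-2}), which in turn rely on the identification $(\At)_{\perf}[1/E]^\wedge_p \simeq \Cont(\hat G, W(\hat L^\flat))^{H_K^2}$. To transplant this to the log setting you need the observation the paper flags as key, Lemma~\ref{lem-perfectionofA2andAst2}: $(\At_{\st})_{\perf} \simeq (\At)_{\perf}$. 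This is not formal -- it requires showing that $u_2/u_1$ becomes a unit after perfection, so that the additional logarithmic relations impose nothing new at the perfect level. Without this, the descent theory over $\At_{\st}[1/E]^\wedge_p$ (Theorem~\ref{thm-log-phitau}, which is what lets you pass from a descent datum to a $G_K$-representation and back) is not established, and your ``specialize the descent datum to recover the $\tilde\tau$-action'' step has no justification. Your proposal instead appeals to a hand-waved ``computation inside $\Ainf$ controlling how iterated images of $\tau$ generate $\hat G$,'' including an invocation of a pro-$\ell$ piece and of $\Gal(K_\infty^{\mathrm{ur}}/K_\infty)$ that is not part of the structure here ($\tau$ generates $\Gal(L/K_{1^\infty}) \simeq \Z_p$, and $\hat G$ is topologically generated by $\tau$ together with $H_K = \Gal(L/K_\infty)$; there is no auxiliary prime $\ell$ in this argument). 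As written, that step would not produce the required continuous $\hat G$-action, nor the cocycle verification, without first proving the coincidence of perfections and then invoking the étale $\varphi$-module machinery over $\At_{\st}[1/E]^\wedge_p$.

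A secondary point you may want to sharpen: identifying the log self-coproduct with $\At_{\st}$ is not merely a matter of ``forcing both images of $u$ to have the same logarithmic image modulo $E$.'' The paper's Lemma~\ref{lem-Ast2islogprism} uses Koshikawa's exactification procedure -- one replaces $\N^2 \to \N$ by its exactification $M^2 = \{(m,n)\in\Z^2 : m+n \ge 0\} \to \N$, which introduces $v/u$ and $u/v$ as monoid elements, and this is precisely what changes the prismatic envelope from $W(k)[\![x,y]\!]\{\tfrac{y-x}{E}\}^\wedge_\delta$ to $W(k)[\![x,\y]\!]\{\tfrac{\y}{E}\}^\wedge_\delta$ with $\y = y/x - 1$. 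That computation is the concrete content behind the slogan, and it is what makes the log coproduct strictly larger than the non-log coproduct even though, after perfection, the two coincide.
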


\bigskip
\noindent
\textbf{Further discussions.} As illustrated in Figure~\ref{fig:1}, Bhatt-Scholze can package classical integral $p$-adic Hodge theories in prismatic $F$-crystals, and our work can be thought of as a way of finding minimal data when unpacking prismatic $F$-crystals while can still recover the whole package. 

 Integral $p$-adic Hodge theory is well-known for its contribution to the study of torsion Galois representations. It is expected in \cite{BS2021Fcrystals} that Theorem~\ref{thm-intro-main-1} should be upgraded using the stack $\Sigma''$ constructed by Drinfeld in \cite{Drinfeld-Prismatization} to study torsion representations. On the other hand, the theory of $(\varphi, \hat G)$-modules has been shown to be a powerful tool when studying torsion Galois representations, cf. \cite{Caruso-Liu2} and \cite{GLS-WeightPart}. As the theory of $(\varphi, \hat G)$-modules has been improved,  we hope the results of this paper can develop new ideas and tools to deal with torsion Galois representations. 

Another interesting and natural question one can ask is whether Theorem~\ref{thm-intro-main-1} and Theorem~\ref{thm-intro-log-main} can accommodate more general base schemes. It turns out that our strategy does allow us to treat many relative bases. See \cite{HLMS} for more details. So part of our paper, for example, \S\ref{sec-ring-strcuture} does allow specific general base rings.  

\subsection{Outline} Let us briefly overview the content in each section. We begin with some ring theoretic foundations of this paper in \S\ref{sec-ring-strcuture}. We will construct the period rings $\At$ and $\At_{\st}$ and collect some basic properties of them for our later use. Our construction of $\At$ and $\At_{\st}$ relies heavily on the theory of prisms developed by Bhatt-Scholze, we refer the reader to \cite{BS19} for the foundations of this theory. Note that we expect some of the results in this paper can allow more general base rings, so most of the discussions in \S\ref{sec-ring-strcuture} are proved for a quite general setup. In Appendix~\ref{subsec-baserings}, we will compare our setup with those used in \cite{Kim12} and \cite{Brinon}. In \S\ref{sec-prismaticphiGhatmodules}, we develop the theory of prismatic $(\varphi,\hat G)$-modules. We will state our main result about prismatic $(\varphi,\hat G)$-modules in Theorem~\ref{thm-2} and Corollary~\ref{cor-crystalline}, but both of them rely on Proposition~\ref{thm-1prime} which we will prove in \S\ref{sec-prismaticFcrystals}. In \S\ref{sec-prismaticFcrystals}, we will review the definition of prismatic $F$-crystals defined by Bhatt-Scholze and give a different proof of their prismatic description of crystalline lattices. One will see this will be achieved simultaneously as we complete the proof of Proposition~\ref{thm-1prime}. And in order to do this, we develop the prismatic $(\varphi,\tau)$-module theory in \S\ref{subsec-phi-tau}. In \S\ref{sec-logprismandsemistablereps}, we will briefly review the theory of logarithmic prisms developed by Koshikawa in \cite{Koshikawa2021log-prism}, then we will generalize the result of Bhatt-Scholze to semi-stable representations.

\bigskip
\noindent
\textbf{Acknowledgments.}
It is our pleasure to thank Hui Gao,  Wansu Kim, Teruhisa Koshikawa, Zeyu Liu, Yu Min, Yong Suk Moon, Peter Scholze, Koji Shimizu, Yupeng Wang, and Zhiyou Wu for their comments and conversations during the preparation of this paper. We also thank the anonymous referees for carefully reading our manuscript and for their many insightful comments and suggestions.

\section{Ring structures on certain prismatic envelope} \label{sec-ring-strcuture}

Recall that $K$ is a completed discrete valuation field in mix characteristic $(0 , p)$ with the ring of integers $\O_K$ and perfect residue field $k$. Let $W(k)$ be the ring of Witt vectors over $k$. Let $\varpi\in \O_K$ be a uniformizer and $E= E(u)\in W(k)[u]$ be the Eisenstein polynomial of $\varpi$. 
Let $\C_p$ be the $p$-adic completion of $\overline{K}$, and $\O_{\C_p}$ be the ring of integers. {In this paper, for a $p$-adic complete ring $T$, we write $T\langle x_1,\ldots,x_d \rangle$ to be the $p$-adic completion of $T[x_1,\ldots,x_d]$.} Let $R_0$ be a $W(k)$-algebra which admits a lifting of the $p$-th Frobenius $\varphi : R_0 \to R_0 $. Set $R: = R_0 \otimes_{W(k)}\O_K$. We make the following assumptions for $R_0$ and $R$: 
\begin{enumerate}
\item Both $R_0$ and $R$ are $p$-adically complete integral domains, and $R_0 / p R_0= R/ \varpi R$ is an integral domain;  
    \item Let $\Breve{R}_0 \coloneqq W(k)\langle t _1,  \dots , t _m \rangle$. $R_0$ is a $\Breve{R}_0 $-\emph{formally \'etale} algebra with $p$-adic topology;   
\item $\breve{R}_0$ admits a Frobenius lift such that $\breve{R_0} \to R_0$ defined in (2) is $\varphi$-equivalent. 

\item The $k$-algebra $R_0 / p R_0$ has finite $p$-basis in the sense of \cite[Definition 1.1.1]{deJong}.
\end{enumerate}
Our main example is $R_0= \breve R_0 =  W(k). $ We will not use the finite $p$-basis assumption until \S4. The following are other examples of  $R_0$:  
\begin{example}\label{Eg-1} 
\begin{enumerate}
\item $R_0 = W(k) \langle t _1^{\pm 1} ,  \dots , t _m ^{\pm 1}\rangle$ with $\varphi (t_j) = t ^p_j$
\item $R_0 = W(k) [\![t]\!]$ with $\varphi (t) = t^p$ or $(1+t)^p -1 $.
\item $R_0$ is an unramified complete DVR with imperfect field $\kappa$ with finite $p$-basis. See Appendix~\ref{subsec-baserings} for more discussions. 
\end{enumerate}
\end{example} 

We reserve $\gamma_i(\cdot)$ to denote the $i$-th divided power. 
\subsection{Construction of \texorpdfstring{$\At$}{A(2)}} \label{subsrc-construct-A2}
Let $A=\SSS=R_0[\![u]\!]$ and extend $\varphi : A \to A$ by $\varphi (u)= u^p$. It is well-known that $(A, E)$ is a prism and we can define a surjection $\theta: A \to R$ via $u\mapsto \varpi$. We have $\Ker\theta = (E(u))$.  Let $\breve A := \breve R_0 [\![u]\!]$ and define $\varphi$ and $\breve \theta:  \breve A \to \breve R : = \O_K \otimes _W \breve R_0$ similarly. 
We set \[ A ^{\ho 2}: = A [\![y -x, s_1 - t_1, \dots , s_m - t_m]\!],\]\[ A^{\ho 3}: = A[\![ y -x, w-x , \{ s_i - t _i , r_i - t_i\}_{j= 1, \dots , m}]\!].\] 
Note that $A ^{\ho 2}$  (resp. $ A^{\ho 3}$) is $\breve A \otimes_{\Z_p} \breve A $(resp. $\breve A \otimes_{\Z_p} \breve A \otimes_{\Z_p} \breve A$)-algebra by $ u \otimes 1 \mapsto x$, 
$1\otimes u \mapsto y$ and $1 \otimes t_i \mapsto s_i$ (resp. $1\otimes 1 \otimes u \mapsto w$ and $1 \otimes 1 \otimes  t_i \mapsto r_i$). So in this way, we can extend Frobenius $\varphi$ of $A$, which is compatible with that on $\breve A$ to $A ^{\ho 2}$ and $A^{\ho 3}$. 
Set $J ^{(2)}= (E, y -x , \{s_i- t _i \}_{i = 1, \dots,  m} )\subset A^{\ho 2}$ and $J ^{(3)} = (E, y-x , w-x  , \{s_i-t_i , r_i - t_i\}_{i = 1, \dots , m}) \subset A ^{\ho 3}.$ Clearly, we have $A^{\ho i}/ J ^{(i)}\simeq R$ for $i = 2, 3$. And we have $A^{\ho 2}/(p,E)$ (resp. $A^{\ho 3}/(p,E)$) is a formal power series ring over the variables $\bar{y}-\bar{x}, \{\bar{s}_i-\bar{t}_i\}_{i = 1, \dots,  m}$ (resp. $\bar{y}-\bar{x} , \bar{w}-\bar{x}  , \{\bar{s}_i-\bar{t}_i , \bar{r}_i - \bar{t}_i\}_{i = 1, \dots , m}$), so $(A,(E)) \to (A^{\ho i}, J ^{(i)})$ satisfies the requirements of  in \cite[Prop. 3.13]{BS19}, and we can construct the prismatic envelope with respect to this map, which will be denoted by $A^{(i)}$. More precisely,  $A^{(i)}\simeq A^{\ho i}\left \{\frac{J ^{(i)}}{E}\right\}_\delta^{\wedge}$, here $\{\cdot\}_\delta^{\wedge}$ means freely adjoining elements in the category of $(p, E(u))$-completed $\delta$-$A$-algebras. We will see  $A^{(i)}$, $i = 2 ,3$ are the self product and triple product of $A$ in category $R_\Prism$ in \S  \ref{subsec-pris-crystal}. 

\subsection{The ring \texorpdfstring{$A^{(2)}_{\max}$}{A2max}} Now we set $t_0 = x$, $s_0 = y$ and 
\[ z_j = \frac{s_i - t _i}{E} \textnormal{ and } z_0 = z=  \frac{y -x }{E}= \frac{s_0 - t_0}{E}. \]
Note that $A^{(i)}$ are  $A$-algebras via $u \mapsto x$ for $i=2,3$.
\begin{definition}
Let $\Omax$ be the $p$-adic completion of the $A$-subalgebra of $A[\frac{1}{p}]$ generated by $p^{-1}E$. And let $A_{\max}^{(2)}$ be the $p$-adic completion of the $A$-subalgebra of $ A [z_j ,  \frac{1}{p}; j = 0 , \dots , m ]$ generated by $p^{-1}E$ and $\{\gamma_i(z_j)\}_{i\geq 1, j = 0 , \dots , m}$. 
\end{definition}
We first note that $\At_{\max}$ is an $A^{\wh \otimes 2}$-algebra via $ (s_j - t_j ) = E z_j, j =0 ,\dots,  m$. Write $\iota: A^{\ho 2} \to \At_{\max}$ for the structure map. 
By construction, it is easy to see that $\At _{\max}\subset R_0[\frac 1 p] [\![ E, z_j, j = 0, \dots , m]\!]$. In particular, $\At_{\max}$ is a domain and    
any element $b\in A^{(2)}_{\max}$ can be \emph{uniquely} written as 
$\sum\limits_{i_0= 0}^\infty\cdots \sum\limits_{i_m= 0 }^\infty  b_{i_1 , \dots, i _m} \prod\limits_{j= 0}^m\gamma_{i_j} (z_{j})$ with $b _{i_0 , \dots , i_m} \in \Omax$ and $b_{i_0, \dots , i_m}\to 0$ $p$-adically when $i_0 + \cdots+  i_m \to \infty$.  

{Our next goal of this subsection is to show there is a natural way to extend $\varphi$ on $A^{\ho 2}$ to $A^{(2)}_{\max}$ in Lemma~\ref{lem-phionA^2max}. More importantly, we will show $A^{(2)}$ is a closed subring of $A^{(2)}_{\max}$ and stable under $\varphi$ in Proposition~\ref{prop-key-property} and Lemma~\ref{lem-subring}}
\begin{lemma}\label{lem-Omax}
$c: = \frac{\varphi(E)}{p}\in \Omax$ and $ c^{-1} \in \Omax$. 
\end{lemma}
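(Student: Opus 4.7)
The approach is to use the $\delta$-structure identity $\varphi(E) = E^p + p\,\delta(E)$ in $A$ to expand $c$. Dividing by $p$ gives
\[ c \;=\; \frac{\varphi(E)}{p} \;=\; p^{p-1}\left(\frac{E}{p}\right)^{p} + \delta(E). \]
The element $E/p$ lies in $\Omax$ by the very definition of $\Omax$, the element $\delta(E)$ lies in $A$ (and hence in $\Omax$ via the structure map $A\to\Omax$), and for any prime $p\ge 2$ the integer $p^{p-1}$ is divisible by $p$. Therefore both summands on the right lie in $\Omax$, which shows $c\in\Omax$.

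For the invertibility of $c$, I would first verify that $\delta(E)\in A^{\times}$. Writing the Eisenstein polynomial as $E(u)=u^{e}+p\,g(u)$ with $g\in W[u]$ and $g(0)=a_{0}/p\in W^{\times}$, a short computation of $\varphi(E)-E^{p}$ modulo $p^{2}$ yields $\delta(E)\equiv g(u)^{p}\pmod{p}$ in $A/p=(R_{0}/p)[\![u]\!]$. The constant term of $g(u)^{p}$ is $g(0)^{p}$, a unit in $R_{0}/p$ because it is the image of a unit in $W\subset R_{0}$, and a power series over any ring is invertible as soon as its constant term is. Hence $\delta(E)$ is a unit modulo $p$, and $p$-adic completeness of $A$ lifts this to $\delta(E)\in A^{\times}$. (Alternatively, one may invoke that $(A,(E))$ is a prism and hence $E$ is distinguished, which in the present setting gives the same conclusion.)

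Combining the two observations, $c=\delta(E)+p^{p-1}(E/p)^{p}=\delta(E)(1+p\,\eta)$ for some $\eta\in\Omax$, because $p^{p-1}(E/p)^{p}\in p\,\Omax$. The factor $1+p\,\eta$ is invertible in the $p$-adically complete ring $\Omax$ via the geometric series $\sum_{k\ge 0}(-p\,\eta)^{k}$, and $\delta(E)^{-1}$ transfers to a unit in $\Omax$ through the structure map (which is injective since $A$ is $p$-adically separated and embeds into $A[E/p]\subset A[1/p]$). Hence $c^{-1}=\delta(E)^{-1}(1+p\,\eta)^{-1}\in\Omax$. I do not anticipate any serious obstacle; the only point warranting a moment of care is the injectivity of $A\to\Omax$ and the preservation of units under this map, both of which are routine consequences of the construction of $\Omax$.
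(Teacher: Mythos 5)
Your argument is essentially the paper's proof: both start from $\varphi(E)=E^p+p\,\delta(E)$ (i.e.\ $c=\delta(E)+E^p/p$), note $\delta(E)\in A^\times$ because $E$ is distinguished, and invert $c$ by a $p$-adically convergent geometric series in $\Omax$. The extra detail you supply (the explicit Eisenstein computation showing $\delta(E)$ is a unit, and the remark on injectivity of $A\to\Omax$) is correct but not a different route.
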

\begin{proof}
We have $A$ is a $\delta$-ring, and $E$ is a distinguished element, so in particular
$$
\varphi(E)/p=c_0+E^p/p
$$
where $c_0=\delta(E)\in A^\times$. So $c = \varphi(E)/p\in \Omax$, and 
$c ^{-1} = c_0 ^{-1} \sum\limits_{i = 0}^\infty \frac {(- c_0^{-1} E^p ) ^i }{p ^i }\in \Omax. $
\end{proof}

{
\begin{lemma}\label{lem-phionA^2max}
If we define $\varphi(z)=\varphi(z_0)= \frac{y^p-x^p}{\varphi(E)}$ and $\varphi (z_j) = \frac{\varphi(s_j)  - \varphi(t_j) }{\varphi (E)}$, then for $0\leq j \leq m$, we have $\gamma_n(\varphi(z_j)) \in A^{(2)}_{\max}$ for $n\geq 0$. Moreover, $\varphi$ extends to a \emph{ring} map $\varphi: A^{(2)}_{\max} \to A^{(2)}_{\max}$.
\end{lemma}
\begin{proof}
We have
\begin{IEEEeqnarray*}{+rCl+x*}
\varphi(z)=\frac{y^p-x ^p}{\varphi(E)}&=&c^{-1}\frac{y^p-x^p}{p}=c^{-1}\frac{(x+ Ez)^p-x^p}{p} \\
&= & c^{-1}\sum_{i=1}^px^{p-i}(Ez)^i\binom{p}{i}/p \\&=& c^{-1}\sum_{i=1}^{p}a_iz^i,
\end{IEEEeqnarray*}
where $a_i\in W(k)[\![x]\!][\frac{E^p}{p}]\subset \Omax\subset A^{(2)}_{\max}$ and $c$ is a unit in $\Omax$. In particular, we have $ \varphi (z) \in A^{(2)}_{\max}$. Moreover 
$$
 \gamma_n(\varphi(z))=\frac{\varphi(z)^n}{n!}=\frac{z^n}{n!}(c^{-1}\sum_{i=1}^{p}a_iz^{i-1})^n
$$
is in $A^{(2)}_{\max}.$  The argument for $\varphi (z_j)$ for $j >1$ need a little more details. We claim that in $A ^{\ho  2}$, we have $\delta(s_j)-\delta (t_j) = (s_j - t_j) \lambda_j$ for some $\lambda_j \in A ^{\ho  2}$. Recall that $\delta(t_j)=\frac{\varphi(t_j)-t_j^p}{p}$ and $p,s_j-t_j$ is a regular sequence in $A ^{\ho  2}$ from our definition, so the claim follows from the fact that $\varphi(s_j)-\varphi(t_j)$ and $s_j^p-t_j^p$ are both divisible by $s_j-t_j$.
Using that $(s_j -t_j) = E z_j$, so
\begin{equation}\label{eqn-special-shape}
\varphi (z_j) = c^{-1} (\frac{s^p _j - t^p_j}{p} + E z_j \lambda_j)    
\end{equation}
The same argument as that for $\varphi (z_0)$ also shows that $\gamma_n (z_j)\in \At_{\max}$, for $j =1 , \dots , m$. \\
Since any element $b\in A^{(2)}_{\max}$ can be uniquely written as $$b=\sum\limits_{i_0= 0}^\infty\cdots \sum\limits_{i_m= 0 }^\infty  b_{i_1 , \dots, i _m} \prod\limits_{j= 0}^m\gamma_{i_j} (z_{j})$$ with $b _{i_0 , \dots , i_m} \in \Omax$ and $b_{i_0, \dots , i_m}\to 0$ $p$-adically when $i_0 + \cdots+  i_m \to \infty$. So this allows us to extend Frobenius $\varphi $ on $ A$ to a ring map $\varphi: A^{(2)}_{\max} \to A^{(2)}_{\max}$ by letting $\varphi(u)= u^p$,  $\varphi(z) = \frac{y ^p -x^p}{\varphi(E)}$,  $\varphi (z_j) = \frac{\varphi(s_j)  - \varphi(t_j)}{\varphi (E)}$,  and $\gamma_i (z_j) \mapsto \gamma_i (\varphi (z_j))$ for $i\geq 1$.
\end{proof}
}

\begin{remark}\label{rem-not-Frob-lift} The ring map $\varphi: \At_{\max}\to \At_{\max}$ is \emph{not} a Frobenius lift of $\At _{\max}/ p$ because $\varphi (E/p)- (E/p) ^p \not \in p \At_{\max}$. In particular, $\At_{\max}$ is not a $\delta$-ring. 
\end{remark}

Recall that $\At_{\max}$ is an $A^{\ho 2}$-algebra via map  $\iota : A^{\ho 2}\to \At_{\max}$. The above construction of Frobenius $\varphi$ on $\At _{\max}$ is obviously compatible with  $\iota$. 

Our next goal is to show that $\iota$ induces a map $A^{(2)}\to \At_{\max}$ so that $\At $ is a subring of $A^{(2)}_{\max}$ which is compatible with $\varphi$-structures and filtration. We need a little preparation. 
Write $\z_{n}= \delta^n (z)$ with $\delta_0(z)= z= \z_0$, and $A_0 = W(k) [\![u]\!]$. 
\begin{lemma}\label{lem-delta-n} $$\delta^n (Ez) = b_n \z_{n} + \sum_{i = 0} ^p a^{(n)}_i \z_{n -1} ^i.  $$
where $a^{(n)}_i \in A_0 [\z_0, \dots, \z_{n -2}]$ so that $a^{(n)}_p\in A_0 ^\times$ and for $0 \leq i \leq p-1$ each monomials of  $a^{(n)}_i $ contains a  factor $\z_{j}^p$  for some  $ 0 \leq j\leq n -2$. Furthermore,  $b_{n+1 } =p\delta (b_n ) + b^p_n $ and $b_1 = p \delta (E) + E^p$. 
\end{lemma}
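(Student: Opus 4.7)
The plan is by induction on $n$, with all computations performed inside the $\delta$-ring $A^{(2)}$ (note this cannot be carried out in $\At_{\max}$, which by Remark \ref{rem-not-Frob-lift} is not a $\delta$-ring).

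For the base case $n=1$, applying the identity $\delta(XY) = \varphi(X)\delta(Y) + Y^p\delta(X)$ with $X = E$ and $Y = z$ immediately gives $\delta(Ez) = \varphi(E)\z_1 + \delta(E)\z_0^p$, so $b_1 = \varphi(E) = E^p + p\delta(E)$ and $a_p^{(1)} = \delta(E) \in A_0^\times$ because $E$ is distinguished; the remaining $a_i^{(1)}$ vanish, and the $\z_j^p$-divisibility condition is vacuous.

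For the inductive step, I apply $\delta$ termwise to $\delta^n(Ez) = b_n\z_n + \sum_{i=0}^p a_i^{(n)}\z_{n-1}^i$, using three identities: (a) the Leibniz rule $\delta(XY) = \varphi(X)\delta(Y) + Y^p\delta(X)$; (b) the expansion $\delta(\z_{n-1}^j) = \sum_{k=1}^{j}\binom{j}{k}p^{k-1}\z_{n-1}^{p(j-k)}\z_n^k$, obtained by expanding $\varphi(\z_{n-1})^j = (\z_{n-1}^p + p\z_n)^j$ and dividing by $p$; and (c) the nonlinearity correction $\delta(X+Y) - \delta(X) - \delta(Y) = -\sum_{k=1}^{p-1}\frac{1}{p}\binom{p}{k}X^kY^{p-k}$, which contributes only to $\z_n^k$-coefficients with $k \leq p-1$. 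Extracting the coefficient of $\z_{n+1}$ shows that only $\delta(b_n\z_n) = \varphi(b_n)\z_{n+1} + \z_n^p\delta(b_n)$ contributes, yielding $b_{n+1} = \varphi(b_n) = p\delta(b_n) + b_n^p$ as claimed. Unwinding the recursion gives $b_n = \varphi^n(E)$, and since $A_0 = W(k)[\![u]\!]$ is $p$-torsion free the relation $\delta\varphi = \varphi\delta$ holds, so $\delta(b_n) = \varphi^n(\delta(E))$ is a unit in the local ring $A_0$. The coefficient of $\z_n^p$ equals $\delta(b_n) + p^{p-1}\varphi(a_p^{(n)})$, which lies in $A_0^\times$ because the second summand lies in $pA_0$.

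For $0 \leq k \leq p-1$, each monomial of the resulting $a_k^{(n+1)}$ arises from one of: a summand carrying a factor $\z_{n-1}^{p(j-k)}$ with $j > k$ (which directly supplies a $\z_{n-1}^p$ factor), an application of $\varphi$ to $a_k^{(n)}$, an application of $\delta$ to some $a_j^{(n)}$, or a $C_p$-correction from (c). In each case the desired divisibility by $\z_l^p$ for some $0 \leq l \leq n-1$ propagates from the inductive hypothesis, using the elementary observation that both $\varphi$ and $\delta$ send elements of $A_0[\z_0, \ldots, \z_{n-2}]$ whose every monomial contains some $\z_l^p$ with $l \leq n-2$ into elements of $A_0[\z_0, \ldots, \z_{n-1}]$ with the analogous property for $l \leq n-1$: since $\varphi(\z_l) = \z_l^p + p\z_{l+1}$, every monomial of $\varphi(\z_l^p) = (\z_l^p + p\z_{l+1})^p$ is divisible by $\z_l^p$ or by $\z_{l+1}^p$, and a parallel Leibniz computation handles $\delta$. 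The main obstacle is the bookkeeping of which of the three sources contributes to which coefficient of $\z_n^k$, but once identities (a)--(c) are in hand the verification is mechanical.
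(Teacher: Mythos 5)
Your proof is correct and follows essentially the same inductive strategy as the paper's: the Leibniz rule $\delta(XY)=\varphi(X)\delta(Y)+Y^p\delta(X)$, the expansion $\delta(\z_l^j)=\sum_{k=1}^j\binom{j}{k}p^{k-1}\z_l^{p(j-k)}\z_{l+1}^k$, and the nonlinearity correction $F(X,Y)$, together with the same bookkeeping of divisibility by some $\z_l^p$ (the paper formalizes this as ``good'' polynomials). One small refinement on your side: you explicitly unwind $b_{n+1}=\varphi(b_n)$ to $b_n=\varphi^n(E)$ and deduce $\delta(b_n)=\varphi^n(\delta(E))\in A_0^\times$ via $\delta\varphi=\varphi\delta$ in a $p$-torsion-free $\delta$-ring, a point the paper uses but does not justify.
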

\begin{proof} Given  $f \in A_0[x_1 , \dots , x_m]$, if each monomials of $f$  contains $x_j^l$ for some $j$ and $l \geq p$ then we call $f$  \emph{good}.  For example, $f= x_1^p x_2 + 2 x_ 1x_2^{p +3}.$ So we need to show that $a^{(n)}_i\in A_0[\z_0, \dots, \z_{n -2}]$ is good.  Before making induction on $n$, we discuss some properties of good polynomials. It is clear that the set of good polynomials is closed under addition and multiplications. Note that 
\begin{equation}\label{eqn-delta}
\delta(\z_{l}^i)= \frac{\varphi (\z_{l}^i) - \z_{l} ^{p i}}{p}= \frac{(p\z_{l +1} + \z_{l } ^p)^i - \z_{l}^{p i}}{p} = \sum \limits_{j  = 1 }^{i} \binom{i}{j}(p^{j -1} \z_{l }^{p(i-j)} ) \z_{l+1} ^{j }. 
\end{equation}
In particular, given an $f\in A_0[\z_0 , \dots , \z_{m}]$,  $\delta(\z_{m}^p f)= f^p \delta (\z_{m} ^p) +  \z_{m}^{p ^2}\delta (f) + p \delta (\z_{m} ^p)\delta (f)$  is  a good polynomial in $A[\z_0 , \dots, \z_{m+1}]$. Using the fact that 
$\delta (a+b)=\delta(a) + \delta (b) + F(a, b)$ where  $F(X, Y) = \frac 1 p ( X^p + Y^p - (X+Y)^p) = - \sum\limits_{i = 1}^{p-1} \binom{p}{i}/p X^i Y^{p-i}$, together with the above argument of $\delta(\z_l^p f)$, it is not hard to show that if $g\in A_0[\z_0 , \dots , \z_{m}]$ is good  then $\delta (g) \in A_0[\z_0 , \dots , \z_{m}, \z_{m +1}]$ is also good. 

Now we make induction on $n$. When $n =1$, we have 
$$\delta (Ez)= E^p \z_{1} + z^p \delta(E) + p \delta (E) \z_{1}= (p \delta (E) + E^p) \z_{1} + \delta(E) z^p.$$ 
Then $b_1 = p \delta (E) + E^p $,  $a^{(1)}_p= \delta(E)\in A_0 ^\times$ and $a^{(1)}_i = 0$ for $1 \leq i \leq p-1$ are required. Now assume the formula is correct for $n$, then 
\begin{IEEEeqnarray*}{+rCl+x*}
\delta ^{n +1} (Ez) &=& \delta (b_n \z_{n} + \sum_{i = 0} ^p a^{(n)}_i \z_{n -1} ^i) \\
&=& \delta (b_n \z_{n}) + \delta (\sum_{i = 0} ^p a^{(n)}_i \z_{n -1} ^i)) + F(b_n \z_{n}, \sum_{i = 0} ^p a^{(n)}_i \z_{n -1} ^i)).
\end{IEEEeqnarray*}
Clearly, $ F(b_n \z_{n}, \sum\limits_{i = 0} ^p a^{(n)}_i \z_{n -1} ^i))  = \sum\limits_{j = 1} ^{p-1} \tilde a^{(n)}_j \z_{n} ^j$ with $\tilde a^{(n)}_j$ being good. An easy induction shows $\delta (\sum\limits_{i = 0} ^p a^{(n)}_i \z_{n -1} ^i) = \sum\limits_{i = 0} ^p \delta (a^{(n)}_i \z_{n -1} ^i) + f$ with $f \in A_0[\z_0, \dots, \z_{n -1}]$ being good. Plug the formula of $\delta(\z_{n-1}^i)$ from \eqref{eqn-delta} into
$$\delta (a^{(n)}_i \z_{n -1} ^i)= (a^{(n)}_i)^p \delta (\z_{n -1}^i) + (\z_{n-1}^{pi})\delta (a_i ^{(n)}) + p \delta (\z_{n -1}^i  ) \delta (a_i^{(n)}),$$ and using the fact that $a^{(n)}_i$ is good, we have 
$\delta (a_i^{(n)}) $ is also good, we conclude that for $0 \leq i \leq p-1$,  $$\sum\limits_{i = 0} ^{p-1} \delta (a^{(n)}_i \z_{n -1} ^i) = \sum_{i =0}^{p-1} \alpha_i \z_{n} ^i$$ with $\alpha_i \in A_0 [\z_0, \dots, \z_{n-1}]$ being good polynomials. Using that $a _p^{(n)} \in A_0 ^\times$, we compute that  $\delta (a_p^{(n)}\z^p_{n-1}) = \sum \limits_{i = 0}^p \beta_i \z_{n}^i$ with $\beta_p \in p A_0$ and $\beta_j\in A_0 [\z_0 , \dots , \z_{n-1}]$ being good for $1\leq j \leq p-1$. 
 Now we only need to analyze $\delta (b_n \z_{n})$, which is 
 $\delta (b_n ) \z_{n}^p + b_n^p \z_{n+1} + p \delta (b_n)\z_{n+1}$. So $b_{n+1} = p \delta(b_n) + b_n ^p$ and $a_p^{(n+1)} = \delta (b_n) + \beta_p$. Since $\delta(b_n) \in A_0^\times$, we see that $a_p^{(n+1)}= \delta(b_n) + \beta_p \in A_0^\times$ as required. 
\end{proof}

Let $\wt \At := A^{\ho 2} [z_j]_\delta= A^{\ho2} [\delta ^n (z_j), n \geq 0, j =0 , \dots , m]$ and natural map $\alpha : \wt \At \to \wt \At [\frac 1 p]$ (we do not know $\alpha$ is injective at this moment). We need the following result for Lemma~\ref{lem-auxiliaryrings} which is crucial for our later applications. \footnote{{We want to note that there was a gap in the proof of Lemma~\ref{lem-auxiliaryrings} in our previous preprint. We thank Yong Suk Moon for pointing it out and helping us complete the following lemma.}} 

\begin{lemma}\label{lem:gamma(z)-polynomial-in-E/p}
For $i\geq 0$ and $j=0,1,\ldots,d$, there exists $f_{ij}(X) \in \wt \At [X]$ such that, as elements of $\wt \At[\frac 1 p] $ via $\alpha: \wt \At \to \wt \At [\frac 1 p]$, 
\[
\gamma_i(z_j) = f_{ij}\Bigl(\frac{E}{p}\Bigr).
\]
\end{lemma}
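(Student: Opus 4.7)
My plan is to prove this by induction on $i$.

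For $i < p$, the factorial $i!$ is a unit in $\Z_p \subset \wt\At$, so $\gamma_i(z_j) = (i!)^{-1} z_j^i \in \wt\At$ and the constant polynomial $f_{ij}(X) = (i!)^{-1} z_j^i$ works.

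The central case is $i = p$. Here I would combine equation \eqref{eqn-special-shape}, rearranged as $c\,\varphi(z_j) = (E/p)\,z_j(\nu_j + p\lambda_j)$ with $c = \varphi(E)/p = \delta(E) + p^{p-1}(E/p)^p$, together with the $\delta$-identity $\varphi(z_j) = z_j^p + p \z_{j,1}$. The crucial structural input is the divisibility $\nu_j \equiv p\,t_j^{p-1} \pmod{Ez_j}$: indeed, since $s_j \equiv t_j \pmod{s_j - t_j = E z_j}$, the expression $\nu_j = \sum_{k=0}^{p-1} s_j^{p-1-k} t_j^k$ reduces to $p\,t_j^{p-1}$ modulo $Ez_j$, so $\nu_j = p\,t_j^{p-1} + Ez_j\,\mu_j$ for some $\mu_j \in \wt\At$. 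Consequently $(E/p)z_j(\nu_j + p\lambda_j) = p\,H_j$ with
\[
H_j = (E/p)z_j(t_j^{p-1}+\lambda_j) + (E/p)^2 z_j^2\,\mu_j \in \wt\At[E/p].
\]

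Substituting $\varphi(z_j) = z_j^p + p\z_{j,1}$ into $c\,\varphi(z_j) = pH_j$, expanding and dividing through by $p$ yields the identity in $\wt\At[\tfrac{1}{p}]$
\[
\delta(E) \cdot \frac{z_j^p}{p} = H_j - \delta(E)\z_{j,1} - p^{p-2}(E/p)^p z_j^p - p^{p-1}(E/p)^p\z_{j,1},
\]
whose right-hand side visibly lies in $\wt\At[E/p]$. Since $\delta(E) \in A^\times$ is a unit, we obtain $z_j^p/p \in \wt\At[E/p]$, and hence $\gamma_p(z_j) = (p-1)!^{-1}\cdot(z_j^p/p) \in \wt\At[E/p]$, giving a polynomial $f_{p,j}(X)$ of degree at most $p$.

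For $i \geq p$ in general, I would write $i = pk + r$ with $0 \leq r < p$ and use $z_j^i = p^k (z_j^p/p)^k z_j^r$ to get $\gamma_i(z_j) = (z_j^p/p)^k z_j^r /(i!/p^k)$. When $i < p^2$ we have $v_p(i!/p^k) = 0$, so the denominator is a unit and $\gamma_i(z_j)$ is directly a polynomial in $E/p$. For $i \geq p^2$, one iterates the argument by applying $\varphi$ to \eqref{eqn-special-shape} (noting that $\varphi$ preserves $\wt\At[E/p]$ since $\varphi(E/p) = c \in \wt\At[E/p]$) to exhibit analogous divisibilities $z_j^{p^n}/p^{(p^n-1)/(p-1)} \in \wt\At[E/p]$ for all $n \geq 1$, and combines them via divided-power multiplicativity $\gamma_a(z_j)\gamma_b(z_j) = \binom{a+b}{a}\gamma_{a+b}(z_j)$.

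The hard part will be the case $i \geq p^2$: the $i = p$ step rests on the clean divisibility $\nu_j \equiv p\,t_j^{p-1}\pmod{Ez_j}$, but propagating this through Frobenius iterates to higher prime powers requires careful $p$-adic valuation accounting and a delicate combinatorial argument with multinomial denominators $(p^n)!/p^{(p^n-1)/(p-1)}$.
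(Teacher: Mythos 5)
Your base cases are correct and well-argued: for $i<p$ the statement is trivial, and your $i=p$ argument (multiplying up \eqref{eqn-special-shape}, factoring $s_j^p-t_j^p = Ez_j\nu_j$, and using $\nu_j \equiv p\,t_j^{p-1}\pmod{Ez_j}$ to absorb one $p$, then dividing by $\delta(E)\in A^\times$ rather than by $c$) is a clean, explicit version of what the paper does, and it extends to $i<p^2$ by the multiplicativity you cite. But there is a genuine gap exactly where you flag one: the case $i\ge p^2$, equivalently $\tilde\gamma^n(z_j)=z_j^{p^n}/p^{(p^n-1)/(p-1)}$ for $n\ge 2$, is the entire substance of the lemma, and the sketched iteration does not produce it.

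Concretely, suppose $z_j^p/p = f(E/p)$ with $f\in\wt\At[X]$. Applying $\varphi$ gives $\varphi(z_j)^p/p=\varphi(f(E/p))\in\wt\At[E/p]$ (using $\varphi(E/p)=c\in\wt\At[E/p]$), and expanding $(z_j^p+p\delta(z_j))^p/p = z_j^{p^2}/p + (\text{terms in }\wt\At)$ only yields $z_j^{p^2}/p\in\wt\At[E/p]$. That is a \emph{single} factor of $p$ in the denominator; you need $p^{p+1}$. Trying instead to compute $f(E/p)^p/p$ directly does not land in $\wt\At[E/p]$ for a general $f$, since $\wt\At[E/p]$ is a polynomial ring, not a divided power envelope, and terms such as $g_k^p/p$ for $g_k\in\wt\At$ are not controlled. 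Gaining the extra $p^p$ at each stage is precisely the nontrivial combinatorics you acknowledge you have not done; it also cannot be brushed off by inverting $\varphi^n(c)$, since those inverses live only in $\Omax$ and not in $\wt\At[E/p]$.

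The paper closes this gap by a simultaneous induction on two claims — equations \eqref{eq:delta(z)} and \eqref{eq:gamma(delta(z))} — which express $\delta^n(z)$ and $\tilde\gamma(\delta^{n-1}(z))$ in terms of $\tilde\gamma^n(z)$ plus a polynomial in $E/p$ whose coefficients have controlled $\delta$-order. The essential point is that $\wt\At$ already contains all $\delta^n(z_j)$, and Lemma~\ref{lem-delta-n} (the ``good polynomial'' structure of $\delta^n(Ez)$, with $a_p^{(n)}\in A_0^\times$ and the remaining coefficients carrying factors $\z_l^p$) is exactly what lets one trade $p$th powers of lower $\delta$'s — via the inductive claim \eqref{eq:gamma(delta(z))} — for one more iterate of $\tilde\gamma$. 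Without this $\delta$-order bookkeeping the induction does not close; I would encourage you to read the paper's two-claim induction carefully and, if you want to keep your more explicit $i=p$ computation as the base step, check that it reproduces the shape required by claims (1) and (2) with $n=1$ before iterating.
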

\begin{proof}
Write $z = z_{j}$ for simplicity, and let $\tilde{\gamma}(z)= \frac{z^p}{p}$ and $\tilde{\gamma}^n = \underbrace{\tilde{\gamma} \circ \tilde{\gamma} \cdots \circ \tilde{\gamma}}_n$. It suffices to show that for each $n \geq 1$, we have $\tilde{\gamma}^n(z) = f_n(\frac{E}{p})$ inside $\wt \At[\frac 1 p]$ for some $f_n(X) \in \wt \At [X]$. For an element $x \in A[\delta^i(z)]_{i \geq 0}$, we say that $x$ has \emph{$\delta$-order $\leq n$} if $x$ can be written as a sum of monomials such that each term is \emph{not} divisible by $\delta^j(z)$ for $j > n$, namely if $x\in \sum_{0\leq j\leq n}A[\{\delta^i(z)\}_{0 \leq i \leq n }] \delta^j(z)$.

We claim that the following two equations hold for each $n \geq 1$:
\begin{enumerate}
\item We have
\begin{equation} \label{eq:delta(z)}
    \delta^n(z) = \nu_n \tilde{\gamma}^n(z)+P_n\Bigl(\frac{E}{p}\Bigr)+\frac{E^p}{p}d_n\delta^n(z)
\end{equation}
for some $\nu_n \in A^{\times}$, $d_n \in A$, and $P_n(X) \in (A[\delta^i(z)]_{i \geq 0})[X]$ such that each coefficient of $P_n(X)$ has $\delta$-order $\leq n-1$. 

\item We have
\begin{equation} \label{eq:gamma(delta(z))}
    \tilde{\gamma}(\delta^{n-1}(z)) = \mu_{n-1}\tilde{\gamma}^n(z)+Q_{n-1}\Bigl(\frac{E}{p}\Bigr)
\end{equation}
for some $\mu_{n-1} \in A^{\times}$ and $Q_{n-1}(X) \in (A[\delta^i(z)]_{i \geq 0})[X]$ such that each coefficient of $Q_{n-1}(X)$ has $\delta$-order $\leq n-1$.
\end{enumerate}

We prove claims (1) and (2) by induction. For $n = 1$, since
\[
\delta(Ez) = z^p\delta(E)+(p\delta(E)+E^p)\delta(z)
\]
and $\delta(E) \in \mathfrak{S}^{\times}$, we have
\[
\delta(z) = -\tilde{\gamma}(z)+\delta(E)^{-1}\frac{\delta(Ez)}{p}-\delta(E)^{-1}\frac{E^p}{p}\delta(z). 
\]
By easy induction, we also have $\delta^i(Ez) \in (Ez)A$ for each $i \geq 1$. So claim (1) holds. Claim (2) holds for $n = 1$ trivially with $Q_0(X) = 0$.

Suppose that claims (1) and (2) hold for $1 \leq n \leq m$. We will verify claims (1) and (2) for $n = m+1$. We first consider claim (2). Since each coefficient of $P_m(X)$ has $\delta$-order $\leq m-1$, $\frac{E^p}{p}=p^{p-1}\bigl(\frac{E}{p}\bigr)^p$, and Equations \eqref{eq:delta(z)} and \eqref{eq:gamma(delta(z))} hold for $1\leq n \leq m$, applying $\tilde{\gamma}(\cdot)$ to Equation \eqref{eq:delta(z)} for $n = m$ yields
\[
\tilde{\gamma}(\delta^m(z)) = \nu_m^p \tilde{\gamma}^{m+1}(z)+Q_m\Bigl(\frac{E}{p}\Bigr)
\]
for some $Q_m(X) \in (\mathfrak{S}[\delta^i(z)]_{i \geq 0})[X]$ such that each coefficient of $Q_m(X)$ has $\delta$-order $\leq m$. This proves claim (2) for $n = m+1$.

We now consider claim (1) for $n = m+1$. By the above Lemma for $n = m+1$ and that $b_n= p\alpha_n +\beta_n  E^p$
for some $\alpha_n \in A^{\times}$ and $\beta_n \in  A$ (via an easy induction on $n$), we have 
\begin{IEEEeqnarray*}{+rCl+x*}
\alpha_{m+1}\delta^{m+1}(z) = \frac{\delta^{m+1}(Ez)}{p}-\beta_{m+1}\frac{E^p}{p}\delta^{m+1}(z) &-& a_p^{(m+1)}\tilde{\gamma}(\delta^m(z)) \\
&-&\frac{1}{p}\sum_{j=0}^{p-1} a_j^{(m+1)}(\delta^{m}(z))^j.
\end{IEEEeqnarray*}
As noted above, we have $\delta^{m+1}(Ez) \in (Ez)A$. Furthermore, by the condition on $a_j^{(m+1)}$, the last term $\frac{1}{p}\sum_{j=0}^{p-1} a_j^{(m+1)}(\delta^{m}(z))^j$ is a linear combination of terms involving $\tilde{\gamma}(\delta^l(z))=\frac{1}{p}(\delta^l(z))^p$ for some $0\leq l\leq m-1$.
Thus, by applying Equations~\eqref{eq:delta(z)} and \eqref{eq:gamma(delta(z))} for $1 \leq n \leq m$, we see that claim (1) also holds for $n = m+1$ with $\nu_{m+1} = -\alpha_{m+1}^{-1}a_p^{(m+1)}\mu_m$ and $d_{m+1} = -\alpha_{m+1}^{-1}\beta_{m+1}$.
This completes the induction and proves the lemma.
\end{proof}

\begin{remark}
In the above proof, by equation~\eqref{eq:gamma(delta(z))}, we even have for each $i,j\geq 0$, $\gamma_i(\delta^j(z))=f(\frac{E}{p})$ for some $f\in \wt \At [X]$. 
\end{remark}

An easy induction by \eqref{eq:delta(z)} implies $\alpha (\delta^n (z) ) \in A^{\ho 2}[\{\gamma_i (z_j )\}_{i \geq 0 , j =1 , \dots , m}, \frac E p]$ inside $\At_{\max}$, which satisfies equations in Lemma \ref{lem-delta-n} by replacing 
$\z_n$ by $\alpha (\delta^n (z))$ inside $\At_{\max}$. It is clear that $\iota$ is still Frobenius compatible (because both $A ^{\ho 2}$ and $\At_{\max}$ are domains). Since $E = p \frac E p$, $\iota$ is continuous for $(p , E)$-topology on $\wt \At $ and $p$-topology on $\At_{\max}$. Finally, we construct a ring map $\iota: \At \to \At_{\max}$ so that $\iota$ is compatible with Frobenius.  

Our next goal is to show that $\iota$ is injective. We define 
$$\Fil ^i A^{(2)}_{\max}[\frac 1 p]:  = E^i A^{(2)}_{\max}[\frac 1 p].$$ 
And for any subring $B \subset A^{(2)}_{\max}[\frac 1 p]$, set 
$$\Fil ^i B : = B \cap \Fil ^i A^{(2)}_{\max}[\frac 1 p]= B \cap E^i A^{(2)}_{\max}[\frac 1 p].$$
Let $D_z$ be the $p$-adic completion of $R [\gamma_i (z_j), i \geq 0;  j = 0, \dots , m]$. 
\begin{proposition} \label{prop-key-property}
\begin{enumerate}
    \item $\wt \At/ E = R [\gamma_i (z_j ), i \geq 0; j = 0, \dots , m]$.
    \item $ A^{(2)}/ E \simeq D_z$.
    \item $\iota$ is injective.
    \item $\Fil ^1 A^{(2)}= E A ^{(2)}$.
    \item $A^{(i)}$ are flat over $A$ for $i = 2, 3$.
\end{enumerate}
\end{proposition}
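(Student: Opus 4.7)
The plan is to establish $(1)$--$(4)$ in that order, since each builds on the previous, and to prove $(5)$ separately from the prismatic envelope construction. The technical engine throughout is Lemma~\ref{lem-delta-n} (which describes $\delta^n(Ez_j)$ in closed form) together with the fact, already verified, that $\At_{\max}$ is a domain in which elements have unique expansions $\sum b_I\prod\gamma_{i_j}(z_j)$ with $b_I\in\Omax$.

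\textbf{Part $(1)$.} In $\wt A^{(2)}$ we have $Ez_j = s_j - t_j$, so an easy induction shows $\delta^n(Ez_j)\in(Ez_j)\wt A^{(2)}$ for all $n\geq 1$. Expanding by Lemma~\ref{lem-delta-n} and reducing modulo $E$, using that $b_n = p\alpha_n + \beta_n E^p$ with $\alpha_n\in A^\times$, that $a_p^{(n)}\in A^\times$, and the good-polynomial shape of the remaining $a_i^{(n)}$, yields a recursion of the form
\[
p\alpha_n\,\delta^n(z_j) + a_p^{(n)}\,\delta^{n-1}(z_j)^p + (\text{good polynomial in } \delta^0(z_j),\ldots,\delta^{n-2}(z_j)) \equiv 0 \pmod{E}.
\]
Transferring this identity inside $\At_{\max}$, where division by $p$ is legitimate, and iterating, one rewrites $\delta^n(z_j)\bmod E$ as an $A^\times$-multiple of $\tilde\gamma^n(z_j)$ plus terms involving only $\gamma_i(z_j)$ with $i<p^n$. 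Combined with Lemma~\ref{lem:gamma(z)-polynomial-in-E/p} this shows the generators $\delta^n(z_j)$ of $\wt A^{(2)}$ reduce modulo $E$ into the divided power subalgebra $R[\gamma_i(z_j)]$, and conversely that every $\gamma_i(z_j)$ arises this way. The absence of additional relations follows from the $\Omax$-linear independence of the monomials $\prod\gamma_{i_j}(z_j)$ inside the domain $\At_{\max}$.

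\textbf{Parts $(2)$ and $(3)$.} For $(2)$, by construction $A^{(2)}$ is the $(p,E)$-adic completion of $\wt A^{(2)}$ (modulo $E$-power torsion, which is absent once $(3)$ is known since $\At_{\max}$ is a domain). Then $A^{(2)}/E$ is the $p$-adic completion of $\wt A^{(2)}/E = R[\gamma_i(z_j)]$, which is exactly $D_z$. For $(3)$, by $(p,E)$-adic completeness and $E$-torsion freeness of both sides, it suffices to show $\iota$ is injective modulo $E$; this reduces to the embedding $D_z\hookrightarrow \At_{\max}/E$, which is immediate from the unique expansion $\sum b_I\prod\gamma_{i_j}(z_j)$ of elements of $\At_{\max}$ combined with the inclusion $R\hookrightarrow\Omax/E$ coming from $R = A/E\subset\Omax/E$.

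\textbf{Parts $(4)$ and $(5)$.} For $(4)$, the inclusion $EA^{(2)}\subset\Fil^1 A^{(2)}$ is clear. Conversely, suppose $x\in A^{(2)}$ with $\iota(x)=Ey$ for some $y\in\At_{\max}[1/p]$. Choose $N$ minimal with $p^N y\in\At_{\max}$; then $\iota(p^N x)=E\cdot p^N y\in E\At_{\max}$, and by the injection $A^{(2)}/E\hookrightarrow\At_{\max}/E$ from $(3)$ we deduce $p^N x\in EA^{(2)}$. An induction on $N$ using the $p$-torsion freeness of $A^{(2)}$ (which holds because $A^{(2)}\subset\At_{\max}$ is a subring of a domain of characteristic $0$) and the $E$-torsion freeness then yields $x\in EA^{(2)}$. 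For $(5)$, as noted in \S\ref{subsrc-construct-A2} the map $(A,(E))\to(A^{\ho i},J^{(i)})$ satisfies the hypotheses of \cite[Prop.~3.13]{BS19}, so $A^{(i)}$ is a prismatic envelope and is $(p,E)$-completely faithfully flat over $A^{\ho i}$; composing with the flatness of the formal power series ring $A^{\ho i}$ over $A$ gives flatness of $A^{(i)}$ over $A$.

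\textbf{Main obstacle.} The delicate part is $(1)$: one must check that the iteration of the recursive relations from Lemma~\ref{lem-delta-n} genuinely identifies $\delta^n(z_j)\bmod E$ with a specific $A^\times$-multiple of $\gamma_{p^n}(z_j)$ plus strictly lower divided powers, with no extraneous relations. The $A^\times$-assertions on $b_n$, $\alpha_n$ and $a_p^{(n)}$ together with the good-polynomial structure of the error terms are precisely what makes this bookkeeping work; once it does, parts $(2)$--$(4)$ follow by fairly formal completion and filtration arguments, and $(5)$ is immediate from the prismatic envelope formalism.
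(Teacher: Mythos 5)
The overall strategy is the same as the paper's (compute $\wt\At/E$ via Lemma~\ref{lem-delta-n}, pass to the $(p,E)$-completion, embed into $\At_{\max}$, identify the filtration), but several steps are asserted rather than proved, and two of them are genuinely nontrivial.

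\textbf{Part $(2)$ is the main gap.} You write that ``$A^{(2)}/E$ is the $p$-adic completion of $\wt\At/E = R[\gamma_i(z_j)]$, which is exactly $D_z$'' as though this were automatic from $A^{(2)}$ being the $(p,E)$-adic completion of $\wt\At$. It is not: for a non-Noetherian ring, classical $(p,E)$-adic completion does not in general commute with reduction modulo $E$. This is precisely the content of $(2)$, and the paper proves it by a concrete Cauchy-sequence argument (or, equivalently, by observing that the derived $(p,E)$-completion computes the same thing because $E$ is a nonzerodivisor and $\wt\At/E$ is $\Z_p$-flat). Your parenthetical about $E$-power torsion does not address the issue.

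\textbf{Parts $(3)$ and $(4)$ use the wrong target modulo $E$.} You reduce both claims to an embedding $D_z\hookrightarrow\At_{\max}/E\At_{\max}$, and in $(4)$ you further run an ``induction on $N$.'' Note that $E = p\cdot(E/p)$ in $\At_{\max}$, so $\At_{\max}/E\At_{\max}$ has $p$-torsion and is \emph{not} the same as $\At_{\max}/\Fil^1\At_{\max} = D_z$; the unique-expansion argument gives injectivity into $\At_{\max}[\tfrac1p]/E \simeq D_z[\tfrac1p]$, not into $\At_{\max}/E$. Working with the honest quotient $\At_{\max}/E$ forces the detour through $N$, and the induction as stated does not descend: from $p^N x = Ez$ with $z\in A^{(2)}$ you cannot conclude $z\in pA^{(2)}$ from $p$-torsion-freeness of $A^{(2)}$ alone (that only controls torsion, not divisibility). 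The clean route, which the paper takes, is to observe directly that $A^{(2)}/\Fil^1 A^{(2)}$ injects into $D_z[\tfrac1p]$ while $A^{(2)}/E A^{(2)} = D_z$ already injects there, so the surjection $D_z\twoheadrightarrow A^{(2)}/\Fil^1 A^{(2)}$ is forced to be an isomorphism. Your argument can be salvaged by replacing ``$p$-torsion-freeness of $A^{(2)}$'' with ``$p$-torsion-freeness of $D_z = A^{(2)}/E$,'' after which $p^N\bar x = 0$ in $D_z$ immediately gives $\bar x = 0$; but as written the induction does not close.

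\textbf{Part $(5)$ omits the key step.} Knowing that $A^{(i)}$ is $(p,E)$-completely flat over $A$ (via \cite[Prop.~3.13]{BS19}) does not by itself give honest flatness; one needs that $A$ is Noetherian and the criterion of \cite[Tag~0912]{stacks-project}. This is what the paper invokes and it should be stated; ``composing'' completed flatnesses is not a valid shortcut.

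Part $(1)$ is essentially the paper's argument, modulo the caveat that the ``absence of additional relations'' really rests on the injectivity of $\wt\At_n\to\At_{\max}[\tfrac1p]$, which should be spelled out (the paper proves it by filtering $\wt\At$ by the finitely generated subrings $\wt\At_n$ and using $E$-separatedness).
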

\begin{proof}
(1)  By definition, $\wt \At  = A ^{\ho  2}[ z^{(n)}_j , n\geq 0; j = 0 , \dots, m ]/ J $ where $\mod J  $ is equivalent to modulo out the following relations (note that $z_0 = z$):  $\delta  (z^{(n)}_j)= z^{(n+1)}_{j}, \delta ^n (Ez) = \delta^n (y -x) , \delta ^n (Ez_j)= \delta ^n (s_j-t _j)$ for $n\geq 0$. Since $\delta (x-y)= \frac{(x^p - y ^p)- (x-y)^p}{p}$ and $\delta(s_j - t_j) = \frac{\varphi (s_j - t_j)- (s_j - t_j)^p}{p}$, {using the fact that $p, s_i-t_j$ and $p,x-y$ are regular sequence,} one can prove that $\delta^n (x-y)$  and $\delta^n (s_j - t_j)$ always contains a factor $(x-y)$, $s_j - t_j$ and hence $\delta^n (x-y), \delta(s_j - t_j)\equiv 0 \mod E$. Therefore $\delta ^n (Ez_j) \equiv 0 \mod E$. By Lemma \ref{lem-delta-n}, we see that 
$$p \mu_n z^{(n )}_{j} = -\sum_{i = 0} ^p  \overline{a^{(n)}_i} (z^{(n -1)}_j) ^i  \mod E \text{ and } pz^{(1)}_j = z_j^p \mod E $$
where $\overline {a^{(n)}_i} = a^{(n)}_i \mod E$ and $\mu_n = \frac{\delta (b_n)}{p}\mod E \in \O_K^\times$. Using that $a_p^{(n)} \in A_0^\times$, and $a_i^{(n)}, 1 \leq i \leq p-1$ are good in the sense that they contain factor of $(z^{(l)}_j) ^p$ for some $l = 0 , \dots, n-2$, we easily see by induction that 
$\wt \At /E = R [\wt \gamma^n (z_j ), n \geq 0; j = 0 , \dots , m  ]$. But it is well-known that $R [\wt \gamma^n (z_j), n \geq 0; j = 0 , \dots , m ] = R [\gamma_n (z_j), n \geq 0; j = 0 , \dots , m]. $

Now we show that the natural map $\iota:  \wt \At\to \At_{\max}[\frac  1 p]$ induced by $\alpha (\delta ^n(z_j))$ is injective. Note that $\wt \At $ is the direct limit of $\wt{A_n^{(2)}}$ where $\wt{A_n^{(2)}}: =  A ^{\hat \otimes 2}[\{ \delta^i (z_j)\}_{i = 1 , \dots , n, j = 0 , \dots , m}]$. A argument similar as above shows that $\wt{A_n^{(2)}} / E $ injects to $\At_{\max}[\frac 1 p]/E = D_z [\frac 1 p]$. 
Since $\wt{A_n^{(2)}}$ is $E$-separate and  $\At_{\max}$ is a domain, this implies that $\wt{A_n^{(2)}}$ injects to $\At_{\max}[\frac 1 p]$. So 
$\wt \At$ injects to $\At_{\max}$ via $\iota$. 

(2) Since $A^{(2)}$ is $(p, E)$-completion of $\wt \At$ \footnote{Indeed, $A^{(2)}$ is \emph{derived} $(p, E)$-completion. Since $\wt \At/ E$  is $\Z_p$-flat, then derived completion coincides with the classical completion, which is used here.}, we have a natural map from $\bar \iota: A^{(2)}/E \to D_z$. The surjectivity of $\bar \iota$ is straightforward as $A^{(2)}$ is also $p$-complete. To see injectivity, given an sequence $f_n $ so that $f_{n +1}- f _n \in  (p , E)^n \wt \At $ and $ f_n = E g_n$ for all $n$, we have to show that $g_n$ is a convergent sequence in $A^{(2)}$.  Since $E (g_{n +1} - g_n) = \sum_{i = 0} ^n p ^i E ^{n - i} h_i$ with $h _i \in \wt \At $. Then $ E|p ^n h_n $. Since $\wt \At /E$ has no $p$-torsion,
we have $E | h_n $ and write $h_n  = E h'_n $. Since $\wt \At $ is a  domain as it is inside the fraction field of $A ^{\ho 2}$, we see that 
$ g_{n +1}- g_n = p ^n h'_n + \sum\limits_{i = 0} ^{n -1} p ^i E^{n - i - 1} h_i $. Hence $g_n$ converges in $ A^{(2)}$ as required. 
 
(3) It is clear that $A^{(2)}_{\max}[\frac 1 p]/ E \simeq D_z [\frac 1 p]$. So the map $\iota \mod E(u)$ induces an injection $D_z \hookrightarrow D_z [\frac 1 p]$. So for any $x\in \Ker (\iota)$, we see that $x= Ea$ for some $a \in A^{(2)}$. As $A^{(2)}_{\max}$ is $E$-torsion free and $A^{(2)}$ is $E$-complete, we see that $x= 0$ as required.  
 
(4) By the definition of $\Fil ^1 \At  $, we see that $E A ^{(2)} \subset \Fil ^1 A ^{(2)}$ and $ A^{(2)} / \Fil ^1 A^{(2)} $ injects to $A^{(2)}_{\max}[\frac 1 p ]/E= D_z[\frac 1 p]$. But we have seen that $A^{(2)}/E = D_z$ injects to $D_z$. Then $\Fil ^1 A^{(2)} = E A^{(2)}$. 

(5) Both $A^{(2)}$ and $A^{(3)}$ are obtained by the construction of \cite[Proposition 3.13]{BS19}, which implies that they are $(p,E)$-complete flat over $A$. Since $A$ is Noetherian, by \cite[Tag 0912]{stacks-project}, we have both $A^{(2)}$ and $A^{(3)}$ are $A$-flat.
\end{proof}
\begin{corollary}\label{cor-filtration-shape}
\begin{enumerate}
    \item $\Fil ^i A ^{(2)} = E^i A^{(2)}. $
    \item $A^{(i)}$ are bounded prisms for $i = 2, 3$. 
\end{enumerate}
\end{corollary}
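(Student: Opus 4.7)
For part (1), I plan to induct on $i$, with the base case $i=1$ provided by Proposition~\ref{prop-key-property}(4). Given $x \in \Fil^i A^{(2)} = A^{(2)} \cap E^i A^{(2)}_{\max}[\frac{1}{p}]$, the inclusion $E^i A^{(2)}_{\max}[\frac{1}{p}] \subset E A^{(2)}_{\max}[\frac{1}{p}]$ together with the base case yields $x = E\xi$ for some $\xi \in A^{(2)}$. Since $A^{(2)}_{\max} \subset R_0[\frac{1}{p}][\![E, z_0, \ldots, z_m]\!]$ is a domain, $E$ is a non-zero-divisor in $A^{(2)}_{\max}[\frac{1}{p}]$, and cancelling $E$ from $E\xi = E^i y$ gives $\xi \in A^{(2)} \cap E^{i-1} A^{(2)}_{\max}[\frac{1}{p}] = \Fil^{i-1} A^{(2)}$. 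The inductive hypothesis places $\xi \in E^{i-1} A^{(2)}$, whence $x \in E^i A^{(2)}$; the reverse containment is immediate.

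For part (2), the fact that $(A^{(i)}, (E))$ is a prism for $i = 2, 3$ is automatic from the prismatic envelope construction of \cite[Prop.~3.13]{BS19} already invoked in \S\ref{subsrc-construct-A2} and in Proposition~\ref{prop-key-property}(5). What remains is to verify that $A^{(i)}/E$ has bounded $p^\infty$-torsion. My plan is to extend the identification $A^{(2)}/E \simeq D_z$ from Proposition~\ref{prop-key-property}(2) to the case $i = 3$ by running the same $\delta$-theoretic induction with the additional generators $\{(w-x)/E, (r_j - t_j)/E\}_{j = 0, \ldots, m}$; this identifies $A^{(3)}/E$ with the $p$-adic completion of a divided power polynomial algebra over $R$ in the doubled collection of $z$-variables. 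Since $R$ is $p$-torsion free and a divided power polynomial algebra is a free module over its base on the standard monomials $\prod \gamma_{i_j}(z_j)$, both $D_z$ and its $A^{(3)}$-analogue are $p$-torsion free prior to completion. The standard fact that $p$-adic completion of a $p$-torsion free abelian group $M$ remains $p$-torsion free (if $(x_n) \in \varprojlim M/p^n M$ satisfies $p(x_n) = 0$, then $p x_n \in p^n M$, so $x_n \in p^{n-1} M$ by $p$-torsion freeness, forcing $x_{n-1} = 0$ for all $n$) then yields that $A^{(i)}/E$ is itself $p$-torsion free, so the $p^\infty$-torsion is trivially (in fact identically zero and hence) bounded.

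The main obstacle I foresee is not conceptual but organizational: one must transport the inductive bookkeeping of Lemma~\ref{lem-delta-n} and Proposition~\ref{prop-key-property}(1)--(2) into the triple-product setting for $A^{(3)}$. Because the new generators $w-x$ and $r_j - t_j$ are formally identical in shape to $y-x$ and $s_j - t_j$, and each interacts with $E$ independently via the same $\delta$-formulas, I expect the extension to be mechanical, with the identical inductive structure of \eqref{eq:delta(z)} and \eqref{eq:gamma(delta(z))} applying verbatim, variable by variable.
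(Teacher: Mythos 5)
Your argument for part (1) is the same as the paper's: it is the inductive peel-off from the base case $\Fil^1 A^{(2)} = E A^{(2)}$ of Proposition~\ref{prop-key-property}(4), using the fact that $E$ is a nonzerodivisor in $A^{(2)}_{\max}[\frac 1 p]$; the paper just states this tersely ("These follow that $A^{(2)}/E\simeq D_z$ which is $\Z_p$-flat") and leaves the induction implicit.

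For part (2) you take a genuinely different route. The paper observes that $A^{(2)}$ and $A^{(3)}$ are $(p,E)$-completely flat over the bounded prism $(A,(E))$ (already proved in Proposition~\ref{prop-key-property}(5) from the construction of \cite[Prop.~3.13]{BS19}), and then invokes \cite[Lemma~3.7(2)]{BS19}, which says precisely that a $(p,I)$-completely flat algebra over a bounded prism yields a bounded prism. You instead propose to show directly that $A^{(i)}/E$ is $p$-torsion free, by extending the $\delta$-theoretic computation that gives $A^{(2)}/E\simeq D_z$ to the triple product $A^{(3)}/E$. Your route would work: each new generator $(w-x)/E$, $(r_j-t_j)/E$ does satisfy the same recursion as in Lemma~\ref{lem-delta-n} independently of the others, the resulting ring is a divided power polynomial algebra over the $\Z_p$-flat ring $R$, and your observation that $p$-adic completion preserves $p$-torsion freeness is correct. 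The trade-off is clear: the paper's argument is shorter and avoids re-running the bookkeeping of Lemma~\ref{lem-delta-n} and Proposition~\ref{prop-key-property}(1)--(2) for the triple product, while your argument is more elementary (no citation to \cite[Lemma~3.7]{BS19}) and would yield, as a byproduct, an explicit presentation of $A^{(3)}/E$ as a completed divided power polynomial algebra, a fact the paper never needs to record. Do note, though, that what you have is a plan rather than a verified computation: you should actually check that the independence of the $z$-recursions persists (it does, because the relations $\delta^n(Ez_j)=\delta^n(\cdot)$ for different $j$ never mix), since this is the one point where ``mechanical'' could conceivably fail.
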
 
\begin{proof}These follow that $\At / E \At \simeq D_z$ which is $\Z_p$-flat. For (2), we have $A^{(2)}$ and $A^{(3)}$ are $(p,E)$-complete flat over $A$, so boundedness follows from (2) in \cite[Lemma 3.7]{BS19}.
\end{proof}

\begin{lemma}\label{lem-subring} $A^{(2)}$ is a closed subset inside $A^{(2)}_{\max}$.  
\end{lemma}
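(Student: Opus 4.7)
The strategy is to show that the $p$-adic topology on $\At_{\max}$, restricted to $\At$, coincides with the $(p, E)$-adic topology under which $\At$ is already known to be complete. The key reduction is to prove
\[
p^n \At_{\max} \cap \At = (p, E)^n \At \quad \text{for every } n \geq 1,
\]
or equivalently, that the natural map $\At/(p, E)^n \hookrightarrow \At_{\max}/p^n$ is injective for every $n$. Granted this, any sequence $\{a_k\} \subset \At$ that is $p$-Cauchy in $\At_{\max}$ is automatically $(p, E)$-Cauchy in $\At$, hence converges in $\At$ by the $(p, E)$-completeness built into the prismatic-envelope construction of $\At$; by continuity of $\iota$ the limit coincides with the $p$-adic limit in $\At_{\max}$, giving closedness.

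The inclusion $(p, E)^n \At \subseteq p^n \At_{\max}$ is immediate from the identity $E = p \cdot (E/p)$ in $\At_{\max}$. For the base case $n = 1$ of the converse, I combine Proposition~\ref{prop-key-property}(2), which identifies $\At/(p, E)$ with the divided-power polynomial algebra $(R/p)[\gamma_i(z_j)]_{i \geq 0,\, j = 0, \dots, m}$, with the presentation $\Omax \simeq A[t]/(pt - E)$ (so $\Omax/p \simeq (R/p)[t]$) and the unique-expansion description of elements of $\At_{\max}$; these together yield $\At_{\max}/p \simeq (R/p)[\overline{E/p},\, \gamma_i(z_j)]$, and the natural map is the evident polynomial inclusion that adjoins the extra variable $\overline{E/p}$, hence is injective.

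For general $n$, I proceed by induction using the commutative diagram whose rows are the short exact sequences $0 \to (p, E)^{n-1}/(p, E)^n \to \At/(p, E)^n \to \At/(p, E)^{n-1} \to 0$ and $0 \to p^{n-1}\At_{\max}/p^n \to \At_{\max}/p^n \to \At_{\max}/p^{n-1} \to 0$, whose exactness uses $p$-torsion-freeness of both rings. The right vertical arrow is injective by the inductive hypothesis, so a five-lemma chase reduces to injectivity of the left vertical arrow, which sends the class of $p^i E^{n-1-i}$ to $p^{n-1}(E/p)^{n-1-i}$. Assuming $p, E$ is a regular sequence on $\At$ (expected from the $A$-flatness of $\At$ in Proposition~\ref{prop-key-property}(5) and the fact that $p, E$ is regular on $A$), the source is the free $\At/(p, E)$-module on the monomials $\{p^i E^{n-1-i}\}_{i = 0}^{n-1}$, and the algebraic independence of the powers $(\overline{E/p})^{j}$ in $\At_{\max}/p$ over the image of $\At/(p, E)$ forces the left vertical to be injective. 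The main obstacle is confirming the regular-sequence property for $p, E$ on $\At$ and carefully tracking the matching of the two filtrations via the relation $E = p \cdot (E/p)$.
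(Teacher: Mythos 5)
Your proposal is correct, and the underlying target is the same as the paper's: establish that $p^n\At_{\max}\cap \wt\At$ (or $\At$) is contained in $(p,E)^n\wt\At$ (or $\At$), then invoke $(p,E)$-completeness to get closedness. However, the routes differ in flavor. The paper proves the key containment by an explicit iterative descent: given $x=p^ny\in\wt\At$, it uses $\wt\At/E\hookrightarrow D_z=\At_{\max}/\Fil^1$ to extract the first coefficient, writes $x=p^nx_0+Ew_1$, then manipulates the explicit divided-power/$E/p$-series expansion of $Ew_1$ in $\At_{\max}$ to show the remainder $w_1$ is already $p^{n-1}$-divisible, and iterates. Your approach reorganizes the same induction into a diagram chase on the short exact sequences defining the filtration steps, reducing to injectivity of the associated-graded map $(p,E)^{n-1}\At/(p,E)^n\At\to p^{n-1}\At_{\max}/p^n\At_{\max}$, which you identify by dividing through by $p^{n-1}$ with the inclusion of $(R/p)\{\gamma_i(z_j)\}$-multiples of the monomials $\bar t^{\,j}$, $\bar t=\overline{E/p}$, into the divided-power polynomial ring $\At_{\max}/p\simeq (R/p)[\bar t]\{\gamma_i(z_j)\}$. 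This buys a more structural, ``graded'' picture at the cost of invoking that $p,E$ is a regular sequence on $\At$ (legitimately, from the $A$-flatness already recorded in Proposition~\ref{prop-key-property}(5), since $p,E$ is regular on $A$), whereas the paper's hands-on computation sidesteps that input. Two small cleanups worth noting: the exactness of the rows in your diagram is automatic for filtration quotients and does not actually require $p$-torsion-freeness (you only need torsion-freeness to identify $p^{n-1}\At_{\max}/p^n\At_{\max}$ with $\At_{\max}/p$ by dividing by $p^{n-1}$); and your identification $\Omax\simeq A[t]/(pt-E)$, hence $\Omax/p\simeq (R/p)[\bar t]$, holds because $A[t]/(pt-E)$ is $p$-torsion-free and thus injects into $A[1/p]$ with image $A[E/p]$ — a point implicitly used in the paper's unique-expansion description of $\At_{\max}$ but good to make explicit.
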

\begin{proof} We need to show the following statement:  Given $ x \in \wt \At $, if $x= p ^n y$ with $y \in A^{(2)}_{\max}$ then $x = \sum\limits_{i = 0}^n p ^{n-i} E^i x_i$ with $x_i \in \wt \At . $ Indeed, since $A^{(2)}/E \simeq A^{(2)}_{\max}/ {\Fil ^1}$, there exists $x_0, w_1 \in \wt \At $ so that $x= p ^n x_0 + E w_1$. Then $Ew_1 \in p ^n  A^{(2)}_{\max}$. Write $ E w_1= p ^n \sum \limits_{i =0} ^\infty\sum\limits_{j =0}^m f_{ij} \gamma_i (z_j)$, we see that $f_{ij}= \sum_{l \geq 1} a_{ijl}  \frac{E^l}{p^l}\in \Fil ^1 \Omax$. So it is easy to see that $p ^n E^{-1}f_{ij} \in p ^{n -1} \Omax$ and then 
$w_1 = p ^{n -1} x_1 $ with $x_1 \in \At_{\max}$. Then we may repeat the above argument to $w_1$, and finally $x= \sum\limits_{i = 0}^n p ^{n-i} E^i x_i$ with $x_i \in \wt \At$ as required. 
\end{proof}

\begin{remark}\label{rem:compare BS_part1}
This remark is prompted by feedback provided by an anonymous referee. We have seen $\iota$ induces $\iota'\colon\At\langle \frac{E}{p} \rangle \to A^{(2)}_{\max}$, where $\At\langle \frac{E}{p} \rangle$ is the $p$-adic completion of $\At[\frac{E}{p}]$. Moreover, since $E$ is divisible by $p$ in both $\At\langle \frac{E}{p} \rangle$ and $A^{(2)}_{\max}$, the $p$-adic topology is the same as the $(p,E)$-adic topology over these rings. Combining these with Lemma~\ref{lem:gamma(z)-polynomial-in-E/p} and Equation~\ref{eq:delta(z)}, one has $\iota'$ has both left and right inverses and is thus an isomorphism. Notably, we have $A^{(2)}_{\max}=\O_\Prism\langle \frac{\mathcal{I}_\Prism}{p}\rangle\big((\At,(E))\big)$, where $\O_\Prism\langle \frac{\mathcal{I}_\Prism}{p}\rangle((A,I))$ is equal to the $p$-adic completion of $A[ \frac{I}{p} ]$, as a presheaf on $R_\Prism$. We want to remark that $\O_\Prism\langle \frac{\mathcal{I}_\Prism}{p}\rangle$ bears a close relationship to the sheaf $\Prism_{\bullet}\langle \frac{{I}}{p}\rangle$ on the (small) quasi-syntomic site over $R$, which is defined in \cite[\S6]{BS2021Fcrystals}. Additionally, using similar techniques in \cite{DuLiu-NewMethod}, one can show $\O_\Prism\langle \frac{\mathcal{I}_\Prism}{p}\rangle$ is a sheaf on the absolute \emph{transversal} prismatic site and contains $\O_\Prism$ as a subsheaf. This may give a direct verification of the inclusion of $\At$ in $A^{(2)}_{\max}$. However, to further our subsequent arguments, it is useful to write down the elements of $A^{(2)}_{\max}$ in a more explicit manner.
\end{remark} 

Now we realize $A^{(2)}$ as a subring of $A^{(2)}_{\max}$ via $\iota$. We need to introduce some auxiliary rings. By the description of elements in $A^{(2)}_{\max}$, we define ${\wt S}$ be the subring of $A^{(2)}_{\max}$ as follows
$$
\widetilde{S} :=  A^{(2)} [\![ \frac {E ^p}{p} ]\!] := \{ \sum_{i \geq 0} a_i (\frac{E^p}{p})^i \mid a_i\in A^{(2)} \}.
$$
And when $p=2$, we define 
$
\widehat{S} := \At [\![\frac{E^4}{2}]\!]
$ similarly. 
We will have $\widehat{S} \subset \widetilde{S} \subset A^{(2)}_{\max}$. Viewing $\wt S$ and $\widehat{S}$ as subrings of $A^{(2)}_{\max}$, we give them the filtration induced from $A^{(2)}_{\max}$. We will use the fact that if $\{a_i\}_{i\geq 1}$ is a sequence in $A^{(2)}$ that converges to $0$, then for any sequence $\{b_i\}_{i\geq 1}$ inside $ \widetilde{S}$, one can show $\sum_i a_i b_i \in \widetilde{S}$ by rewriting it into formal power series in $\frac{E^p}{p}$.

\begin{lemma}\label{lem-auxiliaryrings}
Fix $h \in \N$, then we have
\begin{enumerate}
    \item We have $\varphi(A^{(2)}_{\max}) \subset \widetilde{S} \subset A^{(2)}_{\max}$, and when $p=2$, we have $\varphi(\widetilde{S}) \subset \widehat{S} \subset \widetilde{S}$;
    \item $x \in \Fil ^h \wt S $ if and only if $x$ can be written as
    $$
    x = \sum\limits_{i \geq h } a_i \frac {E ^i}{p ^{\lfloor \frac i p\rfloor}} 
    $$
    with $a_i\in A^{(2)}$. 
    \item when $p>2$, there is a $h_0>h$ such that $\varphi (\Fil ^m \wt S ) \subset A ^{(2)} + E^h\Fil^{m +1} \wt S$ for all $m > h_0$;
    \item when $p=2$, then $x \in \Fil ^h \widehat S $ if and only if $x$ can be written as
    $$
    x = \sum\limits_{i \geq h } a_i \frac {E ^i}{2 ^{\lfloor \frac i 4\rfloor}} 
    $$
    with $a_i\in A^{(2)}$;
    \item when $p=2$, there is a $h_0>h$ such that $\varphi (\Fil ^m \widehat{S}  ) \subset A ^{(2)} + E^h\Fil^{m +1} \widehat{S} $ for all $m > h_0$.
\end{enumerate}
\end{lemma}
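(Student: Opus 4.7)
The five parts are logically tiered: (1) provides the self-containment of $\varphi$ needed to set up the filtration analyses (2)--(5); the Frobenius estimates (3) and (5) build on the filtration descriptions (2), (4) respectively; and the case $p=2$ is treated separately because the odd-$p$ divisibility estimate is too weak, forcing passage from $\widetilde S$ to the finer ring $\widehat S = \At[\![E^4/2]\!]$. Throughout, the main source of difficulty will be the careful bookkeeping between the $p$-adic denominator $p^{\lfloor i/p \rfloor}$ and the $E$-adic filtration, especially in the $p=2$ case.

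For (1), I will use that $\widetilde S$ (resp.\ $\widehat S$) is $p$-adically closed in $A^{(2)}_{\max}$ and that $A^{(2)}_{\max}$ is the $p$-adic completion of the $A$-algebra generated by $E/p$ and $\{\gamma_i(z_j)\}$, so by continuity it suffices to check $\varphi$ on generators. The computation $\varphi(E/p) = \delta(E) + E^p/p \in A + A\cdot(E^p/p) \subset \widetilde S$ is immediate. For $\varphi(\gamma_i(z_j)) = \gamma_i(\varphi(z_j))$, I will combine \eqref{eqn-special-shape} with the binomial expansion $s_j^p - t_j^p = \sum_{k=1}^p \binom{p}{k} t_j^{p-k}(E z_j)^k$ to write $(s_j^p - t_j^p)/p$ as an element of $A^{(2)} + A^{(2)} \cdot (E^p/p)$, from which one deduces $\varphi(z_j) \in \widetilde S$; the divided powers $\varphi(z_j)^i/i!$ are then controlled by noting that the $p$-denominators introduced by $i!$ are absorbed by the powers of $E^p/p$ already present. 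For $p=2$, the refinement $\varphi(\widetilde S) \subset \widehat S$ follows from $\varphi(E^2/2) = \varphi(E)^2/2 = 2c^2 \in A^{(2)} \subset \widehat S$ together with continuity.

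For (2), the inclusion $\supseteq$ is direct: each term $E^i/p^{\lfloor i/p\rfloor} = E^{i - p\lfloor i/p\rfloor}(E^p/p)^{\lfloor i/p\rfloor}$ lies in $\widetilde S$, and $i \geq h$ gives the $E^h$-divisibility. For $\subseteq$, starting from $x = \sum_n a_n (E^p/p)^n \in \Fil^h \widetilde S$, I will use the $(p,E)$-adic completeness of $A^{(2)}$ together with $A^{(2)}/E \simeq D_z$ (Proposition~\ref{prop-key-property}) to decompose each $a_n = a_n^{(0)} + a_n^{(1)} E + \cdots + a_n^{(p-1)} E^{p-1} + E^p \widetilde a_n$ in $A^{(2)}$; after reindexing, this places $x$ in the claimed form. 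The hypothesis $x \in E^h A^{(2)}_{\max}[1/p]$ then forces the coefficients at indices $i < h$ to vanish, which is checked by reducing modulo successive powers of $E$. Part (4) follows by the same strategy with $E^4/2$ in place of $E^p/p$.

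For (3), writing $x = \sum_{i \geq m} a_i E^i/p^{\lfloor i/p\rfloor}$, I apply $\varphi$ and expand
\[
\frac{\varphi(E)^i}{p^{\lfloor i/p\rfloor}} = \sum_{k=0}^{i} \binom{i}{k} \delta(E)^k \, p^{k - \lfloor i/p\rfloor} E^{p(i-k)},
\]
then split the $k$-sum: for $k \geq \lfloor i/p\rfloor$ each term lies in $A^{(2)}$, while for $k < \lfloor i/p\rfloor$ the identity $p^{k - \lfloor i/p\rfloor} E^{p(i-k)} = p^{i - \lfloor i/p\rfloor} (E^p/p)^{i-k}$ places the term in $\Fil^{p(i-k)} \widetilde S$ with extra divisibility $p^{i - \lfloor i/p\rfloor}$. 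For $p\geq 3$ one has $i - \lfloor i/p\rfloor \geq 2i/3$, and $p(i-k) \geq p(i - \lfloor i/p\rfloor + 1)$ for the bad terms, so choosing $h_0 \geq (h+1)/(p-1)$ guarantees both that every bad term for $i \geq m > h_0$ lies in $E^h \Fil^{m+1} \widetilde S$ and that the good terms aggregate to an element of $A^{(2)}$. The parallel estimate (5) is the main obstacle: for $p=2$ the exponent $i - \lfloor i/2\rfloor = \lceil i/2\rceil$ grows only half as fast, which is precisely why one must work in $\widehat S$ with denominator $2^{\lfloor i/4\rfloor}$; this supplies the two extra $E$-powers per $2$ that the naive estimate misses, after which the same splitting argument with $E^4/2$ and the refined inequality $i - \lfloor i/4\rfloor \geq 3i/4$ carries through, provided $h_0$ is chosen of order $2(h-1)$ to match the new bookkeeping.
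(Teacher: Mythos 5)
Your overall architecture (treat (1) first, use it to get (2)/(4), then split good/bad terms in (3)/(5)) matches the paper's, and several of your computations (the expansion of $\varphi(E)^i/p^{\lfloor i/p\rfloor}$, the identity $p^{k-\lfloor i/p\rfloor}E^{p(i-k)}=p^{i-\lfloor i/p\rfloor}(E^p/p)^{i-k}$) are exactly the right ones. However, there are genuine gaps.

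The most serious is in part (1). You correctly deduce $\varphi(z_j)\in\widetilde S$, but the step ``the divided powers $\varphi(z_j)^i/i!$ are controlled by noting that the $p$-denominators introduced by $i!$ are absorbed by the powers of $E^p/p$ already present'' does not hold as stated. Writing $\varphi(z_j)=c^{-1}z_j\nu_j$ with $\nu_j\in\widetilde S$, one has $\gamma_i(\varphi(z_j))=(c^{-1}\nu_j)^i\gamma_i(z_j)$; the factor $(c^{-1}\nu_j)^i$ is indeed in $\widetilde S$, but the remaining factor $\gamma_i(z_j)=z_j^i/i!$ is a priori only a generator of $A^{(2)}_{\max}$, and the powers of $E^p/p$ sitting in $\nu_j^i$ cannot cancel the $1/i!$ attached to $z_j^i$ (a nonzero negative power of $p$ can never be rewritten as $a\cdot(E^p/p)^l$ with $a\in A^{(2)}$). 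What is actually needed is Lemma~\ref{lem:gamma(z)-polynomial-in-E/p}: $\gamma_i(z_j)$ is a polynomial $f_{ij}(E/p)$ with $\widetilde{A}^{(2)}$-coefficients, and only \emph{after} applying $\varphi$ does $E/p$ become $c=\delta(E)+E^p/p\in\widetilde S$, yielding a polynomial in $E^p/p$ with $A^{(2)}$-coefficients. Your proposal never invokes that lemma (which is the real work of part (1)), and the informal substitute does not close the gap. You also take for granted that $\widetilde S$ is $p$-adically closed in $A^{(2)}_{\max}$ so that one may pass from generators to the completion; this is not established in the paper, and the paper's argument avoids it by expanding $a\in A^{(2)}_{\max}$ directly as $\sum a_{i_0,\ldots,l}(E/p)^l\prod\gamma_{i_j}(z_j)$ with $A$-coefficients and applying $\varphi$ term-by-term.

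A second, concrete error is in your $p=2$ case of (1): you assert $\varphi(E^2/2)=2c^2\in A^{(2)}$. But $c=E^2/2+\delta(E)$ and $2c^2=E^4/2+2(\delta(E)E^2+\delta(E)^2)$, and $E^4/2\notin A^{(2)}$ (there is no $p$-divisibility of $E$ in $A^{(2)}$). The correct conclusion is only $\varphi(E^2/2)\in A+A\cdot\frac{E^4}{2}\subset\widehat S$, which is what the paper establishes and which is weaker than your claim.

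For (3) and (5), the splitting into good ($k\geq\lfloor i/p\rfloor$) and bad ($k<\lfloor i/p\rfloor$) terms is the same as the paper, but your proposed constants do not meet the statement. The lemma requires $h_0>h$, and your $h_0\geq(h+1)/(p-1)$ is typically $\leq h$ for $p\geq3$. You also do not verify that the two inequalities (membership of the bad remainder in $\widetilde S$, and $p(i-k)-h\geq m+1$) actually follow from your choice; the paper's $h_0>\max\{h,\,(p(h+1)+1)/(p(p-2))\}$ and (for $p=2$) $h_0>2(h+2)$ come out of solving exactly these inequalities. This is less a conceptual gap than a missing verification, since only existence of some $h_0$ is required, but as written the proposal does not produce a valid $h_0$. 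Similarly, in (2) the assertion that the filtration condition ``forces the coefficients at indices $i<h$ to vanish'' is not accurate — the representation $\sum a_iE^i/p^{\lfloor i/p\rfloor}$ is far from unique, and those coefficients do not vanish; rather their total contribution $\tilde a_0$ is shown to lie in $p^{-\lfloor h/p\rfloor}E^hA^{(2)}$ and is absorbed into the $i=h$ term, as the paper does.
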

\begin{proof}
For $(1)$, any $a\in \At_{\max}$, we can write 
$$
a = \sum _{i_0 = 0}^\infty \cdots \sum_{i _m = 0}^\infty \sum_{l = 0}^\infty  a_{i_0 , \dots , i_m, l } \left (\frac E p\right ) ^l \prod_{j = 0}^m \gamma_{i_j} (z_j)
$$
where $a_{i_0 , \dots , i_m, l}\in A$ and $a_{i_0 , \dots , i_m, l}\to 0$ $p$-adically when $\sum_j i _j + l \to \infty$. Thanks for Lemma \ref{lem:gamma(z)-polynomial-in-E/p}, we see that $b_{i_0 , \dots , i_m , l}: = \varphi \left  (\left (\frac E p\right ) ^l \prod_{j = 0}^m \gamma_{i_j} (z_j) \right)\in \wt S$. So $\varphi (a) = \sum a_{i_0 , \dots , i_m , l} b_{i_0 , \dots , i_m , l}$ converges in $\wt S$. 

For the claim in $(1)$ for $p=2$, we have $\varphi(\frac{E^2}{2})=(E^2+2b')^2/2= \frac{E^4}{2} + 2b$ for some $b,b'\in A$. And for $a = \sum_{i \geq 0} a_i (\frac{E^p}{p})^i \in \widetilde{S}$, we have 
\begin{IEEEeqnarray*}{+rCl+x*}
\varphi(a) = \sum_{i \geq 0} \varphi(a_i) (\frac{\varphi(E^2)}{2})^i &=& \sum_{i \geq 0} \varphi(a_i) \sum_{j=0}^{i} c_{ij}(2b)^{i -j} (\frac{E^4}{2})^j \\
&=& \sum_{j \geq 0} \left (\sum_{i=j}^{\infty} \varphi(a_i)c_{ij} (2b)^{i-j} \right ) (\frac{E^4}{2})^j
\end{IEEEeqnarray*}
for some $c_{ij} \in \Z$. We have $\varphi(a) \in \widehat{S}$ since $\sum_{i=j}^{\infty} \varphi(a_i)c_{ij} (2b)^{i-j}$ converges in $\At$.

For $(2)$, the if part is trivial. For the other direction, any $x \in \wt S $, we have 
$$
x = \sum\limits_{i \geq 0 } a_i \frac {E^{i}}{p^{\lfloor \frac i p  \rfloor}}.
$$
And if we also have $x \in \Fil ^h A^{(2)}_{\max}[\frac  1 p ] = E^h A^{(2)}_{\max}[\frac 1 p]$, Then $\tilde a_0=\sum\limits_{0\leq i \leq h} a_i \frac {E^{i}}{p^{\lfloor \frac i p \rfloor}}$ will lie inside $\Fil ^h \At [\frac 1 p]$. This implies $p^{\lfloor \frac h p\rfloor}\tilde a_0 \in \Fil ^h A^{(2)}= E^h A^{(2)}$. That is $\tilde a_0={p^{-\lfloor \frac h p\rfloor}}{E^h} b$ for some $b \in A^{(2)}$. So $x$ is of the given form. The proof for $(4)$ is similar.

For $(3)$, we have by $(2)$, $x \in \Fil ^m \wt S$, $x$ can be written as
$$
x = \sum\limits_{i \geq m } a_i \frac {E ^i}{p ^{\lfloor \frac i p\rfloor}}.
$$
And use the fact $\varphi(E)=E^p+pb$ for some $b \in \At$, we have 
\begin{IEEEeqnarray*}{+rCl+x*}
\varphi(x) &=& \sum\limits_{i \geq m } \varphi(a_i) \sum_{j=0}^{i} \frac {c_{ij}E^{p(i-j)} p^j}{p ^{\lfloor \frac i p\rfloor}} \\
&=& \sum_{i \geq m} \sum_{ j \geq \lfloor \frac i p\rfloor}^{i} \frac {b_{ij}E^{p(i-j)} p^j}{p ^{\lfloor \frac i p\rfloor}} + \sum_{i \geq m} \sum_{0 \leq j < \lfloor \frac i p\rfloor} E^h\frac {b_{ij}E^{p(i-j)-h} p^j}{p ^{\lfloor \frac i p\rfloor}}
\end{IEEEeqnarray*}
with  $b_{ij} \in A^{(2)}$.

In particular, we have $\sum_{i \geq m} \sum_{ j \geq \lfloor \frac i p\rfloor}^{i} \frac {b_{ij}E^{p(i-j)} p^j}{p ^{\lfloor \frac i p\rfloor}}$ is inside $\At$. To prove $(3)$, it is amount to find $h_0$ such that whenever $m>h_0$, $i \geq m$ and $0 \leq j < \lfloor \frac i p\rfloor$, we have 
$$
\sum_{i \geq m} \sum_{0 \leq j < \lfloor \frac i p\rfloor} \frac {b_{ij}E^{p(i-j)-h} p^j}{p ^{\lfloor \frac i p\rfloor}} \in \Fil^{m +1} \wt S.
$$
The claim follows if we can find $h_0 > h$ such that $\frac {E^{p(i-j)-h} p^j}{p ^{\lfloor \frac i p\rfloor}} \in \wt S$ and $p (i-j)- h \geq m +1$ for all $m>h_0$, $i \geq m$ and $0 \leq j < \lfloor \frac i p\rfloor$. That is $\lfloor\frac{p (i -j)-h}{p} \rfloor +j \geq \lfloor \frac i p \rfloor $ and $p(i-j)-h\geq m+1$ for all $i,j,m$ in this range. And solve this we have it is enough to choose $h_0 > \max \{h, \frac{p(h+1)+1}{p(p-2)}\}$, which is valid for $p>2$.

Statement in $(5)$ is similar to $(3)$. Any $x \in \Fil ^m \widehat S$, $x$ can be written as
$$
x = \sum\limits_{i \geq m } a_i \frac {E ^i}{2^{\lfloor \frac i 4\rfloor}}.
$$
We have $\varphi(E)=E^2+2b$ for some $b \in \At$, so 
\begin{IEEEeqnarray*}{+rCl+x*}
\varphi(x) &=& \sum\limits_{i \geq m } \varphi(a_i) \sum_{j=0}^{i} \frac {c_{ij}E^{2(i-j)} 2^j}{2 ^{\lfloor \frac i 4\rfloor}} \\
&=& \sum_{i \geq m} \sum_{ j \geq \lfloor \frac i 4\rfloor}^{i} \frac {b_{ij}E^{2(i-j)} 2^j}{2 ^{\lfloor \frac i 4\rfloor}} + \sum_{i \geq m} \sum_{0 \leq j < \lfloor \frac i 4\rfloor} E^h\frac {b_{ij}E^{2(i-j)-h} 2^j}{2 ^{\lfloor \frac i 4\rfloor}}.
\end{IEEEeqnarray*}
Similar to the argument in $(3)$, it is amount to find $h_0$ such that whenever $m>h_0$, $i \geq m$ and $0 \leq j < \lfloor \frac i 4\rfloor$, we have $\lfloor (i -j)-\frac h 2 \rfloor + j  \geq \lfloor \frac i  4 \rfloor $ and $2(i-j)-h \geq m +1$. It is enough to choose $h_0 > 2(h+2)$. 
\end{proof}

If $A$ is a ring then we denote by  ${\textrm M} _d (A)$ the set of $d\times d$-matrices with entries in $A$.

\begin{proposition}\label{prop-desecnt} Let $Y \in {\textrm M}_d (A^{(2)}_{\max})$ so that $E^ h Y = B  \varphi (Y) C$ with $B$ and $C$ in ${\textrm M}_d (A ^{(2)})$ then   $Y$ is in ${\textrm M}_d (A ^{(2)}[\frac 1 p])$. 
\end{proposition}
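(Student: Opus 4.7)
The plan is to iterate the Frobenius relation $E^h Y = B\varphi(Y)C$ to approximate $Y$ to arbitrarily high $E$-adic precision by elements of ${\rm M}_d(A^{(2)}[\frac{1}{p}])$, leveraging Lemma~\ref{lem-auxiliaryrings}(3) (for $p > 2$) or (5) (for $p = 2$), which asserts that $\varphi$ carries deep filtration in $\wt{S}$ (resp. $\widehat{S}$) into ${\rm M}_d(A^{(2)})$ plus still deeper filtration. For concreteness I restrict to $p > 2$; the $p = 2$ case is parallel, replacing $\wt{S}$ by $\widehat{S}$ and using parts (4)--(5) in place of (2)--(3).

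I would begin by applying Lemma~\ref{lem-auxiliaryrings}(1): $\varphi(Y) \in {\rm M}_d(\wt{S})$, so $E^h Y = B\varphi(Y)C \in {\rm M}_d(\wt{S})$. Combined with the tautological inclusion $E^h Y \in \Fil^h A^{(2)}_{\max}[\frac{1}{p}]$, this gives $E^h Y \in {\rm M}_d(\Fil^h \wt{S})$. By Lemma~\ref{lem-auxiliaryrings}(2), each entry of $E^h Y$ has the shape $\sum_{i \geq h} a_i E^i / p^{\lfloor i/p \rfloor}$ with $a_i \in A^{(2)}$. Truncating this series at $i = m_0$ for some $m_0 > h_0$ (with $h_0$ as in Lemma~\ref{lem-auxiliaryrings}(3)) and dividing by $E^h$ then yields an initial decomposition $Y = Y^{(0)} + R^{(0)}$ with $Y^{(0)} \in {\rm M}_d(A^{(2)}[\frac{1}{p}])$ (a finite truncation) and $R^{(0)} \in {\rm M}_d(p^{-N_0}\Fil^{m_0 - h + 1}\wt{S})$ for some integer $N_0 \geq 0$.

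The inductive step takes $Y = Y^{(n)} + R^{(n)}$ with $R^{(n)} \in {\rm M}_d(p^{-N_n}\Fil^{m_n}\wt{S})$ and applies $\varphi$ to $R^{(n)}$, invoking Lemma~\ref{lem-auxiliaryrings}(3) to split $\varphi(R^{(n)})$ into an $A^{(2)}$-part (weighted by $p^{-N_n}$) and an $E^h \Fil^{m_n+1}\wt{S}$-part. Substituting into $E^h Y = B\varphi(Y^{(n)})C + B\varphi(R^{(n)})C$ and absorbing the integral contribution $B(\text{$A^{(2)}$-part})C$ into a refined approximation $Y^{(n+1)}$, one obtains a new decomposition whose residue $R^{(n+1)}$ lies in filtration $m_n + 1$. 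Crucially, the output of Lemma~\ref{lem-auxiliaryrings}(3) sits in $A^{(2)}$ rather than $A^{(2)}[\frac{1}{p}]$, so no new $p$-denominators are introduced at each step and the $N_n$ stay controlled.

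To conclude, the residues $R^{(n)}$ have $E$-valuation tending to infinity, and since $A^{(2)}_{\max}[\frac{1}{p}]$ is $E$-adically separated (it embeds into the formal power series ring $R_0[\frac{1}{p}][\![E, z_j]\!]$), the approximations $Y^{(n)}$ converge $E$-adically to $Y$. Combined with the $E$-adic closedness of ${\rm M}_d(A^{(2)}[\frac{1}{p}])$ inside ${\rm M}_d(A^{(2)}_{\max}[\frac{1}{p}])$ (inherited from the $(p, E)$-completeness of the bounded prism $A^{(2)}$ established in Corollary~\ref{cor-filtration-shape}), the limit $Y$ must lie in ${\rm M}_d(A^{(2)}[\frac{1}{p}])$. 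The main technical obstacle is precisely the bookkeeping that keeps the $Y^{(n)}$ in ${\rm M}_d(A^{(2)}[\frac{1}{p}])$ with sufficiently controlled $p$-denominators so that the $E$-adic limit stays inside $A^{(2)}[\frac{1}{p}]$ and not only in the larger ambient $A^{(2)}_{\max}[\frac{1}{p}]$; this is what makes the integrality of the output in Lemma~\ref{lem-auxiliaryrings}(3) essential, and is also where the non-Frobenius-lift nature of $\varphi$ on $A^{(2)}_{\max}$ (Remark~\ref{rem-not-Frob-lift}) has to be handled carefully.
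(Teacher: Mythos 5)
Your proposal is essentially the paper's argument: iterate the relation $E^h Y = B\varphi(Y)C$ and use Lemma~\ref{lem-auxiliaryrings}(3) (resp.~(5) for $p=2$) to push the remainder into strictly deeper filtration at each step while keeping the extracted piece $p$-integral, so that the $E$-adic approximations converge inside $A^{(2)}$ up to a bounded power of $p$. The bookkeeping differs slightly: the paper first normalizes by a fixed power of $p$ so that $p^r Y \in {\rm M}_d(\wt S)$ (resp.\ $\widehat S$) and then builds genuine $E$-adic Taylor coefficients $Y_i \in {\rm M}_d(A^{(2)})$, whereas you carry $p$-denominators $N_n$ along with the remainders.

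One caveat worth fixing in your final step. The assertion ``$E$-adic closedness of ${\rm M}_d(A^{(2)}[\frac 1 p])$ in ${\rm M}_d(A^{(2)}_{\max}[\frac 1 p])$'' is not literally true as a consequence of $(p,E)$-completeness of $A^{(2)}$: a $\Fil$-adically Cauchy sequence in $A^{(2)}[\frac 1 p]$ with unbounded $p$-denominators may fail to converge there. What your scheme actually delivers, and needs, is that the $N_n$ (and hence the $p$-denominators of the $Y^{(n)}$) stabilize, so that after enlarging $r$ once and for all the $Y^{(n)}$ lie in $p^{-r}{\rm M}_d(A^{(2)})$; the successive differences $Y^{(n+1)}-Y^{(n)}$ then lie in $p^{-r}{\rm M}_d(\Fil^{m_n}\wt S \cap A^{(2)}) = p^{-r}E^{m_n}{\rm M}_d(A^{(2)})$ by Corollary~\ref{cor-filtration-shape}, so the series converges in ${\rm M}_d(A^{(2)})$ by $(p,E)$-completeness, and its sum equals $p^r Y$ by $E$-adic separatedness of $A^{(2)}_{\max}[\frac 1 p]$. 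This is precisely what the paper's preliminary reduction to $p^r Y \in {\rm M}_d(\wt S)$ with integral coefficients accomplishes; with that correction your write-up matches the paper's proof.
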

\begin{proof} First, we claim that there is a constant $s$ only depending on $h$, such that the entries of $p^s Y$ is in $\wt S$. By $(1)$ of Lemma~\ref{lem-auxiliaryrings}, entries of  $E ^ h Y$ are in $\wt S$. So for each entry $a$ of $Y$, we can write $E^h a = \sum \limits_{i = 0}^\infty a_i \frac{ E^{pi}}{p^i}$ with $a_i \in A ^{(2)}$. It is clear that $E^h p ^h a = a' + E^h \sum\limits_{i \geq h} a_j \frac {E^{pi-h }}{p ^i} $ so that $a' \in A^{(2)}$. Therefore, $a' \in \Fil^h A^{(2)} = E^h A ^{(2)}$ by Corollary \ref{cor-filtration-shape}. So write $a' =  E^h b$, we have $ p ^h a = b + \sum\limits_{i \geq h} a_j \frac {E^{pi-h }}{p ^i}$. In particular, we see that $p ^{2h} a \in \wt S$, this proves our claim. When $p=2$, then we may repeat the above argument and we can assume $p^s Y$ is in ${\textrm M} _d (\widehat{S})$.

Let $\mathfrak{R}=\wt S$ when $p>2$ and $\mathfrak{R}=\widehat{S}$ when $p=2$, then we may assume $Y$ is inside ${\textrm M} _d (\mathfrak{R})$. Then we claim there is another constant $r$ only depends on $h$, such that for each entry $a$ of $p^r Y$ {changed $Y$ into $p^r Y$}, there is a sequence $\{b_i\}_{i\geq 1}$ in $\At$ such that we have $a - \sum\limits_{i = 0} ^m b _i E^ i \in \Fil ^{m +1} \mathfrak{R}$. Note that once this is known, we will have $\sum\limits_{i = 0} ^m b _i E^ i$ converges to an element $b$ in $\At$, and $a-b=0$ since it is in $\Fil ^{m} \mathfrak{R}$ for all $m\in \N$. 

It remains to show our claim. When $p>2$, let $h_0$ be the integer in $(3)$ of Lemma~\ref{lem-auxiliaryrings}, then it is easy to show there is a constant $r$ only depends on $h_0$ (so only on $h$) and sequence $\{b_i\}_{i=1}^{h_0}$ such that for each entry $a$ of $Y':=p^rY$, we have
$$
a - \sum\limits_{i = 0} ^{h_0} b _i E^ i \in \Fil ^{h_0 +1} \mathfrak{R}.
$$
Now we show our claim by induction, assuming for each entry $a$ in $Y'$, there is a sequence $\{b_i\}_{i=1}^{m}$ such that,
$$
a - \sum\limits_{i = 0} ^{m} b _i E^ i \in \Fil ^{m +1} \mathfrak{R}.
$$
for some $m \geq h_0$.  So we can write $Y'$ as
$$
\sum_{i = 0}^m Y_i E^i + Z_{m+1},   
$$
with $Y_i \in {\textrm M}_d (A^{(2)})$ and $Z_{m+1} \in {\textrm M}_d (\Fil ^{m +1} \mathfrak{R})$. Writing $X_{m}= \sum_{i = 0}^m Y_i E^i$, then $E^h Y' = B \varphi (Y') C$ implies 
$$
E^hZ_{m+1} = B\varphi(X_m)C -E^hX_m + B\varphi(Z_{m+1})C.
$$
By $(3)$ in Lemma~\ref{lem-auxiliaryrings}, we have $B\varphi(Z_{m+1})C = A_{m+1} + E^h B_{m+1}$, with $A_{m+1} \in {\textrm M}_d (A^{(2)})$ and $B_{m+1} \in {\textrm M}_d (\Fil ^{m +2} \mathfrak{R})$. One has $B\varphi(X_m)C -E^hX_m + A_{m+1} \in {\textrm M}_d (\Fil^{h +m +1 } \At )$, so $B\varphi(X_m)C -E^hX_m + A_{m+1}=E^{h+m+1} Y_{m+1}$ with $Y_{m+1} \in {\textrm M}_d (A^{(2)})$. Moreover, we have $Y - \sum_{i = 0}^{m+1} Y_i E^i = B_{m+1} \in {\textrm M}_d (\Fil ^{m +2}\mathfrak{R})$ as required.

At last, when $p=2$. We know we can assume $Y$ is inside ${\textrm M}_d (\widehat{S})$. Then by repeating the above arguments by replacing $(3)$ in Lemma~\ref{lem-auxiliaryrings} with $(5)$, we can also prove our claim.
\end{proof}

\begin{remark}
The above proposition will be used to prove the ``boundedness of descent data at the boundary", similar to the results exhibited in \cite[\S 6.3]{BS2021Fcrystals}. Specifically, with $R=\O_K$, in the proof of Lemma~\ref{lem-equivalenceofthm}, we will establish that any $\varphi$-equivariant descent data of a Kisin module defined over $A^{(2)}_{\max}$ will automatically descend to $A^{(2)}[\frac 1 p]$. Alternatively, if $(A^{(2)}_{\inf},(d))$ is the initial prism over the quasi-regular semiperfectoid ring $\O_{\C_p}\wh{\otimes}\O_{\C_p}$, whose existence is ensured by \cite[Proposition 7.2]{BS19} and admits two natural maps from $(\Ainf,\ker\theta)$, and since we mentioned that $A^{(2)}$ is certain product of $A$ in the absolute prismatic site over $R$ (cf. Proposition~\ref{prop-selfproduct} for the explicit statement), we can construct a universal map $A^{(2)} \to A^{(2)}_{\inf}$ once fix a map $A\to \Ainf$. Moreover, in \cite[Proposition 6.10]{BS2021Fcrystals}, it is demonstrated that any $\varphi$-equivariant descent data of a Breuil--Kisin--Fargues module defined over $A^{(2)}_{\inf}\langle\frac{d}{p} \rangle[\frac{1}{p}]$ will automatically descend to $A^{(2)}_{\inf}[\frac 1 p]$. Additionally, if the Breuil--Kisin--Fargues module comes from the base change of Kisin module, then these two results on descents morphisms are actually equivalent using quasi-syntomic descent, providing that $A\to \Ainf$ is faithfully flat (so $\Ainf$ also applies to Proposition~\ref{prop-cover-final-object}), and the description of $A^{(2)}_{\max}$ from Remark~\ref{rem:compare BS_part1}. However, the proof provided in \cite{BS2021Fcrystals} crucially uses the Beilinson fiber square developed in \cite{AMMN22}. On the other hand, our proof is entirely explicitly algebraic and mainly uses the finite $E$-height condition of Kisin modules. 
\end{remark}

\subsection{The ring \texorpdfstring{$A^{(2)}_{\st}$}{A(2)st}}\label{subsec-Ast} We assume that $R = \O _K$ in the following two subsections. 
For our later use for semi-stable representations, we construct $A^{(2)}_{\st}$ as the following: Define $\varphi$ on $W(k)[\![x, \y]\!]$ by $\varphi(x)= x^p$ and $\varphi (\y ) = (1+\y)^p -1$ and set $w = \frac{\y}{ E}$. Set $ A^{(2)}_{\st}: = W(k)[\![x,\y ]\!]\{w\}_\delta^\wedge$ where $\wedge$ means $(p, E)$-completion. Similarly, we define $A^{(3)}_{\st}=W(k)[\![x,\y, \z ]\!]\{\frac{\y}{E},\frac{\z}{E}\}^\wedge_{\delta}$, with the $\delta$-structure on $W(k)[\![x,\y, \z ]\!]$ given by $\delta(x)=0$, $\varphi(\y)=(\y+1)^p-1$ and $\varphi(\z)=(\z+1)^p-1$. Define $A^{(2)}_{\st,\max}$ to be the $p$-adic completion of $W(k)[\![x, \y]\!][w, \frac E p , \gamma_i (w), i \geq 0].$ It is clear that for any $f \in A^{(2)}_{\st ,\max}$ can be written uniquely $a = \sum\limits_{i= 0}^\infty f_i \gamma_i (w) $ with $f_i \in \Omax$ and $f_i \to 0$ $p$-adically. For any subring $B\subset A^{(2)}_{\st , \max}[\frac 1 p]$, we set $\Fil ^i B : = B \cap E^i A^{(2)}_{\st , \max}[\frac 1 p]$ and $D_w $ the $p$-adic completion of $\O_K [\gamma_i(w), i \geq 0]$. 

It turns out that $A^{(2)}$ and $A^{(2)}_{\st}$ share almost the same properties by replacing $z$ with $w$. 
So we summarize all these properties in the following: 
\begin{proposition}\label{prop-Ast-properties}
\begin{enumerate}
 \item One can extend Frobenius from $A$ to $A^{(2)}_{\st, \max}$.  
    \item There exists an embedding $\iota : A^{(2)}_{\st} \hookrightarrow A^{(2)}_{\st , \max}$ so that $\iota$ commutes with Frobenius. 
    \item $A^{(2)}_{\st} \cap E ^ i A^{(2)}_{\st , \max}[\frac 1 p] = E A^{(2)} _{\st}$. 
    \item $A ^{(2)}_{\st}/E \simeq D_w = A ^{(2)}_{\st , \max}/ \Fil ^1 A^{(2)}_{\st , \max}.$
    \item $\At_{\st}$ is closed in $\At_{\st, \max}$. 
    \item $\At_{\st}$ and $A^{(3)}_{\st}$ are flat over $A$, and in particular they are bounded. 
    \item Proposition \ref{prop-desecnt} holds by replacing $A^{(2)}_{\max}$ and $A^{(2)}$ by 
    $A^{(2)}_{\st} $ and $A^{(2)}_{\st, \max}$ respectively. 
\end{enumerate}

\end{proposition}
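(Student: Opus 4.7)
The plan is to follow the same template used for $A^{(2)}$ and $A^{(2)}_{\max}$, substituting the variable $z=(s_0-t_0)/E$ with $w=\y/E$ throughout. The only genuinely new input is that $\y$ is not a difference $s_0-t_0$ but an independent variable with $\varphi(\y)=(1+\y)^p-1$; what matters is that $\varphi(\y)\in(\y)=(Ew)$, i.e.\ $\varphi(\y)=\y\cdot\mu$ with $\mu=p+\binom{p}{2}\y+\cdots+\y^{p-1}\in W(k)[\![x,\y]\!]$. Using Lemma \ref{lem-Omax} write $\varphi(w)=\varphi(\y)/\varphi(E)=c^{-1}(\y/p)\mu=c^{-1}\sum_{i=1}^{p}a_iw^i$ with $a_i\in W(k)[\![x]\!][E^p/p]\subset\Omax$, which shows $\varphi(w)\in A^{(2)}_{\st,\max}$ and hence $\gamma_n(\varphi(w))\in A^{(2)}_{\st,\max}$ as in the $z$-case. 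Since every $f\in A^{(2)}_{\st,\max}$ has a unique expression $\sum_i f_i\gamma_i(w)$ with $f_i\in\Omax$, $f_i\to 0$ $p$-adically, this extends $\varphi$ to a ring endomorphism of $A^{(2)}_{\st,\max}$, giving (1).

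For (2)--(4), I will replay the Lemma \ref{lem-delta-n}/Lemma \ref{lem:gamma(z)-polynomial-in-E/p} machinery with $\y$ in place of $s_j-t_j$. The key induction inputs are $\delta(Ew)\in A^{\times}\cdot\z_1+(\text{good terms in }\z_0)$ (where here $\z_n=\delta^n(w)$), which holds verbatim because $\delta(Ew)=\delta(\y)=((\y+1)^p-1-\y^p)/p\in A$ and $\delta(E)\in A^{\times}$. The same good-polynomial bookkeeping then gives $\delta^n(Ew)=b_n\z_n+\sum a^{(n)}_i\z_{n-1}^i$ with $a^{(n)}_p\in A^{\times}$ and lower coefficients good, so the analogue of Lemma \ref{lem:gamma(z)-polynomial-in-E/p} expresses every $\gamma_i(w)$ as $f_i(E/p)$ with $f_i\in\wt{A}^{(2)}_{\st}[X]:=W(k)[\![x,\y]\!][\delta^n(w)]_{n\ge 0}[X]$. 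This gives the ring map $\iota:\wt{A}^{(2)}_{\st}\to A^{(2)}_{\st,\max}$, and the computation $\wt{A}^{(2)}_{\st}/E=\O_K[\gamma_n(w),\ n\ge 0]$ (identical to part (1) of Proposition \ref{prop-key-property}) together with the fact that $A^{(2)}_{\st,\max}$ is a domain inside $W(k)[\frac1p][\![E,w]\!]$ gives injectivity of $\iota$. Passing to $(p,E)$-completions (using that the quotient $\wt{A}^{(2)}_{\st}/E$ is $\Z_p$-flat so derived and classical completions agree) yields (2), (3), and (4), exactly as in Proposition \ref{prop-key-property} and Corollary \ref{cor-filtration-shape}.

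Statement (5) follows word for word from Lemma \ref{lem-subring}: if $x\in\wt{A}^{(2)}_{\st}$ satisfies $x=p^ny$ with $y\in A^{(2)}_{\st,\max}$, the same expansion argument in $\Omax$ (using $f_{i}=\sum_{l\ge1}a_{il}E^l/p^l\in\Fil^1\Omax$) extracts $y\in A^{(2)}_{\st}$. For (6), $A^{(2)}_{\st}$ and $A^{(3)}_{\st}$ are produced by the prismatic envelope construction of \cite[Proposition 3.13]{BS19} applied to the maps $(A,(E))\to(W(k)[\![x,\y]\!],(E,\y))$ and $(A,(E))\to(W(k)[\![x,\y,\z]\!],(E,\y,\z))$; the required regularity hypothesis holds because modulo $(p,E)$ one obtains formal power series rings in $\bar\y$ (resp.\ $\bar\y,\bar\z$). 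Hence both are $(p,E)$-completely flat over $A$, and since $A$ is Noetherian, flat and bounded (using \cite[Lemma 3.7]{BS19}).

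The last part (7) is the main technical obstacle, but it reduces to verifying that Lemma \ref{lem-auxiliaryrings} carries over. Define $\wt{S}_{\st}=A^{(2)}_{\st}[\![E^p/p]\!]$ (and $\widehat{S}_{\st}=A^{(2)}_{\st}[\![E^4/2]\!]$ when $p=2$) as subrings of $A^{(2)}_{\st,\max}$. Using the analogue of Lemma \ref{lem:gamma(z)-polynomial-in-E/p} (that each $\gamma_i(w)$ is polynomial in $E/p$ over $\wt{A}^{(2)}_{\st}$), the proof of Lemma \ref{lem-auxiliaryrings}(1) shows $\varphi(A^{(2)}_{\st,\max})\subset\wt{S}_{\st}$; parts (2)--(5) of that lemma use only the filtration structure and the formula $\varphi(E)=E^p+pb$, not the specifics of the variables, so they go through unchanged. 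With these analogues in hand, the descent argument of Proposition \ref{prop-desecnt} (first reducing to entries in $\wt{S}_{\st}$ via the $p^s$-bound, then inductively matching $Y$ with $\sum_iY_iE^i$ modulo $\Fil^{m+1}$) applies verbatim to yield the conclusion for $A^{(2)}_{\st}$ and $A^{(2)}_{\st,\max}$. The hardest bookkeeping step is checking the $\delta$-order good-polynomial estimates for $w$, but the algebraic manipulation is identical because $\y=Ew$ plays precisely the role $s_0-t_0=Ez$ played before.
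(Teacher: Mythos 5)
Your proof is correct and follows essentially the same route as the paper, which likewise reduces everything to two observations: $\delta(\y)$ contains a $\y$-factor (so the $\delta$-order/good-polynomial inductions of Lemmas~\ref{lem-delta-n} and~\ref{lem:gamma(z)-polynomial-in-E/p} carry over verbatim with $Ew=\y$ playing the role of $Ez=y-x$), and modulo $(p,E)$ the rings $W(k)[\![x,\y]\!]$ and $W(k)[\![x,\y,\z]\!]$ become power series rings, so \cite[Proposition~3.13]{BS19} gives $(p,E)$-complete flatness and hence boundedness, from which parts (5)--(7) follow by replaying Lemma~\ref{lem-subring}, Lemma~\ref{lem-auxiliaryrings}, and Proposition~\ref{prop-desecnt} with $w$ in place of $z$. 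One cosmetic slip: the coefficient of $\z_1$ in $\delta(Ew)$ is $b_1=p\delta(E)+E^p$, which is \emph{not} a unit of $A$; what the induction actually needs (and what you correctly invoke in the very next sentence) is $a^{(1)}_p=\delta(E)\in A^{\times}$ together with the decomposition $b_1=p\alpha_1+\beta_1E^p$ with $\alpha_1=\delta(E)\in A^{\times}$.
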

\begin{proof}
All previous proof applies by noting the following difference 
$$\varphi(w)= \varphi ( \frac {\y}{E}) = c ^{-1} \frac {1}{p} \sum_{i =1}^p \binom{p}{i} \y ^i= c ^{-1 }\sum _{i =1}^{p-1} \y ^i\binom{p}{i}/ p  + c^{-1}\frac{E^p w^p}{p}.   $$
Also $\delta (\y) = \sum\limits _{i =1}^{p-1} \y ^i\binom{p}{i}/ p$ always contains $\y$-factor and this is a key input for the analogy of Lemma \ref{lem:gamma(z)-polynomial-in-E/p}. 

For the boundedness of $A^{(3)}_{\st}$, we have 
$$ 
W(k)[\![x,\y, \z]\!]/(p,E)\simeq (\O_K/p)[\![\bar{\y}, \bar{\z}]\!]
$$ so $\{\y, \z\}$ form a $(p,E)$-complete regular sequence, and by \cite[Proposition 3.13]{BS19}, $A^{(3)}_{\st}$ is also $A$-flat, and this implies $A^{(3)}_{\st}$ is bounded by (2) in Lemma 3.7 of $loc.cit.$. 
\end{proof}

Note that $A^{\ho 2}= W(k) [\![x, y]\!]\subset W(k)[\![x,
 \y]\!]$ via $y = x(\y +1)$ or equivalently $\y = \frac y x -1 $. It is clear that this inclusion is a map of $\delta$-rings. By the universal property of prismatic envelope to construct $\At$, the inclusion induces a map of prisms $\alpha: \At\to \At_{\st}$. Since $z= xw$, we easily see that $\At_{\max} \subset \At_{\st, \max}$. So $\At\subset \At _{\st}$ via $\alpha$. We will see that $\At$ (resp. $\At_{\st}$) is the self product of $A$ in category $X_{\Prism}$ (resp. $(X, M_X)_{\Prism_{\text{log}}}$) in 
 \S \ref{subsec-pris-crystal} and \S \ref{sec-logprismandsemistablereps}. Then the existence of $\alpha: \At \to \At_{\st}$ can be explained by the universal property of self-product. See \S\ref{sec-logprismandsemistablereps} for details. 

To simplify our notation, let $B^{(2)}_{\st}$ (resp. $B^{(3)}_{\st}$, $B^{(2)}$, $B ^{(3)}$) be the $p$-adic completion of $ {\At_{\st}} [\frac 1 E]$ (resp. $A^{(3)}_{\st}[\frac 1 E]$, $\At [\frac 1 E]$, $A^{(3)}[\frac 1 E]$). 
\begin{lemma}\label{lem-intersection} \begin{enumerate}
\item $ A^{(i)}_{\st}  \subset B^{(i)}_{\st}\subset B^{(i)}_{\st}[\frac 1 p]$ and $ A^{(i)} \subset B ^{(i)} \subset B ^{(i)}[\frac 1 p]$ for $i = 2, 3$. 
    \item $B^{(2)}_{\st} \cap {\At_{\st}} [\frac 1 p] = \At_{\st}$ and $B ^{(2)} \cap {\At} [\frac 1 p] = \At$. 
\end{enumerate}
\end{lemma}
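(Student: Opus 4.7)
The plan is to deduce both parts from a single key claim: for each $i\in\{2,3\}$ and $n\geq 1$, multiplication by $E$ is injective on $A^{(i)}/p^n$ and on $A^{(i)}_{\st}/p^n$.

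To verify the key claim, I would first note that $E\in W[u]\subset A$ is a monic Eisenstein polynomial, so it reduces to $u^e$ modulo $p$ inside $A/p=(R_0/p)[[u]]$. Since $R_0/p$ is a domain by our standing assumptions, $u$ is a non-zero-divisor on $(R_0/p)[[u]]$, and hence so is $E$. A short induction on $n$, using that $p$ is a non-zero-divisor on $A$, then promotes this to: $E$ is a non-zero-divisor on $A/p^n$ for every $n\geq 1$. Proposition \ref{prop-key-property}(5) and Proposition \ref{prop-Ast-properties}(6) supply $A$-flatness of $A^{(i)}$ and $A^{(i)}_{\st}$, so tensoring the injection $E\cdot\colon A/p^n\hookrightarrow A/p^n$ with these flat $A$-modules yields the key claim. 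In particular, both $A^{(i)}$ and $A^{(i)}_{\st}$ are $E$-torsion free, and each embeds into its own $E$-localization.

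For part (1) I will treat the $\st$-version; the unadorned case is identical. Writing $B^{(i)}_{\st}=\varprojlim_n (A^{(i)}_{\st}/p^n)[\tfrac{1}{E}]$, the key claim makes the map $A^{(i)}_{\st}/p^n\hookrightarrow (A^{(i)}_{\st}/p^n)[\tfrac{1}{E}]$ injective at each finite level. Since $A^{(i)}_{\st}$ is $p$-adically complete, being a bounded prism (Proposition \ref{prop-Ast-properties}(6)), it coincides with $\varprojlim_n A^{(i)}_{\st}/p^n$, and passing to the inverse limit delivers the required injection $A^{(i)}_{\st}\hookrightarrow B^{(i)}_{\st}$. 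The second inclusion $B^{(i)}_{\st}\subset B^{(i)}_{\st}[\tfrac{1}{p}]$ amounts to $p$ being a non-zero-divisor on $B^{(i)}_{\st}$; $A$-flatness gives this on $A^{(i)}_{\st}[\tfrac{1}{E}]$, and the standard inverse-limit argument transfers the property to its $p$-adic completion.

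For part (2), I will take $a\in B^{(2)}_{\st}\cap A^{(2)}_{\st}[\tfrac{1}{p}]$, write $a=p^{-n}a'$ with $a'\in A^{(2)}_{\st}$, and observe that $a'=p^n a$ has zero image in $B^{(2)}_{\st}/p^n=(A^{(2)}_{\st}/p^n)[\tfrac{1}{E}]$. The key claim makes $A^{(2)}_{\st}/p^n\hookrightarrow(A^{(2)}_{\st}/p^n)[\tfrac{1}{E}]$ injective, so $a'$ already vanishes in $A^{(2)}_{\st}/p^n$, i.e.\ $a'\in p^n A^{(2)}_{\st}$, and hence $a\in A^{(2)}_{\st}$; the reverse inclusion is trivial. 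The statement for $B^{(2)}\cap A^{(2)}[\tfrac{1}{p}]$ is completely parallel. The only substantive step is the $E$-regularity of the quotients modulo $p^n$ — this is the one item I would verify carefully — after which everything reduces to the standard commutation of localization with $p$-completion in the presence of a regular element.
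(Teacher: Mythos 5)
Your argument is correct and follows essentially the same route as the paper: the key inputs in both are $A$-flatness of $A^{(i)}$ and $A^{(i)}_{\st}$ (Propositions \ref{prop-key-property}(5) and \ref{prop-Ast-properties}(6)), the fact that these rings are $p$-torsion-free integral domains, and $p$-adic completeness as bounded prisms.

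The packaging differs slightly. The paper establishes part (1) by first noting $B^{(2)}$ has no $p$-torsion via an integral-domain argument, then proving $A^{(2)}\hookrightarrow B^{(2)}$ by reducing to injectivity \emph{modulo $p$} (i.e., that $A^{(2)}/p$ has no $u$-torsion, which is $A$-flatness together with $E\equiv u^e\bmod p$), and implicitly bootstrapping from the mod-$p$ statement to the full statement via $p$-adic separatedness. For part (2), the paper runs a minimal-counterexample argument at $n=1$. You instead prove a single uniform ``key claim" --- $E$-regularity on $A^{(i)}/p^n$ and $A^{(i)}_{\st}/p^n$ for every $n$, obtained by tensoring the regular-element statement on $A/p^n$ with the flat $A$-algebras --- after which both parts drop out of the identification $B^{(i)}/p^n\cong (A^{(i)}/p^n)[1/E]$ and the left-exactness of $\varprojlim$. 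This is a minor structural variant of the same mechanism; the one implicit step you should spell out is the commutation $\widehat{M}/p^n\widehat{M}\cong M/p^nM$ for the $p$-adic completion of a $p$-torsion-free module $M=A^{(i)}[1/E]$, which you invoke in part (2) but only gesture at in your closing sentence. That identity is standard and holds under precisely the $p$-torsion-freeness you have already secured from $A$-flatness, so there is no gap, only a step worth making explicit.
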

\begin{proof} Here we only prove the case $\At$ while the proofs for $\At_{\st}$, $A^{(3)}$ and $A^{(3)}_{\st}$ are almost the same. 

By Proposition \ref{prop-key-property}, $\At$ is a subring of $\At _{\max}\subset K_0  [\![x, z]\!]$. So $\At$ and hence $\At [\frac 1 E]$
is an integral domain. Then $B^{(2)} $ has no $p$-torsion: Assume that $x \in B^{(2)}$ so that $p x = 0 $. Suppose that $x_n \in \At [\frac 1 E]$ so that $x \equiv x _n \mod p ^n$. Then $p x_n \equiv 0 \mod p ^n \At [\frac 1 E]$. Since $\At [\frac 1 E]$ is domain,  $x_{n}\equiv 0 \mod p ^{n -1}$. Hence  $x = 0 $. As $B^{(2)}$ has no $p$-torsion, we see that $B^{(2)}\subset B^{(2)}[\frac 1 p]$. 
To see the natural map $\At \to B^{(2)}$ is injective, it suffices to show that $\At / p \At $ injects to $ \At / p\At [\frac 1 u]= B ^{(2)}/ pB^{(2)}$. Clearly, this is equivalent to that $\At / p\At$ has no $u$-torsion. Note that $\At$ is obtained by taking prismatic envelope of $A^{\ho 2}= W(k)[\![x, z]\!]$ for the ideal $ I = (z)$. As mentioned before, we can apply \cite[Prop. 3.13]{BS19} to our situation. So $\At$ is flat over $A$ and hence $\At / p \At$ has no $u$-torsion as desired.

Now we can regard $B^{(2)}$ and $\At [\frac 1 p]$ as  subrings of $B^{(2)}[\frac 1 p]$. In particular, $ B^{(2)} \cap \At [\frac 1 p ]$ makes sense and contains $\At$. For any $x\in B^{(2)} \cap \At [\frac 1 p ]$, if $x\not \in \At$ but $p x \in \At$. Then the image of $y = px  $ inside $\At/ p \At$ is nonzero but the image of $y$ in $B ^{(2)}/ p B ^{(2)}$ is zero. This contradicts to that $\At / p \At $ injects to $B^{(2)}/ p B^{(2)}$. So such $x$ can not exist and we have $B ^{(2)} \cap {\At} [\frac 1 p] = \At$ as required. \end{proof}

By \cite[Lem. 3.9]{BS19}, any prism $(B, J)$ admits its perfection $(B,J)_{\perf}=(B_{\perf}, JB _{\perf})$. 
\begin{remark}
In \cite{BS19}, the underlying $\delta$-ring of $(B,J)_{\perf}$ is denoted by $(B_{\infty},JB_{\infty})$, and $B_{\perf}$ is defined as the direct perfection of $B$ in the category of $\delta$-rings. In this paper, we write $B_{\perf}$ as the $(p,J)$-adic completion of $\mathrm{colim}_{\varphi} B$, which also coincides with the derived $(p, I)$-completion of $\mathrm{colim}_{\varphi} B$ (cf. Lemma 3.9 of $loc.cit.$).
\end{remark}

\begin{lemma}\label{lem-perfisflat}
We have $(\At)_{\perf}$ and $(\At_{\st})_{\perf}$ are $A$-flat.
\end{lemma}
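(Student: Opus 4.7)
The plan is to establish first that $\colim_\varphi \At$ and $\colim_\varphi \At_{\st}$ are $A$-flat, and then to pass to the $(p,E)$-adic completion via the same Stacks Tag~0912 argument used in Proposition~\ref{prop-key-property}(5).

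First, I would show that the Frobenius $\varphi_A : A \to A$ is flat. Since $A = R_0[\![u]\!]$, the map $\varphi_A$ factors as $A \xrightarrow{\varphi_{R_0}} A \xrightarrow{u \mapsto u^p} A$, where the first applies Frobenius on $R_0$ and fixes $u$, and the second fixes $R_0$ and sends $u \mapsto u^p$. The second map is finite free of rank $p$ with basis $\{1, u, \ldots, u^{p-1}\}$. For the first, the Frobenius on $\breve R_0 = W\langle t_1, \ldots, t_m\rangle$ is flat (mod $p$ it becomes the Frobenius on a polynomial algebra over the perfect field $k$, and one lifts this using $p$-completeness), and since $R_0$ is $\breve R_0$-formally \'etale, $\varphi_{R_0}$ is also flat. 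In the main example $R_0 = W(k)$, $\varphi_{R_0}$ is even an isomorphism.

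Next, in the colimit system $\At \xrightarrow{\varphi} \At \xrightarrow{\varphi} \cdots$, the transition map $\varphi$ is $\varphi_A$-semilinear, so the $A$-module structure on the $n$-th term (compatible with the transitions being $A$-linear) is the composition $A \xrightarrow{\varphi_A^n} A \to \At$. Each such composition is flat: $\varphi_A^n$ is flat by the previous step, and $A \to \At$ is flat by Proposition~\ref{prop-key-property}(5) (resp.\ Proposition~\ref{prop-Ast-properties}(6) for $\At_{\st}$). Since filtered colimits of flat modules are flat, both $\colim_\varphi \At$ and $\colim_\varphi \At_{\st}$ are $A$-flat.

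Finally, $(\At)_{\perf}$ is by construction the $(p,E)$-adic completion of the $A$-flat module $\colim_\varphi \At$, so it is $(p,E)$-completely flat over $A$. Since $A$ is Noetherian, Stacks Tag~0912 (exactly as invoked in Proposition~\ref{prop-key-property}(5)) upgrades this to honest $A$-flatness. The same argument applies verbatim to $(\At_{\st})_{\perf}$. The main technical point is the flatness of $\varphi_{R_0}$: this is the only place where structural assumptions on $R_0$ (beyond the already-established $A$-flatness of $\At$ and $\At_{\st}$) enter, but the $\breve R_0$-formally \'etale hypothesis is tailor-made for this.
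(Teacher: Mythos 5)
Your proof is essentially identical to the paper's: flatness of $\varphi$ on $A$, flatness of $i_1$, the identity $\varphi^n \circ i_1 = i_1 \circ \varphi^n$ (so each term of the colimit system is a flat $A$-module), filtered colimit, and then Stacks Tag 0912 to upgrade $(p,E)$-complete flatness of the completion to flatness. The extra care you devote to $\varphi_{R_0}$ for a general $R_0$ is superfluous here, since this lemma sits in \S2.3--2.4 where the paper has already specialized to $R = \O_K$ (so $R_0 = W(k)$ and $\varphi_{R_0}$ is an isomorphism), as you yourself observe at the end of that paragraph.
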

\begin{proof} We have seen that $\At$ is $A$-flat via $i_1$. And it is easy to see $\varphi$ on $A$ is flat. Since $i_1$ is a $\delta$-map, so we have $\varphi^n\circ i_1 =i_1 \circ \varphi^n$ which is flat. So $\mathrm{colim}_\varphi \At$ is flat over $A$. In particular, we will have $A_{\perf}$ is $(p , E)$-complete flat over $A$. Now since $A$ is Noetherian, by \cite[Tag 0912]{stacks-project}, we have $(\At)_{\perf}$ is $A$-flat. The proof for $(\At_{\st})_{\perf}$ is the same. 
\end{proof}

\subsection{Embedding \texorpdfstring{$A^{(2)}$}{A(2)} and \texorpdfstring{$A^{(2)}_{\st}$}{A(2)st} to \texorpdfstring{$\Ainf$}{Ainf}}\label{subsec-embedding}
Let $\Ainf=W(\O_{\C_p}^\flat)$, then there is a surjection $\theta: \Ainf \to \O_{\C_p}$ and $\Ker\theta=(E)$. And let $\BdR^+$ be the $\Ker\theta$-adic completion of $\Ainf[\frac{1}{p}]$.

\begin{definition}
Let $\A_{\max}$ be the $p$-adic completion of the $\Ainf$-subalgebra of $\BdR^+$ generated by $E/p$. 
\end{definition}
It can be easily seen that $\varphi(E/p):=\varphi(E)/p\in A_{\cris}\subset \A_{\max}$ is well-defined and it extends the Frobenius structure on $\Ainf$ to an endomorphism on $\Amax$.

Let $\{\varpi_n\}_{n\geq 0}$ be a compatible system of $p^n$-th roots of $\varpi_0=\varpi$ and $\{\zeta_n\}_{n\geq 0}$ be a compatible system of $p^n$-th roots of 1. Write $\varpi^\flat  : = (\varpi_n)_{n\geq 0}$ and $\zeta^\flat : = (\zeta_n)_{n\geq 0}$ as elements in $\O_{\C_p}^\flat$ and we let $u=[\varpi^\flat ]$, $\epsilon=[\zeta^\flat]$, $v=\epsilon u$ and $\mu=\epsilon-1$ be elements inside $\Ainf$. We can regard $W(k)[\![x,y]\!]$ as a subring of  $\Ainf$ via $x \mapsto u$ and $y\mapsto v$. Consider $z' = \frac{u-v}{E}\in \Ainf [\frac 1 E]$. Since $u -v = u (\epsilon -1)$ is clearly inside $\Ker (\theta )$ and $ \Ker (\theta) = E \Ainf$, we conclude that $ z' \in \Ainf$. Hence we have a natural map (of $\delta$-rings) $\iota_A : \wt\At \to \Ainf$ via $z\mapsto z'$,  which naturally extends to $\iota _A : A^{(2)}\to \Ainf$ because $(p, E)$-topology of $A^{(2)}$ matches with the weak topology of $\Ainf$. Similarly, we have map of $\delta$-rings $\iota_{\st}: A^{(2)}_{\st} \to A_{\inf}$ via $ x \mapsto u$ and $\y \mapsto \epsilon-1$ and $w\mapsto \frac{\epsilon-1}{E}$. 
\begin{remark}\label{rem-embedding-depend} Once we know that $\At$ is self-product of $A$ inside $X_\Prism$ with $X= \Spf (\O_K)$ as explained in \S \ref{subsec-pris-crystal}. The map $\iota_A$ can be constructed as follows: First we fix an embedding $A\to \Ainf$ by sending $x \mapsto u =  [\varpi^\flat]$. Then $A\to \Ainf$ by $x \to v= \epsilon u$ is another map of prisms. By universal property of $\At$, these two maps define a map $\iota_A : \At \to \Ainf$. Clearly, the map $\iota_A : \At \to \Ainf$ depends on choice of ${\varpi}^\flat = (\varpi_n)_{n \geq 0}$ and ${\zeta}^\flat = (\zeta_n)_{n \geq 0}$. Also $\iota_A$ is a special case of $\iota^{(2)}_\gamma$ defined by \eqref{equ-diagram-prisms} in \S\ref{subsec-pris-crystal-proof}. Indeed if $\gamma ([w^\flat])= [\zeta^\flat][w ^\flat]$ then $\iota_A = \iota^{(2)}_{\gamma}$. Similarly comment also applies to $\iota_{\st}$. 
\end{remark}

\begin{proposition}
There is a unique embedding 
$
\begin{tikzcd}
A_{\max}^{(2)} \arrow[r, hookrightarrow] & \A_{\max}
\end{tikzcd}
$ such that
$$
\begin{tikzcd}
W(k)[\![x,y]\!] \arrow[r, hookrightarrow]\arrow[d, hookrightarrow] & \Ainf \arrow[d, hookrightarrow]  & \\
A_{\max}^{(2)} \arrow[r, hookrightarrow] & \A_{\max}  \arrow[r, hookrightarrow] & \BdR^+
\end{tikzcd}
$$
commutes. Furthermore, $\Fil ^ i \BdR^+ \cap A^{(2)}_{\max}= \Fil ^i A^{(2)}_{\max}$. The same result holds when $A^{(2)}_{\max}$ is replaced by $A^{(2)}_{\st , \max}$.  
\end{proposition}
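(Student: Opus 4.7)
The plan is to extend the embedding $\iota_A : A^{(2)} \hookrightarrow \Ainf$ from \S\ref{subsec-embedding} to a map $A^{(2)}_{\max} \to \A_{\max}$. First, $\iota_A$ extends by sending $p^{-1}E \mapsto E/p \in \A_{\max}$. For the remaining generators $\gamma_i(z_j)$, their images in $\BdR^+$ would be $(z'_j)^i/i!$, and the crucial point is that these images in fact lie in $\A_{\max}$. This will follow from Lemma~\ref{lem:gamma(z)-polynomial-in-E/p}: the identity $\gamma_i(z_j) = f_{ij}(E/p)$ in $\wt{\At}[\tfrac{1}{p}]$, with $f_{ij}(X) \in \wt{\At}[X]$, pushes forward under $\iota_A$ to $(z'_j)^i/i! = \bar f_{ij}(E/p)$, where the right-hand side is a polynomial in $E/p$ with coefficients in $\Ainf$, hence lies in $\Ainf[E/p] \subset \A_{\max}$. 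Taking $p$-adic completions then produces the desired ring map $A^{(2)}_{\max} \to \A_{\max}$; the required diagram commutes by construction, and uniqueness is automatic from the injectivity of $\A_{\max} \hookrightarrow \BdR^+$.

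For the injectivity of $A^{(2)}_{\max} \to \A_{\max}$, I would show injectivity of the composite $A^{(2)}_{\max} \to \BdR^+$. Any element of $A^{(2)}_{\max}$ has the unique presentation $\sum_I b_I \prod_j \gamma_{i_j}(z_j)$ with $b_I \in \Omax$ and $b_I \to 0$ $p$-adically. By essentially the argument of Proposition~\ref{prop-key-property}(2), one identifies $A^{(2)}_{\max}/\Fil^1 A^{(2)}_{\max} \simeq D_z$, and the induced map $D_z \to \C_p = \BdR^+/\Fil^1 \BdR^+$ sends $\gamma_i(z_j) \mapsto \theta(z'_j)^i/i!$. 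I would verify that this quotient map is injective (using the transcendence of $\theta(z'_j)$ over $K$) and lift to injectivity on all of $A^{(2)}_{\max}$ via its $E$-adic filtration, exploiting the fact that $A^{(2)}_{\max}$ sits inside $R_0[\tfrac{1}{p}][\![E, z_j]\!]$ with the variables $E$ and $z_j$ mapping to a uniformizer of $\BdR^+$ and elements of $\Ainf$, respectively.

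For the filtration identity $\Fil^i \BdR^+ \cap A^{(2)}_{\max} = \Fil^i A^{(2)}_{\max}$, the containment $\supseteq$ is automatic from the definitions. For the reverse, given $x \in A^{(2)}_{\max}$ with $x = E^i y$ and $y \in \BdR^+$, I would induct on $i$: the injectivity $A^{(2)}_{\max}/\Fil^1 \hookrightarrow \C_p$ from the previous step forces $x \in \Fil^1 A^{(2)}_{\max} = A^{(2)}_{\max} \cap E \cdot A^{(2)}_{\max}[\tfrac{1}{p}]$, and iterating this argument yields $x \in \Fil^i A^{(2)}_{\max}$. The analogous proposition for $A^{(2)}_{\st, \max}$ is proved by the same scheme, substituting Proposition~\ref{prop-Ast-properties} for Proposition~\ref{prop-key-property} and using $\iota_{\st}$ in place of $\iota_A$. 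I expect the principal obstacle to lie in the injectivity at the graded level, which encodes a transcendence-type statement about the period elements $\theta(z'_j)$ (respectively the image of $w$ in the semi-stable case) over $K$ that requires careful analysis of their explicit shape $u\mu/E$ (resp.\ $\mu/E$).
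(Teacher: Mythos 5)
Your argument is correct, and for the existence part it takes a genuinely different route from the paper. The paper proves $\gamma_i(w) \in \Amax$ by working with the multiplicative Banach norm $\lvert\cdot\rvert_{p^{-1}}$ on $\Amax[\tfrac 1p]$: it uses $w = a\varphi^{-1}(\mu)$ with $a \in \Ainf^\times$, computes $\overline w^{p-1} = c\bar u^e$ in $\O_{\C_p}^\flat$, and bounds $\lvert\gamma_i(w)^{p-1}\rvert_{p^{-1}} \le 1$ via $v_p(i!) < \tfrac{i}{p-1}$. You instead push the polynomial identities $\gamma_i(z_j) = f_{ij}(E/p)$ from Lemma~\ref{lem:gamma(z)-polynomial-in-E/p} forward along the $\delta$-ring map $\iota_A : \wt\At \to \Ainf$ to land directly in $\Ainf[E/p] \subset \Amax$; this is shorter, re-uses an already-established lemma, and gives the more precise containment $\iota_A(\gamma_i(z_j)) \in \Ainf[E/p]$. (It also works verbatim in the $\At_{\st,\max}$ case since the analogue of that lemma for $w$ holds by Proposition~\ref{prop-Ast-properties}(1).) Two smaller points of divergence: the paper only treats $\At_{\st,\max}$ and reduces the crystalline case via $z = uw$ in $\Ainf$, whereas you handle both directly; and for uniqueness, one should note not only that $\Amax \hookrightarrow \BdR^+$ is injective, but that the images of $E/p$, $z$ and $\gamma_i(z)$ are already forced inside $\BdR^+$ (because $E$ is a non-zerodivisor and $p$, $i!$ are units there), so the composite $\At_{\max} \to \BdR^+$ is pinned down.

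For the filtration identity and the injectivity at the graded level your outline matches the paper's, but you have left the key technical step implicit. Since the base ring here is $R=\O_K$ (declared at the start of \S2.3), there is a single $z$ and a single $w$, so the graded statement is: $\theta : D_w \to \C_p$ (resp.\ $D_z \to \C_p$) is injective. The paper proves this by combining the transcendence of $w_0 = \theta(\tfrac{\epsilon-1}{E})$ over $K$ (Lemma~\ref{lem-transcendental}) with a Weierstrass-preparation argument: after rescaling by $p_1 = \sqrt[p-1]{p}$ (and using $v_p(w_0)\ge\tfrac1{p-1}$, $v_p(i!)\le\tfrac i{p-1}$), a nonzero $f \in D_w$ with $f(w_0)=0$ would produce a nonzero element of the Tate algebra $\O_K\langle w\rangle$ vanishing at $w_0/p_1$, forcing $w_0$ to be algebraic. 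That rescaling/Weierstrass step is the nontrivial point your sketch elides; the transcendence of $\theta(z')=\varpi w_0$ is indeed equivalent to that of $w_0$, so once you fill in the Weierstrass argument the rest of your plan (graded injectivity, then the $E$-adic filtration argument) goes through as in the paper.
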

\begin{proof} In the following, we only treat the case of $A^{(2)}_{\st , \max}$ while the proof of $A^{(2)}_{ \max}$ is the same by noting that $z= u w$ in $A_{\inf}$. 

The uniqueness is apparent. To show the existence of the embedding, it is enough to show $\gamma_i(w)\in\Amax$ for all $i\geq 1$. 

It is a well-known fact that $\Amax$ is isomorphic to the $p$-adic completion of $\Ainf[\frac{u^e}{p}]$, and $\Amax[1/p]$ is a Banach $\Q_p$-algebra, which is the completion of $\Ainf[1/p]$ under the norm $\lvert \cdot \rvert_{p^{-1}}$ such that
$$
\lvert x \rvert_{p^{-1}} = \sup_{n} \{p^{-n}\lvert x_n \rvert_{\O_{C}^\flat}\}
$$
where $x=\sum_{n\gg 0} [x_n]p^n \in \Ainf[1/p]$. And we have for $x\in \Amax[1/p]$, $x\in \Amax$ if and only if $\lvert x \rvert_{p^{-1}} \leq 1$. Moreover $\lvert \cdot \rvert_{p^{-1}}$ is multiplicative. So now it is enough to show for $x=\gamma_i(w)$ considered as an element inside $ \Amax[1/p]$, we have $\lvert x^{p-1} \rvert_{p^{-1}}\leq 1$. To show this, we have by \cite[Proposition 3.17]{BMS1}, $\xi:=\mu/\varphi^{-1}(\mu)$ is a generator of $\Ker\theta$ with $\mu = \epsilon-1$. In particular, $w=\mu/E = a  \varphi^{-1} (\mu) \in \Ainf$ with $a\in \Ainf^\times$. 
And we can check $\overline{w}^{p-1} = c \overline{u}^e$ inside $\O_C^\flat=\Ainf/p\Ainf$, with $c$ a unit. So $w^{p-1}=au^e+bp$ with $a,b\in \Ainf$, and
$$
x^{p-1}=\frac{(au^e+bp)^i}{(i!)^{p-1}}.
$$
Using the fact $v_p(i!)< \frac{i}{p-1}$, one can show each term in the binomial expansion on the right-hand side of the equation has $\lvert \cdot \rvert_{p^{-1}}$-norm less or equal to $1$, so in particular, $\lvert x^{p-1} \rvert_{p^{-1}}\leq 1$.

To prove that $\Fil ^i \BdR^+ \cap  A ^{(2)}_{\st, \max} = \Fil ^i A ^{(2)}_{\st , \max}$, it suffices to show $E \BdR ^+ \cap A ^{(2)}_{\st, \max}[\frac 1 p] = E A ^{(2)}_{\st , \max}[\frac 1 p]$. By  Proposition  \ref{prop-key-property},  we reduces to prove that the map $$\theta : D_w= A ^{(2)}_{\st , \max}[\frac 1 p ]/E \to \BdR^+ / E= \C_p$$ is injective. Let $f(w) = \sum_{i \geq 0} a_i \gamma_i (w)\in \Ker \theta$ with $a_i \in \O_K$ limits to $0$ $p$-adically. Then $f(w_0) = 0$ with $w_0: = \theta (w)= \theta (\frac{\epsilon-1}{E}) \in \C_p$. Note  $v_p (w_0)\geq \frac{1}{p-1}$ since  $ \frac{\epsilon-1}{\varphi^{-1}(\epsilon) -1}$ is another generator of kernel $\theta: A_{\inf}  \to \O_{\C_p}$, and we have  
$v_p (w_0) = v_p (\theta (\varphi^{-1} (\epsilon)-1))= \frac{1}{p-1}$. 
Since we aim to show that $f= 0$, without loss of generality, we can assume that $K$ contains $p_1= {p}^\frac{1}{p-1}$. Note that 
$v_p (i !)\leq \frac{1}{p-1}$, we conclude that $\frac{w_0}{p_1}$ is a root of $f(p_1 w)$ which is in $\O_K\langle w\rangle$. By Weierstrass preparation theorem, $w_0$ is algebraic over $K$ unless $f=0$.  By lemma below,  $w_0: = \theta (w) \in \C_p$ is transcendental over $K$ and hence $f= 0$. 
\end{proof}
\begin{lemma}\label{lem-transcendental}
$w_0 = \theta (\frac{\epsilon-1}{E})$ is transcendental over $K$. 
\end{lemma}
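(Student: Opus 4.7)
My plan is to exploit the Galois action of $G_{K_\infty}$ on $\Ainf$ (and hence on $\Cp$ through $\theta$) to establish the identity $\sigma(w_0) = \chi_{\mathrm{cyc}}(\sigma)\, w_0$ for every $\sigma \in G_{K_\infty}$, where $\chi_{\mathrm{cyc}}$ is the cyclotomic character. If $w_0$ were algebraic over $K$, this relation would force $\chi_{\mathrm{cyc}}$ to be trivial on $G_{L\cdot K_\infty}$ for some finite extension $L/K$, which is ruled out by the classical fact that the Kummer tower $K_\infty$ and the cyclotomic tower $K(\mu_{p^\infty})$ have finite intersection over $K$.

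First I would verify $w_0 \neq 0$. Writing $\xi = \mu/\varphi^{-1}(\mu)$, we have $\theta(\varphi^{-1}(\mu)) = \zeta_p - 1 \neq 0$, so $\mu = \xi\cdot\varphi^{-1}(\mu) \notin (\Ker\theta)^2$. Hence the unique $w \in \Ainf$ with $\mu = Ew$ satisfies $\theta(w) = w_0 \neq 0$. Next I would compute $\sigma(w)$ for $\sigma \in G_{K_\infty}$. Since $\sigma$ fixes every $\varpi_n$, it fixes $u = [\varpi^\flat]$ and hence $\sigma(E) = E$. Setting $a = \chi_{\mathrm{cyc}}(\sigma) \in \Z_p$, one has $\sigma(\epsilon) = \epsilon^a$, and therefore
\[
\sigma(\mu) = (1+\mu)^a - 1 = \mu \cdot \Bigl(a + \binom{a}{2}\mu + \binom{a}{3}\mu^2 + \cdots\Bigr),
\]
a series that converges in $\Ainf$ because $\mu \in \Ker\theta = E\Ainf$. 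Dividing by $E$ gives $\sigma(w) = \bigl(a + \binom{a}{2}\mu + \cdots\bigr)\cdot w$, and applying $\theta$ yields $\sigma(w_0) = a\cdot w_0 = \chi_{\mathrm{cyc}}(\sigma)\,w_0$.

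To conclude transcendence, suppose for contradiction that $w_0 \in L$ for some finite extension $L/K$ inside $\overline{K}$. Then $G_L$ fixes $w_0$, and combining with the computation above gives $\chi_{\mathrm{cyc}}(\sigma) = 1$ for every $\sigma \in G_L \cap G_{K_\infty} = G_{L\cdot K_\infty}$ (using $w_0 \neq 0$), so $G_{L\cdot K_\infty} \subset G_{K(\mu_{p^\infty})}$. By Galois correspondence, $K(\mu_{p^\infty}) \subset L\cdot K_\infty$, and hence $K_\infty\cdot K(\mu_{p^\infty}) \subset L\cdot K_\infty$. But $L\cdot K_\infty/K_\infty$ is finite, whereas $K_\infty\cdot K(\mu_{p^\infty})/K_\infty$ is infinite because $K_\infty \cap K(\mu_{p^\infty})$ is a finite extension of $K$ (a standard input in the theory of $(\varphi,\hat G)$-modules, implicit already in the discussion of the field $L$ in the introduction). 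This contradiction finishes the argument.

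The only substantive input beyond direct computation with Teichmüller lifts is the linear disjointness up to a finite intersection of $K_\infty$ and $K(\mu_{p^\infty})$ over $K$; every other step is a routine manipulation inside $\Ainf$ and $\Cp$, so the main point is to organize the Galois computation cleanly so that the cyclotomic twist falls out directly from $\sigma(\epsilon) = \epsilon^{\chi_{\mathrm{cyc}}(\sigma)}$.
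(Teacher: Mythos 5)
Your proof is correct and follows essentially the same approach as the paper's: both compute the action of $G_{K_\infty}$ (you work with $\sigma(w_0) = \chi_{\mathrm{cyc}}(\sigma)\,w_0$, the paper with $g(t)-t$ modulo $\Fil^2\BdR^+$), and both conclude by using that the Kummer tower and the cyclotomic tower have finite intersection over $K$. Your direct binomial computation of $\sigma(w_0)$ is perhaps a touch cleaner than the paper's route through $t - (\epsilon-1) \in \Fil^2\BdR^+$, but it is the same argument in substance.
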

\begin{proof}
If $w_0$ is contained in an algebraic extension $L$ over $K$, we define $L_{0,\infty}=\bigcup_n L(\varpi_n)$. For $g\in G_{L_{0,\infty}}$, we will have 
$$
\theta(g(\frac{\epsilon-1}{E}))=g(w_0)=w_0=\theta(\frac{\epsilon-1}{E}).
$$
Since $G_{L_{0,\infty}}$ fix $E$,  $\theta(\frac{g(\epsilon-1)-(\epsilon-1)}{E})=0$. This implies $g(\epsilon-1)-(\epsilon-1)\in \Fil^2\BdR^+$. Recall for $t=\log \epsilon$, $t-(\epsilon-1)\in \Fil^2\BdR^+$, so we have $g(t)-t \in \Fil^2\BdR^+$. But this can't be true. Since $L_{0,\infty}$ can only contain finitely many $p^n$-th roots of $1$, for $g\in G_{L_{0,\infty}}$, $g(t)=c(g)t$ satisfying $c(g) \in \Q_p$ and $c(g)\neq 1$. This implies $g(t)-t = (c(g)-1)t \in \Fil^1\BdR^+ \setminus \Fil^2\BdR^+$.
\end{proof}

\begin{corollary}\label{cor-inj}
The natural maps $\iota _ A : A ^{(2)} \to A_{\inf}$ and $\iota_{\st} : A^{(2)}_{\st} \to A_{\inf}$ are injective. 
\end{corollary}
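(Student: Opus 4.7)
\medskip

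\noindent\textbf{Proof proposal.} The plan is to factor $\iota_A$ through the chain of embeddings that has just been established, and then read off injectivity. Concretely, I want to show that the composition
\[
A^{(2)} \xrightarrow{\iota_A} A_{\inf} \hookrightarrow \BdR^+
\]
coincides with the composition
\[
A^{(2)} \xrightarrow{\iota} A^{(2)}_{\max} \hookrightarrow \A_{\max} \hookrightarrow \BdR^+,
\]
where the first arrow in the second composition is the embedding from Proposition~\ref{prop-key-property}(3), the second is the embedding constructed in the proposition just proved, and the third is the standard inclusion. Since all three of these arrows are injections, injectivity of $\iota_A$ will follow at once.

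To check the equality of the two compositions, I would argue as follows. Both maps $A^{(2)} \to \BdR^+$ extend the fixed ring map $A^{\ho 2} = W(k)[\![x,y]\!] \to \BdR^+$ sending $x\mapsto u = [\varpi^\flat]$ and $y\mapsto v = \epsilon u$, by the commutative square in the proposition together with the definition of $\iota_A$ in \S\ref{subsec-embedding}. Inside $\BdR^+$, the element $v - u = u(\epsilon-1)$ lies in $E\,\BdR^+$ and $\BdR^+$ is $E$-torsion free, so the equation $Ez = y - x$ in $A^{(2)}$ forces $z$ to map to $(v-u)/E$ under either composition. The analogous identity holds for the $z_j$'s. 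Since the two maps agree on the generators $A^{\ho 2}$ and on $z, z_1,\dots,z_m$ of the $\delta$-algebra $A^{(2)}$, and both respect the $(p,E)$-topology and the ring structure, they coincide. (Alternatively, one may invoke the universal property of the prismatic envelope as in Remark~\ref{rem-embedding-depend}.)

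For $\iota_{\st}$ the argument is identical: I replace $A^{(2)}$ by $A^{(2)}_{\st}$, $A^{(2)}_{\max}$ by $A^{(2)}_{\st,\max}$, and use Proposition~\ref{prop-Ast-properties}(2) together with the second half of the preceding proposition (which gives $A^{(2)}_{\st,\max} \hookrightarrow \A_{\max}$). The generator $w = \y/E$ is forced to map to $(\epsilon-1)/E \in A_{\inf}$ inside the $E$-torsion free ring $\BdR^+$, and the two compositions $A^{(2)}_{\st} \to \BdR^+$ agree on $W(k)[\![x,\y]\!]$ and on $w$, hence everywhere.

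I do not expect a serious obstacle here: the real work has already been done in constructing the embeddings $A^{(2)} \hookrightarrow A^{(2)}_{\max}$ and $A^{(2)}_{\max} \hookrightarrow \A_{\max}$ (and their $A^{(2)}_{\st}$-analogues). The only point that requires minor care is verifying that the two candidate maps $A^{(2)} \to \BdR^+$ really are equal rather than merely agreeing on a generating subring; this is handled by the observation above that the value at $z_j$ is uniquely determined by the relation $Ez_j = s_j - t_j$ together with $E$-torsion freeness of $\BdR^+$.
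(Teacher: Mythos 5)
Your overall strategy — factoring $\iota_A$ (resp.\ $\iota_{\st}$) through the chain of injections $A^{(2)}\hookrightarrow A^{(2)}_{\max}\hookrightarrow \A_{\max}\hookrightarrow\BdR^+$ (resp.\ the $\At_{\st}$-analogue) and appealing to the two preceding propositions — is exactly what the paper intends, and the reduction is sound. There is, however, a small hole in the step where you conclude that the two candidate maps $A^{(2)}\to\BdR^+$ coincide. You argue that they agree on $A^{\ho 2}$ and on $z,z_1,\dots,z_m$ and assert that, since these generate the $\delta$-algebra $A^{(2)}$, the two maps must coincide; but agreement on $\delta$-algebra generators only forces equality for $\delta$-\emph{ring} morphisms, and neither candidate is one, since $\BdR^+$ carries no $\delta$-structure. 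As a topological \emph{ring}, $A^{(2)}$ is $(p,E)$-adically generated by $A^{\ho 2}$ together with \emph{all} of the $\delta^n(z_j)$, $n\geq 0$, so you must also pin down the images of the higher $\delta^n(z_j)$. Your parenthetical alternative via the universal property of the prismatic envelope runs into the same issue: $\A_{\max}$ is not a $\delta$-ring, and it is not immediate without further argument that the second composition takes values in $\Ainf$.

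The patch is short. Both compositions land in $\A_{\max}$ and are $\varphi$-equivariant (this is built into the constructions of $\iota_A$, of $\iota\colon A^{(2)}\hookrightarrow A^{(2)}_{\max}$, and of $A^{(2)}_{\max}\hookrightarrow\A_{\max}$). Using the identity $p\,\delta^{n+1}(z_j)=\varphi(\delta^n(z_j))-\delta^n(z_j)^p$ in $A^{(2)}$ together with the fact that $p$ is a nonzero-divisor in $\BdR^+$, one shows by induction on $n$ that the two maps agree on every $\delta^n(z_j)$, hence on the dense subring $\wt\At$, hence on $A^{(2)}$ by continuity. With this addition your proof is complete, and the argument for $\iota_{\st}$ goes through verbatim with $w$ in place of the $z_j$.
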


To summarize, we have the following commutative diagram of rings inside $\BdR^+$:
$$
\begin{tikzcd}
\At  \arrow[d, hookrightarrow] \arrow[r, hookrightarrow] & \At_{\st}\arrow[r, hookrightarrow] \arrow[d, hookrightarrow]  & \Ainf \arrow[d, hookrightarrow] \\
\At_{\max} \arrow[r, hookrightarrow] & \At_{\st, \max} \arrow[r, hookrightarrow] & \Amax.  
\end{tikzcd}
$$

\section{Application to semi-stable Galois representations}\label{sec-prismaticphiGhatmodules}
In this section, we assume that $R= \O_K$. We explain how to use the period ring $A^{(2)}$ and $A^{(2)}_{\st}$ to understand lattices in crystalline and semi-stable representations. Roughly speaking, we are going to use $A^{(2)}$ and $A^{(2)}_{\st}$ to replace $\wh{\mathcal R}$ in the theory of $(\varphi , \hat G)$-modules developed in \cite{liu-notelattice}. 

Let $K _\infty =\bigcup_{n = 1}^\infty K (\varpi _n )$, $G_\infty: = {\textrm Gal } (\overline  K / K_\infty)$ and $G_K: = {\textrm Gal } (\overline K  / K)$.  Recall that $A= \mathfrak S = W(k)[\![u]\!]$. Let $S $  be the $p$-adic completion of $ W(k) [\![u , \frac{E^i}{i !}, i \geq 1]\!]$, which is the PD envelope of $W(k)[u]$ for the ideal $(E)$. It is clear that $S\subset \Omax$. We define $\varphi$ and $\Fil ^i$ on $S$ induced from those on $\Omax$, in particular, $\Fil ^i S = S \cap E ^i \Omax[\frac 1 p]$. Note that $A$ embeds to $\Ainf$ via $u \mapsto [\varpi^\flat ]$ that is not stable under $G_K$-action but only on $G_\infty$-action. For any $g \in G_K$, define $\ueps (g)= \frac{g(u)}{u}$. It is clear that $\ueps (g) = \epsilon ^{a(g)}$ with $a(g) \in \Z_p$. We define \emph{two}  differential operators $N_S$ and $\nabla_S$ on $S$ by $N_S(f) = \frac{d f}{du}u$ and $\nabla_S (f) = \frac{ df }{du}$. We need $\nabla_S$ to treat crystalline representations.  

\subsection{Kisin module attached to semi-stable representation}\label{subsec-Kisin-st} Fix $h \geq 0$, 
a \emph{Kisin module of height $h$} is a finite free $A$-module $\MM $ with a semi-linear endomorphism $\varphi_{\MM}: \MM \to \MM$ so that $\coker (1 \otimes \varphi_\MM)$ is killed by $E^h$, where $1 \otimes \varphi_{\MM} : \MM ^* : = A \otimes _{\varphi, A}\MM \to \MM $ is linearization of  $\varphi_\MM$. Note here we are using the classical setting of Kisin modules used in \cite{liu-notelattice} but it is good enough for this paper. The following summarizes the results on Kisin modules attached to $G_K$-stable $\Z_p$-lattices in semi-stable representations. The details and proofs of these facts can be found in \cite{liu-notelattice}. 

Let $T$ be a $G_K$-stable $\Z_p$-lattice inside a semi-stable representation $V$ of $G_K$ with Hodge-Tate weights in $\{0, \dots , h\}$. We set 
$$D: = D^*_{\st} (V)= \Hom_{\Q_p, G_K} (V , B_{\st})$$ which is the filtered $(\varphi , N)$-module attached to $V$ and $D_K : = K \otimes_{K_0} D$. Then there exists a unique Kisin module $\MM : = \MM (T) $ of height $h$ attached to $T$ so that 
\begin{enumerate}
    \item $\Hom_{\varphi , A} (\MM , \Ainf)\simeq T|_{G_\infty}$. 
    \item There exists an $S$-linear isomorphism  
    $$\iota_S : S [\frac 1 p] \otimes _{\varphi, A}\MM \simeq D \otimes _{W(k)} S $$ so that $\iota_S$ is compatible with $\varphi$ on the both sides. 
\item $\iota_S$ also induces an isomorphism $\Fil^h (S [\frac 1 p] \otimes _{\varphi, A}\MM) \simeq \Fil ^h (D\otimes_{W(k)} S) $. The filtration on both sides are defined as follows: 
\[\Fil^h (S [\frac 1 p] \otimes _{\varphi, A}\MM): =\left  \{ x \in S [\frac 1 p] \otimes _{\varphi, A}\MM| (1\otimes \varphi_\MM (x)) \in \Fil ^h S[\frac 1 p ] \otimes_A \MM \right \}.  \] 
To define 
filtration on $\calD : = S \otimes _ {W(k)} D$, we first extend the monodromy operator $N_D=N$ on $D$ to $N_\calD$ (resp. $\nabla_\calD$) on $\calD$ by letting $N_{\calD}= 1 \otimes N_D + N_S \otimes 1$ (resp. $\nabla_\calD = 1 \otimes N_D + \nabla_S \otimes 1$). Then we define $\Fil ^i \calD$ by induction: set $\Fil^0 \calD = \calD$ and 
\[ \Fil ^i\calD: = \{x \in \calD|  N_{\calD}(x) \in \Fil^{i-1}\calD, f_\varpi (x) \in \Fil ^i D_K\}\]
where $f_\varpi : \calD \to D_K$ is induced by $S\to \O_K$ via $u \mapsto \varpi. $
\end{enumerate}
\begin{remark}[Griffith transversality]\label{rem-GT} We have $N_{\calD} (\Fil  ^i\calD) \subset \Fil^{i -1}\calD$ from the construction of $\Fil ^i \calD$. This property is called Griffith transversality. 

We only use $\nabla_\calD$ when $N_D = 0$, that is, when $V$ is crystalline. In this case, it is clear that $N_\calD = u \nabla_{\calD}$. So it is clear that $\nabla_\calD (\Fil ^i\calD) \subset \Fil ^{i-1}\calD$. 
\end{remark}
For ease of notation, we will write $N = N_\calD$ and $\nabla = \nabla_\calD$ in the following. 
Let $T^\vee: = \Hom_{\Z_p} (T , \Z_p)$ and $V ^\vee : = T^\vee \otimes_{\Z_p}\Q_p$ denote the dual representations.  
Then there exists an $A_{\inf}$-linear injection 
\begin{equation}\label{eqn-iota-A}
\iota_\MM: A_{\inf} \otimes _A \MM \to  T ^\vee \otimes_{\Z_p} \Ainf, 
\end{equation}
which is compatible with $G_\infty$-actions ($G_\infty$ acts on $\MM$ trivially) and $\varphi$ on both sides. Applying $S \otimes_{\varphi, A} -$ and using $\iota _S: =S \otimes_{\varphi, A} \iota_\MM $, we obtain the following commutative diagram 
$$
\xymatrix@C=5em{ \Acris[\frac 1 p] \otimes_{\varphi , A}\MM \ar[d]_\wr  ^{\Acris \otimes_S \iota _S} \ar[r] ^{S \otimes_{\varphi, A} \iota _\MM} & V^\vee \otimes_{\Z_p} \Acris \ar@{=}[d]\\   \Acris \otimes_{W(k)} D \ar[r]^{\alpha} & V^\vee \otimes _{\Z_p} \Acris}
$$
where the second row  $\alpha$ is built by the classical comparison $$B_{\st} \otimes_{K_0} D^*_{\st}(V) \simeq   V^\vee \otimes_{\Q_p} B_{\st},   $$
and  $\alpha$ is $G_K$-stable on the both sides. The left side of $\alpha$ is defined by 
\[ \forall x \in D, \forall g \in G_K, g (x) = \sum_{i = 0}^\infty N^i (x) \gamma_i (\log (\ueps(g))) \]
Therefore, if we regard $\MM^* : = A \otimes_{\varphi, A} \MM$ as an  $A$-submodule of  
$ V^\vee \otimes_{\Z_p} \Acris$ via injection $\iota^* : = S \otimes_{\varphi, A} \iota_A$, one can show that: 
\begin{equation}\label{eqn-G-action}
\forall g \in G_K, x \in \MM^*, g(x) = \sum_{i = 0}^\infty N_\calD^i (x) \gamma_i (\log (\ueps(g))).  
\end{equation}
 When $V$ is crystalline, or equivalently, $N_D = 0$, we have (\cite[\S8.1]{LL2021comparison})
 \begin{equation}\label{eqn-G-action-2}
\forall g \in G_K, x \in \MM^*, g(x) = \sum_{i = 0}^\infty \nabla_\calD^i (x) \gamma_i (u \ueps(g)).  
\end{equation}

\subsection{Descent of the \texorpdfstring{$G_K$}{GK}-action}\label{subsec-G-image}

Let us first discuss the  $G_K$-action on $\MM \subset T ^\vee \otimes_{\Z_p}\Ainf$ via $\iota_\MM$ in \eqref{eqn-iota-A} in more details.  
We select an $A$-basis $e_1, \dots , e_d$ of $\MM$ so that $\varphi (e_1, \dots, e_d)= (e_1, \dots , e_d )\gA$ with $\gA \in {\textrm M}_d (A)$.  Then there exists a matrix $B\in {\textrm M}_d (A)$ so that $\gA B = B \gA = E^h I_d$. For any $g \in G_K, $ let $X_g$ be the matrix so that 
\[ g (e_1, \dots , e_d) = (e_1, \dots , e_d) X_g. \]
In this section, we are interested in where the entries of $X_g$ locates. 
\begin{theorem}\label{Thm-1}The entries of $X_g$ are in $A^{(2)}_{\st}$. If $V$ is crystalline and $g(u)- u = Ez$ then $X_g \in {\textrm M}_{d} (\At). $
\end{theorem}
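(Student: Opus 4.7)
The plan is to combine the explicit $G_K$-action formulas \eqref{eqn-G-action} and \eqref{eqn-G-action-2} with the ring-structure analysis of \S\ref{sec-ring-strcuture} and the descent Proposition~\ref{prop-desecnt} (together with its $\At_{\st}$-analog Proposition~\ref{prop-Ast-properties}(7)). First I would show that $X_g$ has entries in $\At_{\st,\max}$ (resp.\ $\At_{\max}$ in the crystalline case) viewed inside $\Amax$ via the embedding of \S\ref{subsec-embedding}. Applying \eqref{eqn-G-action} to the basis $e_1,\dots,e_d$ yields $g(e_j^*) = \sum_{i\ge 0} N_\calD^i(e_j^*)\,\gamma_i(\log\ueps(g))$, and $\log\ueps(g) = a(g)t$ with $t = \log(1+\y)$ expands as a $p$-adically convergent series in the divided powers $\gamma_j(\y)$. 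Since $\y = Ew$ under $\iota_{\st}$, one has $\gamma_j(\y) = E^j \gamma_j(w) \in \At_{\st,\max}$, so $\gamma_i(\log\ueps(g)) \in \At_{\st,\max}$. Combining this with Griffith transversality (Remark~\ref{rem-GT}) for $N_\calD$, which provides the filtration-lowering by $E^{-1}$ needed to balance the $E^j$-factors, and translating from $\MM^*$ back to $\MM$ via the Frobenius isomorphism $1\otimes\varphi_\MM$, we obtain $X_g \in {\rm M}_d(\At_{\st,\max})$.

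Next, the commutation $g\circ\varphi = \varphi\circ g$ applied to the basis gives the matrix identity $X_g \cdot g(\gA) = \gA \cdot \varphi(X_g)$. Post-multiplying by $g(B)$ where $\gA B = E^h I$, and noting that $g(E) = \lambda E$ for a unit $\lambda$ (since $g$ preserves $\ker\theta$), yields a relation of the shape $E^h Y = B_1 \varphi(Y) C_1$ with $B_1, C_1 \in {\rm M}_d(\At_{\st})$ after clearing the unit. Proposition~\ref{prop-Ast-properties}(7) then places $X_g \in {\rm M}_d(\At_{\st}[\frac{1}{p}])$. Combined with Step~1 and an intersection argument using Lemma~\ref{lem-intersection}, this forces $X_g \in {\rm M}_d(\At_{\st})$. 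The crystalline case runs in complete parallel using \eqref{eqn-G-action-2} with the connection $\nabla_\calD$ and divided powers of $g(u) - u = Ez$: since $\gamma_i(Ez) = E^i \gamma_i(z) \in \At_{\max}$ and Griffith transversality holds for $\nabla_\calD$ when $N_D = 0$ (Remark~\ref{rem-GT}), the identical strategy with $\At_{\max}$ and $\At$ in place of $\At_{\st,\max}$ and $\At_{\st}$ delivers $X_g \in {\rm M}_d(\At)$.

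The main obstacle is Step~1: one must carefully track how the iterates $N_\calD^i$ (resp.\ $\nabla_\calD^i$) applied to lifts of $e_j^*$ in $\calD = S\otimes D$ interact with the $E^j$-factors emerging from the divided-power expansion, given that the filtration on $\calD$ is itself defined inductively via $N_\calD$. A careful choice of representatives of $e_j^*$ inside $\Fil^h\calD$ and a degree-by-degree analysis of the iterates should make the integrality in $\At_{\st,\max}$ transparent; once this is established, the descent step and the final intersection with $\Ainf$ are essentially formal consequences of the machinery of \S\ref{sec-ring-strcuture}.
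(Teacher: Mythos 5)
Your Steps~1 and~2 track the paper's own argument faithfully up to the point where one concludes $X_g \in \mathrm M_d(\At_{\st}[\frac 1 p])$ (resp.\ $\mathrm M_d(\At[\frac 1 p])$): the expansion \eqref{eqn-G-action}/\eqref{eqn-G-action-2} on a basis of $\Fil^h\MM^*$, Griffith transversality, the matrix identity $E^h X_g = r^{-h}\gA\varphi(X_g)g(B)$ with $r = g(E)/E$ a unit, and Proposition~\ref{prop-desecnt} (resp.\ its $\At_{\st}$-analog) are all exactly the moves the paper makes. The problem is Step~3, the removal of the $\frac 1 p$, and it is a genuine gap, not a technical detail that ``careful tracking'' will fix.

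Two issues. First, your Step~1 cannot deliver $X_g \in \mathrm M_d(\At_{\st,\max})$: the comparison $\iota_S$ is only an isomorphism after inverting $p$, so the coefficients $N_\calD^i(\alpha_j)$ live in $\calD[\frac 1 p]$, and the paper correspondingly only obtains $Y_g \in \mathrm M_d(\At_{\st,\max}[\frac 1 p])$. Second, and more seriously, Lemma~\ref{lem-intersection} does not say $\At_{\st,\max}\cap\At_{\st}[\frac 1 p]=\At_{\st}$; it says $B^{(2)}_{\st}\cap\At_{\st}[\frac 1 p]=\At_{\st}$, where $B^{(2)}_{\st}$ is the $p$-adic completion of $\At_{\st}[\frac 1 E]$. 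Your proposal has not established that $X_g$ has entries in $B^{(2)}_{\st}$, and the intersection you actually invoke, $\At_{\st,\max}\cap\At_{\st}[\frac 1 p]=\At_{\st}$, amounts to injectivity of $\At_{\st}/p\hookrightarrow\At_{\st,\max}/p$. Remark~\ref{rem-inputofWu} states explicitly that this kind of statement was the authors' original strategy and that they were unable to prove it (they can only show it for $\wt\At_{\st}$, not $\At_{\st}$). This is precisely why the paper defers the integrality statement (reducing to $g=\tilde\tau$ via Lemma~\ref{lem-equivalenceofthm}) and only completes the proof in \S\ref{subsec-pris-crystal-proof}, after the $(\varphi,\tau)$-module machinery of \S\ref{subsec-phi-tau} supplies the missing input: by \cite[Theorem 5.6]{wu2021galois} the Galois action descends to a $\varphi$-equivariant descent datum $c$ over $B^{(2)}$, the evaluation lemma (Lemma~\ref{lem-evaluation-2}) identifies $c\otimes B^{(2)}[\frac 1 p]$ with $f_{\tilde\tau}$, so $X_{\tilde\tau}$ has entries in $B^{(2)}$, and only then does Lemma~\ref{lem-intersection} apply. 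Without that external input your argument stalls at $\At_{\st}[\frac 1 p]$.
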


First,  it is well-known that $ W (\C_p^\flat) \otimes_{\Ainf} \iota_\MM $ is an isomorphism. So $X_g\in {\textrm M}_d (W (\C_p^\flat))$. Since $G_K$-actions and $\varphi$ commutes, we have $$ \gA \varphi(X_g)= X_g g (\gA)  .$$  
 Define $$\Fil ^h \MM ^* : = \{ x \in \MM^* | (1 \otimes \varphi_{\MM}) (x) \in E^h \MM\}. $$
Since $\MM$ has $E$-height $h$, it is easy to show that $\Fil ^h \MM^*$ is a finite free $A$-module and $\Fil^h \calD$ is generated by $\Fil ^h \MM^*$. 

To be more precise, let $\{e^*_i : =1 \otimes e_i, i =1 , \dots , d\}$ be an $A$-basis of $\MM^*$. It is easy to check that $(\alpha_1, \dots , \alpha_d)= (e_1^* , \dots , e_d^*) B$ is an $A$-basis of $\Fil ^h \MM^*$,  and it is  also an $S[\frac 1 p]$-basis of $\Fil ^h \calD$. 
So for any $g \in G_K$, we have $g (\alpha_j) = \sum\limits_{i = 0}^\infty N ^i (\alpha_j) \gamma _i (\log (\ueps (g))) $. By Griffith transversality in Remark \ref{rem-GT}: 
$N (\Fil ^i \calD)\subset \Fil^{i-1}\calD,   $
 we have, 
 \begin{equation}\label{eqn-action-g}
 g (\alpha_j)= \sum_{i = 0}^h N ^i (\alpha_j) E^i \gamma _i (\frac {\log (\ueps (g))}{E}) + \sum_{i > h}^\infty N^i (\alpha_j) \gamma_i(E) (\frac{\log (\ueps (g))}{E})^i.      
 \end{equation}
Since $N^i (\alpha_j) E^i \in \Fil ^h \calD$,  $\gamma_i (E)$ in $\Omax$ and $\{w ^n\}$ converges to $0$ inside $A^{(2)}_{\st, \max}$,  we see that $g (\alpha_1, \dots , \alpha_d) = (\alpha_1 , \dots , \alpha_d) Y_g $ with $Y_g \in A^{(2)}_{\st , \max}[\frac 1 p ]. $

In the case that $V$ is crystalline, using \eqref{eqn-G-action-2}, we have 
$$g (\alpha_j)= \sum_{i = 0}^h \nabla ^i (\alpha_j) E^i \gamma _i (\frac {u\ueps (g)}{E}) + \sum_{i >  h}^\infty \nabla^i (\alpha_j) \gamma_i(E) (\frac{u\ueps (g)}{E})^i $$
\emph{If $g $ is chosen so that  $ g (u)-u = Ez$} then, a similar argument can shows that $g (\alpha_1, \dots , \alpha_d) = (\alpha_1 , \dots , \alpha_d) Y^\nabla_g $ with $Y^\nabla_g \in A^{(2)}_{\max}[\frac 1 p]. $

Now $g (e_1^*, \dots , e_d^*) = (e_1^*, \dots, e_d ^*) \varphi (X_g)$.  Using similar arguments, we see that $\varphi(X_g)$'s entry are in $A^{(2)}_{\st , \max}[\frac 1 p ]$ and $A^{(2)}_{\max}[\frac 1 p]$ respectively. We have $(\alpha_1 , \dots , \alpha_d) = (e_1^*, \dots, e_d^*)B$ by definition of $B$,, we conclude that $$ \varphi(X_g) g (B) = B Y_g.$$ 
Using the formula that $\gA \varphi (X_g) = X_g g (\gA)$ and $\gA B = B \gA = E^h I_d$, we conclude that 
$Y_g = (\frac{g (E)}{E}) ^h X_g$. Write  $r= \frac{g (E)}{E}$. We claim that $r$ is a unit in $\At_{\st}$. Indeed, 
$\frac{g(E)}{E}= \frac{E (u \epsilon ^{a(g)})}{E(u)}= \sum\limits_{i =0}^e E^{(i)} (u) \frac{ u ^i (\epsilon^{a(g)}-1)^i}{E i !}$ is again inside $A_{\st}^{(2)}$, where $E^{(i)}$ means the $i$-th derivative of $E$. And it is easy to show $g(E)$ is also a distinguished element $A_{\st}^{(2)}$, so by \cite[Lemma 2.24]{BS19}, $r$ is a unit. Similarly, when $g(u)-u=Ez$, we will have $r=\frac{g (E)}{E}\in (A^{(2)})^\times$. Hence  
\begin{equation}\label{eqn-key-eqn}
E^h X_g  = r ^{-h} \gA  \varphi (X_g) g (B).
\end{equation}

Now we can apply Proposition \ref{prop-desecnt} and Proposition \ref{prop-Ast-properties} (5) to the above formula, we conclude that for $g\in G_K$ (resp. $g\in G_K$ such that $g(u)-u=Ez$ and $V$ is crystalline), we have $X_g$ has entries in $A^{(2)}_{\st}[\frac 1 p]$ (resp. $A^{(2)}[\frac 1 p]$). 

To complete the proof of Theorem \ref{Thm-1}, it suffices to show that  entries $X_g$ are in $A^{(2)}_{\st}$ (resp. $A^{(2)}$). Unfortunately, the proof to remove ``$\frac 1 p$" is much harder, which needs \S \ref{subsec-phi-tau} and \S \ref{subsec-pris-crystal-proof}. For the remaining of this subsection, we only show that the proof of Theorem \ref{Thm-1} can be reduced to the case that $g = \tilde \tau$ for a special selected $\tilde \tau \in G_K$.  

Let $L = \bigcup\limits_{n =1}^\infty K_{\infty} (\zeta_{p ^n})$, $K_{1^\infty}: = \bigcup_{n =1}^\infty K (\zeta_{p ^n})$,  $\hat G : = \Gal(L /K) $ and $H_K : = \Gal (L / K _\infty)$. 
If $p > 2$ then it is known that $\hat G \simeq \Gal (L/ K_{1 ^\infty}) \rtimes H_K $ with 
$\Gal (L/ K_{1 ^\infty}) \simeq \Z_p$. Let $\tau$ be a topological generator of $\Gal (L/ K_{1 ^\infty}) $. We have $\tau (u) = \epsilon^a u$ with $a\in \Z_p ^\times$. Without loss of generality, we may assume that $\tau (u) = \epsilon u$. If $p=2$ then we can still select $\tau \in \hat G $ so that $\tau (u)= \epsilon u$ and $\tau, H_K$ topologically generate $\hat G$. Pick $\tilde \tau \in G_K$ a lift of $\tau$. Clearly, we have $\tilde \tau (u ) - u = E z$. 

\begin{proposition}\label{thm-1prime} 
For $g = \tilde \tau, $ the entries of $X_g$ are in $A^{(2)}_{\st}$, and if further  $V$ is crystalline, then $X_g \in {\textrm M}_{d} (\At).$
\end{proposition}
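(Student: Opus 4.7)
The plan is to promote the fact that $X_{\tilde\tau}\in\mathrm{M}_d(\At_{\st}[\tfrac{1}{p}])$ (resp. $\mathrm{M}_d(\At[\tfrac{1}{p}])$ in the crystalline case), which we already have from the equation $E^h X_{\tilde\tau} = r^{-h}\gA\,\varphi(X_{\tilde\tau})\,\tilde\tau(B)$ and Proposition~\ref{prop-desecnt}, to genuine integrality $X_{\tilde\tau}\in\mathrm{M}_d(\At_{\st})$ (resp. $\mathrm{M}_d(\At)$). The strategy is to reinterpret the $\tilde\tau$-action via the prismatic formalism of later sections, so that integrality becomes tautological.

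First I would invoke Remark~\ref{rem-embedding-depend} and the explicit form of $\iota_{\st}$ (resp. $\iota_A$): the embedding $\At_{\st}\hookrightarrow\Ainf$ is precisely the map supplied by the universal property of $\At_{\st}$ as a self-coproduct of $(A,(E))$ in Koshikawa's log-prismatic site over $\mathrm{Spf}(\O_K)$, applied to the two prism maps $j_1,j_2: A\to\Ainf$ defined by $u\mapsto u$ and $u\mapsto\tilde\tau(u)=\epsilon u$. The analogous statement for $\At$ uses the ordinary prismatic site and the hypothesis $\tilde\tau(u)-u=Ez$ in the crystalline case. Injectivity of $\iota_{\st}$ and $\iota_A$ is Corollary~\ref{cor-inj}.

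Next I would attach to $T^\vee$ its prismatic $F$-crystal $\MM_\Prism$, whose evaluation on the Breuil-Kisin prism $(A,(E))$ recovers $\MM$; this is to be made precise in \S\ref{subsec-pris-crystal-proof} and \S\ref{sec-logprismandsemistablereps}. Evaluating $\MM_\Prism$ on the self-product $(\At_{\st},(E))$ and comparing the two restrictions along $j_1,j_2$ produces an $\At_{\st}$-linear descent isomorphism
\[
f_{\tilde\tau}:\ \MM\otimes_{A,j_1}\At_{\st} \xrightarrow{\ \sim\ } \MM\otimes_{A,j_2}\At_{\st},
\]
whose matrix in the basis $e_1,\dots,e_d$ tautologically lies in $\mathrm{M}_d(\At_{\st})$. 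The remaining step would be to verify, after applying $\iota_{\st}$ and the usual Frobenius twist relating $\MM^*$ to $\MM$, that this matrix coincides with $X_{\tilde\tau}$; since $\iota_{\st}$ is injective this would force $X_{\tilde\tau}\in\mathrm{M}_d(\At_{\st})$. The crystalline case is formally identical after replacing $(\At_{\st},(E))$ by $(\At,(E))$.

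The main obstacle I anticipate is precisely this matching step: one must check that the abstract crystal-theoretic descent datum $f_{\tilde\tau}$ really computes the semi-linear $\tilde\tau$-action described by the explicit formula \eqref{eqn-G-action}. Concretely, this amounts to identifying the divided powers $\gamma_i(w)\in\At_{\st}$, where $w=(\epsilon-1)/E$, with the factors $\gamma_i\!\bigl(\log\ueps(\tilde\tau)/E\bigr)$ appearing in \eqref{eqn-action-g} (using $\ueps(\tilde\tau)=\epsilon$ and $\log\epsilon-(\epsilon-1)\in\Fil^2\BdR^+$), and controlling the higher-order tails via the filtration estimates of Lemma~\ref{lem-auxiliaryrings} together with the intersection $B^{(2)}_{\st}\cap\At_{\st}[\tfrac{1}{p}]=\At_{\st}$ from Lemma~\ref{lem-intersection}. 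In the crystalline case one instead matches $\gamma_i(z)\in\At$ with $\gamma_i(u\ueps(\tilde\tau))$ via \eqref{eqn-G-action-2}. Once this compatibility is in hand, integrality of $f_{\tilde\tau}$ transports across $\iota_{\st}$ (resp. $\iota_A$) to integrality of $X_{\tilde\tau}$, yielding the proposition.
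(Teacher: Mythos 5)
There is a genuine circularity in your plan. You propose to ``attach to $T^\vee$ its prismatic $F$-crystal $\MM_\Prism$'' and evaluate it on $(\At_{\st},(E))$ (resp.\ $(\At,(E))$) to get the descent datum for free. But the existence of such an $F$-crystal in finite locally free $\O_\Prism$-modules (resp.\ $\O_{\Prism_{\log}}$-modules) attached to a lattice in a crystalline (resp.\ semi-stable) representation is precisely Theorem~\ref{Thm-main-1} (resp.\ Theorem~\ref{thm-log-main-1}), and in this paper the essential surjectivity of that equivalence is \emph{deduced from} Proposition~\ref{thm-1prime}: the proof of Theorem~\ref{Thm-main-1} in \S\ref{subsec-pris-crystal-proof} explicitly builds the descent datum $f$ over $\At$ out of $X_{\tilde\tau}$, citing Proposition~\ref{thm-1prime}. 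For the semi-stable case there is not even an external result to appeal to, since the log-prismatic Theorem~\ref{thm-log-main-1} is new. So ``tautologically lies in $\mathrm{M}_d(\At_{\st})$'' begs the question.

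What the paper actually uses is a much weaker external input: the equivalence between lattices and prismatic $F$-crystals in $\O_\Prism[1/\II]^\wedge_p$-modules (Wu, and Bhatt--Scholze \S2), which supplies a descent datum $c$ only over $B^{(2)}=\At[1/E]^\wedge_p$, \emph{not} over $\At$. The real work is then lifting from $B^{(2)}$ to $\At$ (resp.\ from $B^{(2)}_{\st}$ to $\At_{\st}$), and this is done in two steps that your proposal does not supply. First, the ``matching step'' that you flag as the main obstacle --- identifying $f_{\tilde\tau}$ with $c$ after inverting $p$ --- is \emph{not} done by comparing divided powers $\gamma_i(w)$ with $\gamma_i(\log\epsilon/E)$ explicitly; instead the paper bypasses such computations entirely via the $\tau$-evaluation uniqueness Lemma~\ref{lem-evaluation-2}: any two $\varphi$-equivariant maps over $B^{(2)}[1/p]$ with the same base change along $e_\tau$ coincide, and both $f_{\tilde\tau}$ and $c'$ specialize to the linearization of $\tilde\tau$ by construction (this also uses Proposition~\ref{prop-descentBsttoB2}, i.e.\ Lemma~\ref{lem-perfectionofA2andAst2}, to move between $B^{(2)}$ and $B^{(2)}_{\st}$). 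Second, once $X_{\tilde\tau}\in\mathrm{M}_d(\At_{\st}[1/p])\cap\mathrm{M}_d(B^{(2)}_{\st})$ is known, the integrality follows from the intersection $\At_{\st}[1/p]\cap B^{(2)}_{\st}=\At_{\st}$ of Lemma~\ref{lem-intersection} --- you do correctly identify that lemma and Corollary~\ref{cor-inj} as ingredients. Replacing the appeal to the not-yet-constructed $F$-crystal by the Laurent $F$-crystal plus the $\tau$-evaluation criterion is the essential idea you are missing.
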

\begin{lemma}\label{lem-equivalenceofthm}
Proposition ~\ref{thm-1prime} is equivalent to Theorem~\ref{Thm-1}.
\end{lemma}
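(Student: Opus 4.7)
The direction Theorem~\ref{Thm-1} $\Rightarrow$ Proposition~\ref{thm-1prime} is immediate: for $g=\tilde\tau$, one has $\tilde\tau(u)-u=u(\epsilon-1)=E\cdot\iota_A(z)$, so $\tilde\tau$ satisfies the crystalline hypothesis of Theorem~\ref{Thm-1}. I focus on the converse. The two workhorses I would use are the cocycle identity
\[ X_{g_1g_2}=X_{g_1}\cdot g_1(X_{g_2}), \]
which falls out of the defining relation $g(e_i)=\sum_j(X_g)_{ji}e_j$ inside $T^\vee\otimes W(\C_p^\flat)$ together with the diagonal $G_K$-action, and the equality $X_h=I_d$ for every $h\in G_\infty$, which comes from the $G_\infty$-equivariance of $\iota_\MM$, the trivial $G_\infty$-action on $\MM$, and the fact that $G_\infty$ fixes $A\subset\Ainf$. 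In particular, $X_g$ depends only on the coset $gG_\infty$ from the right.

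For the crystalline case the reduction is then essentially automatic. The hypothesis $g(u)-u=Ez$ reads, inside $\Ainf$, as $g(u)=u\epsilon$, i.e.\ $\ueps(g)=\epsilon=\ueps(\tilde\tau)$. Since $\chi(\tilde\tau)=1$ (because $\tau$ fixes $K_{1^\infty}\supset K(\zeta_{p^\infty})$), a short cocycle computation shows $\tilde\tau^{-1}g$ fixes $u$, so it lies in $G_\infty$. Writing $g=\tilde\tau\cdot h$ with $h\in G_\infty$, the cocycle identity collapses to $X_g=X_{\tilde\tau}\cdot\tilde\tau(I_d)=X_{\tilde\tau}$, and Proposition~\ref{thm-1prime} gives $X_g\in {\rm M}_d(\At)$ directly.

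For the semi-stable case I would use that $G_K$ is topologically generated by $\tilde\tau$ together with $G_\infty$: the kernel $G_L$ of $G_K\twoheadrightarrow\hat G$ is contained in $G_\infty$, $\hat G$ is topologically generated by $\tau$ and $H_K$, and $H_K$ is the image of $G_\infty$. Thus any $g\in G_K$ is a limit of finite words $\tilde\tau^{n_1}h_1\tilde\tau^{n_2}h_2\cdots\tilde\tau^{n_k}h_k$ with $n_i\in\Z$ and $h_i\in G_\infty$. Iterated application of the cocycle identity and of $X_{h_i}=I_d$ expresses the corresponding $X$-matrix as a product of $G_K$-translates of $X_{\tilde\tau}$, which lies in ${\rm M}_d(\At_{\st})$ provided $\At_{\st}\subset\Ainf$ is $G_K$-stable. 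I would verify this stability directly from the presentation $\At_{\st}=W(k)[\![x,\y]\!]\{w\}_\delta^\wedge$, using $g(x)=x(1+\y)^{a(g)}$, $g(\y)=(1+\y)^{\chi(g)}-1$, and $g(w)=g(\y)/g(E)$ together with $g(E)\in E\cdot(\At_{\st})^\times$ (essentially the $r=g(E)/E$ computation already appearing just above equation~\eqref{eqn-key-eqn}). To pass from these finite words to the general $g$, I would invoke continuity of $g\mapsto X_g$, which follows from formula~\eqref{eqn-G-action} expressing the action as an analytic function of $a(g)\in\Z_p$, together with closedness of $\At_{\st}$ in $\At_{\st,\max}$ (Proposition~\ref{prop-Ast-properties}(5)).

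The main obstacle will be this last, semi-stable, limiting step. Checking $G_K$-stability of $\At_{\st}$ is routine bookkeeping, but setting up a clean continuity statement for $g\mapsto X_g$ and verifying that the iterated cocycle expressions attached to arbitrary finite words in $\tilde\tau^{\Z}\cdot G_\infty$ converge $p$-adically in ${\rm M}_d(\At_{\st})$ to the $X_g$ of a general $g\in G_K$ is where the genuine analytic content of the reduction lies.
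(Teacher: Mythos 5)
Your proposal is correct and follows the same overall strategy the paper uses: decompose $G_K$ via its topological generation by $\tilde\tau$ and $G_\infty$, exploit the cocycle relation $X_{g_1g_2}=X_{g_1}\cdot g_1(X_{g_2})$ together with $X_h=I_d$ for $h\in G_\infty$, and control the limit using closedness of $\At_{\st}$ in $\At_{\st,\max}$. Two points are worth highlighting. First, your treatment of the crystalline case is cleaner than the paper's: you observe that the constraint $g(u)-u=Ez$ (equivalently $\ueps(g)=\epsilon$), combined with $\chi(\tilde\tau)=1$, pins $g$ down to the single coset $\tilde\tau G_\infty$, so the crystalline halves of Theorem~\ref{Thm-1} and Proposition~\ref{thm-1prime} are literally the same statement once one knows $X_g$ depends only on $gG_\infty$; no limiting argument is needed. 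The paper instead says "the proof for the crystalline case is similar by replacing $\At_{\st}$ with $\At$," which is unnecessarily heavy. Second, you correctly flag that the iterated cocycle products require the $G_K$-stability of $\At_{\st}\subset\Ainf$. The paper uses this implicitly but only records it later as part $(1)$ of Lemma~\ref{lem-properties-Ast}; your direct verification via $g(\y)=(1+\y)^{\chi(g)}-1$, $g(E)/E\in(\At_{\st})^\times$, and commutation of $g$ with $\delta$ is exactly how the paper proves that lemma. For the semi-stable limit, the paper organizes the convergence as $X_{\tilde\tau^{p^n}}\to I_d$ in $\At_{\st,\max}$ using the explicit expression $X_g=(E/g(E))^hY_g$ with $Y_g$ from \eqref{eqn-action-g} and the fact that $\ueps(\tilde\tau^{p^n})\to 1$ $(p,\epsilon-1)$-adically; your framing via iterated cocycle expansions of words in $\tilde\tau^{\Z}\cdot G_\infty$ together with continuity of $g\mapsto X_g$ (from \eqref{eqn-G-action}) is the same analytic content, just packaged differently.
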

\begin{proof}
Since  $\hat{G}$ is topologically generated by $\tau$ and $H_K$. So $G_K$ is topologically generated by $G_\infty$ and $\tilde{\tau}$. And we have $\tau(u)-u=(\epsilon-1)u=Ez$. Now if $X_{\tilde{\tau}}$ has coefficient in $A^{(2)}_{\st}$ and $X_g=I_d$ for all $g\in G_\infty$  then to show that $X_g \in \At_{\st}$ for all $g \in G_K$, it suffices to show that $X_{\tilde \tau^{p ^n }}$ converges to $I_d$ inside ${\text M}_d (\At_{\st})$. Since $\At_{\st}$ is closed in $\At_{\st, \max}$ by Proposition \ref{prop-Ast-properties} (5), it suffices to show that $X_{\tilde \tau^{p ^n}}$ converges inside $\At_{\st, \max}$. Since $X_g= (\frac{E}{g (E)})^r Y_g$ and $Y_g$ is defined by \eqref{eqn-action-g}, we easily check that $X_{\tilde \tau^{p ^n}}$ converges to $I_d$ in $\At_{\st,\max}$ by using that $\ueps(\tilde \tau^{p ^n})$ converges to $0$ in $(p , \epsilon-1)$-topology. The proof for the crystalline case is similar by replacing $\At _{\st}$ with $\At$.
\end{proof}
So it remains to prove Proposition ~\ref{thm-1prime} to complete the proof of  Theorem~\ref{Thm-1}. We will prove Proposition~\ref{thm-1prime} in \S\ref{subsec-pris-crystal-proof}. Briefly speaking, for $g = \tilde \tau$, we have shown that the linearization of the $g$-action defines a $\varphi$-equivariant isomorphism:
$$
f_g: \MM\otimes_{A,\iota_g} A_{\st}^{(2)}[\frac{1}{p}] \simeq \MM\otimes_{A} A_{\st}^{(2)}[\frac{1}{p}]
$$
of $A_{\st}^{(2)}[\frac{1}{p}]$-modules. Since $g(u)- u = Ez$ and $V$ is crystalline, $f_g$ defines a $\varphi$-equivariant isomorphism:
$$
f_g: \MM\otimes_{A,\iota_g} A^{(2)}[\frac{1}{p}] \simeq \MM\otimes_{A} A^{(2)}[\frac{1}{p}]
$$
of $A^{(2)}[\frac{1}{p}]$-modules. Here $\iota_g: A \to A^{(2)}_{\st}$ (resp. $\iota_g: A \to A^{(2)})$) is defined by $u \to g(u)$. On the other hand, by \cite[Theorem 5.6]{wu2021galois}, we will see the $g$-action on $T^\vee \otimes W(\C_p^\flat)$ also descent to a $\varphi$-equivariant morphism $c$ of $B^{(2)}$-modules, and recall that $B^{(2)}$ the is $p$-adic completion of $\At [\frac 1 E]$. Then by comparing $c$ and $f_g$ using the technique developed in \S\ref{subsec-phi-tau}, we will deduce Proposition~\ref{thm-1prime} from Lemma~\ref{lem-intersection}.
 
\begin{remark}\label{rem-inputofWu} Our original strategy to prove Theorem \ref{Thm-1} is to show $\At_{\st} [\frac 1 p] \cap W ({\C^\flat_p}) = \At_{\st}$ (resp. $\At [\frac 1 p] \cap W (\O^\flat_{\C_p}) = \At$).  This is equivalent to that $ \At/ p , \At_{\st}/ p$ injects in $\C_p^\flat$. Unfortunately, it does not work out though we can show  $ \wt \At/ p , \wt {\At_{\st}}/ p$ injects in $\C_p^\flat$.

\end{remark}

\subsection{Prismatic \texorpdfstring{$(\varphi, \hat G)$}{(phi,Ghat)}-modules}\label{subsec-phiGhatmodules} In this subsection, we show that the base ring $\wh{\calR}$ for the theory of $(\varphi, \hat G)$-modules can be replaced by $\At_{\st}$. To this end, this builds a new theory of $(\varphi, \hat G)$-modules  with new base ring $\At_{\st}$. Since the idea of this new theory is almost the same as that of the old one, We will use \emph{classical} to indicate we are using the theory over $\wh \calR$. For example, when we say classical $(\varphi, \hat G)$-module, it means a $(\varphi , \hat G)$-module over $\wh \calR$. 
Recall $L = \bigcup\limits_{n =1}^\infty K_{\infty} (\zeta_{p ^n})$, $\hat G : = \Gal(L /K) $ and $H_K : = \Gal (L / K _\infty)$. Let $\m $  be the maximal ideal of $\O_{\C_p}^\flat$ and set $I_+ = W(\m)$ so that $\Ainf/ I _+ = W(\bar k)$. For any subring $B\subset \Ainf$ set $ I_+ B = B \cap I_+$. Let $t = \log \epsilon$,  $t ^{(i)} = t ^{r(i)} \gamma_{\tilde q(i)}(\frac{t ^{p-1}}{p})$ where $ i = (p-1) \tilde q (i) + r(i)$ with $ 0 \leq r(i)<p-1. $
 Recall that $\wh \calR : = \Ainf \cap \calR_{K_0 }$ where 
$$\calR_{K_0}:=\left  \{ \sum_{i = 0}^\infty f_i t ^{(i)}, f_i\in S[\frac 1 p], f_i \to 0\  p{\text{-adically}}  \right \}.$$

\begin{lemma} \label{lem-properties-Ast}
\begin{enumerate}
    \item As a subring of $\Ainf$, $\At_{\st}$ is stable under $G_K$-action and the $G_K$-action factors through $\hat G$. 
\item $\At_{\st}/ I_+\At_{\st} = W(k)$. 
\item $I_+ \At \subset u \At_{\st}$. 
\item $\varphi (\At_{\st} ) \subset \wh \calR$. 
\end{enumerate}
\end{lemma}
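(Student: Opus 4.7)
My plan is to treat the four parts in sequence, relying on the universal property of $\At_{\st}$ as a prismatic envelope for (1), on the Teichmüller reduction $\Ainf\twoheadrightarrow W(\bar k)$ for (2) and (3), and on a direct computation of $\varphi(w)$ for (4).

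For (1), fix $g\in G_K$. Writing $g(\varpi^\flat)=(\zeta^\flat)^{a(g)}\varpi^\flat$ with $a(g)\in\Z_p$ gives $g(u)=\epsilon^{a(g)}u$ and $g(\epsilon)=\epsilon^{\chi(g)}$ inside $\Ainf$. I would define a $\delta$-map $\sigma_g\colon W(k)[\![x,\y]\!]\to\At_{\st}$ by $x\mapsto(1+\y)^{a(g)}x$ and $\y\mapsto(1+\y)^{\chi(g)}-1$, where both binomial series converge in the $\y$-adic (hence $(p,E)$-adic) topology. Frobenius compatibility is a direct check, and $\sigma_g(\y)=\chi(g)\y+O(\y^2)\in(E)\At_{\st}$, so the universal property of the prismatic envelope extends $\sigma_g$ to a $\delta$-endomorphism of $\At_{\st}$. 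Both $\iota_{\st}\circ\sigma_g$ and $g\circ\iota_{\st}$ are $\delta$-ring maps $\At_{\st}\to\Ainf$ that agree on $W(k)[\![x,\y]\!]$, so uniqueness in the universal property forces them to coincide, giving $g(\iota_{\st}(\At_{\st}))\subset\iota_{\st}(\At_{\st})$. Factoring through $\hat G$ is immediate from $\iota_{\st}(\At_{\st})\subset W(\O_L^\flat)$ and the triviality of the $G_L$-action there.

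For (2) and (3), the key input is the Teichmüller reduction. Under $\Ainf\twoheadrightarrow W(\bar k)=\Ainf/I_+$ we have $u=[\varpi^\flat]\mapsto 0$ and $\epsilon=[\zeta^\flat]\mapsto 1$ (since $\bar\varpi^\flat=0$ and $\bar\zeta^\flat=\zeta_0=1$ in $\bar k$), so $\y\mapsto 0$ and the subring $W(k)[\![x,\y]\!]$ maps onto $W(k)$. The relation $Ew=\y$ reduces to $E(0)\bar w=0$; since $E(0)\in pW(k)^\times$ and $W(\bar k)$ is $p$-torsion-free, $\bar w=0$, and $\delta^n(w)\mapsto 0$ for all $n\geq 1$. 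Since $I_+\At_{\st}=\At_{\st}\cap I_+$ by definition, this identifies $\At_{\st}/I_+\At_{\st}$ with $W(k)$, proving (2). For (3), inside $\At_{\st}$ we have $y=x(1+\y)$ and $z=xw$, and the identity $\delta(x\cdot h)=x^p\delta(h)$ (valid since $\delta(x)=0$) yields by induction $\delta^n(z)\in x\At_{\st}$ for every $n\geq 0$. Hence every generator of $\At$ over $W(k)$ lies in $W(k)+x\At_{\st}$, and the $(p,E)$-adic completion preserves this since $E\equiv E(0)\in pW(k)^\times\pmod{x\At_{\st}}$ reduces the $(p,E)$-topology to the $p$-adic one. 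Given $a\in\At\cap I_+$, writing $a=c+xb$ with $c\in W(k)$, $b\in\At_{\st}$ and reducing mod $I_+$ forces $c=0$ (using $W(k)\hookrightarrow W(\bar k)$), so $a\in x\At_{\st}=u\At_{\st}$.

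For (4), observing that $\wh\calR$ is $(p,E)$-complete and closed under $\varphi$ and $\delta$, it suffices to show $\varphi(x),\varphi(\y),\varphi(w)\in\wh\calR$; the first two are automatic since $x,\y\in\wh\calR$. For $\varphi(w)=(\epsilon^p-1)/\varphi(E)$, membership in $\Ainf$ follows from $\epsilon^p-1=\varphi(Ew)=\varphi(E)\varphi(w)$. To place $\varphi(w)$ in $\calR_{K_0}$, I would factor $\varphi(E)=pc$ with $c=\delta(E)+E^p/p\in S^\times$ and expand
\[
\varphi(w)=c^{-1}\frac{\exp(pt)-1}{p}=c^{-1}\sum_{n\geq 1}p^{n-1}\frac{t^n}{n!}.
\]
Rewriting $t^n=p^{\tilde q(n)}\tilde q(n)!\,t^{(n)}$ expresses each summand as $p^{n-1+\tilde q(n)}\tilde q(n)!/n!\cdot t^{(n)}$ with $S[\tfrac{1}{p}]$-coefficients whose $p$-adic valuation tends to infinity. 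The main obstacle is this decay estimate, which reduces to the standard Legendre formula for $v_p(n!)$.
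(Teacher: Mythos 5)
Parts (1)--(3) are essentially sound. For (1) you take a genuinely different route: the paper simply observes that $\At_{\st}$ is the $(p,E)$-completion of $W(k)[\![x,\y]\!][\delta^i(w)]$, checks $g(w)=\frac{E}{g(E)}\cdot\frac{\epsilon^{\chi(g)}-1}{E}\in\At_{\st}$ using the distinguished-element lemma, and is done since $g$ commutes with $\delta$; you instead invoke the universal property of the prismatic envelope. That works, but note that $\sigma_g$ is not an $A$-algebra map in the standard sense (it sends $x\mapsto(1+\y)^{a(g)}x\neq x$), so to apply \cite[Prop.~3.13]{BS19} you must regard the target as a prism over $(A,(E))$ via the twisted structure $u\mapsto(1+\y)^{a(g)}x$; this is fine because $\sigma_g(E)=E((1+\y)^{a(g)}x)$ is still a distinguished generator of $(E)\At_{\st}$, but it deserves to be said. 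For (2) and (3) your Teichm\"uller-reduction and $\delta(xh)=x^p\delta(h)$ arguments match the paper's.

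Part (4) has a genuine gap. You open with ``$\wh\calR$ is $(p,E)$-complete'' and reduce to checking $\varphi$ on the three generators $x,\y,w$. But $\wh\calR$ is \emph{not} known to be $p$-adically (much less $(p,E)$-adically) complete --- the paper emphasizes this repeatedly (see the introduction and Remark~\ref{rem-Ast-is-good}: the $p$-completeness of $\At_{\st}$ is precisely what makes it a better period ring than $\wh\calR$, and the non-completeness of $\wh\calR$ is the source of the gap in \cite{liu-Fontaine} that this paper is designed to fix). So the reduction you use is unavailable. The paper's actual argument sidesteps this: since $\At_{\st}\subset\At_{\st,\max}$ and $\varphi(\At_{\st})\subset\Ainf$ anyway, it suffices to show $\varphi(\At_{\st,\max})\subset\calR_{K_0}$; every element of $\At_{\st,\max}$ is a $p$-adically convergent sum $\sum_i f_i\gamma_i(w)$ with $f_i\in\Omax\to 0$, one checks $\varphi(\Omax)\subset S\subset\calR_{K_0}$ and $\varphi(\gamma_n(w))\in\calR_{K_0}$, and then one uses the $p$-adic completeness of $\calR_{K_0}$ (which is built into its definition), not of $\wh\calR$; finally $\varphi(\At_{\st})\subset\Ainf\cap\calR_{K_0}=\wh\calR$. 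Your computation of $\varphi(w)=c^{-1}\tfrac{\exp(pt)-1}{p}$ and the $p$-adic decay of the coefficients is correct and in the same spirit as the paper's formula for $\varphi(w)$, but you need to run the argument through $\At_{\st,\max}$ and $\calR_{K_0}$ rather than through an unjustified completeness of $\wh\calR$.
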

\begin{proof}(1) It is clear that the $G_K$-action is stable on $W(k)[\![u , \epsilon -1]\!]$. Since $\At_{\st}$ is $(p , E)$-completion of $W(k)[\![u , \epsilon -1]\!] [\delta^i (w), i \geq 0]$, to show that $\At_{\st}$ is $G_K$-stable, it suffices to show that $g (w)\in \At_{\st}$ (because $g$ and $\delta$ commutes, if $g(x)\in \At _{\st}$ then so is $g (\delta (x))$). Now $E w = \epsilon-1$, we have $g (E) g (w) = g (\epsilon -1)= \epsilon ^{a(g)} -1 $. Then $g (w) = \frac{E}{g(E)} \frac {\epsilon^{a(g)} -1}{E}$. By \cite[Lemma 2.24]{BS19}, $E/g(E)$ is a unit in $\At_{\st}$, then $g (w)\in \At_{\st}$. 

(2) It is clear that both $u , \epsilon-1 $ are in $I_+$. Hence $w \in I_+$ because $E w = \epsilon-1 \in I_+$ and $E \equiv p \mod I_+$. For any $x=\sum\limits_{i = 0}^\infty  p ^i[x_i] \in \Ainf$, $x \in I_+$ if and only of $x_i \in \m$. Then it is easy to check that $\delta (I_+)\subset I_+$, and consequently  all $\delta^i (w) \in I_+$. So $I_+ \At_{\st}$ is topologically generated by $u, y=\epsilon-1 , \delta ^i (w), i \geq 0$ and hence $\At_{\st}/ I_+\At_{\st} = W(k) $ as required. 

(3) $I_+ \At$ is topologically generated by $u, v=\epsilon u, \{\delta ^i (z)\}, i \geq 0$. And (3) follows from $z= u w$ and $\delta^n (z)= u^{p ^n} \delta^n (w)$.  

(4) Since $\At_{\st} \subset \At_{\st , \max}$, it suffices to show that 
$\varphi (\At_{\st , \max}) \subset \calR_{K_0}$. Since $\varphi(\Omax) \subset A[\![\frac{E^p }{p}]\!]\subset S$, it suffices to show that $\varphi (\gamma_n (w) ) \in \calR_{K_0}$. Note that $\varphi (E) = p \nu$ with $\nu \in A[\![\frac{E^p }{p}]\!] ^\times$ and $\gamma _i (\epsilon -1) \in \calR_{K_0}$. And we have
$$
\varphi(w)=\varphi (\frac{(\epsilon -1)}{E})= \nu^{-1} (\epsilon-1) \sum_{i = 1}^p \left ( \binom{p}{i} / p \right ) (\epsilon -1)^{i -1} 
$$
which is a polynomial with coefficients in $\Z$ and in variables $\nu^{-1}$ and $\gamma_i(\epsilon -1)$'s. In particular $\varphi (\gamma_n (w) ) \in \calR_{K_0}$ by basic properties of divided powers. 
\end{proof}

\begin{definition} A (finite free) $(\varphi, \hat G)$-module of height $h$ is a (finite free) Kisin module $(\MM , \varphi_{\MM})$ of height $h$ together with an $\At_{\st}$-semi-linear $\hat G$-action on $\wh \MM : = \At_{\st} \otimes _A \MM$ so that 
\begin{enumerate}
    \item The actions of $\varphi$ and $\hat G$ on $\wh \MM$ commutes; 
    \item $\MM \subset \wh \MM ^{H_K}$; 
    \item $\hat G$-acts on $\wh \MM / I_+ \At_{\st}$ trivially. 
\end{enumerate}
\end{definition}

The category of $(\varphi, \hat G)$-modules consists of the above objects and a morphism between two $(\varphi, \hat G)$-modules is a morphism of Kisn modules that commutes with actions of $\hat G$. Given a $(\varphi, \hat G)$-modules $\wh \MM : = (\MM , \varphi, \hat G)$, we define a $\Z_p$-representation of $G_K$, 
$$\wh T (\wh \MM) : = \Hom_{\At_{\st}, \varphi} (\At_{\st} \otimes _A \MM, \Ainf).$$

Since $\varphi (\At_{\st})\subset \wh \calR$, given a $(\varphi, \hat G)$-module $\wh \MM : = (\MM, \varphi, \hat G)$-defined as the above, $(\MM, \varphi)$ together $\hat G$-action on $\wh\calR \otimes_{\varphi, A}\MM$ is a \emph{classical} $(\varphi, \hat G)$-modules $\wh \MM_c$. It is easy to check that $\wh T (\wh \MM ) = \wh T(\wh \MM_c): =\Hom_{\wh \calR, \varphi} (\wh \calR \otimes _{\varphi , A} \MM, \Ainf). $

\begin{theorem}\label{thm-2}
The functor $\wh T$ from the category of $(\varphi, \hat G)$-modules of height $h$ to the category of $G_K$-stable $\Z_p$-lattices in semi-stable representations with Hodge-Tate weights in $[0 , \dots , h]$ is an anti-equivalence. 
\end{theorem}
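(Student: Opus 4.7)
\noindent The plan is to reduce to the classical theory of $(\varphi,\hat G)$-modules over $\wh\calR$ developed in \cite{liu-notelattice}, which provides an anti-equivalence between classical $(\varphi,\hat G)$-modules at height $h$ and $G_K$-stable lattices in semi-stable representations with Hodge--Tate weights in $[0,h]$. The bridge is Lemma \ref{lem-properties-Ast}(4): since $\varphi(\At_{\st})\subset\wh\calR$, any $(\varphi,\hat G)$-module $\wh\MM=(\MM,\varphi,\hat G)$ in the new sense base-changes along $\varphi$ to a classical $(\varphi,\hat G)$-module $\wh\MM_c$ over $\wh\calR$, and the paragraph preceding the theorem records the identification $\wh T(\wh\MM)=\wh T(\wh\MM_c)$ of $G_K$-representations. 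This already shows that $\wh T$ lands in lattices in semi-stable representations and, on morphisms, factors through the classical Galois functor.

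\noindent For essential surjectivity, I would attach to every lattice $T$ in a semi-stable representation with Hodge--Tate weights in $[0,h]$ a new-style $(\varphi,\hat G)$-module as follows. Let $\MM=\MM(T)$ be the Kisin module of \S\ref{subsec-Kisin-st} with its $G_\infty$-equivariant embedding $\iota_\MM:\Ainf\otimes_A\MM\hookrightarrow T^\vee\otimes\Ainf$ of \eqref{eqn-iota-A}. Combining $\iota_\MM$ with the embedding $\iota_{\st}:\At_{\st}\hookrightarrow\Ainf$ of \S\ref{subsec-embedding} (whose image is $G_K$-stable by Lemma \ref{lem-properties-Ast}(1)) realizes $\wh\MM:=\At_{\st}\otimes_A\MM$ as a subset of $T^\vee\otimes\Ainf$. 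Theorem \ref{Thm-1} asserts precisely that the ambient $G_K$-action on $T^\vee\otimes\Ainf$ preserves this subset, equivalently that $X_g\in{\rm M}_d(\At_{\st})$ for every $g\in G_K$; restriction therefore yields a semi-linear $\hat G$-action on $\wh\MM$ commuting with $\varphi$. Axiom (2), $\MM\subset\wh\MM^{H_K}$, holds because $H_K$ is the image of $G_\infty$ in $\hat G$ and $\iota_\MM$ is $G_\infty$-equivariant with trivial $G_\infty$-action on $\MM$. Axiom (3), triviality of $\hat G$ on $\wh\MM/I_+\wh\MM=W(k)\otimes_A\MM$, follows from Lemma \ref{lem-properties-Ast}(2) together with the classical fact that, for a semi-stable lattice, the reduction of $X_g$ modulo $I_+$ is the identity matrix.

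\noindent To close the loop one checks that base-changing the above $\hat G$-action along $\varphi:\At_{\st}\hookrightarrow\wh\calR$ recovers the classical $\hat G$-action of \cite{liu-notelattice} on $\wh\calR\otimes_{\varphi,A}\MM$; this is automatic because both are defined by transport of structure from the $G_K$-action on $T^\vee\otimes\Ainf$ via $\iota_\MM$, using that $\iota_{\st}$ is a map of $\delta$-rings so that $\varphi\circ\iota_{\st}$ is the composite of $\iota_{\st}$ with Frobenius on $\Ainf$. Full faithfulness then follows from the corresponding property in the classical theory, since morphisms of new-style $(\varphi,\hat G)$-modules are determined by their restriction to the underlying Kisin modules. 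The main obstacle is Theorem \ref{Thm-1}, i.e.\ the descent of the matrix entries $X_g$ from $W(\C_p^\flat)$ all the way down to $\At_{\st}$ rather than merely $\At_{\st}[1/p]$; this is the technical heart of \S\ref{subsec-G-image}, reduced there to Proposition \ref{thm-1prime} for the single element $g=\tilde\tau$ and completed in \S\ref{subsec-pris-crystal-proof}. Once Theorem \ref{Thm-1} is granted, the rest of the proof of Theorem \ref{thm-2} is a formal reduction to the classical anti-equivalence.
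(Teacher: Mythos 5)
Your proposal is correct and follows essentially the same route as the paper: reduce to the classical anti-equivalence over $\wh\calR$ via Lemma \ref{lem-properties-Ast}(4), and for essential surjectivity transport the $G_K$-action through $\iota_\MM$ and $\iota_{\st}$, with Theorem \ref{Thm-1} supplying the key fact that $g(\MM)\subset\At_{\st}\otimes_A\MM$. You spell out the verification of axioms (2) and (3) and the compatibility with the classical $\hat G$-action in more detail than the paper's terse proof, but the logical skeleton is identical.
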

\begin{proof}
Given an $(\varphi, \hat G)$-module $\wh \MM= (\MM, \varphi, \hat G)$, $\wh\MM_c $ is a classical $(\varphi, \hat G)$-module. So $\wh T (\wh \MM)= \wh T (\wh \MM_c)$ is a lattice inside semi-stable representation with Hodge-Tate weights in $[0, \dots , h]$. Conversely, given a lattice in semi-stable representation $T$ with Hodge-Tate weights in $[0, \dots , h]$, following the proof for the existence of classical $(\varphi, \hat G)$-module $\wh \MM $ so that $\wh T (\wh \MM ) = T$, it suffices to show that for any $g \in G_K$, 
$g (\MM) \subset \At_{\st} \otimes _A \MM$, here $\MM $ and $ \At_{\st} \otimes _A \MM$ are regarded as submodules of $T^\vee \otimes _{\Z_p} \Ainf$ via $\iota_\MM$ in \eqref{eqn-iota-A} and uses the $G_K$-action on $T^\vee \otimes _{\Z_p} \Ainf$. This follows Theorem \ref{Thm-1}. 
\end{proof}

Now let us discuss when $\wh T (\wh \MM)$ becomes a crystalline representation. Recall that $\tau$ is a selected topological generator of $\Gal (L/ K_{1 ^\infty}) $, and we have $\tau (u) = \epsilon u$ and $\tau, H_K$ topologically generate $\hat G$.

\begin{corollary}\label{cor-crystalline} Select $\tau \in \hat G$ as the above. Then $\wh T (\wh \MM )$ is crystalline if and only if $\tau (\MM) \subset \At \otimes _{A} \MM$. 
\end{corollary}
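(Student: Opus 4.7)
The plan is to handle the two directions separately, with the forward direction being immediate from Theorem~\ref{Thm-1} and the reverse direction requiring a careful coefficient analysis of the $\tilde\tau$-action.

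For the ``only if'' direction, I will use that, with our chosen $\tau \in \hat G$ (satisfying $\tau(u) = \epsilon u$) and any lift $\tilde\tau \in G_K$, one has $\tilde\tau(u) - u = u(\epsilon - 1) = Ez$ since $z = uw$ inside $\At_{\st}$. If $V := \wh T(\wh\MM) \otimes \Q_p$ is crystalline, Theorem~\ref{Thm-1} then gives $X_{\tilde\tau} \in \mathrm{M}_d(\At)$. Because $H_K$ fixes $\MM$ inside $T^\vee \otimes \Ainf$ via $\iota_\MM$, this translates into $\tau(\MM) \subset \At \otimes_A \MM$ on $\wh\MM$.

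For the ``if'' direction, suppose $\tau(\MM) \subset \At \otimes_A \MM$, equivalently $X_{\tilde\tau} \in \mathrm{M}_d(\At)$. By Theorem~\ref{thm-2}, $V$ is semi-stable, and I need to show $N_D = 0$. The matrix $Y_{\tilde\tau}$ of the $\tilde\tau$-action on the basis $\alpha_1,\dots,\alpha_d$ of $\Fil^h \MM^*$ equals $r^h X_{\tilde\tau}$ with $r = \tilde\tau(E)/E \in \At^\times$, so $Y_{\tilde\tau} \in \mathrm{M}_d(\At)$ as well. On the other hand, from equation~\eqref{eqn-action-g} together with $\log\epsilon = E w u_1$ for some unit $u_1 \in 1 + \mu\Acris$, the entries of $Y_{\tilde\tau}$ admit an explicit $\gamma_i(w)$-expansion in $\At_{\st,\max}[\tfrac{1}{p}]$ whose coefficient of $w$ is, up to a unit, the matrix of $E \cdot N_\calD$ on $(\alpha_j)$. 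I plan to compare the two descriptions of $Y_{\tilde\tau}$ using the unique $\gamma_i(w)$-expansion provided by \S\ref{subsec-Ast}: since $\At \hookrightarrow \At_{\st}$ carries $\gamma_i(z) \mapsto u^i \gamma_i(w)$, the $\gamma_i(w)$-coefficient of any element of $\At$ must be divisible by $u^i$ modulo $E$. Feeding this back and inducting on $i$, combined with Griffith transversality (Remark~\ref{rem-GT}) and the decomposition $N_\calD = 1 \otimes N_D + N_S \otimes 1$ with $N_S = u \cdot d/du$, will force $N_D$ to vanish on $D$.

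The hard part will be carrying out the induction cleanly: contributions from $N_\calD^j(\alpha)$ for $j > i$ leak into the $\gamma_i(w)$-coefficient through the expansion of the unit $u_1$ and the tail $\sum_{i>h} N^i(\alpha_j)\gamma_i(E)(\log\epsilon/E)^i$, and they must be isolated in order to pin down $N_\calD(\alpha_j)$. Moreover, the divisibility has to be extracted integrally rather than only after inverting $p$, which is where I expect to need the closedness of $\At$ inside $\At_{\st,\max}$ from Lemma~\ref{lem-subring} together with the flatness results of Proposition~\ref{prop-Ast-properties}.
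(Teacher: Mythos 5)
The forward direction matches the paper: $\tau(u)-u = Ez$, so for $V$ crystalline Theorem~\ref{Thm-1} gives $X_{\tilde\tau}\in {\rm M}_d(\At)$, i.e.\ $\tau(\MM)\subset\At\otimes_A\MM$.

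For the reverse direction you take a genuinely different route from the paper. The paper's proof is short and structural: since $\hat G$ acts trivially on $\wh\MM/I_+\At_{\st}$, one has $X_{\tilde\tau}-I_d\in {\rm M}_d(I_+\At)$; combined with Lemma~\ref{lem-properties-Ast}(3) (namely $I_+\At\subset u\At_{\st}$) this gives $(\tau-1)\MM\subset u\Ainf\otimes_A\MM$, and then one invokes the crystallinity criterion of Ozeki (or Du's thesis). Your proposal instead tries to extract $N_D=0$ directly from the uniqueness of the $\gamma_i(w)$-expansion in $\At_{\st,\max}$. The underlying idea is sound: in the unique expansion $\sum b_i\gamma_i(w)$ of $Y_{\tilde\tau}\in {\rm M}_d(\At)$, one has on the one hand $b_i\in u^i\Omax$ (from $\gamma_i(z)=u^i\gamma_i(w)$ — note this divisibility holds integrally in $\Omax$, not merely ``modulo $E$'' as you wrote), and on the other hand, from \eqref{eqn-G-action}, $b_1 = E\cdot (n_{ij})$ where $(n_{ij})$ is the matrix of $N_\calD$ on $(\alpha_j)$. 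Reducing modulo $u$ (using $\Omax/u\Omax\cong W$ and $E(0)\ne 0$) yields $n_{ij}(0)=0$, so $N_D(\bar\alpha_j)=0$ where $\bar\alpha_j$ is the image of $\alpha_j$ in $D=\calD/u\calD$.

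The gap is that you stop at a sketch and anticipate complications that don't actually bite, while omitting the one fact the argument genuinely needs. There is no ``leakage'' into the $\gamma_1(w)$-coefficient: $\gamma_l(t)=E^l u_1^l\gamma_l(w)$ for $l\ge 2$ contributes only to $\gamma_m(w)$ with $m\ge l$, since $\gamma_l(w)w^k = \frac{(l+k)!}{l!}\gamma_{l+k}(w)$, so your inductive scheme and worries about $u_1$ are unnecessary for the base case. What you do need, and have not observed, is that the $\bar\alpha_j$ form a $K_0$-basis of $D$: this follows because $\det B = c^{-1}E^{hd-w}$ with $c\in A^\times$ (using $\det\gA=cE^w$ for the Kisin module of a semi-stable representation), hence $\det B(0)\ne 0$. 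Without this, $N_D(\bar\alpha_j)=0$ does not yet give $N_D=0$. With it, the $i=1$ step alone finishes, and neither the closedness of $\At$ in $\At_{\st,\max}$ (Lemma~\ref{lem-subring}) nor the flatness results are needed the way you suggest. As written, your proposal is an unfinished (and somewhat over-engineered) alternative; the paper's route via $I_+\At\subset u\At_{\st}$ plus the cited criterion is shorter and avoids any coefficient bookkeeping.
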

\begin{proof} Clearly for the selected $\tau$, we have $\tau (u) - u = Ez$. If $T: = \wh T (\wh \MM)$ is crystalline then Theorem \ref{Thm-1} proves that $\tau (\MM) \subset \At \otimes _{A} \MM$. Conversely,  Suppose $\tau (\MM) \subset \At \otimes _{A} \MM$. 
Then we see that $(\tau-1)\MM \subset u \Ainf \otimes_A \MM$ by Lemma \ref{lem-properties-Ast} (3). And we have this is enough to show that $\wh T (\wh \MM)$ is crystalline. For example, We will have $\MM\otimes_{A}(\Ainf[\frac{1}{p}]/\mathfrak{p})$ has a $G_K$-fixed basis given by a basis of $\MM$, where the ideal $\mathfrak{p}$ is defined as $\mathfrak{p}:=\cup_{n\in \N} \varphi^{-n}(u)\Ainf[\frac{1}{p}] \subset \Ainf[\frac{1}{p}]$. Then one can prove by the same method in \cite[Thm. 3.8]{Ozeki} or directly use \cite[Theorem 4.2.1]{Duthesis} that $T$ is crystalline.  
\end{proof}
\begin{remark}\label{rem-Ast-is-good} Though $\At_{\st}$ is still complicated, for example, it is not Noetherian, $\At_{\st}$ is still better than old $\wh \calR$: at least it has explicit topological generators. Furthermore, $\At_{\st}$ is $p$-adic complete. This can help to close the gap in \cite{liu-Fontaine} mentioned in \cite[Appendix B]{gao2021breuilkisin}. Indeed, as indicated by Remark B.0.5 \emph{loc.cit.}, if $\wh \calR$ can be shown to be $p$-adic complete then the gap in \cite{liu-Fontaine} can be closed. So by replacing $\wh \calR$ by $\At_{\st}$, we close the gap of \cite{liu-Fontaine} (\cite{gao2021breuilkisin} provides another similar way to close the gap). 
\end{remark}

\section{Crystalline representations and prismatic \texorpdfstring{$F$}{F}-crystals}\label{sec-prismaticFcrystals}
In this section, we reprove the theorem of Bhatt and Scholze on the equivalence of prismatic $F$-crystal and lattices in crystalline representations of $G_K$ and complete the proof of Theorem \ref{Thm-1}. We start with some general facts on the absolute prismatic site (which allows general base rings). 

\subsection{Prismatic \texorpdfstring{$F$}{F}-crystals in finite projective modules}\label{subsec-pris-crystal}
Let $R=R_0\otimes_W \O_K=R_0[u]/E$ as in the beginning of \S2 and $X=\Spf(R)$ with the $p$-adic topology.

\begin{definition}
The (absolute) prismatic site $X_\Prism$ of $X$ is the opposite of the category of bounded prisms $(A, I)$ that are $(p,I)$-completed together with a map $R \to A/I$, and a morphism of prisms $(A, I) \to (B, J)$ is a covering if and only if $A \to B$ is $(p, I)$-completely faithfully flat. 

Define the following functors: 
$$
\O_\Prism: (A,I)\mapsto A,
$$ 
and for all $h\in \N$, let
$$
\II_\Prism^h: (A,I)\mapsto I^h.
$$
It is known in \cite{BS19} that these are sheaves on $X_\Prism$, and $\O_\Prism$ admits an endomorphism $\varphi_\Prism$ coming from the $\delta$-structure. We will also use $\O_\Prism[1/\II_{\Prism}]^\wedge_p$ to denote functor assign $(A,I)$ to the $p$-adic completion of $A$ with $I$ inverted.
\end{definition} 

Now we verify $\At $(resp.  $A^{(3)})$  constructed in \S\ref{subsrc-construct-A2} is indeed self (resp. triple) product of $A$ in $X_\Prism$. We mainly discuss the situation of $\At$ while the proof of $A^{(3)}$ is almost the same. Recall that $\breve A = \breve R_0 [\![u]\!]= W\langle t_1, \dots , t_m \rangle[\![u]\!]$. 

First, we want to remark on the existence of nonempty self-coproduct in the category of prisms over $R$. We thank Peter Scholze for answering our question on Mathoverflow. And we will repeat his answer here. Let $(A_i,I_i)$ for $i=1,2$ that are prisms over $R$, let $A_0=A_1\hat{\otimes}_{\Z_p} A_2$ where the completion is taken for the $(p,I_1,I_2)$-adic topology. Let $J$ be the kernel of the map:
$$
A_0 \to A_1/I_1\otimes_{R} A_2/I_2.
$$
Let $(A,I)$ be the prismatic envelope of $(A_1,I_1)\to (A_0,J)$, one can  check this is the initial object in the category of prisms over $R$ that admits maps from $(A_i,I_i)$ such that the two $R \to A_i/I_i \to A/I$ agree. Also we want to note that in general, we do not know if the boundedness of $(A_1,I_1)$ and $(A_2,I_2)$ will imply the boundedness of their coproduct. But we have seen $A^{(2)}$ and $A^{(3)}$ are indeed bounded by Corollary \ref{cor-filtration-shape}.
 
To start, note that there exists a $W$-linear map $\breve i_2: \breve A \to A^{\ho 2}$ induced by $u \mapsto y$ and $t_i \mapsto s_i$. We claim that $\breve i_2$ uniquely extends to $ i_2 : A \to A^{\ho 2}$ which is compatible with $\delta$-structures. Indeed, consider the following commutative diagram
$$\xymatrix@C=45pt{ A \ar[r] ^-{\overline i_2}\ar@{-->}[rd]^{i_{2, n}} & A^{\ho 2} / (p , J^{(2)} )  \\ {\breve A}  \ar[u]\ar[r]^-{\breve i_{2, n} } & A^{\ho 2} / (p , J ^{(2)})^n\ar[u]}  $$
Here $\breve i_{2, n}= \breve i_2 \mod (p , J ^{(2)})^n$ and $\overline{i}_2:A \to A/(p, E)  \simeq A^{\ho 2}/ (p , J ^{(2)}) $. Since $\breve i_2 (u) = y = x+ (y -x)$ and $\breve i_2 (t_i) = s_i = t _i + (s_i - t_i)$, we see that the above (outer) diagram commutes. Since $A$ is formally \'etale over $\breve A $ by $(p, u)$-adic topology, we conclude that there exists a unique map $i_{2, n} : A \to A^{\ho 2} / (p , J^{(2)})^n$ so that the above diagram commutes. Since $A ^{\ho 2}$ is $(p, J ^{(2)}  )$-complete,  there uniquely exists  $i _2 : A \to A^{\ho 2}$ which extends $\breve i_2$. To see $i_2$ is compatible with $\delta$-structures. it suffices to show that $\varphi \circ i _2 = i_2 \circ \varphi$. But both of $\varphi \circ i _2$  and $ i_2 \circ \varphi$ extend $ \breve A \overset \varphi \to \breve A \to A^{\ho 2}$.  Again by formally \'etaleness of $A$ over $\breve A$, we see that $\varphi \circ i _2 = i_2 \circ \varphi$. Hence we obtain a map $ 1 \otimes i_2: A \otimes _{\Z_p }A \to A^{\ho 2}$. Define $\theta^{\otimes 2}: A\otimes_{\Z_p} A \to R$ via $\theta^{\otimes 2} (a \otimes b)= \theta (a) \theta (b)$. By the construction of $i _2$, we have the following commutative diagram
\[ \xymatrix{ A \otimes _{\Z_p} A \ar[r] ^{1 \otimes i _2} \ar[d]^{\theta ^{\otimes 2 }} & A ^{\ho 2}\ar[d]  \\ R \ar[r]^- \sim & A^{\ho 2}/ J ^{(2)}  }\]
Let $\widehat {A ^{\otimes 2}} $ be the $(p , \ker (\theta ^{\otimes 2}))$-completion of $A^{\otimes 2}: = A \otimes_{\Z_p} A$. 
Hence $1 \otimes i_2$ induces a map $\hat i_2$ from the $\widehat {A ^{\otimes 2}}$ to $ A ^{\ho 2}$ because $A ^{\ho 2}$ is clearly $(p , J ^{(2)})$-complete. To treat $A^{\ho 3}$, we construct $ i _3: A \to A ^{\ho 3}$ by extending $\breve i _3: A \to A^{\ho 3}$ by sending $u \mapsto w $ and $t _j \mapsto r_j$. The same method shows that $i_3$ is compatible with $\delta$-structure and we obtain a map $1 \otimes i _2 \otimes i_3 : A^{\otimes 3} \to A^{\ho 3}$ with $A ^{\otimes 3}: A \otimes_{\Z_p} A \otimes_{\Z_p}A$. Similarly, we obtain a natural map $\hat i _3 : \widehat {A ^{\otimes 3}} \to A ^{\ho 3} $. 
\begin{lemma} For $s= 2, 3$, $\hat i_s : \widehat {A ^{\otimes s}} \to A ^{\ho s}$ are isomorphisms. 
\end{lemma}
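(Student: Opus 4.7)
Plan: I would prove, for $s=2$, that $\hat i_2$ is both surjective and injective; the case $s=3$ is entirely analogous, using $i_1,i_2,i_3$ in place of $i_1,i_2$.

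For surjectivity, the target $A^{\ho 2} = A[\![y-x, s_1-t_1,\dots,s_m-t_m]\!]$ is topologically generated over $A$ (embedded via $i_1$) by the formal variables $y-x$ and $s_i-t_i$. These all lie in the image of $\hat i_2$: for instance $y-x = i_2(u) - i_1(u) = \hat i_2(1\otimes u - u\otimes 1)$, and similarly $s_i-t_i = \hat i_2(1\otimes t_i - t_i\otimes 1)$. Moreover $\hat i_2$ is continuous, because the identities $\theta\circ i_j = \theta$ for $j=1,2$ (built into the construction of $i_2$) give $\hat i_2(\ker\theta^{\otimes 2})\subset J^{(2)}$. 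Combined with $(p,J^{(2)})$-completeness of the target, surjectivity follows.

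For injectivity I would produce an explicit inverse $\psi$. The description of $A^{\ho 2}$ as a formal power series ring provides a universal property: a continuous ring map $A^{\ho 2}\to B$ into a $(p,\text{ideal})$-complete ring is the same as a ring map $A\to B$ together with choices of images for the formal variables inside the completion ideal. Apply this with $B=\widehat{A^{\otimes 2}}$, ideal $\ker\theta^{\otimes 2}$, the map $A\to \widehat{A^{\otimes 2}}$ being $a\mapsto a\otimes 1$, and variables going to $y-x\mapsto 1\otimes u - u\otimes 1$ and $s_i - t_i \mapsto 1\otimes t_i - t_i\otimes 1$; these target elements all lie in $\ker\theta^{\otimes 2}$, so the construction goes through. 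The composition $\hat i_2\circ \psi$ is the identity directly on topological generators of $A^{\ho 2}$. For $\psi\circ \hat i_2 = \mathrm{id}$, density of $A^{\otimes 2}\subset \widehat{A^{\otimes 2}}$ reduces the claim to $\psi(i_1(a))=a\otimes 1$ and $\psi(i_2(a))=1\otimes a$ for $a\in A$. The first holds by construction; the second is direct on $\breve A$ since $\psi(i_2(u))=\psi(y)=(1\otimes u - u\otimes 1)+(u\otimes 1)=1\otimes u$ and analogously on each $t_j$.

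To promote the equality $\psi\circ i_2 = \tilde i_2$ (second-factor embedding) from $\breve A$ to $A$, I would invoke the uniqueness half of formal \'etaleness of $A$ over $\breve A$ in precisely the way used to construct $i_2$ in the paper: both $\psi\circ i_2$ and $\tilde i_2$ are $\breve A$-algebra lifts of the canonical projection $A\to R$, and formal \'etaleness forces such lifts to agree.

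The main obstacle is this last step: since $\widehat{A^{\otimes 2}}$ is $(p,\ker\theta^{\otimes 2})$-complete rather than only $p$-adically complete, one must filter by powers of $\ker\theta^{\otimes 2}$ and apply $p$-adic formal \'etaleness of $A/\breve A$ stage by stage to the successive square-zero, $p$-nilpotent quotients, verifying that the resulting inductive lifts assemble into a well-defined $(p,\ker\theta^{\otimes 2})$-adically continuous map; this is essentially the same technical maneuver already performed to define $i_2$ in the diagram preceding the lemma, applied one more time.
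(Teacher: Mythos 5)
Your proof follows essentially the same route as the paper's. Both construct the same explicit inverse (the paper's $g$ is your $\psi$, defined by sending the formal variables $y-x$ and $s_j-t_j$ to $1\otimes u - u\otimes 1$ and $1\otimes t_j - t_j\otimes 1$, well-defined because these land in $\Ker\theta^{\otimes 2}$), verify $\hat i_2\circ\psi=\mathrm{id}$ directly on topological generators, and establish $\psi\circ\hat i_2=\mathrm{id}$ by an inductive formal-étaleness argument over the filtration by powers of $(p,\Ker\theta^{\otimes 2})$ — exactly the diagram-chase the paper runs to force $h_n=\mathrm{id}\bmod K^n$. The separate surjectivity argument at the start is redundant once you have an inverse, but otherwise the content is identical.
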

\begin{proof}
We need to construct an inverse of $\hat i_s$. We only show for $\hat i _2$ and the proof for $\hat i _3$ is the same. 
Let $g: A^{\ho 2} \to \widehat {A ^{\otimes 2}}$ be the $A$-linear map by sending $y -x \mapsto 1 \otimes u - u \otimes 1 $ and $s_j - t _j  \mapsto  1 \otimes t_j  - t_j \otimes 1$.  Clearly $g$ is well-defined because $ 1 \otimes u - u \otimes 1$ and $1 \otimes t_j  - t_j \otimes 1$ are in $\Ker (\theta ^{\otimes 2})$. Since $i_2 (u) = y$ and $i_2 (t_j) = s_j$,  $\hat i _2 \circ g $ is identity on $A ^{\ho 2}$. Now it suffices to show that $h : = g \circ \hat i_2 $ is identity. Write $K = (p , \Ker (\theta ^{\otimes 2}))$. Note that we have map $ A \otimes_{\Z_p} \breve A \to  \widehat {A ^{\otimes 2}} \overset h \to   \widehat {A ^{\otimes 2}}$ induced by $h  $ which we still call it $\breve h $ for simplicity. 
Now  we  have the following commutative diagram
$$
\begin{tikzcd}[column sep=huge]
A\otimes_{\Z_p} A \arrow[r, "\mod K"] \arrow[dr, dashed, shift right=0.5ex, "h_n", swap] \arrow[dr, dashed, shift left=0.5ex, "\pi_n"] & (A\otimes_{\Z_p} A ) / K \\
A \otimes_{\Z_p}{\breve A} \arrow[u] \arrow[r,"\breve h \mod K^n", swap] & (A \otimes_{\Z_p} A )/ K ^n \arrow[u]
\end{tikzcd}
$$
where $h_n$ is induced by $h \mod K^n$, and $\pi_n$ is the modulo $K^n$ map.
We see that both $ h_n $ and $\pi_n$ on the dashed arrows can make the diagram commute. Then by the formal \'etaleness of $A$ over $\breve A$, we conclude that $h_n  = \pi_n$ and $h$ is the identity map. 
\end{proof}
\begin{proposition}\label{prop-selfproduct} $\At$ and $A^{(3)}$ is self-product  and triple product of  $A$ in $X_\Prism$. 
\end{proposition}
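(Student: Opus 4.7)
The plan is to reduce the statement directly to the lemma just proved, combined with the universal construction of coproducts in the category of prisms over $R$ recalled in the remark after the definition of $X_\Prism$. I treat $\At$; $A^{(3)}$ is identical.

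First I would recall that by the construction in that remark, the self-coproduct of $(A,(E))$ in the category of bounded prisms over $R$ is obtained as the prismatic envelope of the map $(A,(E))\to (A_0,J_0)$, where $A_0$ is the $(p,\ker\theta^{\otimes 2})$-completion $\widehat{A^{\otimes 2}}$ of $A\otimes_{\Z_p}A$ and $J_0:=\ker(A_0\to R)$. Boundedness of the outcome is not an issue here: we already know from Corollary~\ref{cor-filtration-shape} that $\At$ is a bounded prism.

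Next I would invoke the preceding lemma, which gives an isomorphism $\hat i_2:\widehat{A^{\otimes 2}}\isoto A^{\ho 2}$. Under this isomorphism the natural generators $u\otimes 1-1\otimes u$, $t_i\otimes 1-1\otimes t_i$ and $E\otimes 1$ of $\ker\theta^{\otimes 2}$ are sent respectively to $x-y$, $t_i-s_i$ and $E$, so $\hat i_2$ identifies $J_0$ with $J^{(2)}=(E,y-x,\{s_i-t_i\})\subset A^{\ho 2}$ and matches the two topologies. Consequently, the prismatic envelope of $(A,(E))\to(A_0,J_0)$ is naturally identified with the prismatic envelope of $(A,(E))\to(A^{\ho 2},J^{(2)})$, which is $\At$ by construction. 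The two coproduct structure maps $A\rightrightarrows\At$ are then identified with the standard inclusions: the first coming from $A\to A^{\ho 2}$, $u\mapsto x$, $t_i\mapsto t_i$, and the second being precisely the map $i_2:A\to A^{\ho 2}$, $u\mapsto y$, $t_i\mapsto s_i$, constructed by formal \'etaleness above.

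The same argument, applied to $\widehat{A^{\otimes 3}}\simeq A^{\ho 3}$ and the ideal $J^{(3)}$, identifies $A^{(3)}$ with the triple self-product. I expect no serious obstacle: the one place that needs care is verifying that the ideals $J_0$ and $J^{(2)}$ (and their analogues in the triple case) really coincide as ideals under $\hat i_2$, not merely that a chosen set of generators of one is sent into the other. This, however, is immediate once one notes that both ideals are the kernels of the surjections onto $R$ (resp.\ induced by $\theta^{\otimes 2}$ and by the reduction $A^{\ho 2}\to A^{\ho 2}/J^{(2)}\simeq R$), and $\hat i_2$ intertwines these two reductions by construction.
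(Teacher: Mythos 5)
Your proof is correct but takes a genuinely different route from the paper's. The paper verifies the universal property directly: it builds a comparison map $\beta: \Hom_{X_\Prism^{\opp}}(\At, B) \to \Hom_{X_\Prism^{\opp}}(A, B)^2$ and proves bijectivity---surjectivity by assembling $f_1\ho f_2$ through the chain $A\otimes_{\Z_p}A \to \widehat{A^{\otimes 2}} \simeq A^{\ho 2} \to \At$ and the universal property of the prismatic envelope, injectivity by a separate argument on the structure maps. You instead treat the remark's general coproduct construction as established and identify it with $\At$ via the lemma $\widehat{A^{\otimes 2}}\simeq A^{\ho 2}$. This is a legitimate shortcut, but it shifts the burden onto the remark's ``one can check'' assertion, which the paper states without proof---the paper's direct verification of the universal property is, in effect, exactly that check carried out for the case at hand. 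Two points deserve more care than you give them. (a) The remark's $A_0$ is the $(p,I_1,I_2)$-adic completion of $A\otimes_{\Z_p}A$, which is \emph{not} your $\widehat{A^{\otimes 2}}$ (the $(p,\ker\theta^{\otimes 2})$-adic completion); these rings genuinely differ in general (for example, when $R_0=W(k)$ with $k$ finite, the $p$-completed $W(k)\otimes_{\Z_p}W(k)$ is a product of several copies of $W(k)$, whereas $\widehat{A^{\otimes 2}}$ retains only the diagonal factor). They do yield the same prismatic envelope, because the envelope construction $(p,E)$-completes and any $\delta$-map from $A_0$ into a prism over $(A,(E))$ sending $J$ into $(E)$ automatically factors through the $(p,\ker\theta^{\otimes 2})$-completion of $A_0$---but this should be said, not silently elided. (b) The remark produces the coproduct in the category of \emph{all} prisms over $R$; you correctly upgrade to the coproduct in the full subcategory $X_\Prism$ via boundedness of $\At$ from Corollary~\ref{cor-filtration-shape}, which is the right move.
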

\begin{proof} 
In the following, we only treat the case of $\At$ while the proof for $A^{(3)}$ is the same. 
We need to prove that  for any $B = (B,J)$ in $X_\Prism$, 
$$
\Hom_{X_\Prism^{\opp}}(A^{(2)},B)=\Hom_{X_\Prism^{\opp}}(A, B ) \times \Hom_{X_\Prism^{\opp}}(A,B).
$$
By the above lemma, we have natural maps $A \otimes_{\Z_p} A \to \wh{A^{\otimes 2}} \simeq A^{\ho 2}$. Combined with natural map $A^{\ho 2}\to \At$ as $\At$ is the prismatic envelope of $ A^{\ho 2}$ for the ideal $J^{(2)}$, we have map $\alpha : A \otimes_{\Z_p} A \to \At$ which is compatible with $\delta$-structures. Then $\alpha$ induces map $$
\beta: \Hom_{X_\Prism^{\opp}}(A^{(2)},B)\to \Hom_{X_\Prism^{\opp}}(A, B ) \times \Hom_{X_\Prism^{\opp}}(A,B).
$$
To prove the surjectivity of $\beta$, given $f_i \in  \Hom_{X_\Prism}(A, B )$ for $i = 1,2$, we obtain a map  $f _1 \otimes f _2: A \otimes_{\Z_p} A \to B$. It is clear that $(f_1 \otimes f_2) (\Ker (\theta ^{\otimes 2})) \subset J$. Since $B$ is $(p, J)$-derived complete, $f \otimes f_2$ extends to a map $ f _1 \ho f_2 : \wh{A ^{\otimes 2}}\simeq A ^{\ho 2} \to B$ which is compatible with $\delta$-structures, Hence $f_1 \ho f _2$ is a morphism of $\delta$-algebra. Finally, by the universal properties of prismatic envelope, $f _1 \ho f_2 $ extends to   a map of prisms $ f_1 \ho_{\Prism} f_2: \At \to B$ as required. 

Finally, we need to show that $\beta$ is injective. It suffices to show that $A$-algebra structure map $i _1 : A\to \At $ and $i'_2: A \overset{i_2}{\to } A^{\ho 2} \to \At$ both are  injective. 
Since all rings here are $(p, E)$-complete integral domains, it suffices to check that $i_1 , i_2' \mod (p, E)$ are injective. By Proposition \ref{prop-key-property}, we see that $i_1 \mod (p, E)$ is $R/ pR \to R/pR [\{\gamma _i (z_j)\}] $, so it is injective. By the construction $i'_2$ and $i_2$, we see that $i'_2 \mod (p, E)$ is the same as $A/(p, E) \to A ^{\ho 2}/ (p , J ^{(2)}) \to \At /(p, E)$, which is same as $R/ pR \to R/pR [\{\gamma _i (z_j)\}] $.  So it is injective. 
\end{proof}

\begin{remark}\label{rem-Astprelog}
    When $R=\O_K$ is a complete DVR with perfect residue field $k$, we know a priori, the self-product $A^{(2)}$ of $(A,(E))$ in $X_\Prism$ can be constructed as the prismatic envelope of $(A,(E))\to (B,I)$, where $B$ is the $(p,E(u),E(v))$-adic completion of $W(k)[\![u]\!] \otimes_{\Z_p} W(k)[\![v]\!]$ and $I$ is the kernel of the map:
    $$
    B \to A/(E)\otimes_{R} A/(E)=R.
    $$
    On the other hand,  $W(k)$ is formally \'etale over $\Z_p$ for the $p$-adic topology, so for all $(C,J)\in X_\Prism$, the map $W(k)\to R \to C/J$ lifts uniquely to a map $W(k) \to C$. In particular, for all $(C,J)\in X_\Prism$, $C$ has a natural $W(k)$-algebra structure. So when we construct the self-product, we can also consider $A^{(2)}$ as the prismatic envelope of $(A,(E))\to (C,J)$, where $C$ is the $(p,E(u),E(v))$-adic completion of $A\otimes_{W(k)} A$ and $J$ is the kernel of the map:
    $$
    C \to A/(E)\otimes_{R} A/(E)=R.
    $$
    We  have $C\simeq W(k)[\![u,v]\!]$, $J=(E(u),u-v)$ and $A^{(2)}=W(k)[\![u,v]\!]\{\frac{u-v}{E}\}^\wedge_\delta$.
\end{remark}

\begin{definition}\label{def-Fcrystal}
\begin{enumerate}
    \item 
A prismatic crystal over $X_\Prism$ in finite locally free $\O_\Prism$-modules (resp. $\O_\Prism[1/I]^\wedge_p$-modules) is a finite locally free $\O_\Prism$-module (resp. $\O_\Prism[1/I]^\wedge_p$-module) $\MM_{\Prism}$ such that for all morphisms $f: (A, I) \to (B, J)$ of prisms, it induces an isomorphism:
$$
f^\ast \MM_{\Prism,A} := \MM_{\Prism}((A,I))\otimes_A B \simeq \MM_{\Prism,B} := \MM_{\Prism}((B,J))
$$
$$
(resp.\quad f^\ast \MM_{\Prism,A} := \MM_{\Prism}((A,I))\otimes_{A[1/I]^\wedge_p} B[1/I]^\wedge_p \simeq \MM_{\Prism,B} := \MM_{\Prism}((B,J))).
$$

\item A prismatic $F$-crystal over $X_\Prism$ of height $h$ in finite locally free $\O_\Prism$-modules is a prismatic crystal $\MM_{\Prism}$ in finite locally free $\O_\Prism$-modules together with a $\varphi_{\Prism}$-semilinear endomorphism $\varphi_{\MM_{\Prism}}$ of the $\O_\Prism$-module $\MM_{\Prism}: \MM_{\Prism} \to \MM_{\Prism}$ such that the cokernel of the linearization $\varphi_\Prism^\ast \MM_{\Prism} \to \MM_{\Prism}$ is killed by $\II^h$.
\end{enumerate}

\end{definition}

\begin{proposition}\label{prop-cover-final-object}
If the sheaf represented by $(B,I)$ in $\Shv(X_\Prism)$ covers the final object $\ast$ in $\Shv(X_\Prism)$, i.e., for any $(C,J)$ in $X_\Prism$, there is a $(P, J)$ lies over $(B,I)$ and covers $(C,J)$. Also assume that the self-coproduct $B^{(2)}$ and self-triple-coproduct $B ^{(3)}$ of $(B,I)$ are inside $X_{\Prism}$, i.e., they are bounded. Then a prismatic crystal $\MM_{\Prism}$ over $X$ in finite locally free $\O_\Prism$-modules is the same as a finite projective module $\MM$ over $B$ together with a descent data $\psi: \MM\otimes_{i_1,B} B^{(2)}\simeq \MM\otimes_{i_2,B} B^{(2)}$ satisfies the cocycle condition. Here  $i _j : B \to B^{(2)}$ $(j=1,2)$ are the two natural maps.
\end{proposition}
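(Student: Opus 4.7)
The plan is to exhibit two mutually inverse functors, following the standard pattern of faithfully flat descent. The hypothesis that $(B,I)$ represents a sheaf covering the final object means that the singleton $\{(B,I)\}$ generates the topos $\Shv(X_\Prism)$ via covers, so any sheaf on $X_\Prism$ is determined by its value at $(B,I)$ together with descent data on the self-coproducts of $(B,I)$ in $X_\Prism$; the assumption that $B^{(2)}$ and $B^{(3)}$ are bounded is precisely what allows us to form these coproducts inside $X_\Prism$.

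First, from a prismatic crystal $\MM_\Prism$ I would produce a pair $(\MM, \psi)$ by setting $\MM := \MM_\Prism((B,I))$. Applying the crystal property to the two structure maps $i_1, i_2 \colon (B,I) \to (B^{(2)}, IB^{(2)})$ gives two canonical isomorphisms $\MM \otimes_{B, i_j} B^{(2)} \simeq \MM_\Prism((B^{(2)}, IB^{(2)}))$ for $j = 1, 2$; composing one with the inverse of the other defines $\psi$. The cocycle condition over $B^{(3)}$ is automatic: evaluating $\MM_\Prism$ at $(B^{(3)}, IB^{(3)})$ and applying the crystal property to the three natural maps $B \to B^{(3)}$ (each factoring through $B^{(2)}$ in three different ways) expresses each of the three compositions entering the cocycle identity as one and the same isomorphism of $\MM \otimes_B B^{(3)}$. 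That $\MM$ is finite projective over $B$ follows from faithfully flat descent applied to a cover of $(B,I)$ in $X_\Prism$ on which $\MM_\Prism$ becomes free, which exists by the definition of finite locally free $\O_\Prism$-module.

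Conversely, from a pair $(\MM, \psi)$ satisfying the cocycle condition I would construct a crystal $\MM_\Prism$ as follows. For each $(C, J) \in X_\Prism$ the covering hypothesis yields a prism $(P, JP) \in X_\Prism$ equipped with a prism map $(B, I) \to (P, JP)$ such that $(C, J) \to (P, JP)$ is a cover. The self-coproduct of $(P, JP)$ over $(C, J)$ in $X_\Prism$ (which is again bounded, since $C \to P$ is $(p,J)$-completely faithfully flat) receives two natural maps from $(B, I)$, hence a canonical map from $(B^{(2)}, IB^{(2)})$ by its universal property; pulling $\psi$ back along this map equips the finite projective $P$-module $\MM \otimes_B P$ with a descent datum along $(C,J) \to (P, JP)$, and the cocycle identity transfers via the analogous map out of $(B^{(3)}, IB^{(3)})$. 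Faithfully flat descent for finite projective modules in the prismatic topology then produces a finite projective $C$-module $\MM_\Prism((C, J))$.

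The main obstacle is showing that this construction is independent of the auxiliary choice of cover $(P, JP)$ and is functorial in $(C, J)$. Given two such covers, their coproduct over $(C, J)$ in $X_\Prism$ receives a compatible map from $(B^{(2)}, IB^{(2)})$, and the cocycle condition on $\psi$ identifies the two resulting descent data, producing a canonical isomorphism between the two candidate values of $\MM_\Prism((C,J))$; an analogous argument using $B^{(3)}$ shows these identifications are coherent. Functoriality in $(C,J)$ and the crystal property then follow by naturality. Finally, the two functors are mutually inverse: in one direction the tautological choice $P = B$ when $(C,J) = (B,I)$ immediately recovers $\MM$, and in the other direction $\psi$ is reconstructed from the crystal structure at $(B^{(2)}, IB^{(2)})$, which is built into the definition of $\psi$ in the first step.
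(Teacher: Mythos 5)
Your proposal is correct and takes essentially the same approach as the paper: extract $(\MM,\psi)$ from $\MM_\Prism$ via the crystal isomorphisms at $(B^{(2)}, IB^{(2)})$, and conversely use a cover $(P,JP)$ of $(C,J)$ lying over $(B,I)$, the universal map $B^{(2)} \to P^{(2)}_C$ into the self-coproduct of $P$ over $C$, and faithfully flat descent for finite projective modules. You are somewhat more explicit than the paper about independence of the auxiliary cover and coherence of the resulting identifications, but the underlying argument is the same.
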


\begin{proof}
Let $\MM$ be a prismatic crystal in finite projective modules. Define $\MM= \MM_{\Prism}((B,I))$, and the descent data comes from the crystal property:
$$
\psi:\MM\otimes_{i_1,B} B^{(2)}\simeq  \MM_{\Prism}((B^{(2)},I)) \simeq \MM\otimes_{i_2,B} B^{(2)}.
$$
Now  given $(\MM, \psi)$, then for any $(C,J)$ in $X_\Prism$, we need to construct a finite projective module over $C$. We choose the $(P, J)$ as in the assumption, let $\MM_P=\MM  \otimes_B P$, and consider the following diagram:
$$
\begin{tikzcd}
C \arrow[r] & P \arrow[rr,"f_1"] & & P^{(2)}_C \\
 & B \arrow[u] \arrow[r,"i_1"] & B^{(2)} \arrow[ur,"f"] & \\
 & & B \arrow[u,"i_2"] \arrow[r] & P \arrow[uu,"f_2"] \\
 & & & C \arrow[u]
\end{tikzcd}
$$
Here $(P^{(2)}_C,J)$ is the self-coproduct of $(P,J)$ in the category of prisms over $(C,J)$, and the existence of $(P^{(2)}_C,J)$ is from \cite[Corollary 3.12]{BS19}, where they also show that $P^{(2)}_C$ is the derived $(p, J)$-completion of $P\otimes^\mathbb L_C P$ and $(P^{(2)}_C,J)$ is bounded. As a bounded prism over $(C,J)$, $(P^{(2)}_C,J)$ is naturally inside $X_\Prism$, so $f$ exists by the universal property of $B^{(2)}$. So if we take the base change of $\psi$ along $f$, we get 
$$
f^\ast\psi: (\MM\otimes_{i_1,B} B^{(2)})\otimes_{B^{(2)},f}P^{(2)}_C \simeq (\MM\otimes_{i_2,B} B^{(2)})\otimes_{B^{(2)},f}P^{(2)}_C
$$
which is the same as an isomorphism:
$$
\psi_C: \MM_{P}\otimes_{P,f_1}P^{(2)}_C \simeq \MM_{P}\otimes_{P,f_2}P^{(2)}_C.
$$
Similar arguments will show $\psi_C$ satisfies the cocycle condition. And $\MM_{P}$ descents to a finite projective module over $C$ by \cite[Proposition A.3]{ALB}.
\end{proof}

\begin{remark}
We want to note that the structures of finite nonempty coproducts in the category of bounded prisms over a prism $(A,I)$ is much simpler compared with the structure of finite nonempty products in the category $(R/A)_{\Prism}$ (cf. \cite[Lecture V, Corollary 5.2]{BhaNotes18}).
\end{remark}

\begin{lemma}\label{lem-AEcoversfinal}
The prism $(A,(E))$ defined in \S\ref{subsrc-construct-A2} covers the final object $\ast$ in $\Shv(X_\Prism)$ in the sense of Proposition~\ref{prop-cover-final-object}. And $A^{(2)}$ and $A^{(3)}$ are bounded.
\end{lemma}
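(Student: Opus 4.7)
The boundedness of $A^{(2)}$ and $A^{(3)}$ is already contained in Corollary \ref{cor-filtration-shape}, so the only real content is the covering assertion. The plan is to produce, for each $(C, J) \in X_\Prism$ with structure map $R \to C/J$, a bounded prism $(P, JP) \in X_\Prism$ together with morphisms $(A,(E)) \to (P,JP)$ and $(C,J) \to (P,JP)$ such that the underlying ring map $C \to P$ is $(p,J)$-completely faithfully flat. The natural candidate for $P$ is the coproduct of $(A,(E))$ and $(C,J)$ in the category of bounded prisms over $R$, produced by the construction recalled in the discussion preceding this lemma.

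Concretely, first form the $(p, E, J)$-adic completion $A_0 := A \, \widehat\otimes_{\Z_p} C$, which inherits a $\delta$-structure from $A$ and $C$, and set $K := \ker\!\bigl(A_0 \twoheadrightarrow A/(E) \otimes_R C/J\bigr)$. Since $A/(E) \cong R$ and $C/J$ is an $R$-algebra, the target simplifies to $C/J$. One then defines $(P, JP)$ to be the prismatic envelope $A_0\{K/J\}^\wedge_\delta$. To put $(P,JP)$ into $X_\Prism$ and to obtain faithful flatness of $C \to P$, I will invoke \cite[Prop. 3.13]{BS19}; by the universal property of the coproduct the envelope can equivalently be formed starting from either $(A,(E)) \to (A_0, K)$ or $(C,J) \to (A_0, K)$, so the resulting prism is flat over both factors. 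To apply that proposition, I must check that a set of generators of $K$ is a $(p,J)$-completely regular sequence modulo $JA_0$. Using the decomposition $R = R_0 \otimes_{W(k)} \O_K = R_0[u]/E$ and the fact that $W(k)$ is formally étale over $\Z_p$ (so there is a canonical $W(k) \to C$), I can choose any lifts $\tilde t_i \in C$ of the images of the $t_i$ and a lift $\tilde\varpi \in C$ of $\varpi$; modulo $JA_0$ the ideal $K$ is then generated by $u\otimes 1 - 1\otimes \tilde\varpi$ together with the elements $t_i\otimes 1 - 1\otimes \tilde t_i$. Finally, faithful flatness follows from flatness together with the surjection $P/(p,J)P \twoheadrightarrow C/(p,J)C$ coming from the augmentation.

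The main obstacle is the verification of the regular-sequence condition in the presence of the general base ring $R$. For $R = \O_K$ the relevant sequence reduces to $(u - \tilde\varpi)$ together with the distinguished element $d$ generating $J$, and regularity is immediate. For the general base here, one has to use the formal étaleness of $R_0$ over $\breve R_0 = W\langle t_1,\dots,t_m\rangle$ (hypothesis (2)) to see that in $A_0/(p,J)$ the reductions $\overline{t_i\otimes 1 - 1\otimes \tilde t_i}$ behave like a regular sequence of "difference variables", and the Eisenstein property of $E$ to handle $u\otimes 1 - 1\otimes \tilde\varpi$; this is the analogue of the argument used in Proposition \ref{prop-key-property}(5) to show $A$-flatness of $A^{(2)}, A^{(3)}$ via the same \cite[Prop. 3.13]{BS19}, and will proceed similarly.
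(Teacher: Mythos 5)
The proposal takes a genuinely different route from the paper, but it has substantive gaps that go beyond unfilled routine details.

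\textbf{Strategy comparison.} The paper's proof does not try to produce an explicit cover of each $(C,J)$ by forming coproducts. Instead it constructs a quasi-syntomic \emph{perfectoid} cover $\tilde R_\infty$ of $R$ (via $\widehat K_\infty$ and the perfection of $R_0/(p)[\![u]\!]$) and then invokes the mechanism of \cite[Lemma 5.2.8]{ALB}: the associated perfect prism covers the final object of $\Shv(X_\Prism)$, and since $(A,(E))$ maps to it (through its perfection), the Breuil--Kisin prism inherits the covering property. Your approach---form the coproduct $(P,JP)$ of $(A,(E))$ and $(C,J)$ via a prismatic envelope and check that $C\to P$ is a flat cover---is essentially the strategy the paper \emph{does} use later in Proposition~\ref{prop-logcoverfinalobject}, but there the base is restricted to $R=\O_K$. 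Lemma~\ref{lem-AEcoversfinal} is stated in the generality of the base rings from \S\ref{sec-ring-strcuture}, which is precisely why the paper switches to the quasi-syntomic argument.

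\textbf{Gap 1: faithfulness.} Your final sentence claims faithful flatness ``from flatness together with the surjection $P/(p,J)P\twoheadrightarrow C/(p,J)C$ coming from the augmentation.'' There is no augmentation $P\to C$: such a map would require a prism map $(A,(E))\to (C,J)$, which is exactly what one does not have for a general $(C,J)\in X_\Prism$. The actual map goes $C/(p,J)\hookrightarrow P/(p,J)$, and what one must prove is surjectivity of $\Spec(P/(p,J))\to\Spec(C/(p,J))$. When the paper needs this (in Proposition~\ref{prop-logcoverfinalobject}, log case, $R=\O_K$) it establishes it by an explicit computation showing $P/(p,J)$ is \emph{free} over $C/(p,J)$, using Lemma~\ref{lem-delta-n} to identify a basis. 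That computation is the crux and cannot be waved through.

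\textbf{Gap 2: boundedness of $(P,JP)$ and membership in $X_\Prism$.} The paper explicitly cautions, in the discussion preceding Proposition~\ref{prop-cover-final-object}, that a coproduct of two bounded prisms over $R$ is not known to be bounded. Your plan forms the coproduct and then asserts it is in $X_\Prism$ without addressing this. The fix is to view the envelope as a prism \emph{over $(C,J)$} and deduce boundedness from $(p,J)$-complete flatness via \cite[Lemma 3.7]{BS19}, but this route requires the very flatness hypothesis you haven't yet verified, so it is not a free pass.

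\textbf{Gap 3: the regularity hypothesis for general $R$.} You acknowledge this is the main obstacle and say it ``will proceed similarly'' to Proposition~\ref{prop-key-property}(5). But that proposition handles the \emph{self}-coproduct of $A$, where the symmetry and the explicit $\delta$-structure on $A^{\ho 2}$ make the regular-sequence check tractable. For a general $(C,J)$ the lifts $\tilde t_i\in C$ of the images of $t_i$ under $R\to C/J$ are not canonical and not $\delta$-compatible with $\varphi_{R_0}(t_i)=t_i^p$; moreover the diagonal ideal $\ker(A\widehat\otimes_{\Z_p}C \to C/J)$ is a priori generated by $a\otimes 1 - 1\otimes\bar a$ for \emph{all} $a\in A$, not only $u$ and the $t_i$, and reducing it to a finite regular sequence requires carefully combining the formal étaleness of $R_0$ over $\breve R_0$ with the $W(k)$-structure on $C$. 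None of this is ``the same argument.''

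In short, the boundedness claim is correctly reduced to Corollary~\ref{cor-filtration-shape}, but the covering argument as written does not go through: the faithfulness step is wrong, the membership in $X_\Prism$ is unaddressed, and the regularity check for general base $R$ is nontrivially harder than the self-coproduct case. A direct envelope argument like yours does work for $R=\O_K$ (compare Proposition~\ref{prop-logcoverfinalobject}), but the lemma here is stated for the broader base rings of \S\ref{sec-ring-strcuture}, and for those the quasi-syntomic perfectoid cover approach is the one that closes.
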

\begin{proof}
The proof is similar to \cite[Lemma 5.14]{ALB}, we need to show for $R$ defined as in \S\ref{subsrc-construct-A2}, there exists a quasi-syntomic perfectoid cover of $R$. We will construct this perfectoid cover similar to \cite[\S 7.1]{Kim12}.

First recall we have $R=\O_K\otimes_{W}R_0$, and we fix a compatible system $\{\varpi_n\}_{n\geq 0}$ of $p^n$-th roots of a uniformizer $\varpi_0$ of $\O_K$ inside $E$. Let $\wh K_\infty$ be the $p$-adic completion of $\cup_n K(\varpi_n)$, we know $\wh K_\infty$ is perfectoid. Use $\overline{R}_0[\![u]\!]$ to denote $A/(p)=R/(\varpi)=R_0/(p)[\![u]\!]$, and let $\overline{R}_0[\![u]\!]_{\mathrm{perf}}^\wedge$ to be the $u$-adic completion of the direct perfection of $\overline{R}_0[\![u]\!]$, it can be checked directly that $(\overline{R}_0[\![u]\!]_{\mathrm{perf}}^\wedge[1/u],\overline{R}_0[\![u]\!]_{\mathrm{perf}}^\wedge)$ is a perfectoid affinoid $\wh K_\infty^\flat$-algebra, by tilt equivalence, there is a corresponded perfectoid affinoid $\wh K_\infty$-algebra. More explicitly, let $\tilde{R}_\infty = W(\overline{R}_0[\![u]\!]_{\mathrm{perf}}^\wedge)\otimes_{W(\O_{\wh K_\infty}^\flat),\theta} \O_{\wh K_\infty}$. Then  $\tilde{R}_\infty$ is naturally an  $R$-algebra, and we claim it is a quasi-syntomic cover of $R$.

To show this, by \cite[\S 7.1.2]{Kim12}, we have
$$
\tilde{R}_\infty = (R_0\wh {\otimes}_{W}\O_{\wh K_\infty})\wh {\otimes}_{\Z_p} \Z_p\langle T_i^{-p^\infty}\rangle
$$
where $T_i \in R_0$ is any lift of a $p$-basis of $R_0/(p)$. We have $\O_K\to \O_{\wh K_\infty}$ is a quasi-syntomic cover so by (2) of \cite[Lemma 4.16]{BMS2}, $R \to R_0\wh {\otimes}_{W}\O_{\wh K_\infty}$ is also a quasi-syntomic cover. And we have $S=\Z_p\langle T_i^{-p^\infty}\rangle$ is a quasi-syntomic ring, this can be seen by constructing  a perfectoid quasi-syntomic covering of it, so by Lemma 4.34 of $loc.cit.$, we have the complex $\mathbb{L}_{S/\Z_p} \in D(S)$ has $p$-complete Tor amplitude in $[-1,0]$. In particular, $\Z_p \to \Z_p\langle T_i^{-p^\infty}\rangle$ is also a quasi-syntomic cover, so applying (1) in Lemma 4.16 of $loc. cit.$,  $R \to \tilde{R}_\infty$ is a quasi-syntomic perfectoid cover.

The  boundedness of $\At$ and $A ^{(3)}$ is from (2) in Corollary~\ref{cor-filtration-shape}.
\end{proof}

\begin{corollary}\label{cor-crystal-descentdata}
Assume the the base $X=\Spf(R)$ satisfies the condition in \S\ref{sec-ring-strcuture}, and let $A$, $A^{(2)}$ and $A^{(3)}$ be  defined as in \S\ref{subsrc-construct-A2}, then a prismatic $F$-crystal $(\MM_{\Prism}, \varphi_{\MM_{\Prism}})$ in finite locally free $\O_\Prism$-modules of height $h$ over $X$ is the same as a Kisin module $(\MM,\varphi_\MM)$ of height $h$ over $A$ with a descent datum
$$
f: \MM \otimes_{A,i_1} A^{(2)} \simeq \MM \otimes_{A,i_2} A^{(2)}
$$
that compatible with the $\varphi$-structure and satisfies the cocycle condition over $A^{(3)}$.
\end{corollary}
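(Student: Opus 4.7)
The plan is to deduce this corollary essentially by assembling three earlier results. Lemma~\ref{lem-AEcoversfinal} says the prism $(A,(E))$ covers the final object of $\Shv(X_\Prism)$ in the sense of Proposition~\ref{prop-cover-final-object} and that $A^{(2)}, A^{(3)}$ are bounded; Proposition~\ref{prop-selfproduct} identifies $A^{(2)}$ (resp.\ $A^{(3)}$) with the self-coproduct (resp.\ triple coproduct) of $(A,(E))$ in $X_\Prism$; and the flatness in Proposition~\ref{prop-key-property}(5) ensures base change to $A^{(2)}$ is well-behaved. So the hypotheses of Proposition~\ref{prop-cover-final-object} hold with $(B,I)=(A,(E))$, and I can simply invoke it.

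First I would apply Proposition~\ref{prop-cover-final-object} to obtain an equivalence between prismatic crystals $\MM_\Prism$ in finite locally free $\O_\Prism$-modules and pairs $(\MM,f)$, where $\MM := \MM_\Prism((A,(E)))$ is a finite projective $A$-module and the descent datum $f : \MM\otimes_{A,i_1} A^{(2)}\simeq \MM\otimes_{A,i_2} A^{(2)}$ comes from the crystal property applied to the two structure maps $i_1,i_2 : (A,(E))\to (A^{(2)},(E))$ and satisfies the cocycle condition over $A^{(3)}$.

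Next I would translate the $F$-structure. Evaluating $\varphi_{\MM_\Prism}$ at $(A,(E))$ gives a $\varphi$-semilinear endomorphism $\varphi_\MM : \MM\to\MM$. The condition that $\coker(\varphi^*\MM_\Prism \to \MM_\Prism)$ is killed by $\II_\Prism^h$, evaluated at $(A,(E))$ where $\II_\Prism = (E)$, becomes precisely the height-$h$ Kisin module condition that $\coker(1\otimes\varphi_\MM)$ is killed by $E^h$. The crystal property then forces $f$ to intertwine the two induced $\varphi$-structures on $\MM\otimes_{A,i_j} A^{(2)}$, so $f$ is automatically $\varphi$-compatible in the sense of the statement. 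Conversely, a Kisin module $(\MM,\varphi_\MM)$ together with a $\varphi$-compatible cocycle descent datum $f$ reconstructs, via Proposition~\ref{prop-cover-final-object}, a prismatic crystal $\MM_\Prism$; the endomorphism $\varphi_\MM$ lifts uniquely to an $F$-structure, and the height condition at any test prism $(C,J)\in X_\Prism$ can be checked after the faithfully flat base change provided by Lemma~\ref{lem-AEcoversfinal}.

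The only subtlety I anticipate is the mild discrepancy between \emph{finite projective} $A$-modules (as produced by the descent procedure) and the \emph{finite free} convention used in the Kisin module definition of \S\ref{subsec-Kisin-st}: in the principal case $R_0 = W(k)$ the ring $A = W(k)[\![u]\!]$ is a regular local ring so any finite projective module is free and the issue disappears, while in the general setup of \S\ref{sec-ring-strcuture} the notion of Kisin module should be read with finite projective replacing finite free throughout. Beyond this bookkeeping, no new input is needed and the corollary follows formally.
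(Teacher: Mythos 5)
Your proposal is correct and is essentially the argument the paper intends: it assembles Proposition~\ref{prop-cover-final-object} (applied with $(B,I)=(A,(E))$), Lemma~\ref{lem-AEcoversfinal} (which supplies the cover hypothesis and the boundedness of $A^{(2)}$, $A^{(3)}$), and Proposition~\ref{prop-selfproduct} (which identifies $A^{(2)}$, $A^{(3)}$ with the self-products), then translates the $F$-structure by evaluating at $(A,(E))$ and checking the height condition after faithfully flat base change. Your final remark about finite projective versus finite free $A$-modules in the general base-ring setting of \S\ref{sec-ring-strcuture} is the only bookkeeping wrinkle, and you handle it correctly.
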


\begin{theorem}(\cite[Theorem 1.2]{BS2021Fcrystals})\label{Thm-main-1}  
Let $T$ be a crystalline representation of $G_K$ over a $\Z_p$-lattice of Hodge-Tate weights in $[0,h]$, then there is a prismatic $F$-crystal $\MM_{\Prism}(T)$ over $X_\Prism$ of height $h$ over $X$ such that $\MM_{\Prism}((A,E))$ is the Kisin module associated to $T$. Moreover, the association of $T\mapsto \MM_{\Prism}(T)$ induces an equivalence of the above two categories. 
\end{theorem}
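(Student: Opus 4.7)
The plan is to reduce the theorem to the equivalence between $(\varphi,\hat G)$-modules and semi-stable lattices (Theorem~\ref{thm-2}) together with the crystalline criterion of Corollary~\ref{cor-crystalline}. By Corollary~\ref{cor-crystal-descentdata}, a prismatic $F$-crystal of height $h$ on $X_\Prism$ is the same as a Kisin module $(\MM,\varphi_\MM)$ of height $h$ equipped with a $\varphi$-equivariant descent isomorphism
\[
f:\MM\otimes_{A,i_1}\At \;\overset{\sim}{\longrightarrow}\; \MM\otimes_{A,i_2}\At
\]
satisfying the cocycle condition over $A^{(3)}$. It therefore suffices to produce a natural equivalence between this descent category and the image of the crystalline-representation functor $T\mapsto \MM(T)$ from Theorem~\ref{thm-2}.

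Given a $G_K$-stable lattice $T$ in a crystalline representation, let $\MM=\MM(T)$ with the embedding $\iota_\MM$ of \eqref{eqn-iota-A}. Pick $\tilde\tau\in G_K$ lifting the selected $\tau\in\hat G$, so that $\tilde\tau(u)-u=Ez$. By Corollary~\ref{cor-crystalline} (whose proof relies on Theorem~\ref{Thm-1}), one has $\tilde\tau(\MM)\subset \At\otimes_A\MM$ inside $T^\vee\otimes\Ainf$. Using the description in Remark~\ref{rem-embedding-depend}, the embedding $\iota_A:\At\hookrightarrow\Ainf$ is characterized precisely by $i_1(u)\mapsto [\varpi^\flat]=u$ and $i_2(u)\mapsto \epsilon u=\tilde\tau([\varpi^\flat])$. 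Thus the $\tilde\tau$-action, written as $\tilde\tau(e_1,\ldots,e_d)=(e_1,\ldots,e_d)X_{\tilde\tau}$ with $X_{\tilde\tau}\in \mathrm{M}_d(\At)$, linearizes to a $\varphi$-equivariant isomorphism $f_\tau:\MM\otimes_{A,i_2}\At\simeq \MM\otimes_{A,i_1}\At$. To check the cocycle condition on $A^{(3)}$, I would identify $A^{(3)}\hookrightarrow\Ainf$ with the prism determined by the three maps $u\mapsto u,\,\tilde\tau(u),\,\tilde\tau^2(u)$; then the cocycle identity for $f_\tau$ reduces to the relation $X_{\tilde\tau^2}=X_{\tilde\tau}\cdot\tilde\tau(X_{\tilde\tau})$, which holds by the group-theoretic compatibility of the $G_K$-action on $T^\vee\otimes\Ainf$. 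This constructs the functor $T\mapsto\MM_\Prism(T)$ whose evaluation at $(A,(E))$ is the Kisin module of $T$.

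For full faithfulness, both categories embed into the category of étale $\varphi$-modules over $A[1/E]^{\wedge}_p$ via $\MM\mapsto\MM[1/E]$; since Kisin modules attached to crystalline representations are determined by their underlying $G_\infty$-representation (a standard fact used throughout \cite{KisinFcrystal}, \cite{liu-notelattice}), the functor is fully faithful on crystalline lattices, and the descent datum is recovered from $X_{\tilde\tau}$ tautologically. For essential surjectivity, given an abstract $(\MM,\varphi_\MM,f)$ as above, base-change $f$ along $\alpha:\At\to\At_\st$ of \S\ref{subsec-Ast} to obtain $\MM\otimes_{A,i_2}\At_\st\simeq \MM\otimes_{A,i_1}\At_\st$. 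I would use this to define a $\tilde\tau$-action on $\wh\MM=\At_\st\otimes_A\MM$, extend it to $\hat G$ by letting $H_K$ act trivially (compatible with axiom (2) of $(\varphi,\hat G)$-modules), and check continuity using the $(p,E)$-adic convergence as in Lemma~\ref{lem-equivalenceofthm}. The cocycle condition on $A^{(3)}$ (interpreted again via iterated applications of $\tilde\tau$) guarantees that $\tilde\tau^n$ is well-defined and yields an action of the profinite group $\hat G$; triviality modulo $I_+\At_\st$ follows because $f\equiv\mathrm{id}\pmod{I_+\At_\st}$ by Lemma~\ref{lem-properties-Ast}(2) applied to the descent data. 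The resulting $(\varphi,\hat G)$-module corresponds by Corollary~\ref{cor-crystalline} to a crystalline representation, since the descent data is defined already over $\At$ rather than merely $\At_\st$.

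The main obstacle will be verifying essential surjectivity rigorously: one must argue that the iteration of the $\tilde\tau$-action built from the descent datum extends continuously to an action of the full group $\hat G$ on $\wh\MM$, and that the resulting $(\varphi,\hat G)$-module satisfies all axioms of Definition~\ref{def-Fcrystal}. The key input is the cocycle relation over $A^{(3)}$, which must be matched carefully with the group law of $\hat G$, and this requires a precise description of $A^{(3)}$ as classifying triples of $\delta$-maps into $\Ainf$ parallel to the identification of $\At$ in Proposition~\ref{prop-selfproduct} and Remark~\ref{rem-embedding-depend}, as well as the fact that $\hat G$ is topologically generated by $\tau$ and $H_K$, with $H_K$ acting trivially on $\MM\subset \Ainf\otimes_A\MM$.
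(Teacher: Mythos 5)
Your overall strategy --- reduce to Kisin modules with descent data via Corollary~\ref{cor-crystal-descentdata}, then compare against $(\varphi,\hat G)$-modules and the crystalline criterion of Corollary~\ref{cor-crystalline} --- is exactly the spirit of the paper's proof. You also correctly locate the pivot: the $\tilde\tau$-action, written as $X_{\tilde\tau}$ via $\iota_\MM$, is what should become the descent isomorphism, and the identification of Remark~\ref{rem-embedding-depend} is the right tool. However, there are two genuine gaps.

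First, your cocycle verification rests on the claim that the map $A^{(3)}\to\Ainf$ determined by the triple $u\mapsto u,\,\tilde\tau(u),\,\tilde\tau^2(u)$ is injective, so that checking $X_{\tilde\tau^2}=X_{\tilde\tau}\cdot\tilde\tau(X_{\tilde\tau})$ in $\Ainf$ suffices to verify $i_{13}^\ast f=i_{23}^\ast f\circ i_{12}^\ast f$ in $A^{(3)}$. The paper only establishes the injectivity of $\iota_A:A^{(2)}\hookrightarrow\Ainf$ (Corollary~\ref{cor-inj}), via a fairly delicate transcendence argument through $A^{(2)}_{\max}$ and $\Amax$, and it never proves the corresponding statement for $A^{(3)}$. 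In fact, Remark~\ref{rem-inputofWu} makes clear that the authors' original strategy of arguing directly inside $W(\C_p^\flat)$ did not work out already for $\At$ and $\At_{\st}$, which is why they go a different route. What the paper actually uses is the injection $A^{(3)}\hookrightarrow B^{(3)}$ from Lemma~\ref{lem-intersection}, combined with the fact that the descent datum $c$ of the \'etale $\varphi$-module $\M_A$ over $B^{(2)}$ (coming from the $G_K$-representation via the machinery of \S\ref{subsec-phi-tau} and \cite{wu2021galois}) already satisfies the cocycle condition over $B^{(3)}$; since $f\otimes_{\At}B^{(2)}=c$ and $A^{(3)}\hookrightarrow B^{(3)}$, the cocycle for $f$ follows. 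Without this, your reduction to a matrix identity is not justified.

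Second, even granting the cocycle, your essential surjectivity argument does not close the loop. Given an abstract $(\MM,\varphi_\MM,f)$, you construct a $(\varphi,\hat G)$-module and hence a crystalline lattice $T$; but you still need to show that applying the forward functor to this $T$ recovers the original descent datum $f$, i.e., that the ``$\tau$-action'' you manufacture from $f$ agrees with the natural $\tilde\tau$-action on $T^\vee\otimes\Ainf$. This is precisely what the evaluation-at-$\tau$ criterion (Theorem~\ref{thm-evaluation-1} and Lemmas~\ref{lem-evaluation-1},~\ref{lem-evaluation-2}) is built to do: two descent data over $B^{(2)}$ (or $B^{(2)}[1/p]$) agree iff their evaluations at $\tau$ agree, and these evaluations are the semilinear $\tilde\tau$-actions. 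You would need to develop or invoke something equivalent to identify the two constructions; the compatibility is not automatic, and the check for the $G_\infty$-action and the triviality modulo $I_+$ (axioms (2) and (3) of a $(\varphi,\hat G)$-module) also go through the crystal property evaluated along $\iota_\gamma^{(2)}$ and the reduction $(A,(E))\to (W(k),(p))$, not just Lemma~\ref{lem-properties-Ast}(2).
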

We will prove this theorem in \S \ref{subsec-pris-crystal-proof}. 

\begin{remark}
Theorem~\ref{Thm-main-1} was first established by Bhatt-Scholze in \cite[Theorem 1.2]{BS2021Fcrystals}. The harder direction of \cite[Theorem 1.2]{BS2021Fcrystals} is to show for all $\Z_p$-lattices inside crystalline representations of $G_K$, one can attach a prismatic $F$-crystal. Using the theory of $(\varphi,\hat{G})$-modules, we have shown in \S\ref{subsec-G-image}, given a crystalline representation of $G_K$ over a $\Z_p$-lattices $T$, we can attach a Kisin module $\MM$ and a descent data\footnote{Strictly speaking, \S\ref{subsec-G-image} only constructs an isomorphism but have not checked that  it satisfies cocycle condition, which will be proved in \S \ref{subsec-pris-crystal-proof}.} 
$$
f_{\tilde{\tau}}: \MM\otimes_{A,i_1} A^{(2)}[\frac{1}{p}] \simeq \MM\otimes_{A,i_2} A^{(2)}[\frac{1}{p}]
$$
comes from the $\tau$-action. We just show this is a $\varphi$-equivariant isomorphism, and we need to show it gives rise to a descent data over $A^{(2)}$. As we have mentioned in Remark~\ref{rem-inputofWu}, we can not find a direct ring theoretic proof of this. Our idea is to use result of \cite{wu2021galois} or \cite[Corollary 3.7]{BS2021Fcrystals}: the underlying Galois representation $T$ gives a descent data over $A^{(2)}[\frac{1}{E}]^\wedge_p$. To finish our proof, we need to compare this descent data with $f_{\tilde{\tau}}$ over $A^{(2)}[\frac{1}{E}]^\wedge_p[\frac{1}{p}]$. This leads us to develop a ``prismatic" $(\varphi,\tau)$-module theory in the next subsection, where we will have Lemma~\ref{lem-evaluation-1} and Lemma~\ref{lem-evaluation-2} to help us compare descent data over $A^{(2)}[\frac{1}{E}]^\wedge_p$ and $A^{(2)}[\frac{1}{E}]^\wedge_p[\frac{1}{p}]$ via an evaluation map to $W(\O_{\hat{L}}^\flat)$.
\end{remark}

\subsection{Prismatic \texorpdfstring{$(\varphi,\tau)$}{(phi,tau)}-module theory}\label{subsec-phi-tau} In this subsection, we make some preparations to prove Proposition \ref{thm-1prime} and Theorem \ref{Thm-main-1}. So we restrict to the case that $R=\O_K$ is a complete DVR with perfect residue field. 
\begin{definition}
An \'etale $\varphi$-module over $A[1/E]^\wedge_p$ is a pair $(\M, \varphi_\M)$ such that $\M$ is a finite free module over $A[1/E]^\wedge_p$, and $\varphi_\M$ is an isomorphism
$$
\varphi_\M: \varphi^\ast \M: = A[1/E]^\wedge_p\otimes_{\varphi , A[1/E]^\wedge_p} \M  \simeq \M
$$
of $A[1/E]^\wedge_p$-modules. And we define an \'etale $\varphi$-module over $A[1/E]^\wedge_p[1/p]$ to be a $\varphi$-module over $A[1/E]^\wedge_p[1/p]$ such that it is obtained from an \'etale $\varphi$-module over $A[1/E]^\wedge_p$ by base change.

An \'etale $\varphi$-module over $A[1/E]^\wedge_p$ (resp. $A[1/E]^\wedge_p[1/p]$) with descent data is a triple $(\M, \varphi_\M, c)$, with $(\M, \varphi_\M)$ an \'etale $\varphi$-module over $A[1/E]^\wedge_p$ (resp. $A[1/E]^\wedge_p[1/p]$), and $c$ an isomorphism
$$
c: \M \otimes_{A[1/E]^\wedge_p,i_1} B^{(2)} \simeq \M \otimes_{A[1/E]^\wedge_p,i_2} B^{(2)}
$$
$$
(\text{resp. }c: \M \otimes_{A[1/E]^\wedge_p[1/p],i_1} B^{(2)}[1/p] \simeq \M \otimes_{A[1/E]^\wedge_p[1/p],i_2} B^{(2)}[1/p])
$$
which is compatible with the $\varphi$-structure and satisfies the cocycle condition over $B^{(3)}$ (resp. $B^{(3)}[\frac 1 p]$). Here for $j=1,2$, $i_j: A[1/E]^\wedge_p \to B^{(2)}$ is the map induced from $i_j: (A,(E)) \to (A^{(2)},(E))$. 
\end{definition}

\begin{remark}\label{rmk-Wuandevaluation}
It is the main result in \cite{wu2021galois} and \cite[\S2]{BS2021Fcrystals} that there is an equivalence of the category of lattices in representations of $G_K$ and the category of prismatic $F$-crystals in finite locally free $\O_\Prism[1/I]^\wedge_p$-modules over $\O_K$. Also by \cite[Proposition 2.7]{BS2021Fcrystals}, one can show prismatic $F$-crystals in finite locally free $\O_\Prism[1/I]^\wedge_p$-modules is the same as \'etale $\varphi$-modules over $A[1/E]^\wedge_p$ with descent data. 
\end{remark}

The aim of this subsection is to use the ideas in \cite{wu2021galois} and \cite[\S 5.5]{KedlayaLiu-relativeII} show that \'etale $\varphi$-modules over $A[1/E]^\wedge_p$ (resp. $A[1/E]^\wedge_p[1/p]$) with descent data are equivalence to $\RepZp(G_K)$ (resp. $\RepQp(G_K)$). More importantly, for all $\gamma \in \hat{G}$, we will construct an evaluation at $\gamma$ map
$$
e_\gamma: B^{(2)} \to W(\hat{L}^\flat)
$$
and use it to study $\varphi$-equivariant morphisms between finite free $B^{(2)}$ and $B^{(2)}[1/p]$-modules. We will see the evaluation at $\tau$ map will play a crucial role in our proof of Proposition~\ref{thm-1prime} and the Theorem~\ref{Thm-main-1} below.

Recall in \S\ref{subsec-phiGhatmodules}, we define $L = \bigcup\limits_{n =1}^\infty K_{\infty} (\zeta_{p ^n})$, $\hat G : = \Gal(L /K) $ and $H_K : = \Gal (L / K _\infty)$. Moreover, we define $\wh K_{1^\infty}$ to be the $p$-adic completion of $\cup_{n\geq 0} K(\zeta_{p^n})$, and we let $\hat{L}$ to be the $p$-adic completion of $L$. It is clear that $A[1/E]_p^\wedge\subset W(\hat L ^\flat)^{H_K}$.
Recall the following definition and theorem in \cite{Caruso-phitau}:

\begin{theorem}\label{thm-caruso}
An \'etale $(\varphi,\tau)$-module is a triple $(\M, \varphi_{\M}, \hat{G})$ where
\begin{itemize}
    \item $(\M, \varphi_{\M})$ is an \'etale $\varphi$-module over $A[1/E]^\wedge_p$;
    \item $\hat{G}$ is a continuous $W(\hat{L}^\flat)$-semi-linear $\hat{G}$-action on $$\hat{\M}:=W(\hat{L}^\flat)\otimes_{A[1/E]^\wedge_p}\M$$ such that $\hat{G}$ commutes with $\varphi_{\M}$;
    \item regarding $\M$ as an $A[1/E]^\wedge_p$-submodule of $\hat{\M}$, we have $\M\subset \hat{\M}^{H_K}$.
\end{itemize} 
Then there is an anti-equivalence of the category of \'etale $(\varphi,\tau)$-modules and $\RepZp(G_K)$, such that if $T$ corresponds to $(\M, \varphi_{\M}, \hat{G})$, then
$$
T^\vee = (\hat{\M}\otimes_{W(\hat{L}^\flat)}W(\C_p^\flat))^{\varphi=1}.
$$
\end{theorem}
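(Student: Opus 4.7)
The plan is to deduce this essentially classical theorem of Caruso \cite{Caruso-phitau} from Fontaine's equivalence between $\RepZp(G_\infty)$ and \'etale $\varphi$-modules over $A[1/E]^\wedge_p$, enhanced by Galois descent along the normal subgroup $G_L := \Gal(\overline K / L)$ of $G_K$. One first checks that $A[1/E]^\wedge_p$ agrees with the classical Fontaine ring $\O_\E$: since $E \equiv u^e \pmod p$, the $p$-adic completions of $A[1/E]$ and $A[1/u]$ coincide, so Fontaine's usual theorem applies.

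Going from $T \in \RepZp(G_K)$ to $(\M, \varphi_\M, \hat G)$, I would restrict $T$ to $G_\infty$ and apply Fontaine's equivalence to obtain an \'etale $\varphi$-module $\M$ over $A[1/E]^\wedge_p$ with a canonical $\varphi$- and $G_\infty$-equivariant identification $\M \otimes_{A[1/E]^\wedge_p} W(\C_p^\flat) \cong T^\vee \otimes_{\Z_p} W(\C_p^\flat)$. Setting $\hat\M := W(\hat L^\flat) \otimes_{A[1/E]^\wedge_p} \M$, the inclusion $G_L \subset G_\infty$ forces $G_L$ to act trivially on $\M$ inside the above identification, and $W(\C_p^\flat)^{G_L} = W(\hat L^\flat)$ by Ax--Sen--Tate for tilts; hence Galois descent yields $\hat\M \cong (T^\vee \otimes_{\Z_p} W(\C_p^\flat))^{G_L}$. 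Since $G_L$ is normal in $G_K$, the ambient $G_K$-action descends to the required $\hat G$-action on $\hat\M$; compatibility with $\varphi$ is automatic, and the condition $\M \subset \hat\M^{H_K}$ holds because $H_K$ fixes $A[1/E]^\wedge_p$.

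In the reverse direction, I would define $T^\vee := (\hat\M \otimes_{W(\hat L^\flat)} W(\C_p^\flat))^{\varphi = 1}$ with the $G_K$-action induced from $G_K \twoheadrightarrow \hat G$ acting on $\hat\M$ tensored with the natural action on $W(\C_p^\flat)$. The base change to $W(\C_p^\flat)$ equals $\M \otimes_{A[1/E]^\wedge_p} W(\C_p^\flat)$, so Fontaine's classical equivalence ensures $T^\vee$ is a finite free $\Z_p$-module of the correct rank with continuous $G_K$-action. Quasi-inverseness follows: on the representation side Fontaine's theorem recovers $T^\vee$ from the $\varphi$-invariants of its Witt-vector base change, while on the module side one uses $H_K$-descent from $W(\hat L^\flat)$ to $W(\hat K_\infty^\flat)$ combined with Fontaine's equivalence for $G_\infty$-representations to recover $\M$ from $\hat\M$.

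The main obstacle is verifying that the ambient $G_K$-action on $T^\vee \otimes_{\Z_p} W(\C_p^\flat)$ genuinely restricts to the smaller submodule $\hat\M$ rather than merely stabilizing its further base change; this is exactly the content of the $G_L$-descent identification, for which the Ax--Sen--Tate theorem for tilts is indispensable. Once this is secured, continuity of the $\hat G$-action and the remaining compatibilities with $\varphi$ and $H_K$ follow formally from the construction.
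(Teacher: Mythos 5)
The paper does not prove this theorem: it is stated as a recall of Caruso's result (\cite{Caruso-phitau}) and is used as a black box, so there is no in-paper argument to compare against. Your sketch is a correct reconstruction of the standard route to such a statement: identify $A[1/E]^\wedge_p$ with the Fontaine ring $\O_\E$ using $E\equiv u^e\pmod p$; apply Fontaine's $G_\infty$-equivalence; then descend from $W(\C_p^\flat)$ to $W(\hat L^\flat)$ by taking $G_L$-invariants, using that $G_L\subset G_\infty$ acts trivially on $\M$ in the comparison isomorphism and that $W(\C_p^\flat)^{G_L}=W(\hat L^\flat)$ because $\hat L$ is perfectoid; finally use normality of $G_L$ in $G_K$ to see that $G_K$ preserves $\hat\M=(T^\vee\otimes W(\C_p^\flat))^{G_L}$ and that the induced action factors through $\hat G$. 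This is essentially the argument in Caruso and in later expositions. Two points in your sketch deserve a bit more care before this could be called a complete proof: (i) you assert continuity of the $\hat G$-action and quasi-inverseness of the two constructions as "formal," but both require a short verification (continuity of the descended semilinear action on $\hat\M$ is where one typically uses that $T$ is a finitely generated $\Z_p$-module and the topology on $W(\hat L^\flat)$ is the weak topology; quasi-inverseness needs the observation that the map $\hat\M\otimes_{W(\hat L^\flat)}W(\C_p^\flat)\to T^\vee\otimes W(\C_p^\flat)$ built from descent is $G_K$-equivariant, not merely $G_\infty$-equivariant), and (ii) the step "$(\M\otimes W(\C_p^\flat))^{G_L}=\M\otimes W(\C_p^\flat)^{G_L}$" relies on $\M$ being finite free over $A[1/E]^\wedge_p$ (not just finitely generated) and on $A[1/E]^\wedge_p\subset W(\hat L^\flat)$, both of which hold here but should be noted. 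With those details filled in, your argument is sound.
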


One of the basic facts used in the theory of \'etale $(\varphi,\tau)$-modules developed in \cite{Caruso-phitau} is that $\Gal(\hat{L}/\wh K_{1^\infty})\simeq \Z_p$, and we write $\tau$ to be a topological generator of $\Gal(\hat{L}/K_{1^\infty})$ determined by $\tau(\varpi_n)=\zeta_{p^n}\varpi_n$ as the discussion before Corollary \ref{cor-crystalline}. Also  $\hat{G}$ is topologically generated by $\tau$ and $H_K$, so in particular,  the $\hat{G}$-action on $\hat{\M}$ is determined by the action of $\tau$ on $\M$ inside $\hat{\M}$. As discussed before, we will provide a direct correspondence of the category of \'etale $(\varphi,\tau)$-modules and the category of \'etale $\varphi$-modules over $A[1/E]^\wedge_p$ with descent data. Moreover, we will construct an evaluation at $\tau$ map:
$$
e_\tau: B^{(2)} \to W(\hat{L}^\flat),
$$
and show that the $\tau$-action on $\M$ inside $\hat{\M}$ is given by the base change of the descent data along $e_\tau$. 

\begin{remark}
In \cite[Theorem 5.2]{wu2021galois}, they prove a similar equivalence but for \'etale $(\varphi,\Gamma)$-modules. The theory of \'etale $(\varphi,\Gamma)$-module is defined for the cyclotomic tower $K_{1^\infty}$ over $K$ while the theory of \'etale $(\varphi,\tau)$-modules is defined using the Kummer tower $K_{\infty}$. We will use a lot of ideas and results developed in \cite{wu2021galois} when proving our claims in this subsection. The main difficulty in our situation is that the Kummer tower $K_\infty$ is not a Galois tower over $K$. To deal with this, we have to use the idea in \cite[\S 5.5]{KedlayaLiu-relativeII}. Roughly speaking, we will take the Galois closure $L$ of $K_\infty$,  then prove results over ${L}$, then descent back to $K_\infty$ using $K_\infty={L}^{H_K}$. 

One should be able to construct the evaluation map in the content of \cite{wu2021galois} the same way as we define in this subsection. This map will give a more direct correspondence of the descent data and the $\Gamma$-actions on \'etale $(\varphi,\Gamma)$-modules.
\end{remark}

By \cite[Lem 3.9]{BS19}, any prism $(B , J)$ admits a map into its perfection $(B_{\perf}, JB_{\perf})$. The following theorem (\cite[Thm 3.10]{BS19}) is the key to understanding perfect prisms.  
\begin{theorem}\label{thm-perfectprismandperfectoidring}
$(A,I)\to A/I$ induces an equivalence of the category of perfect prisms over $\O_K$ with the category of integral perfectoid rings over $\O_K$.
\end{theorem}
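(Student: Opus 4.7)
The plan is to exhibit quasi-inverse functors in each direction and check they are well-defined. In one direction, send a perfect prism $(A,I)$ to $A/I$ with its induced $\O_K$-algebra structure. In the other, send a perfectoid $\O_K$-algebra $R$ to the prism $(W(R^\flat), \ker\theta)$, where $R^\flat := \lim_{\varphi} R/p$ is the tilt and $\theta: W(R^\flat) \to R$ is Fontaine's map.

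First I would set up the backward direction, which is more concrete. Given a perfectoid $R$, its tilt $R^\flat$ is a perfect ring of characteristic $p$, so $W(R^\flat)$ is a perfect $\delta$-ring (the $\delta$-structure coming from the Witt vector Frobenius). Because $R$ is perfectoid over $\O_K$, the element $E \in W(k) \subset W(R^\flat)$ maps into $\ker\theta$, and one checks using the definition of perfectoid rings that $\ker\theta$ is principal, generated by a distinguished element (for instance, by $\xi = \mu/\varphi^{-1}(\mu)$ when $R$ contains a system of $p$-power roots of unity, and by $E$ itself together with a unit correction more generally). The bounded prism axiom is automatic in the perfect case since the $\delta$-ring is $p$-complete and $p$-torsion free.

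Next I would establish the forward direction. For a perfect prism $(A,I)$, the underlying $\delta$-ring $A$ is $p$-torsion free and $p$-adically complete, so $A$ is canonically isomorphic to $W(A/p)$ with $A/p$ perfect, by the standard equivalence between perfect $\delta$-rings and perfect $\F_p$-algebras. Writing $R := A/I$ and using that $I$ is generated by a distinguished element $d$, one verifies the perfectoid axioms: $R$ is $p$-adically complete, the Frobenius on $R/p$ is surjective (using the surjectivity of $\varphi$ on $A$ and that $d \equiv $ unit $\cdot\, p \pmod{\varphi(I)}$), and $\ker(\varphi: R/p \to R/p)$ has the required structure. One also checks the natural identification of $A/p$ with $R^\flat$ through the map $A/p \to \lim_\varphi R/p$ induced from reduction.

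Finally I would prove the two functors are quasi-inverse. In one direction, starting with perfectoid $R$, the composition yields $W(R^\flat)/\ker\theta \cong R$, which is just $\theta$'s fundamental property. In the other direction, starting with a perfect prism $(A,I)$, one gets $(W((A/I)^\flat), \ker\theta)$; the comparison $A \simeq W(A/p) \simeq W((A/I)^\flat)$ identifies the underlying rings, and the map $A \to A/I$ is seen to coincide with $\theta$, so the ideals match. The main obstacle will be verifying that $\ker\theta$ is principal generated by a distinguished element for a general perfectoid $R$ over $\O_K$ (rather than just over $\Z_p\langle p^{1/p^\infty}\rangle$ say); this typically requires descending from the perfectoid cover $W(R^\flat)\to R$ and invoking that $R$ has enough $p$-power roots in a compatible way, which is where the integral perfectoid hypothesis enters most strongly. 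Everything else is essentially bookkeeping once this key structural input is in place.
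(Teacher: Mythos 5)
The paper does not prove this statement; it cites it directly as \cite[Thm.\ 3.10]{BS19} and moves on, so there is no in-paper argument to compare yours against. Taken on its own, your sketch reproduces the standard Bhatt--Scholze proof: quasi-inverse functors $(A,I)\mapsto A/I$ and $R\mapsto(W(R^\flat),\ker\theta)$, with the nontrivial input concentrated in two facts, namely that $\ker\theta$ is generated by a distinguished element and that the underlying ring of a perfect prism is $p$-torsion free (so $A\simeq W(A/p)$ via the tilting correspondence for perfect $\delta$-rings). You correctly identify the first of these as the structural heart of the matter.

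Two small imprecisions are worth flagging. First, $E$ is an Eisenstein polynomial in $W(k)[u]$, not an element of $W(k)$, so the assertion that ``$E\in W(k)\subset W(R^\flat)$ maps into $\ker\theta$'' is not well-posed; there is no preferred $u$ in $W(R^\flat)$ to evaluate $E$ at without further choices. In any case this step is not needed: the $\O_K$-structure on each side of the equivalence is just a map $\O_K\to A/I=R$, and it is carried along automatically by the functor $(A,I)\mapsto A/I$; no compatibility of $E$ with $\ker\theta$ has to be checked separately. Second, the $p$-torsion freeness of a perfect prism, which you invoke to identify $A$ with $W(A/p)$, is itself a lemma in \cite{BS19} (it follows from the prism axioms but is not one of them), and should be flagged as such rather than treated as immediate. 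Neither issue undermines the overall structure of the argument.
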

 
Let $(A,(E))$ be the Breuil--Kisin prism defined in \S\ref{subsrc-construct-A2}, we have
\begin{lemma}\label{lem-perfectionofA} $A_{\perf}\simeq W(\O_{\wh K_\infty}^\flat)$.
\end{lemma}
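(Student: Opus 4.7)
\textbf{Plan for Lemma~\ref{lem-perfectionofA}:} The plan is to build a natural map of perfect prisms from $(A_{\perf}, EA_{\perf})$ to $(W(\O_{\wh K_\infty}^\flat), \ker\theta)$ and then invoke Theorem~\ref{thm-perfectprismandperfectoidring} to reduce the question to an isomorphism of perfectoid rings, which we then verify on the tilted side.

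First I would define a map of $\delta$-rings $f: A = W(k)[\![u]\!] \to W(\O_{\wh K_\infty}^\flat)$ by sending $u$ to the Teichm\"uller lift $[\varpi^\flat]$. This is well-defined as a $\delta$-map because $\delta(u) = 0$ in $A$ and $\delta([\varpi^\flat]) = 0$ in any perfect $\delta$-ring, and because $W(\O_{\wh K_\infty}^\flat)$ is $(p, [\varpi^\flat])$-adically complete. Since $\theta(f(E)) = E(\varpi) = 0$, the image $f(E)$ lies in $\ker\theta$, so $f$ is a morphism of prisms $(A,(E)) \to (W(\O_{\wh K_\infty}^\flat), \ker\theta)$. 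Because the target is a perfect prism, the universal property recalled in \cite[Lem. 3.9]{BS19} allows $f$ to factor uniquely through the perfection, giving a map of perfect prisms $\tilde f: (A_{\perf}, EA_{\perf}) \to (W(\O_{\wh K_\infty}^\flat), \ker\theta)$.

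Next I would invoke Theorem~\ref{thm-perfectprismandperfectoidring}: the functor $(B, I) \mapsto B/I$ is an equivalence between perfect prisms over $\O_K$ and integral perfectoid $\O_K$-algebras. Thus, $\tilde f$ is an isomorphism iff the induced map
\[
\bar f : A_{\perf}/EA_{\perf} \longrightarrow W(\O_{\wh K_\infty}^\flat)/\ker\theta = \O_{\wh K_\infty}
\]
is an isomorphism of perfectoid rings. Since both source and target are $p$-torsion free and $p$-adically complete, it is equivalent to check that $\tilde f \bmod p$ is an isomorphism of perfect $\F_p$-algebras; by Witt-vector functoriality on perfect $\F_p$-algebras, this already forces $\tilde f$ itself to be an isomorphism.

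Finally, I would perform the mod-$p$ comparison explicitly. Unwinding definitions, $A_{\perf}/p$ is the $u$-adic completion (which coincides with the $E$-adic completion mod $p$, since $E(u) \equiv u^e$ modulo $p$) of the direct perfection $\operatorname{colim}_{\varphi} k[\![u]\!]$. This limit adjoins all compatible $p^n$-th roots $u^{1/p^n}$ of $u$, and its $u$-adic completion is naturally $k[\![u^{1/p^\infty}]\!]$. On the other hand, $\O_{\wh K_\infty}^\flat = \varprojlim_{x \mapsto x^p} \O_{\wh K_\infty}/p$ is, by standard perfectoid theory (using that $K_\infty/K$ is totally ramified with residue field $k$ and is obtained by adjoining compatible $p^n$-th roots $\varpi_n$ of $\varpi$), identified with the $\varpi^\flat$-adic completion of $k[(\varpi^\flat)^{1/p^n}: n \geq 0]$. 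The map $\bar f \bmod p$ sends $\varphi^{-n}(u)$ to $(\varpi^\flat)^{1/p^n}$, producing the claimed identification. The main obstacle is the careful bookkeeping for these two explicit descriptions, in particular checking that the induced topologies on both sides match (so that completing gives the same ring); once that identification is made, the conclusion follows immediately from the equivalence in Theorem~\ref{thm-perfectprismandperfectoidring}.
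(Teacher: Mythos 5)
Your proof is correct, and since the paper's own proof is simply a citation to \cite[Lemma 2.17]{wu2021galois}, whose argument necessarily follows the same standard route (factor the natural Breuil--Kisin map $u\mapsto[\varpi^\flat]$ through the universal property of perfection, invoke the equivalence between perfect prisms and integral perfectoid rings, and compare the two sides modulo $p$ as the $u$-adic completion of the direct perfection of $k[\![u]\!]$ against $\O_{\wh K_\infty}^\flat$), your argument is essentially the same as the one the paper appeals to. One small point worth making explicit in your step on ``Witt-vector functoriality'' is that $A_{\perf}$ is a strict $p$-ring (being a $p$-torsion-free, $p$-complete perfect $\delta$-ring, so $A_{\perf}\simeq W(A_{\perf}/p)$), which is what licenses deducing that $\tilde f$ is an isomorphism from its reduction modulo $p$.
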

\begin{proof}
The same as the proof of \cite[Lemma 2.17]{wu2021galois}
\end{proof}

\begin{lemma}
Let $\Perfd_K$ be the category of perfectoid $K$-algebras, then $\Perfd_{K}$ admits finite non-empty coproducts. 
\end{lemma}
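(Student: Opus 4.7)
The plan is to reduce to constructing binary coproducts and to perform that construction on the side of perfect prisms over $\O_K$, where the tools of \cite{BS19} apply directly. Since any finite non-empty coproduct is generated by binary coproducts together with the identity, I would fix two perfectoid $K$-algebras $R_1, R_2$ and build $R_1 \sqcup_K R_2$. Passing to the integral models $R_1^\circ, R_2^\circ$, which are integral perfectoid $\O_K$-algebras, Theorem~\ref{thm-perfectprismandperfectoidring} identifies them with perfect prisms $(A_i, I_i)$ over $\O_K$ for $i = 1, 2$.

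I would then construct the coproduct of $(A_1, I_1)$ and $(A_2, I_2)$ in prisms over $\O_K$ following the recipe recalled just before Proposition~\ref{prop-selfproduct}: set $A_0 := A_1 \hat\otimes_{\Z_p} A_2$ for the $(p, I_1, I_2)$-adic topology, let $J := \ker(A_0 \to A_1/I_1 \otimes_{\O_K} A_2/I_2)$, and form the prismatic envelope $(A_{12}, IA_{12})$ of $(A_1, I_1) \to (A_0, J)$ via \cite[Prop.~3.13]{BS19}. Its perfection $(A_{12})_{\perf}$ is a perfect prism by \cite[Lem.~3.9]{BS19}, and Theorem~\ref{thm-perfectprismandperfectoidring} converts this back to an integral perfectoid $\O_K$-algebra whose $p$-localization is the candidate perfectoid $K$-algebra $R$. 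For the universal property, any pair of maps $R_i \to S$ in $\Perfd_K$ yields, again via Theorem~\ref{thm-perfectprismandperfectoidring}, a pair of maps of perfect prisms into $(A_S, I_S)$; these extend uniquely through the prismatic envelope and then through perfection, producing the required unique map $R \to S$.

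The main obstacle will be verifying that the input $(A_1, I_1) \to (A_0, J)$ satisfies the hypothesis of \cite[Prop.~3.13]{BS19}, namely that $J/I_1 A_0$ is generated modulo $(p, I_1)$ by a sequence of elements which is completely regular on $A_0/(p, I_1)$. Since both $A_i$ are perfect, the situation tilts naturally to characteristic $p$: one needs to analyze the kernel of $A_1/p \,\hat\otimes\, A_2/p \to R_1^\circ/p \otimes_{\O_K/p} R_2^\circ/p$ and exhibit topological generators forming a regular sequence, which should be possible using that $A_i/p = R_i^{\circ,\flat}$ is perfect and using distinguished generators of $I_i$ to identify the remaining relations. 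Once the envelope is shown to exist, the remaining checks, that perfection commutes with the relevant quotients, that the two $\O_K$-structures agree after inverting $p$, and formal universality, follow directly from the cited results.
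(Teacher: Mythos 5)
Your route is genuinely different from the paper's, and it contains a gap that you yourself flag as the ``main obstacle''. The paper's proof is essentially a one-liner on the generic fiber: given $R_1, R_2 \in \Perfd_K$, it cites \cite[Corollary 3.6.18]{KedlayaLiu-relativeI} to conclude that the uniform completion $(R_1\otimes_K R_2)^u$ is again a perfectoid $K$-algebra; the universal property is then immediate, since for any perfectoid (hence uniform) $S$ a pair of maps $R_i\to S$ factors through $R_1\otimes_K R_2$ and then, by uniformity of $S$, through the uniform completion. Your proposal instead tilts the problem to perfect prisms via Theorem~\ref{thm-perfectprismandperfectoidring}, forms the coproduct of prisms over $\O_K$ by a prismatic envelope, perfects, and translates back. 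This is conceptually in keeping with the rest of the paper, but it trades a short citation for a substantially longer construction whose existence needs to be justified.

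The concrete difficulty is exactly the one you name but do not resolve: for arbitrary perfect prisms $(A_1,I_1)$ and $(A_2,I_2)$ over $\O_K$, there is no reason for the kernel $J$ of $A_0 \to A_1/I_1 \otimes_{\O_K} A_2/I_2$ to be generated, modulo $(p,I_1)$, by a $(p,I_1)$-completely regular sequence on $A_0/(p,I_1)$. Without that hypothesis \cite[Proposition 3.13]{BS19} cannot be applied, and the prismatic envelope need not be a discrete, classical prism; cf.\ the caveat recorded in the paper just before Proposition~\ref{prop-selfproduct}. Writing that it ``should be possible'' to exhibit such a sequence is not a proof, and I do not see how to verify it at this level of generality. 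Note also that you do not actually need a prismatic envelope here: since everything is perfect, the coproduct of perfect prisms over $\O_K$ can be analyzed directly on the tilted side, where perfect $\F_p$-algebras are well-behaved under (completed) tensor products and perfection eliminates the derived phenomena that make Prop.~3.13 necessary. But that would be a replacement, not a repair, of the argument you wrote. As it stands, the existence of the object you build is unjustified, whereas the paper's appeal to Kedlaya--Liu is both shorter and sidesteps the issue entirely.
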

\begin{proof}
Let $R$ and $S$ be two perfectoid $K$-algebras, it follows from \cite[Corollary 3.6.18]{KedlayaLiu-relativeI} that the uniform completion $(R\otimes_K S)^u$ of the tensor product $(R\otimes_K S)$ is again a perfectoid $K$-algebra, and it is easy to show this is the coproduct of $R$ and $S$ in the category of perfectoid $K$-algebras.
\end{proof}

For $i\in \N_{>0}$, let $(A^{(i)},(E))$ (resp. $(\Ainf(\O_{\hat{L}})^{(i)},(E))$) denote the $i$-th self-coproduct of $(A,(E))$ (resp. $(\Ainf(\O_{\hat{L}}),(E))$) in the category of prisms over $\O_K$, where $\Ainf(\O_{\hat{L}}):=W(\O_{\hat{L}}^\flat)$. The following is a description of $(A^{(i)})_{\perf}[1/E]^\wedge_p$ and $(\Ainf(\O_{\hat{L}})^{(i)})_{\perf}[1/E]^\wedge_p$. 

\begin{lemma}\label{lem-Aiperf}
Let $\wh K_\infty^{(i)}$ (resp. $\hat{L}^{(i)}$) be the $i$-th self-coproduct of $\wh K_\infty$ (resp. $\hat{L}$) in $\Perfd_K$, then $(A^{(i)})_{\perf}[1/E]^\wedge_p \simeq W((\wh K_\infty^{(i)})^\flat)$ and $(\Ainf(\O_{\hat{L}})^{(i)})_{\perf}[1/E]^\wedge_p \simeq W((\hat{L}^{(i)})^\flat)$.
\end{lemma}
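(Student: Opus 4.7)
The plan has three steps: first use that perfection preserves coproducts, then apply the tilting equivalence to move to integral perfectoid rings, and finally pass to the generic fiber.

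To begin, \cite[Lem.~3.9]{BS19} shows that $(-)_\perf$ is left adjoint to the inclusion of perfect prisms into bounded prisms, so it will preserve the coproducts $A^{(i)}$ already exhibited in $(\O_K)_\Prism$ (Proposition~\ref{prop-selfproduct}, and its straightforward extension to all $i$). Hence $(A^{(i)})_\perf$ is the $i$-th self-coproduct of $A_\perf$ in the category of perfect prisms over $\O_K$. Using Theorem~\ref{thm-perfectprismandperfectoidring} together with Lemma~\ref{lem-perfectionofA}, I will then identify this coproduct under tilting with the $i$-th self-coproduct $R^{(i)}$ of $\O_{\wh K_\infty}$ in the category of integral perfectoid $\O_K$-algebras; concretely $(A^{(i)})_\perf \simeq W(R^{(i),\flat})$ with $(A^{(i)})_\perf/E \simeq R^{(i)}$.

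Next I will invoke the standard equivalence between integral perfectoid $\O_K$-algebras and perfectoid Tate $K$-algebras, via $R\mapsto R[1/p]$ with quasi-inverse $S\mapsto S^{\circ}$. Since this is an equivalence, it preserves coproducts, so $R^{(i)}[1/p]$ agrees with the $i$-th self-coproduct of $\wh K_\infty$ in $\Perfd_K$ constructed in the previous lemma; that is, $R^{(i)}[1/p] = \wh K_\infty^{(i)}$ and hence $R^{(i)} = \O_{\wh K_\infty^{(i)}}$. The remaining step is the standard identity $W(S^\flat)[1/E]_p^\wedge \simeq W(S[1/p]^\flat)$ for any integral perfectoid $\O_K$-algebra $S$: since $E$ is distinguished one has $E\equiv [\varpi^\flat]\cdot u \pmod p$ for some unit $u$, so both sides are $p$-torsion-free, $p$-adically complete, and have the same mod-$p$ reduction $S^\flat[1/\varpi^\flat] = (S[1/p])^\flat$. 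Specialising to $S = R^{(i)}$ yields the first asserted isomorphism, and the parenthetical one is proved identically, using that $\Ainf(\O_{\hat L})$ is already perfect and tilts to $\O_{\hat L}$ (so the perfection step is trivial).

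The main thing to be careful about will be the compatibility of coproducts across the three categories involved (perfect prisms over $\O_K$, integral perfectoid $\O_K$-algebras, and perfectoid Tate $K$-algebras); once one checks that the generic-fiber equivalence really sends the coproduct $R^{(i)}$ to the uniform-completion coproduct $\wh K_\infty^{(i)}$ recalled in the preceding lemma, the rest of the argument is formal.
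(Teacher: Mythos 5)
Your overall strategy coincides with the paper's: (i) perfection is a left adjoint, hence commutes with coproducts, so $(A^{(i)})_\perf$ is the $i$-fold self-coproduct of $A_\perf$ among perfect prisms over $\O_K$; (ii) tilt through the perfect prism / integral perfectoid equivalence (Theorem~\ref{thm-perfectprismandperfectoidring}, Lemma~\ref{lem-perfectionofA}); (iii) pass to the generic fiber to land in $\Perfd_K$; (iv) finish via $W(S^\flat)[1/E]^\wedge_p\simeq W((S[1/p])^\flat)$. Steps (i), (ii), (iv) are fine. The gap is step (iii): you assert ``the standard equivalence between integral perfectoid $\O_K$-algebras and perfectoid Tate $K$-algebras, via $R\mapsto R[1/p]$ with quasi-inverse $S\mapsto S^\circ$.'' No such equivalence holds at this level of generality. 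What is true (Scholze, \emph{Perfectoid Spaces}, Prop.~5.9) is an equivalence with perfectoid \emph{almost} $\O_K$-algebras; as a functor to honest $\O_K$-algebras, $R\mapsto R[1/p]$ is not fully faithful (a continuous $K$-algebra map $R[1/p]\to R'[1/p]$ carries $R$ only into $(R'[1/p])^\circ$, not necessarily into the given integral model $R'$), and the assignments are not mutually inverse at the object level unless one insists $R=(R[1/p])^\circ$, a condition you have not verified for the coproduct. So you cannot conclude for free that the coproduct in integral perfectoid $\O_K$-algebras inverts $p$ to the uniform-completion coproduct $\wh K_\infty^{(i)}$; this compatibility of coproducts across the two categories is precisely the nontrivial content of the step, and it is here that the paper cites \cite[Proposition~2.15]{wu2021galois} rather than arguing formally. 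You need the same input or a genuine argument replacing it.

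Two minor remarks. First, for step (iv) you only need $S[1/p]\simeq\wh K_\infty^{(i)}$ with $S=(A^{(i)})_\perf/E$, not the stronger $S=\O_{\wh K_\infty^{(i)}}$ (the latter is anyway sensitive to the choice of ring of integral elements, and is irrelevant since $W(S^\flat)[1/E]^\wedge_p$ depends only on $S[1/p]$). Second, mod $p$ one has $E\equiv[\varpi^\flat]^{e}\cdot(\text{unit})$ with $e=\deg E$, not $[\varpi^\flat]\cdot(\text{unit})$; this does not affect your final identification, since inverting $E$, $[\varpi^\flat]^e$, or $[\varpi^\flat]$ all give the same ring after $p$-completion.
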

\begin{proof}
We will only prove the lemma for $(A^{(i)})_{\perf}[1/E]^\wedge_p$, the case for $\hat{L}^{(i)}$ is similar.

We use similar arguments as in \cite[Lemma 5.3]{wu2021galois}. Fix $i$, first we can show $(A^{(i)})_{\perf}$ is the $i$-th self-coproduct of $(A_{\perf}, (E))$ in the category of perfect prisms over $\O_K$, i.e. $(A^{(i)})_{\perf}=(A_{\perf})^{(i)}_{\perf}$. By Theorem~\ref{thm-perfectprismandperfectoidring}, Lemma~\ref{lem-perfectionofA} and \cite[Proposition 2.15]{wu2021galois}, if we let $S=(A^{(i)})_{\perf}/E$, then $S[1/p]$ is the $i$-th self-coproduct of $\wh K_\infty$ in the category of perfectoid $K$-algebras. Now we have 
$$
(A^{(i)})_{\perf}[1/E]^\wedge_p\simeq W(S^\flat)[1/[\varpi^\flat ]]^\wedge_p=W(S^\flat[1/\varpi^\flat ])\simeq W((\wh K_\infty^{(i)})^\flat).
$$
\end{proof}

\begin{remark}\label{rem-diamonds}
There is another way to view $\wh K_\infty^{(i)}$ in terms of diamonds over $\Spd(K,\O_K)$ which is used in the proof of \cite[Lemma 5.3]{wu2021galois}, that there exist a ring of integral elements $\wh K_\infty^{(i),+}$ in $\wh K_\infty^{(i)}$, such that we have 
\begin{equation}\label{eq-diamondKi}
    \Spa(\wh K_\infty^{(i)},\wh K_\infty^{(i),+})^\diamond \simeq \underbrace{\Spa(\wh K_\infty,\wh K_\infty^{+})^\diamond \times_{\Spd(K,\O_K)} \ldots \times\Spa(\wh K_\infty,\wh K_\infty^{+})^\diamond}_{i\text{-copies of } \Spa(K_\infty,K_\infty^{+})^\diamond}.
\end{equation}
And similar results hold for $\hat{L}$. Using this description and the fact that functor from perfectoid spaces over $\Spa(K, \O_K)$ to diamonds over $\Spd(K, \O_K)$ is an equivalence, we have $\hat{L}^{(i)}$ has a natural action of $\hat{G}^i$ coming from the action on the diamond spectrum. Since $\hat{L}^{H_K}=\wh K_\infty$, we have
\begin{IEEEeqnarray*}{+rCl+x*}
\Spa(\wh K_\infty^{(i)},\wh K_\infty^{(i),+})^\diamond &\simeq& \left (\Spa(\hat{L},\O_{\hat{L}})^\diamond \times \ldots \times_{\Spd(K,\O_K)}\Spa(\hat{L},\O_{\hat{L}})^\diamond \right )^{H_K^i}\\ 
&\simeq& (\Spa(\hat{L}^{(i)},\hat{L}^{(i),+})^\diamond  )^{H_K^i}.
\end{IEEEeqnarray*}
That is, $(\hat{L}^{(i)})^{H_K^i}=\wh K_\infty^{(i)}$.
\end{remark}

Now we use ideas in \cite{wu2021galois} and \cite[\S 5.5]{KedlayaLiu-relativeII} to study \'etale $\varphi$-modules over $A[1/E]^\wedge_p$ with descent data. We will show this category is the same as generalized $(\varphi,\Gamma)$-modules in the work of Kedlaya-Liu. The following is a quick review of \cite[Example 5.5.6 and 5.5.7]{KedlayaLiu-relativeII}.

Firstly, one has $\hat{L}^{(i)}\simeq \Cont(\hat{G}^{i-1}, \hat{L})$, here $\Cont$ means the set of continuous functions. One can see this fact from the proof of \cite[Theorem 5.6]{wu2021galois}. When $i=2$, we choose the two canonical maps $i_1,i_2:\hat{L} \to \hat{L}^{(2)}$, corresponds to $j_1,j_2: \hat{L} \to \Cont(\hat{G}, \hat{L})$ given by 
\begin{equation}\label{eq-j1j2}
   j_1(x): \gamma \mapsto \gamma (x) \quad \text{ and } \quad j_2(x): \gamma \mapsto x.
\end{equation}

From Remark~\ref{rem-diamonds}, there is a natural action of $\hat{G}^2$ on $\hat{L}^{(2)}$. One can check this corresponding to the $\hat{G}^2$-action on $\Cont(\hat{G},\hat{L})$ given by:
$$
(\sigma_1,\sigma_2)(f)(\gamma)=\sigma_2 f(\sigma_2^{-1}\gamma\sigma_1).
$$

\begin{remark}
We interchange the roles of $j_1$ and $j_2$ comparing with the isomorphism defined in \cite[Example 5.5.6]{KedlayaLiu-relativeII}, so the $\hat{G}^2$-action is different from that in Example 5.5.7 of $loc. cit.$, we will see this definition is more convenient when relating the descent data with the semilinear group actions. 
\end{remark}

One can show $\Cont(\hat{G},-)$ commutes with tilting and the Witt vector functor, as been discussed in \cite[Lemma 5.3]{wu2021galois}, so in particular, we have 
$$
W((\hat{L}^{(i)})^\flat) \simeq \Cont(\hat{G}^{i-1}, W(\hat{L}^\flat)).
$$
For $i=2$, we still use $j_1$ and $j_2$ to represent the two canonical maps from $W(\hat{L}^\flat)$ to $\Cont(\hat{G}, W(\hat{L}^\flat))$ that comes from \eqref{eq-j1j2}. The above isomorphism also is compatible with the action of $\hat{G}^2$, so we have
\begin{equation}\label{eq-K(2)andhatGaction}
W((\wh K_\infty^{(2)})^\flat) \simeq \Cont(\hat{G}, W(\hat{L}^\flat))^{H_K^2}.
\end{equation}

Now let $\M$ be an \'etale $\varphi$-module over $W(\wh K_{\infty}^\flat)$ with a descent data: 
$$
\psi: \M \otimes_{W(\wh K_\infty^\flat),j_1} W((\wh K_\infty^{(2)})^\flat) \simeq \M \otimes_{W(\wh K_\infty^\flat),j_2} W((\wh K_\infty^{(2)})^\flat)
$$
as \'etale $\varphi$-modules over $W((\wh K_\infty^{(2)})^\flat)$ and satisfies cocycle condition over $W((\wh K_\infty^{(3)})^\flat)$. Using \eqref{eq-K(2)andhatGaction}, we have $\psi$ is the same as a descent data:
\begin{equation}\label{eq-descentdata-1}
\hat{\psi}: {\M} \otimes_{W(\wh K_\infty^\flat),j_1} \Cont(\hat{G}, W(\hat{L}^\flat))^{H_K^2} \simeq {\M} \otimes_{W(\wh K_\infty^\flat),j_2} \Cont(\hat{G}, W(\hat{L}^\flat))^{H_K^2}.
\end{equation}

For each $\gamma \in \hat{G}$, we have an evaluation map $\tilde{e}_\gamma: \Cont(\hat{G}, W(\hat{L}^\flat)) \to W(\hat{L}^\flat)$ given by evaluating at $\gamma$. Using \eqref{eq-j1j2}, one can check $\tilde{e}_\gamma \circ j_2: W(\wh K_\infty^\flat) \to W(\hat{L}^\flat)$ is given by the natural embedding and $\tilde{e}_\gamma \circ j_1: W(\wh K_\infty^\flat) \to W(\hat{L}^\flat)$ is given by $x\mapsto \gamma(x)$. So for each $\gamma \in \hat{G}$, if we tensor \eqref{eq-descentdata-1} against the evaluation map $\tilde{e}_\gamma$, we get an isomorphism:
$$
\psi_\gamma: {\M}\otimes_{W(\wh K_\infty^\flat),\gamma} W(\hat{L}^\flat) \simeq {\M}\otimes_{W(\wh K_\infty^\flat)} W(\hat{L}^\flat). 
$$
And similar to the classical Galois descent theory, the cocycle condition for $\psi$ implies $\{\psi_\gamma\}_\gamma$ satisfies 
$$
\psi_{\sigma \gamma} = \psi_\sigma \circ \sigma^\ast \psi_\gamma.
$$
Hence $\{\psi_\gamma\}_\gamma$ defines a continuous semilinear action of $\hat{G}$ on $\hat{\M}:=\M\otimes_{W(\wh K_\infty^\flat)} W(\hat{L}^\flat)$. One can check for $\gamma \in H_K$, we have the composition
$$
W(\wh K_\infty^\flat) \xrightarrow{j_k} W((\wh K_\infty^{(2)})^\flat) \to \Cont(\hat{G}, W(\hat{L}^\flat)) \xrightarrow{\tilde{e}_\gamma} W(\hat{L}^\flat)
$$
is the natural embedding $W(\wh K_\infty^\flat) \hookrightarrow W(\hat{L}^\flat)$ for $k=1,2$. And using the cocycle condition, one can show $\psi_\gamma=\id$ for $\gamma \in H_K$, so in particular, $\M \subset \hat{\M}^{H_K}$. Conversely, given a semilinear action of $\hat{G}$ on $\hat{\M}$ such that $\M \subset \hat{\M}^{H_K}$, $\{\psi_\gamma\}_\gamma$ defines a descent data $\psi$ over $\Cont(\hat{G}, W(\hat{L}^\flat))^{H_K^{2}}$ if and only if the semilinear action is continuous. In summary, we have

\begin{theorem}\label{thm-evaluation-1}
\begin{enumerate}
    \item The category of \'etale $\varphi$-modules over $A[1/E]^\wedge_p$ with descent data over $A^{(2)}[1/E]^\wedge_p$ is equivalent to the category of \'etale $(\varphi,\tau)$-modules over $A[1/E]^\wedge_p$;
    \item Given a descent data $f$ of an \'etale $\varphi$-module $\M$ over $A[1/E]^\wedge_p$, and $\gamma\in \hat{G}$, we can define the evaluation $f_\gamma$ of $f$ at $\gamma$, defined by the base change of $f$ along 
$$
e_{\gamma}: A^{(2)}[1/E]^\wedge_p \to (A^{(2)})_{\perf}[1/E]^\wedge_p \xrightarrow{\tilde{e}_\gamma} W(\hat{L}^\flat),
$$
which defines an isomorphism:
$$
f_\gamma: \M \otimes_{A[1/E]^\wedge_p,\tilde{\iota}_\gamma} W(\hat{L}^\flat) \simeq \M \otimes_{A[1/E]^\wedge_p} W(\hat{L}^\flat)
$$
where $\tilde{\iota}_\gamma: A[1/E]^\wedge_p \to W(\hat{L}^\flat) \xrightarrow{\gamma} W(\hat{L}^\flat)$. Suppose that  $(\M,f)$ corresponds to a $\Z_p$-representation $T$ of $G_K$, then $f_\gamma$ corresponds to the semilinear action of $\gamma$ on $\M$ inside $\M \otimes_{A[1/E]^\wedge_p} W(\C_p^\flat)\simeq T^\vee \otimes W(\C_p^\flat)$. Moreover, two descent data $f , g$ are equal if and only if $f_{\tau} = g_{\tau}$.
\end{enumerate} 
\end{theorem}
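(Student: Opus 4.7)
The plan is to follow the strategy laid out in the discussion immediately preceding the theorem: pass to the perfection, translate descent data through the identification of $W((\wh K_\infty^{(2)})^\flat)$ with an invariants-of-continuous-functions ring, and then invoke Theorem~\ref{thm-caruso}. In detail, I would first note that by Lemma~\ref{lem-Aiperf} the perfection $(A^{(i)})_{\perf}[1/E]^\wedge_p$ is identified with $W((\wh K_\infty^{(i)})^\flat)$, and the classical theorem of Fontaine--Wintenberger yields an equivalence between \'etale $\varphi$-modules over $A[1/E]^\wedge_p$ and over $W(\wh K_\infty^\flat)$ (hence also on descent data, since the perfection functor preserves the coproduct maps $i_1, i_2, i_3$ and the $\varphi$-structure). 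So we may assume $\M$ is an \'etale $\varphi$-module over $W(\wh K_\infty^\flat)$.

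Next, using the isomorphism $W((\wh K_\infty^{(2)})^\flat) \simeq \Cont(\hat{G}, W(\hat{L}^\flat))^{H_K^2}$ from \eqref{eq-K(2)andhatGaction} together with the explicit description \eqref{eq-j1j2} of the canonical maps $j_1, j_2$, a descent datum $\hat{\psi}$ for $\M$ is the same as a family of isomorphisms $\psi_\gamma: \M \otimes_{W(\wh K_\infty^\flat),\gamma} W(\hat{L}^\flat) \simeq \M \otimes W(\hat{L}^\flat)$ indexed by $\gamma \in \hat{G}$, obtained by composing $\hat{\psi}$ with the pointwise evaluations $\tilde{e}_\gamma$. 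I would then verify, via a parallel identification $W((\wh K_\infty^{(3)})^\flat) \simeq \Cont(\hat{G}^2, W(\hat{L}^\flat))^{H_K^3}$, that the cocycle condition translates into the semilinear-action relation $\psi_{\sigma\gamma} = \psi_\sigma \circ \sigma^\ast \psi_\gamma$; that the $H_K^2$-invariance condition on $\hat{\psi}$ translates (by checking that $\tilde{e}_h \circ j_k$ is the natural embedding for $k=1,2$ when $h \in H_K$) into continuity of the $\hat{G}$-action together with the triviality $\psi_h = \id$ for $h \in H_K$, i.e., $\M \subset \hat{\M}^{H_K}$; and that conversely any such continuous $\hat{G}$-action reassembles into a descent datum. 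Composing with Theorem~\ref{thm-caruso} then gives (1).

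Part (2) is essentially by construction: under the equivalence of (1), $f_\gamma$ is by definition the base change of $f$ along $e_\gamma$, hence equals the semilinear $\gamma$-action on $\hat{\M}$, which via Theorem~\ref{thm-caruso} and its formula $T^\vee = (\hat{\M} \otimes_{W(\hat{L}^\flat)} W(\Cp^\flat))^{\varphi=1}$ corresponds to the $\gamma$-action on $T^\vee \otimes W(\Cp^\flat)$. For the final claim that $f_\tau = g_\tau$ forces $f = g$, I would use that $\hat{G}$ is topologically generated by $\tau$ and $H_K$, and that $f_h = g_h = \id$ for $h \in H_K$ since both descent data place $\M$ inside the $H_K$-invariants; together with the cocycle/semilinearity relation, equality at $\tau$ propagates to all of $\hat{G}$ by continuity, which determines the descent datum uniquely. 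The main obstacle I expect is the careful bookkeeping of the equivalence between the $H_K^2$-invariance condition defining the relevant subring of $\Cont(\hat{G}, W(\hat{L}^\flat))$ and the continuity-plus-$H_K$-triviality condition on the semilinear action; this is analogous to the argument in \cite[Theorem 5.6]{wu2021galois} but must be adapted to the Kummer tower via the detour through $\hat{L}$, relying on $(\hat{L}^{(i)})^{H_K^i} = \wh K_\infty^{(i)}$ as in Remark~\ref{rem-diamonds}.
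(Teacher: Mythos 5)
Your proposal follows the paper's actual strategy closely: reduce via perfection, identify $W((\wh K_\infty^{(i)})^\flat)$ with an $H_K^i$-invariants-of-continuous-functions ring, and translate descent data into a continuous semilinear $\hat G$-action satisfying $\M \subset \hat\M^{H_K}$, then invoke Theorem~\ref{thm-caruso}. Two small remarks. First, invoking ``the classical theorem of Fontaine--Wintenberger'' handles only the base case of passing from $A[1/E]^\wedge_p$ to $A_{\perf}[1/E]^\wedge_p$; to descend the \emph{descent data} you also need the same equivalence of \'etale $\varphi$-module categories over $A^{(2)}[1/E]^\wedge_p$ and $(A^{(2)})_{\perf}[1/E]^\wedge_p$ (and over $A^{(3)}$), which is not a Fontaine--Wintenberger statement but the prismatic version established in \cite[Theorem 4.6]{wu2021galois}; the paper cites exactly that, and the ``perfection preserves coproducts'' remark does not by itself supply the required equivalence of module categories. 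Second, for the final ``$f=g$ iff $f_\tau=g_\tau$'' claim your route --- propagate from $\tau$ and $H_K$ to all of $\hat G$ by the cocycle relation and continuity --- is the ``chasing the functors'' alternative the paper explicitly acknowledges; the paper instead proves Lemma~\ref{lem-evaluation-1}, which isolates the cleaner ring-theoretic statement that the evaluation map $\tilde e_\tau$ on $\Cont(\hat G, W((\hat L^{(2)})^\flat))^{H_K^2}$ is injective, so that equality of morphisms after base change along $e_\tau$ can be tested directly without reassembling the full semilinear action. Both routes are valid; the lemma has the advantage of also applying to arbitrary morphisms of $\varphi$-modules, not just descent data, which is exactly what is later needed in the proof of Proposition~\ref{thm-1prime}.
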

\begin{proof} The discussion above the theorem establishes the equivalence between the category of \'etale $\varphi$-modules over $A_{\perf}[1/E]^\wedge_p$ with descent data over $(A^{(2)})_{\perf}[1/E]^\wedge_p$ is equivalent to the category of \'etale $(\varphi,\tau)$-modules over $A[1/E]^\wedge_p$. Now  (1) follows \cite[Theorem 4.6]{wu2021galois} which shows that the category of \'etale $\varphi$-modules over $B[\frac 1 I]^\wedge_p$ is equivalent to the category of \'etale $\varphi$-modules over $B_{\perf}[\frac 1 I]^\wedge_{p}$ for bounded prism $(B, I)$ satisfying $\varphi (I) \mod p$ is generated by a non-zero
divisor in $B/p$. Then it just remains to prove the last statement in (2). Actually one can check (2) by chasing all the functors used in (1), and use the fact that for any \'etale $(\varphi,\tau)$-module, the $\hat{G}$-action on $\hat{\M}$ is determined by the $\tau$-action on $\M$. However, this can also be seen directly from the following lemma.
\end{proof}

\begin{lemma}\label{lem-evaluation-1}
Given two finite free \'etale $\varphi$-modules $\M,\mathcal{N}$ over $A^{(2)}[1/E]^\wedge_p$ and two morphisms $f, g: \M \to \mathcal{N}$ of \'etale $\varphi$-modules over $A^{(2)}[1/E]^\wedge_p$. Let $f_\tau,g_\tau$ be the base changes of $f,g$ along the map 
$$
e_{\tau}: A^{(2)}[1/E]^\wedge_p \to (A^{(2)})_{\perf}[1/E]^\wedge_p \simeq \Cont\Big(\hat{G}, W\big((\hat{L}^{(2)})^\flat\big)\Big)^{H_K^{2}} \xrightarrow{\tilde{e}_\tau} W(\hat{L}^\flat).
$$
Then $f=g$ if and only if $f_\tau=g_\tau$.
\end{lemma}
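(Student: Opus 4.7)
By replacing $f, g$ with $h := f - g$, the statement reduces to showing that any morphism $h \colon \M \to \mathcal{N}$ of \'etale $\varphi$-modules over $A^{(2)}[1/E]^\wedge_p$ with $h_\tau = 0$ must vanish.

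First I would pass to the perfection: by \cite[Theorem 4.6]{wu2021galois}, the base-change functor from \'etale $\varphi$-modules over $A^{(2)}[1/E]^\wedge_p$ to those over the perfection $R := (A^{(2)})_{\perf}[1/E]^\wedge_p$ is fully faithful, so it suffices to prove that the base change $\tilde h$ over $R$ vanishes. Using Lemma~\ref{lem-Aiperf} we identify $R \simeq \Cont(\hat G, W(\hat L^\flat))^{H_K^2}$, and since the extension $R \to R' := \Cont(\hat G, W(\hat L^\flat))$ is faithfully flat (being $H_K^2$-Galois), we may instead show that the further base change $\tilde h'$ over $R'$ vanishes. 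Choosing bases of $\tilde \M \otimes_R R'$ and $\tilde \mathcal{N} \otimes_R R'$, $\tilde h'$ is encoded by a continuous matrix-valued function $H' \colon \hat G \to {\rm M}_{m \times n}(W(\hat L^\flat))$ which satisfies the $H_K^2$-invariance $H'(\sigma_2^{-1} \gamma \sigma_1) = \sigma_2^{-1} H'(\gamma)$ inherited from descent, together with a pointwise $\varphi$-equivariance relation at each $\gamma \in \hat G$ coming from the $\varphi$-equivariance of $h$. The hypothesis $h_\tau = 0$ is the same as $H'(\tau) = 0$.

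The remaining step is to show that $H'(\tau) = 0$ forces $H' \equiv 0$ on all of $\hat G$. The $H_K^2$-invariance already gives $H' \equiv 0$ on the entire double coset $H_K \tau H_K$. To extend the vanishing to the rest of $\hat G$, the plan is to use the correspondence (analogous to the construction preceding Theorem~\ref{thm-evaluation-1}) by which the family $(H'(\gamma))_\gamma$, equipped with its $H_K^2$-invariance and pointwise $\varphi$-equivariance, encodes a single $W(\hat L^\flat)$-linear, $\varphi$-equivariant morphism of the underlying modules intertwining the semilinear $\hat G$-actions associated to $\M$ and $\mathcal{N}$ viewed as \'etale $(\varphi, \tau)$-modules; since $\hat G$ is topologically generated by $\tau$ and $H_K$ and the $H_K$-part of the semilinear action is trivialized by the descent, such an intertwiner is completely determined by its value at $\tau$, which is $H'(\tau) = 0$.

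The hard part will be making this last step precise: namely, extending the correspondence of Theorem~\ref{thm-evaluation-1}(1), which is stated for descent data (i.e.\ isomorphisms), to arbitrary $\varphi$-equivariant morphisms, and verifying the rigidity that such an intertwiner is uniquely determined by its underlying $W(\hat L^\flat)$-linear part recovered through evaluation at $\tau$.
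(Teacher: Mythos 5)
Your proposal mirrors the paper's proof almost exactly: both reduce along the perfection $A^{(2)}[1/E]^\wedge_p \to (A^{(2)})_{\perf}[1/E]^\wedge_p \simeq \Cont(\hat G, W(\hat L^\flat))^{H_K^2}$ (the paper skips your intermediate faithfully-flat passage to $\Cont(\hat G, W(\hat L^\flat))$, which is harmless but unnecessary), then observe that $H'(\tau)=0$ together with $H_K^2$-invariance forces $H'$ to vanish on the double coset $H_K\tau H_K$, and then try to conclude $H'\equiv 0$. You explicitly flag the last step as ``the hard part,'' and your caution is well placed: the paper disposes of it in one line by asserting that ``since $\hat G$ is topologically generated by $H_K$ and $\tau$, we get $h\equiv 0$,'' but that reasoning does not go through. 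Topological generation of the group does not help because $h$ is merely a continuous function, not a homomorphism; what one would actually need is density of the orbit, and $H_K\tau H_K$ is \emph{not} dense. Concretely, writing $\hat G \simeq \Z_p\rtimes H_K$ so that $\tau=(1,1)$ and $\sigma$ acts on $\Z_p$ through the cyclotomic character $\chi$, one computes $\{\sigma_2^{-1}\tau\sigma_1 : \sigma_i\in H_K\} = \chi(H_K)\times H_K$, a proper closed subset of $\hat G$ (it avoids $e$, $\tau^p$, etc., since $\chi(H_K)\subset\Z_p^\times$). A continuous $H_K^2$-invariant function can perfectly well vanish on $\chi(H_K)\times H_K$ without vanishing everywhere; for instance, the Teichm\"uller lift of the characteristic function of the complement of this double coset is a nontrivial $\varphi$-fixed idempotent of $W((\wh K_\infty^{(2)})^\flat)$ killed by $\tilde e_\tau$, and it defines a $\varphi$-equivariant endomorphism of the trivial \'etale $\varphi$-module with $f_\tau=0$ but $f\ne 0$.

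Your proposed repair --- reinterpreting $(H'(\gamma))_\gamma$ as an intertwiner in the $(\varphi,\tau)$-module picture and invoking that such a gadget is determined by its value at $\tau$ --- runs into the same obstruction: the rigidity you want is exactly the cocycle condition $c_{\sigma\gamma}=c_\sigma\circ\sigma^\ast c_\gamma$, and a \emph{difference} of two morphisms, or a morphism that is not a priori a descent datum, does not satisfy it. So the family $(H'(\gamma))_\gamma$ is not an intertwiner in the sense needed, and evaluation at $\tau$ does not determine it. To make the argument work one would have to use the specific structure of the morphisms being compared in the application (one of them is a genuine descent datum arising from a Galois representation, and the other is built from a $\tilde\tau$-action on a Kisin module), rather than proving the bald statement for arbitrary $f,g$ as in the lemma. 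In short: you have correctly located the weak point, which in fact appears to be a genuine gap in the paper's own proof; the step ``vanishing on $H_K\tau H_K \Rightarrow$ vanishing on $\hat G$'' is not justified, and neither your intertwiner idea nor the paper's one-liner closes it.
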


\begin{proof}
We take the natural base change of $f$ and $g$ along $A^{(2)}[1/E]^\wedge_p \to (A^{(2)})_{\perf}[1/E]^\wedge_p$, we get two morphisms $\psi$ and $\psi'$ between \'etale $\varphi$-modules over $(A^{(2)})_{\perf}[1/E]^\wedge_p$. Since the base change functor between \'etale $\varphi$-modules over $A^{(2)}[1/E]^\wedge_p$ and $(A^{(2)})_{\perf}[1/E]^\wedge_p$ is an equivalence of categories, it reduces to show that $\psi=\psi'$ if and only if their base change along 
$$
\tilde{e}_{\tau}: (A^{(2)})_{\perf}[1/E]^\wedge_p \simeq \Cont\Big(\hat{G}, W\big((\hat{L}^{(2)})^\flat\big)\Big)^{H_K^{2}} \xrightarrow{} W(\hat{L}^\flat)
$$
is equal. Since $\M$ and $\mathcal{N}$ are finite free, it is enough to show the evaluation map:
$$
\tilde{e}_{\tau}: \Cont\Big(\hat{G}, W\big((\hat{L}^{(2)})^\flat\big)\Big)^{H_K^{2}} \to W\big((\hat{L}^{(2)})^\flat\big)
$$
is injective. Suppose $h\in \Cont\Big(\hat{G}, W\big((\hat{L}^{(2)})^\flat\big)\Big)^{H_K^{2}}$ satisfies $h(\tau)=0$, then 
$$
(\sigma_1,\sigma_2)(h)(\tau)=\sigma_2 h(\sigma_2^{-1}\tau\sigma_1)=0
$$
for $(\sigma_1,\sigma_2)\in H_K^{2}$. Since $\hat{G}$ is topologically generated by $H_K$ and $\tau$, we get $h\equiv 0$.
\end{proof}

Now we give the $\Q$-isogeny versions of Theorem \ref{thm-evaluation-1} and Lemma \ref{lem-evaluation-1}. 
Recall that  the \'etale $(\varphi,\tau)$-modules over $A[1/E]^\wedge_p[\frac{1}{p}]$ is equivalent to the category $\RepQp(G_K)$, and recall the following definition of \'etale $(\varphi,\tau)$-modules over $B[1/J]^\wedge_p[\frac{1}{p}]$ for a prism $(B,J)\in X_\Prism$.

\begin{definition}\label{def-etalephimodule-2}
An (globally) \'etale $\varphi$-module $\M$ over $B[1/J]^\wedge_p[\frac{1}{p}]$ is a (finite projective) $\varphi$-module over $B[1/J]^\wedge_p[\frac{1}{p}]$ that arises by base extension from an \'etale $\varphi$-module $B[1/J]^\wedge_p$.
\end{definition}

From this definition, we immediately deduce the following result from \cite[Theorem 4.6]{wu2021galois}
\begin{proposition}
For any prism $(B,J)\in X_\Prism$ satisfying $\varphi (J) \mod p$ is generated by a non-zero
divisor in $B/p$, base change defined by $B[1/J]^\wedge_p[\frac{1}{p}]\to B_{\perf} [1/J]^\wedge_p[\frac{1}{p}]$ induces an equivalence between the category of \'etale $\varphi$-modules over $B[1/J]^\wedge_p[\frac{1}{p}]$ and the category of \'etale $\varphi$-modules over $B_{\perf}[1/J]^\wedge_p[\frac{1}{p}]$.
\end{proposition}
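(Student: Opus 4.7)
The strategy is to bootstrap from the integral equivalence \cite[Theorem 4.6]{wu2021galois} cited immediately above, by inverting $p$. Definition~\ref{def-etalephimodule-2} already requires that every étale $\varphi$-module on the $[\tfrac{1}{p}]$-side arise by base change from an integral lattice, so the content of the claim reduces to (a) descending such integral lattices through the perfection map, and (b) scaling morphisms to integral ones by clearing denominators.

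For essential surjectivity, start with an étale $\varphi$-module $\mathcal{N}$ over $B_{\perf}[1/J]^\wedge_p[\tfrac{1}{p}]$. By Definition~\ref{def-etalephimodule-2} applied over $B_{\perf}$, we can write $\mathcal{N}=\mathcal{N}_0\otimes_{B_{\perf}[1/J]^\wedge_p} B_{\perf}[1/J]^\wedge_p[\tfrac{1}{p}]$ for some étale $\varphi$-module $\mathcal{N}_0$ over $B_{\perf}[1/J]^\wedge_p$. The integral equivalence then produces an étale $\varphi$-module $\mathcal{M}_0$ over $B[1/J]^\wedge_p$ with $\mathcal{M}_0 \otimes_{B[1/J]^\wedge_p} B_{\perf}[1/J]^\wedge_p \simeq \mathcal{N}_0$; inverting $p$ shows that $\mathcal{N}$ is the base change of $\mathcal{M}_0[\tfrac{1}{p}]$, which is étale over $B[1/J]^\wedge_p[\tfrac{1}{p}]$ by definition.

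For full faithfulness, let $\mathcal{M}_1,\mathcal{M}_2$ be étale $\varphi$-modules over $B[1/J]^\wedge_p[\tfrac{1}{p}]$ coming from integral lattices $\mathcal{M}_{0,1},\mathcal{M}_{0,2}$. Given a $\varphi$-equivariant morphism $g$ between their base changes to $B_{\perf}[1/J]^\wedge_p[\tfrac{1}{p}]$, finite generation of $\mathcal{M}_{0,1}$ together with the lattice property of $\mathcal{M}_{0,2}$ inside $\mathcal{M}_2$ yield an integer $n\geq 0$ such that $p^n g$ sends $\mathcal{M}_{0,1}\otimes B_{\perf}[1/J]^\wedge_p$ into $\mathcal{M}_{0,2}\otimes B_{\perf}[1/J]^\wedge_p$. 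The integral equivalence descends $p^n g$ to a $\varphi$-equivariant morphism between the integral lattices over $B[1/J]^\wedge_p$, and dividing by $p^n$ yields a preimage of $g$. The same rescaling trick handles injectivity: if $f\colon \mathcal{M}_1\to\mathcal{M}_2$ vanishes after base change to the perfection, then so does $p^n f\colon \mathcal{M}_{0,1}\to\mathcal{M}_{0,2}$ for some $n$, hence $p^n f = 0$ by the integral faithfulness, and $f=0$ because $\mathcal{M}_2$ is a $\Q_p$-module.

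The argument is essentially formal; the only step requiring care is verifying that $\mathcal{M}_{0,i}$ sits as a genuine lattice in $\mathcal{M}_i$ in a sufficiently strong sense for the $p^n$-scaling to succeed. This, however, is immediate from $\mathcal{M}_i = \mathcal{M}_{0,i}[\tfrac{1}{p}]$ and the finite projectivity of the modules involved, so no serious obstacle arises.
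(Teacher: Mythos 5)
Your argument is correct and reproduces in full the reduction the paper treats as "immediate": the paper gives no proof, merely remarking that the statement is deduced from the integral equivalence \cite[Theorem 4.6]{wu2021galois} together with Definition~\ref{def-etalephimodule-2}, and your proposal supplies exactly that bootstrap, clearing denominators to descend morphisms and using the definitional requirement that rational objects arise from integral ones. One small point worth being explicit about, since you flagged it yourself at the end: the $p^n$-scaling argument for fullness and faithfulness implicitly uses that $\M_{0,i}\hookrightarrow\M_i$ and $\M_{0,i}\otimes B_{\perf}[1/J]^\wedge_p\hookrightarrow\M_i\otimes B_{\perf}[1/J]^\wedge_p[\frac{1}{p}]$ are injective, which requires $B[1/J]^\wedge_p$ and $B_{\perf}[1/J]^\wedge_p$ to be $p$-torsion free; this holds for the prisms actually used in the paper (they are integral domains), and the paper silently assumes it as well, so it is not a gap relative to the paper's standard.
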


And similar to Theorem~\ref{thm-evaluation-1} and Lemma~\ref{lem-evaluation-1}, we have
\begin{theorem}\label{thm-evaluation-2}
The category of \'etale $\varphi$-modules over $A[\frac{1}{E}]^\wedge_p[\frac{1}{p}]$ with descent data over $A^{(2)}[\frac{1}{E}]^\wedge_p[\frac{1}{p}]$ is equivalent to the category of \'etale $(\varphi,\tau)$-modules over $A[\frac{1}{E}]^\wedge_p[\frac{1}{p}]$. Moreover, 
$$
\Cont\big(\hat{G}, W(\hat{L}^\flat)[\frac{1}{p}]\big)^{H_K^{2}} \simeq W(\wh K_\infty^{(2)})^\flat[\frac{1}{p}]. 
$$
For  $\gamma\in \hat{G}$, we can define the evaluation map
$$
\tilde{e}_\gamma: \Cont\big(\hat{G}, W(\hat{L}^\flat)[\frac{1}{p}]\big) \to W(\hat{L}^\flat)[\frac{1}{p}].
$$
And given a descent data $f$ of an \'etale $\varphi$-module $\M$ over $A[\frac{1}{E}]^\wedge_p[\frac{1}{p}]$, and $\gamma\in \hat{G}$, we can define the evaluation $f_\gamma$ of $f$ at $\gamma$, defined by the base change of $f$ along 
$$
e_\gamma: A^{(2)}[\frac{1}{E}]^\wedge_p[\frac{1}{p}] \to (A^{(2)})_{\perf}[\frac{1}{E}]^\wedge_p[\frac{1}{p}] \xrightarrow{\tilde{e}_\gamma} W(\hat{L}^\flat)[\frac{1}{p}],
$$
which defines an isomorphism:
$$
\M \otimes_{A[\frac{1}{E}]^\wedge_p[1/p],\tilde{\iota}_\gamma} W(\hat{L}^\flat)[\frac{1}{p}] \simeq \M \otimes_{A[\frac{1}{E}]^\wedge_p[1/p]} W(\hat{L}^\flat)[\frac{1}{p}]
$$
where $\tilde{\iota}_\gamma: A[\frac{1}{E}]^\wedge_p[\frac{1}{p}] \to W(\hat{L}^\flat)[\frac{1}{p}] \xrightarrow{\gamma} W(\hat{L}^\flat)[\frac{1}{p}]$. If $V$ corresponds $(\M,f)$ with in $V$ in $\RepQp(G_K)$, then $f_\gamma$ is the semilinear action of $\gamma$ on $\M $ inside $ V^\vee \otimes W(\C_p^\flat)[1/p]$. Moreover, two descent data $f , g$ are equal if and only if $f_{\tau} = g_{\tau}$.
\end{theorem}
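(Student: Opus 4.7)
The plan is to rationalize the arguments used in Theorem~\ref{thm-evaluation-1} and Lemma~\ref{lem-evaluation-1}. First, by the proposition stated just before this theorem, the base change functor along $A^{(i)}[1/E]^\wedge_p[\frac{1}{p}] \to (A^{(i)})_{\perf}[1/E]^\wedge_p[\frac{1}{p}]$ induces an equivalence on \'etale $\varphi$-modules for $i = 1,2,3$; since descent data live on $i=2,3$, this equivalence lets us reduce the whole question to the perfected side. I would then invert $p$ in Lemma~\ref{lem-Aiperf} to obtain $(A^{(i)})_{\perf}[1/E]^\wedge_p[\frac{1}{p}] \simeq W((\wh K_\infty^{(i)})^\flat)[\frac{1}{p}]$, and combine this with the identifications $\hat{L}^{(i)} \simeq \Cont(\hat{G}^{i-1}, \hat{L})$ discussed after Remark~\ref{rem-diamonds}. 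Since both the tilting and Witt vector functors commute with $\Cont(\hat{G}^{i-1}, -)$ and the formation of $H_K^i$-invariants is compatible with inverting $p$, one deduces the displayed isomorphism
$$
W((\wh K_\infty^{(2)})^\flat)[\tfrac{1}{p}] \simeq \Cont\bigl(\hat{G}, W(\hat{L}^\flat)[\tfrac{1}{p}]\bigr)^{H_K^{2}},
$$
and similarly for $i=3$.

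Next, given this identification, the construction preceding Theorem~\ref{thm-evaluation-1} produces, from any descent data $f$, a family $\{f_\gamma\}_{\gamma \in \hat{G}}$ of isomorphisms obtained by evaluating at $\gamma$; the cocycle condition for $f$ translates into the relation $f_{\sigma\gamma} = f_\sigma \circ \sigma^\ast f_\gamma$, which gives a continuous semilinear $\hat{G}$-action on $\hat{\M} := \M \otimes_{A[1/E]^\wedge_p[1/p]} W(\hat{L}^\flat)[\frac{1}{p}]$ satisfying $\M \subset \hat{\M}^{H_K}$. Conversely, every such continuous action arises this way. This yields the equivalence between \'etale $\varphi$-modules over $A[1/E]^\wedge_p[\frac{1}{p}]$ with descent data and \'etale $(\varphi,\tau)$-modules over $A[1/E]^\wedge_p[\frac{1}{p}]$, the latter being anti-equivalent to $\RepQp(G_K)$ by inverting $p$ in Theorem~\ref{thm-caruso}. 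The interpretation of $f_\gamma$ as the semilinear $\gamma$-action on $\M$ inside $V^\vee \otimes W(\C_p^\flat)[\frac{1}{p}]$ is then read off by chasing the construction through Caruso's equivalence.

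Finally, for the injectivity statement, I would mimic the proof of Lemma~\ref{lem-evaluation-1}: since $\M$ is finite projective and descent data base-change faithfully to the perfection, it reduces to showing that
$$
\tilde{e}_\tau : \Cont\bigl(\hat{G}, W(\hat{L}^\flat)[\tfrac{1}{p}]\bigr)^{H_K^{2}} \to W(\hat{L}^\flat)[\tfrac{1}{p}]
$$
is injective. If $h$ lies in the kernel, then $H_K^2$-invariance gives $\sigma_2 h(\sigma_2^{-1}\tau \sigma_1) = 0$ for all $(\sigma_1,\sigma_2) \in H_K^2$; since $\hat{G}$ is topologically generated by $\tau$ and $H_K$, continuity of $h$ forces $h \equiv 0$. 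The main point to verify carefully, and the only genuine obstacle, is that \emph{continuity} of the semilinear $\hat{G}$-action on $\hat{\M}$ is equivalent to the existence of a descent datum over the inverted-$p$ ring $\Cont(\hat{G}, W(\hat{L}^\flat)[\frac{1}{p}])^{H_K^2}$; this is where one has to confirm that inverting $p$ does not create spurious elements in the continuous cochain complex, which in turn reduces to the corresponding statement at the integral level established during the proof of Theorem~\ref{thm-evaluation-1}.
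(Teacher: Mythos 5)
Your proposal tracks the paper's strategy almost exactly: both reduce to the integral case (Theorem~\ref{thm-evaluation-1} and Lemma~\ref{lem-evaluation-1}) and recognize that the whole argument hinges on one new observation about continuous functions valued in $W(\hat{L}^\flat)[\frac{1}{p}]$. You correctly isolate the obstacle as well. However, the resolution you offer for that obstacle is not right: you say that showing ``inverting $p$ does not create spurious elements in the continuous cochain complex'' \emph{reduces to the corresponding statement at the integral level}, but that is not the case---the potential new phenomenon appears precisely because $W(\hat{L}^\flat)[\frac{1}{p}]$ is not $p$-adically complete, so a continuous map $\hat{G}\to W(\hat{L}^\flat)[\frac{1}{p}]$ might \emph{a priori} have unbounded denominators. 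The integral statement does not control this.

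What is actually needed, and what the paper supplies, is the identity
$$
\Cont\bigl(\hat{G}, W(\hat{L}^\flat)[\tfrac{1}{p}]\bigr) = \Cont\bigl(\hat{G}, W(\hat{L}^\flat)\bigr)[\tfrac{1}{p}],
$$
proved using the \emph{compactness of $\hat{G}$}: a continuous image of the compact group in $W(\hat{L}^\flat)[\frac{1}{p}] = \bigcup_n p^{-n} W(\hat{L}^\flat)$ (a union of open subgroups) is compact, hence contained in some single $p^{-n} W(\hat{L}^\flat)$, so every continuous $\hat{G}$-valued function has bounded denominators and lies in $\Cont(\hat{G}, W(\hat{L}^\flat))[\frac{1}{p}]$. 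Once this identity is in hand, passing $H_K^2$-invariants through $[\frac{1}{p}]$ (a filtered colimit along injections) is immediate, which is what you asserted. So the remaining steps of your proposal are correct; the gap is solely that you have not proved the identity of continuous function spaces, and the claimed reduction to the integral level does not prove it.
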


\begin{lemma}\label{lem-evaluation-2}
Given two finite free \'etale $\varphi$-modules $\M,\mathcal{N}$ over $A^{(2)}[1/E]^\wedge_p[\frac{1}{p}]$ and two morphisms $f, g: \M \to \mathcal{N}$ of \'etale $\varphi$-modules over $A^{(2)}[1/E]^\wedge_p[\frac{1}{p}]$. Let $f_\tau,g_\tau$ be the base changes of $f,g$ along the map 
$$
e_\tau:A^{(2)}[1/E]^\wedge_p[\frac{1}{p}] \to (A^{(2)})_{\perf}[1/E]^\wedge_p[\frac{1}{p}] \xrightarrow{\tilde{e}_\tau} W(\hat{L}^\flat)[\frac{1}{p}].
$$
Then $f=g$ if and only if $f_\tau=g_\tau$.
\end{lemma}

\begin{proof}
The proofs are the same as the proof of Theorem~\ref{thm-evaluation-1} and Lemma~\ref{lem-evaluation-1}, plus the following fact that
$$
\Cont\big(\hat{G}, W(\hat{L}^\flat)[\frac{1}{p}]\big) = \Cont\big(\hat{G}, W(\hat{L}^\flat)\big)[\frac{1}{p}],
$$
which can be shown by the compactness of $\hat{G}$. 
\end{proof}

\subsection{Proofs of Proposition~\ref{thm-1prime} and Theorem \ref{Thm-main-1}}\label{subsec-pris-crystal-proof} We keep the assumption that  $R=\O_K$ is a mixed characteristic complete DVR with perfect residue field in this subsection, and keep our notations in \S 2.1.

Let us first prove Proposition~\ref{thm-1prime} using Lemma~\ref{lem-intersection} and results in \S\ref{subsec-phi-tau}. First, we give a different interpretation of the ``evaluation map":
$$
e_\gamma: A^{(2)}[1/E]^\wedge_p \to (A^{(2)})_{\perf}[1/E]^\wedge_p \simeq \Cont\Big(\hat{G}, W\big((\hat{L}^{(2)})^\flat\big)\Big)^{H_K^{2}} \xrightarrow{\tilde{e}_\gamma} W(\hat{L}^\flat)
$$
in Theorem~\ref{thm-evaluation-1} when restricted on $A^{(2)}$ . Recall that we fix a compatible system $\{\varpi_n\}_n$ of $p^n$-th roots of a uniformizer $\varpi \in \O_K$, this defines a map of prisms $\iota: (A,(E)) \to (\Ainf,(E))$ maps $u$ to $[{\varpi}^\flat ]$, and given a $\gamma \in G_K$, we define $\iota_{\gamma}$ to be the composition of $\iota$ with $\gamma: (\Ainf,(E)) \to (\Ainf,(E))$ where the second map is defined as $a \mapsto \gamma(a)$. Since $(E)\subset \Ainf$ is equal to $\Ker(\theta)$ and $\theta$ is  $G_K$-equivariant,  $\gamma$ is a well-defined map of $\delta$-pairs. By the universal property of $\At$, we can define a map of prisms $\iota_{\gamma}^{(2)} : (\At,(E)) \to (\Ainf,(E))$ so that the following diagram commutes:  
\begin{equation}\label{equ-diagram-prisms}
\begin{tikzcd}
(A,(E)) \arrow[rr,"i_1"] \arrow[ddrr,"\iota_\gamma",swap] & & (\At, (E) )\arrow[dd,"\iota^{(2)}_\gamma"] & & (A,(E)) \arrow[ll,"i_2",swap] \arrow[ddll,"\iota"]\\
& & & &\\
& & (\Ainf, (E)) & &
\end{tikzcd}  
\end{equation}
We have $\iota^{(2)}_{\gamma}$ induces a morphism $\tilde{\iota}^{(2)}_{\gamma}: A^{(2)}[1/E]^\wedge_p \to W(\C_p^\flat)$. We claim for all $\gamma \in G_K$, $\tilde{\iota}^{(2)}_{\gamma}$ is the same as the 
$$
A^{(2)}[1/E]^\wedge_p \to (A^{(2)})_{\perf}[1/E]^\wedge_p \simeq \Cont\Big(\hat{G}, W\big((\hat{L}^{(2)})^\flat\big)\Big)^{H_K^{2}} \xrightarrow{\tilde{e}_\gamma} W(\hat{L}^\flat) \hookrightarrow W(\C_p^\flat).
$$
To see this, by the universal property of direct perfection, we have \eqref{equ-diagram-prisms} factorizes as:
$$
\begin{tikzcd}
(A,(E)) \arrow[d]\arrow[rr,"i_1"]  & & (\At, (E) )\arrow[d] & & (A,(E)) \arrow[d] \arrow[ll,"i_2",swap]\\
(A_{\perf},(E)) \arrow[rr,"i'_1"] \arrow[ddrr,"\iota'_\gamma",swap] & & ((\At)_{\perf}, (E) )\arrow[dd,"\iota'^{(2)}_\gamma"] & & (A_{\perf},(E)) \arrow[ll,"i'_2",swap] \arrow[ddll,"\iota'"]\\
& & & &\\
& & (\Ainf, (E)) & &
\end{tikzcd}  
$$
So $\tilde{\iota}^{(2)}_\gamma$ has a factorization
$$
A^{(2)}[1/E]^\wedge_p \to (A^{(2)})_{\perf}[1/E]^\wedge_p \to W(\C_p^\flat).
$$
We need to check $\iota'^{(2)}_\tau$ induces the evaluation map 
$$
(A^{(2)})_{\perf}[1/E]^\wedge_p \simeq \Cont\Big(\hat{G}, W\big((\hat{L}^{(2)})^\flat\big)\Big)^{H_K^{2}} \xrightarrow{\tilde{e}_\tau} W(\hat{L}^\flat) \xhookrightarrow{} W(\C_p^\flat).
$$
And this follows from the isomorphism of $(A^{(2)})_{\perf}[1/E]^\wedge_p\simeq W((K^{(2)}_\infty)^\flat)$, then one check directly for $j_1,j_2$ defined in \eqref{eq-j1j2}, $\tilde{e}_\gamma\circ j_1: A_{\perf}[1/E]^\wedge_p \to W(\hat{L}^\flat)$ is equal to the map induced from $\iota'_\gamma$ and $\tilde{e}_\gamma\circ j_2: A_{\perf}[1/E]^\wedge_p \to W(\hat{L}^\flat)$ is equal to the map induced from $\iota'$. In particular, we have a commutative diagram:
\begin{equation}\label{eq-iotaandevaluation}
\begin{tikzcd}
A^{(2)} \arrow[d, hook] \arrow[rrr, "\iota^{(2)}_\gamma"] &&& \Ainf \arrow[d, hook]\\
A^{(2)}[1/E]^\wedge_p \arrow[r] & (A^{(2)})_{\perf}[1/E]^\wedge_p  \arrow[r,"\tilde{e}_\gamma",hook] & W(\hat{L}^\flat) \arrow[r,hook] & W(\C_p^\flat).
\end{tikzcd}  
\end{equation}
Now we can prove Proposition~\ref{thm-1prime}.

\begin{proof}[Proof of Proposition~\ref{thm-1prime}]
First we pick $\gamma=\tilde{\tau}$ that is a preimage of $\tau$ under the map $G_K \to \hat{G}$, we have $\gamma(u)-u=Ez$ and $\iota^{(2)}_{\gamma}$ defined as above is the embedding defined in \S\ref{subsec-embedding} by Remark~\ref{rem-embedding-depend}. In particular, composing the embedding $A^{(2)} \hookrightarrow \Ainf$ defined in \S\ref{subsec-embedding} with $\Ainf \hookrightarrow W(\C_p^\flat)$, one get the evaluation map 
$$
(A^{(2)})_{\perf}[1/E]^\wedge_p \simeq \Cont\Big(\hat{G}, W\big((\hat{L}^{(2)})^\flat\big)\Big)^{H_K^{2}} \xrightarrow{\tilde{e}_\tau} W(\hat{L}^\flat) \xhookrightarrow{} W(\C_p^\flat).
$$
restricted on $A^{(2)}$.

Keep the notations as in \S\ref{subsec-G-image}, let $\M_{\Ainf}=W(\C_p^\flat)\otimes_A \MM$ and $\M_A \simeq \MM\otimes_A A[1/E]^\wedge_p$. By  Theorem~\ref{thm-evaluation-1} and Theorem~\ref{thm-caruso}, recall we use $B^{(2)}= \At [\frac 1 E]^\wedge_p$ and $B^{(2)}_{\st}= \At_{\st} [\frac 1 E]^\wedge_p$ to simplify our notations, we have there is a descent data 
$$
c: \M_A \otimes _{A[1/E]^\wedge_p, \tilde{i}_1} B^{(2)} \to \M_A \otimes_{A[1/E]^\wedge_p, \tilde{i}_2} B^{(2)} 
$$
of $\M_A$ over $B^{(2)}$ that corresponds to the representation $T$. And the semilinear action of $\gamma=\tilde{\tau}$ on $\M_{\Ainf}$ is given by the evaluation $c_\tau$, that is,  we have the linearization of the $\tilde{\tau}$-action is defined by
$$
c_\tau: W(\C_p^\flat) \otimes_{\tilde{\iota}_\gamma,A[1/E]^\wedge_p} \M_A \simeq W(\C_p^\flat) \otimes_{\tilde{\iota} , A[1/E]^\wedge_p} \M_A.
$$
By  base change $c$ along $B^{(2)} \to B^{(2)}[\frac{1}{p}]$, we get a $B^{(2)}[\frac{1}{p}]$-linear $\varphi$-equivariant morphism:
$$
c': \M_A \otimes _{A[1/E]^\wedge_p, \tilde{i}_1} B^{(2)}[\frac{1}{p}] \to \M_A \otimes_{A[1/E]^\wedge_p, \tilde{i}_2} B^{(2)}[\frac{1}{p}]. 
$$
On the other hand, from the discussions after Proposition~\ref{thm-1prime}, $\tilde{\tau}$-action also defines a $\varphi$-equivariant morphism 
$$
f_{\tilde{\tau}}: \MM\otimes_{A,\iota_{\tilde{\tau}}} A_{\st}^{(2)}[\frac{1}{p}] \simeq \MM\otimes_{A} A_{\st}^{(2)}[\frac{1}{p}].
$$
We will see in Proposition~\ref{prop-descentBsttoB2} below that $f_{\tilde{\tau}}$ actually descents to a $B^{(2)}[1/p]$-linear morphism. Assuming this fact, then if we base change $f_{\tilde{\tau}}$ along $A^{(2)}[\frac{1}{p}] \to W(\C_p^\flat)[\frac{1}{p}]$, we will have $f_{\tilde{\tau}}\otimes W(\C_p^\flat)[\frac{1}{p}]=c_\tau$ since the way we define $f_{\tilde{\tau}}$ is by taking the ${\tilde{\tau}}$-action. From the discussion at the beginning of the proof and Lemma~\ref{lem-evaluation-2}, we have $f_{\tilde{\tau}}=c'$ as a $B^{(2)}[\frac{1}{p}]$-linear isomorphism between $\M_A \otimes _{A[1/E]^\wedge_p, \tilde{i}_1} B^{(2)}[\frac{1}{p}] $ and $ \M_A \otimes_{A[1/E]^\wedge_p, \tilde{i}_2} B^{(2)}[\frac{1}{p}]$.

We fix a basis $\{e_i\}$ of $\MM$, for $j=1,2$ let $\{e^j_i\}$ be the basis of $\M_A \otimes _{A, \tilde{i'}_j} B^{(2)}[\frac{1}{p}] $ defined by $e^j_i=e_i\otimes 1$ and the tensor is via $A \to A[1/E]^\wedge_p \xrightarrow{\tilde{i}_j} B^{(2)}[1/p]$. So we can interpret $f_{\tilde{\tau}}=c'$ as matrix using this two basis, this matrix is $X_{\tilde{\tau}}$ from this definition, so it has coefficients inside $A_{\st}^{(2)}[\frac{1}{p}]$ by the discussion before 
Proposition~\ref{thm-1prime}. On the other hand, $X_{\tilde{\tau}}$ has coefficients in $B^{(2)}\subset B_{\st}^{(2)}$ since $c'$ is defined by the $B^{(2)}$-linear map $c$. So by Lemma~\ref{lem-intersection}, we have $X_{\tilde{\tau}}$ has coefficients inside $A_{\st}^{(2)}$. The same argument shows when $T$ is crystalline, then $X_{\tilde{\tau}}$ has coefficients inside $A^{(2)}$.
\end{proof}

\begin{proposition}\label{prop-descentBsttoB2}
Base change along $B^{(2)} \to A^{(2)}_{\st}[1/E]^\wedge_p$ defines an equivalence of categories of \'etale $\varphi$-modules over $B^{(2)}$ and $A^{(2)}_{\st}[1/E]^\wedge_p$ and an equivalence of categories of \'etale $\varphi$-modules over $B^{(2)}[1/p]$ and $A^{(2)}_{\st}[1/E]^\wedge_p[1/p]$.
\end{proposition}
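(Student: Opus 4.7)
The plan is to show that the natural map $\alpha_*\colon B^{(2)} \to B^{(2)}_{\st}$, induced by the inclusion $\alpha\colon A^{(2)}\hookrightarrow A^{(2)}_{\st}$ of \S\ref{subsec-Ast}, is already an isomorphism of $\varphi$-rings; the equivalence of \'etale $\varphi$-module categories, both integrally and after inverting $p$, then follows trivially by base change along an isomorphism.

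The key observation is that $u$ becomes a unit in $B^{(2)}$. Since $E(u)$ is Eisenstein, one has $E = u^e + pb$ for some $b \in A$, hence $u^e/E = 1 - pb/E$ is a unit in the $p$-adically complete ring $B^{(2)}$; combined with $E$ being a unit in $B^{(2)}$, this forces $u$ to be a unit. In particular $z/u \in B^{(2)}$, and using $\delta(u) = 0$ one checks $\delta^n(z/u)\in B^{(2)}$ (for instance $\delta(z/u) = \delta(z)/u^p$). Since $\alpha(z) = xw$ and $\alpha(u) = x$, $\alpha_*$ sends $z/u$ to $w$ and $\delta^n(z/u)$ to $\delta^n(w)$ by $\delta$-equivariance, so the image of $\alpha_*$ contains every generator of $A^{(2)}_{\st}$ over $A^{(2)}$.

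To upgrade this to surjectivity of $\alpha_*$, I will reduce modulo $p$. Using that $A^{(2)}$ is $A$-flat by Proposition~\ref{prop-key-property}(5), $\bar u$ is a non-zero divisor in $A^{(2)}/p$, so inverting $\bar E = \bar u^e$ agrees with inverting $\bar u$, giving $B^{(2)}/p = (A^{(2)}/p)[1/\bar u]$ and similarly $B^{(2)}_{\st}/p = (A^{(2)}_{\st}/p)[1/\bar u]$. The mod-$p$ relations $\bar z = \bar u \bar w$ (from $z = xw$) and $\bar \y = \bar u^{e-1}\bar z$ (from $\y = Ew$ and $\bar E = \bar u^e$) then show that every generator of $A^{(2)}_{\st}/p[1/\bar u]$ has a preimage in $(A^{(2)}/p)[1/\bar u]$ under $\alpha_*\bmod p$; surjectivity of $\alpha_*$ itself follows by standard $p$-adic completeness.

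For injectivity of $\alpha_*$ I will use the commutative diagram of embeddings of \S\ref{subsec-embedding}: the inclusions $A^{(2)}\hookrightarrow A^{(2)}_{\st}\hookrightarrow A_{\inf}$ are compatible with $\alpha$, so after inverting $E$ and $p$-completing, the composite $B^{(2)} \xrightarrow{\alpha_*} B^{(2)}_{\st} \to W(\C_p^\flat)$ is the natural Breuil--Kisin type embedding $B^{(2)}\hookrightarrow \Ainf[1/E]^\wedge_p = W(\C_p^\flat)$; its injectivity forces $\alpha_*$ to be injective, yielding $B^{(2)}\simeq B^{(2)}_{\st}$. The main obstacle I anticipate is a careful verification that this $p$-completed embedding $B^{(2)}\hookrightarrow W(\C_p^\flat)$ is indeed injective --- the uncompleted map $A^{(2)}[1/E]\hookrightarrow \Ainf[1/E]$ is clearly injective by Corollary~\ref{cor-inj}, but one must check that $A^{(2)}[1/E]$ is $p$-adically separated inside $\Ainf[1/E]$, which should follow from the $(p,E)$-flatness of $A^{(2)}$ over $A$ and the regular-sequence structure established in Proposition~\ref{prop-key-property}, though the bookkeeping requires some care.
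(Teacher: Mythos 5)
Your approach attempts to prove a stronger statement than the paper does: you claim that $\alpha_*\colon B^{(2)}\to B^{(2)}_{\st}$ is an isomorphism of rings. The paper does not assert this. Instead it invokes \cite[Theorem 4.6]{wu2021galois} (étale $\varphi$-modules over $B[1/J]^\wedge_p$ agree with those over $B_{\perf}[1/J]^\wedge_p$) and then shows $(A^{(2)})_{\perf}=(A^{(2)}_{\st})_{\perf}$ in Lemma~\ref{lem-perfectionofA2andAst2}, so that the two categories coincide \emph{after perfection}, without needing $B^{(2)}\simeq B^{(2)}_{\st}$ at the unperfected level.

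The gap in your argument is in the surjectivity step. You argue that since each topological generator ($\bar\y$, $\delta^n(\bar w)$) of $A^{(2)}_{\st}/p$ lies in $(A^{(2)}/p)[1/\bar u]$, the map $(A^{(2)}/p)[1/\bar u]\to (A^{(2)}_{\st}/p)[1/\bar u]$ is surjective. But $A^{(2)}_{\st}/p$ is the $\bar u$-adic \emph{completion} of $\wt{A^{(2)}_{\st}}/p$, so a typical element is a $\bar u$-adic limit involving $\delta^n(\bar w)$ for arbitrarily large $n$. Under $\alpha_*$ one has $\delta^n(\bar w)=\delta^n(\bar z)/\bar u^{p^n}$, so the pole order of the preimage grows like $p^n$; a $\bar u$-adically convergent series in $A^{(2)}_{\st}/p$ therefore has a preimage whose denominators are \emph{unbounded}, and there is no reason such a series should land in $(A^{(2)}/p)[1/\bar u]$ (a localization, not a completion, of $A^{(2)}/p$). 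Knowing the generators have preimages is thus not enough; you would need a uniform bound on pole orders, which the structure of $A^{(2)}_{\st}$ (built from $\delta^n(w)$, not from finitely many elements) does not provide. You also acknowledge the injectivity step requires nontrivial bookkeeping ($p$-adic separatedness of $A^{(2)}[1/E]$ inside $\Ainf[1/E]$), which is not settled by Corollary~\ref{cor-inj} alone. In short, the assertion $B^{(2)}\simeq B^{(2)}_{\st}$ may well be false, and even if true is considerably harder than the statement being proved; passing to perfections as the paper does sidesteps both obstacles, since in $(A^{(2)})_{\perf}$ the ratio $u_2/u_1$ is already a unit (so the additional generators collapse), and Wu's theorem does the rest.
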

\begin{proof}
By \cite[Theorem 4.6]{wu2021galois}, we just need to show the same result after perfections, we will show $(\At)_{\perf}= (\At_{\st})_{\perf}$ in Lemma~\ref{lem-perfectionofA2andAst2} using the logarithmic prismatic site.
\end{proof}

Now, let us prove Theorem~\ref{Thm-main-1} by first producing a functor $\mathcal T$ from prismatic $F$-crystals in finite $\O_\Prism$-modules to lattices inside a crystalline representation. For prism $A$, we use $i_k: A \to \At$ or $A^{(3)}$ for natural map from $A$ to $k$-th factor of $\At$ or $A^{(3)}$. The notation $i_{kl}: A ^{(2)} \to A ^{(3)}$ has the similar meaning. 

By Corollary \ref{cor-crystal-descentdata}, given a prismatic $F$-crystal ${\MM}_\Prism$, we obtain a Kisin module $(\MM , \varphi _{\MM})$ of height $h$ together with descent data
$f: \MM \otimes _{A, i_1} \At \to \MM \otimes_{A, i_2}\At $ so that $f$ satisfies the following cocycle condition $ i _{13} \otimes f = (i _{23} \otimes f) \circ (i _{12} \otimes f ) $, where $i_{kl} \otimes f$ is the base change of $f$ along $i_{kl}$, and $f$ also compatible with the $\varphi$-structure on the both sides of $f$. Note that the existence of $f$ follows from the crystal property of   $\MM_\Prism$: 
\begin{equation}\label{eqn-cocyclefromcrystal}
f: \MM \otimes _{A, i_1} \At \simeq \MM_\Prism((A^{(2)},(E))) \simeq \MM \otimes_{A, i_2}\At 
\end{equation}

We let $\M=\MM\otimes_A A[1/E]^\wedge_p$ and  $c=f\otimes_{\At} B^{(2)}$, then  $(\M,c)$ is an \'etale $\varphi$-module with descent data, which corresponds to a $\Z_p$-representation of $G_K$. Moreover the semilinear action of $G_K$ on $\MM\otimes_A W(\C_p^\flat)$ comes from $\{c_\gamma\}_{\gamma\in G_K}$ using the evaluation maps. If we define 
$$
f_\gamma: \Ainf \otimes_{\iota_\gamma,A} \MM \to \Ainf \otimes_{\iota , A} \MM
$$
as the base change of $f$ along $\iota_{\gamma}^{(2)}$, then by \eqref{eq-iotaandevaluation}, we have $c_\gamma=f_\gamma$. The $G_K$-semilinear action commutes with $\varphi$ as $f$ does. For any $\gamma \in G_K$, we have $\gamma (A) \subset W(k)[\![u , \epsilon-1]\!] \subset \At_{\st} \subset \Ainf$. Therefore, the $G_K$-action on the $\Ainf \otimes_A \MM$ defined the above factors through $\At_{\st} \otimes_A \MM$. We claim that $G_K$-action on $\wh \MM : = \At_{\st}\otimes_A \MM$ defines a $(\varphi, \hat G)$-module which corresponds to a crystalline representation.

First, for $\gamma \in G_\infty$, $\gamma(A) = A$ in $\Ainf$, we conclude $\iota^{(2)}_\gamma : \At \to \Ainf$ satisfies $\iota^{(2)}_{\gamma}\circ i_1=\iota^{(2)}_{\gamma}\circ i_2$. In particular, for any $\gamma \in G_\infty$ and $j=1,2$, using \eqref{eqn-cocyclefromcrystal} and the crystal property of $\MM_\Prism$,  $f_\gamma$ comes from the base change of \eqref{eqn-cocyclefromcrystal} along $\iota^{(2)}_\gamma : \At \to \Ainf$, in particular, we have 
$$
f_\gamma: \MM \otimes _{A, \iota^{(2)}_{\gamma}\circ i_1} \Ainf \simeq \MM_\Prism((\Ainf,\Ker\theta)) \simeq \MM \otimes_{A, \iota^{(2)}_{\gamma}\circ i_2}\Ainf. 
$$
Since $\iota^{(2)}_{\gamma}\circ i_1=\iota^{(2)}_{\gamma}\circ i_2$, we have $f_\gamma={\mathrm{id}}$ which means $\MM \subset (\wh \MM) ^{G_\infty}$. Similarly, $G_K$ acts on $\wh \MM/ I_+$ corresponds the base change of $f$ along 
$$A^{(2)} \xrightarrow{\iota^{(2)}_\gamma} \Ainf \to W(\bar{k}) $$
where the last arrow is the reduction modulo $W(\m)$ ($\m$ is the maximal ideal of $\O_{\C_p}^\flat$). One can check for all $\gamma\in G_K$ and $j=1,2$, we have 
$$
A \xrightarrow{i_j} A^{(2)} \xrightarrow{\iota^{(2)}_\gamma} \Ainf \to W(\bar{k})
$$
are all equal to $A \to W(k) \hookrightarrow W(\overline{k})$ with the first arrow given by $u \mapsto 0$. The above map induces a morphism of prisms $(A,(E)) \to (W(k),(p))$, then using \eqref{eqn-cocyclefromcrystal} and the crystal condition of $\MM_\Prism$, we can similarly prove that $G_K$ acts on $\wh \MM/ I_+$-trivially, so $(\MM, \varphi_{\MM}, G_K)$ is a $(\varphi, \hat G)$-module. Furthermore, $\wh T (\wh \MM)$ is crystalline by Corollary \ref{cor-crystalline} and Theorem~\ref{Thm-1}. 

\begin{remark}
In \S\ref{sec-logprismandsemistablereps}, we will consider a category consisting of modules with descent data, and similar arguments about the triviality of the Galois actions can be shown directly using the cocycle condition of the descent data. We summarize this fact in the following easy fact.
\end{remark}
\begin{lemma}
Let $q:(A^{(2)},(E)) \to (B,J)$ be a map of prisms satisfying $q\circ i_1 =q\circ i_2$, then for any descent data $f$ over $A^{(2)}$, the base change of $f$ along $q$ is the identity map.
\end{lemma}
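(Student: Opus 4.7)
The plan is to reduce the identity $q^*f = \mathrm{id}$ to the cocycle condition over $A^{(3)}$ by a judicious choice of map $A^{(3)} \to B$.

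First I set $q_0 := q\circ i_1 = q\circ i_2 \colon A \to B$, which is a morphism of prisms since $q$ is one. By Proposition~\ref{prop-selfproduct} the prism $A^{(3)}$ is the triple self-product of $(A,(E))$ in $X_\Prism$, so the triple $(q_0, q_0, q_0)$ of maps $A \to B$ induces a unique map of prisms $r \colon A^{(3)} \to B$. Writing $j_1, j_2, j_3 \colon A \to A^{(3)}$ for the three structure maps and $i_{12}, i_{13}, i_{23} \colon A^{(2)} \to A^{(3)}$ for the three natural maps (where $i_{kl}$ corresponds to the pair $(j_k, j_l)$), the defining property of $r$ gives $r\circ j_k = q_0$ for $k = 1,2,3$. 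By the universal property of $A^{(2)}$, each composite $r \circ i_{kl} \colon A^{(2)} \to B$ corresponds to the pair $(q_0, q_0)$ of maps $A \to B$, hence $r\circ i_{kl} = q$ for every $(k,l) \in \{(1,2),(1,3),(2,3)\}$.

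Now the cocycle condition satisfied by $f$ is the equality
\[
i_{13}^* f = i_{23}^* f \circ i_{12}^* f
\]
of isomorphisms $\MM \otimes_{A, j_1} A^{(3)} \simeq \MM \otimes_{A, j_3} A^{(3)}$. Applying the base change along $r$ collapses all three tensor products to $\MM \otimes_{A, q_0} B$, and the previous paragraph identifies the base change along $r$ of each $i_{kl}^* f$ with $q^* f$. Therefore the cocycle identity becomes
\[
q^* f = q^* f \circ q^* f
\]
as an endomorphism of $\MM \otimes_{A, q_0} B$. Since $q^* f$ is an isomorphism, cancelling it on either side yields $q^* f = \mathrm{id}$, as desired.

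There is no real obstacle here; the only thing to be careful about is to correctly match each $r \circ i_{kl}$ with $q$ using the universal property of the self-product, and to verify that all three base changes along $r$ land in the same module (which is what forces the collapse to a single endomorphism).
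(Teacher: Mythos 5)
Your proof is correct. The paper states this lemma without proof, labelling it an ``easy fact,'' and the preceding remark indicates that the intended argument is exactly yours: collapse the cocycle condition over $A^{(3)}$ along the map $r\colon A^{(3)}\to B$ induced by the triple $(q_0,q_0,q_0)$, observe that $r\circ i_{kl}=q$ for every pair $(k,l)$ by the universal property of $A^{(2)}$, and then cancel the isomorphism $q^*f$ in the resulting identity $q^*f = q^*f\circ q^*f$. The only step that requires care is verifying $r\circ i_{kl}=q$, which you do correctly by checking both sides agree after composing with $i_1$ and $i_2$ and invoking uniqueness in the universal property.
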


To show the fully faithfulness of this functor, let $(\MM, f)$, $(\MM', f')$ be two Kisin modules with descent data, and assume there exists a map $ \alpha: \mathcal T ((\MM , f)) \to \mathcal T ((\MM' , f'))$ of lattices in crystalline representations, then from our construction of $\mathcal{T}$ and Theorem~\ref{thm-2}, $\alpha$ is induced from a map $\hat{\alpha}: (\MM , \varphi_{\MM}, \hat G_{\MM}) \to (\MM' , \varphi _{\MM'}, \hat G_{\MM'})$ between $(\varphi, \hat G)$-modules. The faithfulness of $\mathcal{T}$ follows from the fact that $A \to A[1/E]^\wedge_p$ induces a fully faithful functor between Kisin modules over $A$ and \'etale $\varphi$-modules over $A[1/E]^\wedge_p$ from \cite[Proposition 2.1.12]{KisinFcrystal}. On the other hand,  $\hat{\alpha}$ gives morphisms $\hat{\alpha}_1: \MM \otimes_{A,i_1} A^{(2)} \to \MM' \otimes_{A,i_1}A^{(2)}$ and $\hat{\alpha}_2: \MM \otimes_{A,i_2} A^{(2)} \to \MM' \otimes_{A,i_2}A^{(2)}$. If we view $A$ and $A^{(2)}$ as subrings of $\Ainf$ using diagram \eqref{equ-diagram-prisms}, then the following diagram commutes by the fact that $\hat{\alpha}: \wh{\MM} \to \wh{\MM'}$ is compatible with $\tau$-action.
$$
\begin{tikzcd}
\MM \otimes_{A,i_1} A^{(2)} \arrow[r,"f"] \arrow[d,"\hat{\alpha}_1"] & \MM \otimes_{A,i_2} A^{(2)} \arrow[d,"\hat{\alpha}_2"] \\
\MM' \otimes_{A,i_1} A^{(2)} \arrow[r,"f'"] & \MM' \otimes_{A,i_2} A^{(2)}  \\
\end{tikzcd}
$$
Thus we produces a morphism between $(\MM, f)$ and $(\MM', f')$, i.e. $\mathcal{T}$ is also full. 

It remains to show the functor $\mathcal{T}$ is essential surjective. Given a lattice $T$ in a crystalline representation of $G_K$, let $\MM$ be the corresponding Kisin module, it suffices to construct a descent data of $\MM$ over $A^{(2)}$. We have shown in our proof of Proposition~\ref{thm-1prime} that if we view $A^{(2)}$ as a subring of $\Ainf$ via $\iota^{(2)}_{\tilde{\tau}}$, then $X_{\tilde{\tau}}$ defines a $\varphi$-equivariant isomorphism $f: \MM\otimes_{A,i_1} A^{(2)} \simeq \MM\otimes_{A,i_2} A^{(2)}$ of $A^{(2)}$-modules. We also show the base change of $f$ along $A^{(2)} \to B^{(2)}$ is equal to the descent data  $c$ of the \'etale $\varphi$-module $\M_A=\MM\otimes_A A[1/E]^\wedge_p$ that corresponds to $G_K$-action on $T$. In particular, $c: \MM\otimes_{A,i_1} B^{(2)} \simeq \MM\otimes_{A,i_2} B^{(2)}$ satisfies the cocycle condition. By Lemma \ref{lem-intersection}, $A^{(2)}$ (resp. $A^{(3)}$) injects into $B^{(2)}$ (resp. $B^{(3)}$), so we have $f$ also satisfies the cocycle condition. In particular, $(\MM,f)$ together produce a primatic $F$-crystals in finite free $\O_\Prism$-module by Corollary~\ref{cor-crystal-descentdata}.

\begin{remark}
Given an \'etale $\varphi$-module $(\M_A,\varphi_{\M_A}, c)$ over $A[1/E]^\wedge_p$ with descent datum $c$, we call $(\M_A,\varphi_{\M_A}, c)$ is \emph{of finite $E$-height} if $\M_A$ is of finite $E$-height, i.e., if there is a finite free Kisin module $(\MM,\varphi_{\MM})$ of finite height and defined over $A$ such that $\MM\otimes_A A[1/E]^\wedge_p \simeq \M_A$ as $\varphi$-modules. Since $(\M_A, \varphi_{\M_A})$ is the \'etale $\varphi$-module for $T|_{G_\infty}$, our definition of finite $E$-height is compatible with the one given by Kisin under the equivalence in (1) of Theorem~\ref{thm-evaluation-1}. 

We expect the same arguments in the proof of Proposition~\ref{thm-1prime} will be used to study representations of finite $E$-height. A similar result has been studied using the theory of $(\varphi,\tau)$-modules by Caruso. For example, in the proof of \cite[Lemma 2.23]{Caruso-phitau}, Caruso shows for representations of finite $E$-height, the $\tau$-actions descents to $\SSS_{u\text{-np},\tau}$, which is a subring of $\Ainf$ closely related to $\tilde{\iota}^{(2)}_{\tilde\tau}(B^{(2)})\cap \Ainf$, where $\tilde{\tau}$ is a preimage of $\tau$ in $G_K$. 
\end{remark}

\begin{remark}
We can also establish the compatibility of our Theorem~\ref{Thm-main-1}, the theory of Kisin and \cite[Theorem 1.2]{BS2021Fcrystals}. Given a lattice $T$ in a crystalline representation of $G_K$ with non-negative Hodge-Tate weight, and let $\MM$ be the Kisin module corresponds to $T$ in \cite{KisinFcrystal}, and let $\MM_{\Prism}$ (reso. $\MM'_{\Prism}$) be the prismatic $F$-crystal corresponds to $T^\vee$ under \cite[Theorem 1.2]{BS2021Fcrystals} (resp. $T$ under Theorem~\ref{Thm-main-1}). Note that we need to take $T^\vee$ since in the work of Bhatt-Scholze, the equivalence is covariant. By our construction of $\MM'_{\Prism}$, we have $\MM'_{\Prism}((A,(E)))\simeq \MM$. By \cite[Remark 7.11]{BS2021Fcrystals}, $\MM_{\Prism}((A,(E)))\simeq \MM$. Next we need to show the descent data over $A^{(2)}$ constructed respectively are the same. By Corollary~\ref{cor-inj}, we just need to show they are the same as descent data of \'etale $\varphi$-modules over $A^{(2)}[1/E]^\wedge_p$, but they are the same by our $\tau$-evaluation criteria in Lemma~\ref{lem-evaluation-1}.  
\end{remark}

\section{Logarithmic prismatic \texorpdfstring{$F$}{F}-crystals and semi-stable representations}\label{sec-logprismandsemistablereps}
In this section, we will propose a possible generalization of Theorem~\ref{Thm-main-1} to semi-stable representations using the absolute logarithmic prismatic site. The main reference of this subsection is \cite{Koshikawa2021log-prism}. We will restrict ourselves to the base ring $R=\O_K$, a complete DVR with perfect residue field. And we give $R$ the log structure associated to the prelog structure $\alpha: \N \to R$ such that $\alpha(1)=\varpi$ is a uniformizer in $R$, i.e., let $D=\{\varpi=0\}$, then the log structure on $X=\Spf(R)$ is defined by 
$$
M_X=M_D \hookrightarrow \O_X \text{ where } M_D(U):=\{f\in \O_X(U) \,|\, f|_{U\backslash D}\in \O^\times(U\backslash D) \}.
$$
Let us introduce the absolute logarithmic site over $(X,M_X)$.
\begin{definition}\cite[Definition 2.2 and Definition 3.3]{Koshikawa2021log-prism}
\begin{enumerate}
    \item A $\delta_{\log}$-ring is a tuple $(A,\delta, \alpha:M\to A, \delta_{\log}:M\to A)$, where $(A,\delta)$ is a $\delta$-pair and $\alpha$ is a prelog-structure on $A$. And $\delta_{\log}$ satisfies:
    \begin{itemize}
        \item $\delta_{\log}(e)=0$,
        \item $\delta(\alpha(m))=\alpha(m)^p\deltalog(m)$,
        \item $\deltalog(mn)=\deltalog(m)+\deltalog(n)+p\deltalog(m)\deltalog(n)$
        for all $m,n\in M$. And we will simply denote it by $(A,M)$ if this is no confusion. Morphisms are morphisms of $\delta$-pairs that are compatible with the perlog structure and $\deltalog$-stucture.
    \end{itemize}
    \item A $\delta_{\log}$-triple is $(A,I,M)$ such that $(A,I)$ is a $\delta$-pair and $(A,M)$ is a $\delta_{\log}$-ring.
    \item A $\delta_{\log}$-triple $(A,I,M)$ is a prelog prism if $(A,I)$ is a prism, and it is bounded if $(A,I)$ is bounded.
    \item A bounded prelog prism is a log prism if it is $(p, I )$-adically log-affine (cf. \cite[Definition 3.3]{Koshikawa2021log-prism}). 
    \item A bounded (pre)log prism is integral if $M$ is an integral monoid.
    \item A $\delta_{\log}$-triple $(A,I,M)$ is said to be over $(R,\N)$ if $A/I$ is an $R$-algebra and there is a map $M\to \mathbb N$ of monoids such that the following diagram commutes.
    $$
    \begin{tikzcd}
    M \arrow[rr] \arrow[d] & & A \arrow[d] \\
    \N \arrow[r] & R \arrow[r] & A/I
    \end{tikzcd}
    $$
    All $\delta_{\log}$-triples over $(R,\N)$ form a category. Similarly, we can define the category of prelog prisms over $(R,\N)$ and the category of bounded log prisms over $(R,\N)^a$.
\end{enumerate}
\end{definition}

\begin{remark}
If $A$ is an integral domain, or more generally if $\alpha(M)$ consists of non-zero divisors, then $\deltalog$ is uniquely determined by $\delta$ if exists. In particular, morphisms between such $\delta_{\log}$-rings are just morphisms of $\delta$-rings.
\end{remark}

\begin{remark}
Note that in this paper, for a $\delta$-pair $(A,I)$, we always assume $A$ is $(p,I)$-adic complete, but in \cite{Koshikawa2021log-prism}, non-$(p,I)$-adic completed $\delta_{\log}$-triples are also been studied. By Lemma 2.10 of loc.cit., we can always take the $(p,I)$-adic completion of the $\delta$-pair $(A,I)$ and the $\delta_{\log}$-structure will be inherited. 
\end{remark}

\begin{proposition}\cite[Corollary 2.15]{Koshikawa2021log-prism}
Given a bounded prelog prism $(A,I,M)$, one can associate it with a log prism
$$
(A,I,M)^a=(A,I,M^a)
$$
\end{proposition}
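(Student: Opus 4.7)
The plan is to carry out the standard associated-log-structure construction from log geometry, then upgrade it to the $\delta_{\log}$-setting by extending $\deltalog$ canonically from $M$ to $M^a$, and finally to verify the log-affineness condition demanded by Koshikawa's definition of a log prism.

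Concretely, I would first form $M^a := M \oplus_{\alpha^{-1}(A^\times)} A^\times$ as a pushout in the category of (integral) monoids, with structure map $\alpha^a \colon M^a \to A$ defined on representatives by $[m,u] \mapsto \alpha(m)\,u$. By construction $(\alpha^a)^{-1}(A^\times) \to A^\times$ is an isomorphism, so $(A,M^a)$ carries a genuine log structure, and it is the initial such structure receiving a map from $(A,M)$ among log prelog pairs over $A$.

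Second, I would extend $\deltalog$ to $M^a$. On the $A^\times$-factor, the second $\deltalog$-axiom $\delta(\alpha(m)) = \alpha(m)^p \deltalog(m)$ forces $\deltalog(u) := \delta(u)/u^p$ for $u \in A^\times$, which is well-defined because $u^p \in A^\times$. The cocycle axiom then dictates the extension
\[
\deltalog([m,u]) := \deltalog(m) + \deltalog(u) + p\,\deltalog(m)\,\deltalog(u)
\]
on the pushout. The critical check is well-definedness on equivalence classes: for $v \in \alpha^{-1}(A^\times)$, one must verify that $\deltalog([m\cdot v,\,u]) = \deltalog([m,\,\alpha(v)u])$, which amounts to applying the second axiom to $\alpha(v)$ and then telescoping via the cocycle identity. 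Granting this, the three $\deltalog$-axioms for the extended structure follow from their validity on $M$ and on $A^\times$ together with the cocycle formula.

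Third, I would verify that the resulting $\delta_{\log}$-triple $(A,I,M^a)$ satisfies the $(p,I)$-adic log-affineness condition of a log prism in the sense of \cite[Definition 3.3]{Koshikawa2021log-prism}; since $M^a$ is already the associated log structure (so $(\alpha^a)^{-1}(A^\times) \xrightarrow{\sim} A^\times$), this is essentially formal from the definition. The main obstacle in the whole argument is the well-definedness of $\deltalog$ on $M^a$, i.e.\ the consistency check on the pushout relation; the remaining steps are either direct consequences of the universal property of the pushout or formal verifications.
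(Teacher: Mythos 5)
The paper does not prove this statement; it is imported verbatim as a citation of Koshikawa's Corollary~2.15, so there is no in-paper argument to compare against. Your reconstruction is correct and follows Koshikawa's construction: the one nontrivial point, well-definedness of $\deltalog$ on $M^a = M \oplus_{\alpha^{-1}(A^\times)} A^\times$, does go through because the unit formula $\deltalog(u) := \delta(u)u^{-p}$ agrees with the given $\deltalog$ on $\alpha^{-1}(A^\times)$ (both equal $\delta(\alpha(v))\alpha(v)^{-p}$ by the axiom $\delta(\alpha(v)) = \alpha(v)^p\deltalog(v)$), the operation $F(a,b)=a+b+pab$ is associative (since $1+pF(a,b)=(1+pa)(1+pb)$), and the pushout relation is generated by $(mv,u)\sim(m,\alpha(v)u)$ with $v\in\alpha^{-1}(A^\times)$, so the telescoping you describe is exactly the compatibility needed.
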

\begin{remark}
When we deal with log prisms in this paper, we will always take it as the log prism associated with some prelog prism. And by the above proposition, we know taking the associated log prism does not change the underlying $\delta$-pair. Moreover, it is a general fact that $(A,I,M)^a$ is integral if $(A,I,M)$ is integral.
\end{remark}

\begin{definition}\label{def-logprism}
The absolute logarithmic prismatic site $(X,M_X)_{\Prism_{\log}}$ is the opposite of the category whose objects are 
\begin{enumerate}
    \item bounded log prisms $(A,I,M_A)$ with \textit{integral} log structure,
    \item maps of formal schemes $f_A: \Spf(A/IA) \to X$,
    \item the map $f_A$ satisfies
$$
(\Spf(A/IA),f_A^\ast M_X) \to (\Spf(A),M_A)^a
$$
defines an exact closed immersion of log formal schemes.
\end{enumerate}
A morphism  $(A,I,M_A) \to (B,I,M_B)$ is a cover if and only if $A \to B$ is $(p,I)$-complete faithfully flat and the pullback induces an isomorphism on log structure. We will also use $\O_{\Prism}$ (resp. $\mathcal{I}_\Prism$) to denote the structure sheaf (resp. ideal sheaf of Hodge-Tate divisor) on $(X,M_X)_{\Prism_{\log}}$ by $(A,I,M_A) \mapsto A$ (resp. $(A,I,M_A) \mapsto I$).
\end{definition}

There is a variant of the about definition that we will also use in this subsection, we define $(X,M_X)_{\Prism_{\log}}^{\perf}$ be the full subcategory of $(X,M_X)_{\Prism_{\log}}$ whose  objects are $(A,I,M_A)$ with $A$ perfect.

\begin{remark}
Our definition of the absolute logarithmic prismatic site is different from \cite[Definition 4.1]{Koshikawa2021log-prism}. First, we need to consider the absolute prismatic site, not the relative one. Furthermore, we use the $(p,I)$-complete faithfully flat topology compared with the $(p,I)$-complete \'etale topology. Also we require the log structures to be integral. 
\end{remark}

\begin{proposition}\label{prop-logprismisasite}
$(X,M_X)_{\Prism_{\log}}$ forms a site.
\end{proposition}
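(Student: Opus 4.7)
The plan is to verify the axioms of a Grothendieck topology on $(X,M_X)_{\Prism_{\log}}$: (i) isomorphisms are covers; (ii) covers are stable under composition; (iii) fiber products of coverings exist, and covers are stable under base change. Axiom (i) is immediate since an isomorphism of log prisms is automatically $(p,I)$-completely faithfully flat and pulls back log structure to itself. Axiom (ii) follows because a composition of $(p,I)$-completely faithfully flat maps is $(p,I)$-completely faithfully flat, and compositions of isomorphisms on the associated log structures are again isomorphisms.

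The bulk of the work goes into axiom (iii), which requires constructing pushouts in the underlying category. Given a diagram $(B,I,M_B) \leftarrow (A,I,M_A) \to (C,I,M_C)$ of objects of $(X,M_X)_{\Prism_{\log}}$ with $(A,I,M_A) \to (B,I,M_B)$ a cover, I would construct the pushout in three stages. First, form the $(p,I)$-adically completed tensor product $D_0 := B \,\wh{\otimes}_A\, C$ with its induced $\delta$-structure; since $A \to B$ is $(p,I)$-completely faithfully flat, so is $C \to D_0$, and in particular $(D_0,ID_0)$ is a bounded prism by \cite[Lemma 3.7]{BS19}. Second, form the pushout of integral monoids $M_{D_0} := (M_B \oplus_{M_A} M_C)^{\mathrm{int}}$, equipped with the prelog structure obtained from $M_B \to D_0$ and $M_C \to D_0$; the $\delta_{\log}$-structures on $(B,M_B)$ and $(C,M_C)$ induce one on $(D_0,M_{D_0})$ by the universal property of the pushout. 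Third, pass to the associated log structure $(D,ID,M_D) := (D_0,ID_0,M_{D_0})^a$, which is a log prism by \cite[Corollary 2.15]{Koshikawa2021log-prism}.

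The map $\Spf(D/ID) \to X$ is obtained by composing with $\Spf(C/IC) \to X$, and one checks that $(\Spf(D/ID),f_D^{\ast}M_X) \to (\Spf(D),M_D)^a$ is an exact closed immersion using that this property is preserved under strict base change along $(\Spf(C),M_C)^a \to (\Spf(D),M_D)^a$ and that the analogous statement holds for $(B,I,M_B)$. Thus $(D,ID,M_D)$ lies in $(X,M_X)_{\Prism_{\log}}$, and the universal property as pushout of $\delta_{\log}$-rings follows from the universal properties in each stage (completed tensor product of $\delta$-rings, pushout of integral monoids, and associated log structure). The base change $(C,I,M_C) \to (D,ID,M_D)$ is $(p,I)$-completely faithfully flat because this property is preserved under completed base change, and it induces an isomorphism on log structures since $M_C \to M_D$ is obtained by pushout from the isomorphism $M_A \xrightarrow{\sim} M_B$.

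The main technical obstacle is verifying that the associated-log operation $(-)^a$ interacts well with the pushout so that the resulting object indeed satisfies the exact closed immersion condition of Definition~\ref{def-logprism}(3), and that integrality of the log structure is preserved; both amount to standard manipulations with integral monoids and their associated log structures along the lines of \cite[\S 2--3]{Koshikawa2021log-prism}, but must be carried out carefully because the associated log structure is formed after $(p,I)$-completion. Once this is settled, stability of covers under base change follows from the faithful-flatness claim above, completing the verification of the site axioms.
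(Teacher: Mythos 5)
Your proposal reduces the site axioms to the existence of pushouts (and stability of covers under base change), which is the same reduction the paper makes; but the construction of the pushout's log structure is different and the gap is there. You form $M_{D_0} := (M_B \oplus_{M_A} M_C)^{\mathrm{int}}$ and then pass to the associated log structure, but there are two problems. First, the covering condition in Definition~\ref{def-logprism} says the \emph{pullback of the log structure} along $A \to B$ is an isomorphism, not that the monoid map $M_A \to M_B$ of prelog structures is an isomorphism. Your final sentence invokes ``the isomorphism $M_A \xrightarrow{\sim} M_B$'' to conclude that $(C,M_C)\to(D,M_D)$ is strict, but no such isomorphism of monoids is given, so the pushout $M_B \oplus_{M_A} M_C$ is not visibly related to $M_C$ at the prelog level. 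Second, the real technical content — that forming the associated log structure of your integralized pushout recovers the log-scheme fiber product, and that the exact closed immersion and integrality conditions are preserved — is exactly the step you flag as ``the main technical obstacle'' and then do not carry out.

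The paper's argument avoids both issues by exploiting strictness from the start: since the covering map is strict (pullback of log structures is an isomorphism), one can form the fiber product of log formal schemes as in \cite[Proposition 2.1.2]{Ogus_logbook}, and $(\Spf C, M_C) \to (\Spf D, M_D)$ is then automatically strict by Remark 2.1.3 of \emph{loc.~cit.} This immediately gives integrality of $M_D$ (as pullback of the integral $M_C$), strictness of $(\Spf(D/ID), f_D^*M_X) \to (\Spf D, M_D)^a$ (as base change of a strict morphism), and exactness (pushout of a surjection of monoids is a surjection). No integral pushout of prelog monoids, and no separate verification of the $\delta_{\log}$-structure on the integralized monoid, is needed: the $\delta_{\log}$-structure is simply pulled back from $C$. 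Your approach could likely be made correct, but it needs the chart-level comparison you omit, and it works harder than the strictness observation makes necessary.
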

\begin{proof}
Similar to \cite[Corollary 3.12]{BS19}, we need to show for a given diagram 
$$
\begin{tikzcd}
(C,I,M_C) & (A,I,M_A) \arrow[l,"c",swap] \arrow[r,"b"] & (B,I,M_B)
\end{tikzcd}
$$
in $(X,M_X)_{\Prism_{\log}}$ such that $b$ is a cover, then the pushout of $b$ along $c$ is a covering. From the argument in $loc. cit.$, we known for the underlying prisms, the pushout of $b$ along $c$ is the $(p,I)$-completed tensor product $D=C\wh {\otimes}_A B$, and $(D,I)$ is a bounded prism covers $(C,I)$ in the $(p,I)$-complete faithful flat topology. And we give $D$ the log structure $M_D$ defined by viewing $\Spf(D)$ as the fiber product via \cite[Proposition 2.1.2]{Ogus_logbook}, then $(C,M_C)\to (D,M_D)$ is strict morphism by Remark 2.1.3 of $loc.cit.$,  so in particular, $M_D$ is integral since $M_C$ is. For the same reason, 
$$
(\Spf(D/ID),f_D^\ast M_X) \to (\Spf(D),M_D)^a
$$ is strict since it is the base change of a strict morphism. It is an exact closed immersion since pushout of a surjective map of monoids is again surjective.
\end{proof}

\begin{example}\cite[Example 3.4]{Koshikawa2021log-prism}\label{exa-logprism}
\begin{enumerate}
    \item Let $(A,(E))$ be the Breuil--Kisin prism, then we can define a perlog structure on $(A,(E))$ by $\N \to A; n\mapsto u^n$, one has $(A,(E),\N)^a$ is in $(X,M_X)_{\Prism_{\log}}$, where (3) in Definition~\ref{def-logprism} follows from the prelog structures $\N \to R \to A/(E)$ and $\N \to A \to A/(E)$ induce the same log structure.
    \item For any prism $(B,J)$ over $(A,(E))$, it has a natural prelog structure $\N \to A \to B$, and similar to $(1)$, $(B,J,\N)^a$ is in $(X,M_X)_{\Prism_{\log}}$.
    \item A special case of (2) is that $(B,J)=(A_{\perf},(E))$, the perfection of $(A,(E))$. One has the prelog structure in (2) can be directly defined as $1\mapsto [\varpi^\flat]$. And $(A,(E),\N)^a \to(B,J,\N)^a$ is a covering of log prisms in $(X,M_X)_{\Prism_{\log}}$.
    \end{enumerate}
\end{example}

Actually, all logarithmic structures of log prisms in $(X,M_X)_{\Prism_{\log}}$ is the log structure associated with a prelog structure defined by $\N$. We thank Teruhisa Koshikawa for letting us know the following lemma.

\begin{lemma}\label{lem-Nchart}
For any log prism $(B,J,M_B)$ inside $(X,M_X)_{\Prism_{\log}}$, $(B,M_B)^a$ admits a chart $\N \to B$ defined by $n \mapsto u_B^n$ for some $u_B\in B$ satisfying $u_B \equiv \varpi \mod J$. 
\end{lemma}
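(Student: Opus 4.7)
The plan is to lift the canonical $\N$-chart on the pullback log structure $f_B^*M_X$ to a chart on $(B, M_B)^a$, using the exactness of the closed immersion and the integrality of $M_B$. First, I would unpack the pullback: since $M_X$ is the log structure associated to the prelog structure $\N \to R$, $1 \mapsto \varpi$, the pullback $f_B^*M_X$ on $\Spf(B/J)$ is associated to the composition $\N \to R \to B/J$, $1 \mapsto \bar\varpi$, where $\bar\varpi$ denotes the image of $\varpi$ in $B/J$. In particular $f_B^*M_X$ carries a tautological $\N$-chart.

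Condition (3) of Definition~\ref{def-logprism} yields an isomorphism of log structures $i^*(M_B)^a \cong f_B^*M_X$, where $i: \Spf(B/J) \hookrightarrow \Spf(B)$ is the underlying closed immersion. Under this isomorphism the chart generator $1 \in \N$ corresponds to a global section $\bar m$ of $i^*(M_B)^a$ with $\alpha(\bar m) = \bar\varpi$. Using the log-affineness hypothesis in the definition of a log prism, together with the integrality of $M_B$, I would lift $\bar m$ to a global section $m \in \Gamma(\Spf(B), M_B)$; this is the standard chart-lifting criterion for exact closed immersions of integral log schemes. Setting $u_B := \alpha_{M_B}(m) \in B$, one has $u_B \equiv \varpi v \pmod{J}$ for some unit $v \in (B/J)^\times$. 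Lifting $v$ to a unit $\tilde v \in B^\times$ (possible since $B$ is $(p, J)$-adically complete) and replacing $m$ by $m \cdot \tilde v^{-1} \in M_B$ (which makes sense because $\O_B^\times \subset M_B$), we may arrange $u_B \equiv \varpi \pmod{J}$.

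It then remains to verify that $\N \to B$, $n \mapsto u_B^n$, is genuinely a chart for $(B, M_B)^a$. The induced map of associated log structures $(\N \to B,\, 1 \mapsto u_B)^a \to (B, M_B)^a$ becomes an isomorphism after pulling back along $i$, which is exactly the exactness condition combined with our construction of $m$. Since $B$ is $(p, J)$-adically complete and both log structures are integral and coherent, a Nakayama-style argument (verifying the statement on the characteristic monoids $\overline{M}$ and invoking that exact closed immersions detect isomorphisms of integral log structures in the appropriate sense) upgrades this to an isomorphism on $\Spf(B)$.

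The main obstacle I expect is producing the lift $m$. For fine log schemes in the Zariski topology, such lifts across exact closed immersions are standard (see e.g.\ Ogus's book on log geometry, or Kato's original paper), but one must ensure the analogous statement applies in Koshikawa's $(p, J)$-adically log-affine framework. The integrality and log-affineness hypotheses built into Definition~\ref{def-logprism} are tailored precisely so that these lifts exist; carefully translating the log-affine condition into a concrete statement about global chart existence is the technical heart of the argument.
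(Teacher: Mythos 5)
Your proposal follows the same route as the paper's proof: both hinge on lifting the canonical $\N$-chart of $f_B^*M_X$ across the exact closed immersion $(\Spf(B/J),f_B^*M_X)\to(\Spf(B),M_B)^a$, using integrality of $M_B$ and log-affineness. The difference is in how the lift is actually produced. You defer to a ``standard chart-lifting criterion for exact closed immersions of integral log schemes,'' which is exactly where the paper does its real work: following the proof of \cite[Proposition~3.7]{Koshikawa2021log-prism}, the paper takes the fiber product $M_B\times_{N^a_{B/J}}\N$ (over the induced map to the pushed-forward log structure $N^a_{B/J}$) and shows, using integrality and the fact that $(\Spf(B/J),f_B^*M_X)\to(\Spf(B),M_B)^a$ is a log thickening with ideal $J$ in the sense of \cite[Definition~2.1.1]{Ogus_logbook}, that this fiber product is isomorphic to $\N\times(1+J)$. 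Since $(1+J)^\times = (1+J)$ (as $J$ is in the Jacobson radical of the $(p,J)$-complete ring $B$), the factor $1+J$ consists of units, so one discards it and is left with the desired chart $\N\to B$. This concrete fiber-product identification is the technical heart you flagged but did not carry out; without it, the lift remains asserted rather than proved.

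One small inaccuracy in your write-up: the unit-adjustment step is unnecessary. If $m\in\Gamma(\Spf(B),M_B)$ lifts $\bar m$ on the nose, then $\alpha_{M_B}(m)$ reduces to $\alpha(\bar m)=\bar\varpi$ modulo $J$, so $u_B\equiv\varpi\pmod J$ directly, with no extraneous unit $v$ to absorb. In the paper's construction this is built into the choice of the $\N$-factor of $\N\times(1+J)$, whose generator maps to an element congruent to $\varpi$ mod $J$ by design.
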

\begin{proof}
For any log prism $(B,J,M_B)$ inside $(X,M_X)_{\Prism_{\log}}$, we have 
$$
(\Spf(B/J),f_B^\ast M_X) \to (\Spf(B),M_B)^a
$$
defines an exact closed immersion of log formal schemes. So by the proof of \cite[Proposition 3.7]{Koshikawa2021log-prism}, if we let $N^a_{B/J}:=\Gamma(\Spf(B/J),\underline{\N}^a)$ for the prelog structure $\N \to \O_K \to B/J$ induced from the given prelog structure on $\O_K$, then the fiber product $M_B \times_{N^a_{B/J}} \N$ is a chart for $(B,M_B)^a$. Moreover, since we assume $M_B$ to be integral, we have $(\Spf(B/J),f_B^\ast M_X) \to (\Spf(B),M_B)^a$ is a log thickening with ideal $J$ in the sense of \cite[Definition 2.1.1.]{Ogus_logbook}, and one can show $M_B \times_{N^a_{B/J}} \N \simeq \N \times (1+J)$. Now $(1+J) =(1+J)^\times$, so 
$$
\N \to \N \times (1+J) \simeq M_B \times_{N^a_{B/J}} \N \to B
$$
is also a chart for $(B,M_B)^a$. And the prelog structure given by $n \mapsto u_B^n$ for some $u_B\in B$ satisfying the image of $u_B$ in $B/J$ coincides with the image of $\varpi$ under $\O_K \to B/J$.
\end{proof}

In the rest of this subsection, we will try to generalize results we proved in \S\ref{subsec-pris-crystal}-\S\ref{subsec-pris-crystal-proof} for the logarithmic prismatic site. 

\begin{lemma}\label{lem-log-nonemptyproduct}
\begin{enumerate}
    \item For $(A,I_A,M_A)^a, (B,I_B,M_B)^a\in (X,M_X)_{\Prism_{\log}}$ such that $M_A,M_B$ are integral and $(A,M_A)\to (A/I_A,\N)$ and $(B,M_B)\to (B/I_B,\N)$ are exact surjective, there is a prelog prism $(C,I_C,M_C)$ with integral log structure that is universal in the sense that the diagram 
    $$
    \begin{tikzcd}
    (A,I_A,M_A) \arrow[r] & (C,I_C,M_C) & (B,I_B,M_B) \arrow[l] 
    \end{tikzcd}
    $$
    is initial in the category of diagrams 
    $$
    \begin{tikzcd}
    (A,I_A,M_A) \arrow[r] & (D,I_D,M_D) & (B,I_B,M_B) \arrow[l] 
    \end{tikzcd}
    $$
    of prelog prisms over $(R,\N)$, and $(D,M_D)\to (D/I_D,\N)$ is an exact surjective.
    \item If $(C,I_C)$ in (1) is bounded, then $(C,I_C,M_C)^a$ is the product of $(A,I_A,M_A)^a$ and $(B,I_B,M_B)^a$ inside $ (X,M_X)_{\Prism_{\log}}$.
    \item If both $(A,I_A,M_A)^a$ and $ (B,I_B,M_B)^a$ in (1) are inside $ (X,M_X)_{\Prism_{\log}}^{\perf}$, and let $(C_{\perf},I_C)$ be the perfection of $(C,I_C)$ defined in (1). Let $(C_{\perf},I_C,M_C)$ be the prelog prism with prelog structure induced from $C$. Then $(C_{\perf},I_C,M_C)^a$ is isomorphic to the product of $(A,I_A,M_A)^a$ and $(B,I_B,M_B)^a$ in $(X,M_X)_{\Prism_{\log}}^{\perf}$. 
\end{enumerate}
\end{lemma}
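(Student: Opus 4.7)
The plan for (1) is to mimic the construction of $A^{(2)}_{\st}$ in \S\ref{subsec-Ast}, which should turn out to be the universal object when $A = B = \SSS$ with the log structure from Example~\ref{exa-logprism}(1). By Lemma~\ref{lem-Nchart}, each of $(A, M_A)^a$ and $(B, M_B)^a$ admits a chart $\N \to A$, $n \mapsto u_A^n$ and $\N \to B$, $n \mapsto u_B^n$, with $u_A \equiv \varpi \mod I_A$ and $u_B \equiv \varpi \mod I_B$. I would first form $C_0 := A \wh\otimes_{\Z_p} B$, the derived $(p, I_A, I_B)$-completion of the tensor product, equipped with the induced $\delta$-structure. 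To identify the two log structures coming from $u_A$ and $u_B$ in an integral way, I would enlarge $C_0$ to $C_1 := C_0[\![\eta]\!]$ with $\delta$-structure extending that of $C_0$ via $\varphi(\eta) = (1 + \eta)^p - 1$, and impose the relation $u_B = (1 + \eta) u_A$. Now let $K \subset C_1$ be the kernel of the natural map $C_1 \to A/I_A \otimes_R B/I_B = R$; this contains $I_A$, $I_B$, and $\eta$ (the latter because $u_A \equiv u_B \pmod{K}$). Let $(C, I_C)$ be the prismatic envelope of $(C_1, I_A)$ along $K$, so $I_C = I_A C$ and $C = C_1\{K/I_A\}_\delta^\wedge$. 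Equip $C$ with the prelog structure $\N \to C$, $n \mapsto u_A^n$; this is a $\delta_{\log}$-structure via $\deltalog(1) = \delta(u_A)/u_A^p \in C$, and $M_C := \N$ is integral. The universal property is then verified in the standard way: any morphism from the displayed diagram to $(D, I_D, M_D, \alpha_D : \N \to D)$ with $(D, M_D)$ exact surjective over $(D/I_D, \N)$ determines the images of $u_A, u_B$, forces the image of $\eta$ to lie in the kernel of $D \to D/I_D$ (hence in $I_A D$ by the exact surjectivity, which implies $\alpha_D(1)$ generates the image of $\N$ up to units), and then the prismatic envelope property of $C$ gives the unique factorization.

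For (2), when $(C, I_C)$ is bounded, $(C, I_C, M_C)^a$ lies in $(X, M_X)_{\Prism_{\log}}$: the induced log structure is integral because the associated log structure of an integral prelog structure is integral, and the exact closed immersion condition in Definition~\ref{def-logprism} can be checked using that both $(A, M_A)^a$ and $(B, M_B)^a$ satisfy it together with Lemma~\ref{lem-Nchart}. The universal property in $(X, M_X)_{\Prism_{\log}}$ then reduces to that of (1), since any $(D, I_D, M_D) \in (X, M_X)_{\Prism_{\log}}$ necessarily satisfies the exact surjectivity hypothesis of (1) by Definition~\ref{def-logprism} combined with Lemma~\ref{lem-Nchart}. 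Passing from prelog to associated log structures is harmless because $(-)^a$ preserves the underlying $\delta$-pair and is left adjoint to the forgetful functor.

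For (3), I would use that perfection is the left adjoint to the inclusion of perfect prisms into bounded prisms (cf.\ \cite[Lem.\ 3.9]{BS19}), so $(C_{\perf}, I_C)$ with its inherited prelog structure receives the required maps from $(A, I_A, M_A)^a$ and $(B, I_B, M_B)^a$, and any $(D, I_D, M_D)^a \in (X, M_X)_{\Prism_{\log}}^{\perf}$ receiving such maps factors uniquely through $(C, I_C, M_C)^a$ by (2), hence through $(C_{\perf}, I_C, M_C)^a$ since $D$ is perfect.

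The main obstacle will be verifying that the prismatic envelope $(C, I_C)$ obtained from $(C_1, K)$ actually exists as a bounded object when we want to invoke (2) for specific applications, together with checking that the resulting log structure remains integral after taking the associated log structure and that the exact surjectivity condition propagates through the construction. The existence of the prismatic envelope itself will follow from \cite[Prop.\ 3.13]{BS19} once we exhibit $K$ as being generated (modulo $I_A$) by a Koszul-regular sequence such as $\{u_A - u_B, \eta\}$ adjusted appropriately; in the archetypal case $A = B = \SSS$ this recovers $A^{(2)}_{\st}$ of \S\ref{subsec-Ast}, whose boundedness was already established in Proposition~\ref{prop-Ast-properties}(6).
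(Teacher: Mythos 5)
Your overall skeleton (form a completed tensor product $C_0$, kill the discrepancy between the two copies of $\varpi$ by adjoining a new log coordinate, then take a prismatic envelope) matches the paper's route via Koshikawa's Proposition 3.6 and Construction 2.18, and the underlying $\delta$-ring you land on is the right one — Lemma~\ref{lem-Ast2islogprism} confirms that in the case $A=B=\SSS$ the envelope is $A^{(2)}_{\st}$. However, your part (1) has a genuine gap in the choice of prelog monoid.

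You equip $C$ with $M_C=\N$, $n\mapsto u_A^n$. A morphism of prelog prisms $(B,I_B,M_B)\to(C,I_C,M_C)$ requires a monoid map $\beta:\N\to\N$ with $\alpha_C(\beta(1))=u_B$ in $C$, i.e.\ $u_A^{\beta(1)}=u_B$. But by construction $u_B=(1+\eta)u_A$ with $1+\eta$ a nontrivial unit, so $u_B$ is not an $\N$-power of $u_A$ in $C$ (they agree only modulo $I_C$). Hence no such $\beta$ exists, and $(C,I_C,\N)$ does not even sit in a diagram receiving arrows from both $(A,I_A,M_A)$ and $(B,I_B,M_B)$; it cannot be the universal object of (1). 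The correct prelog monoid is the exactification of $M_A\times M_B\to\N$, which in the $\N$-chart picture is $M^2=\{(m,n)\in\Z^2\;:\;m+n\geq 0\}$ with $(1,0)\mapsto u_A$, $(0,1)\mapsto u_B$, $(1,-1)\mapsto u_A/u_B$, $(-1,1)\mapsto u_B/u_A$; then $M_A\to M^2$, $n\mapsto(n,0)$ and $M_B\to M^2$, $n\mapsto(0,n)$ give the required prelog maps. This is exactly what the paper obtains by invoking Koshikawa's Proposition 3.6 applied to the $\delta_{\log}$-triple $(C_0,J,M_A\times M_B)$. Note that after passing to associated log structures your $\N$ and $M^2$ give the same thing (the extra generators map to units), so the discrepancy is invisible in parts (2) and (3) themselves; but those parts lean formally on (1), so the gap should still be closed at the prelog level.

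A secondary point: forming $C_1:=C_0[\![\eta]\!]/(u_B-(1+\eta)u_A)$ requires the ideal $(u_B-(1+\eta)u_A)$ to be a $\delta$-ideal for the quotient to inherit a $\delta$-structure, and this is not automatic. The exactification route avoids this by localizing (inverting $u_A/u_B$ and $u_B/u_A$, which keeps one in the world of $\delta$-rings) and then $(p,I)$-completing, rather than taking a quotient. In the special case it is harmless — $W[\![u,v,\eta]\!]/(v-(1+\eta)u)\simeq W[\![u,\eta]\!]$ and one simply \emph{defines} $v:=(1+\eta)u$ with compatible $\varphi$, as the paper does — but for general $A,B$ you should justify that your quotient carries a $\delta$-structure, or follow the localization-and-completion recipe of Koshikawa directly.
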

\begin{proof}
Let $(A,I_A,M_A),(B, I_B, M_B)\in (X,M_X)_{\Prism_{\log}}$, define $C_0$ to be the $(p,I_A,I_B)$-adic completion of $A\otimes_{W(k)}B$ and let $J$ be the kernel of
$$
C_0 \to A/I_A \wh {\otimes}_R B/I_B.
$$
Then $(C_0,J,M_A\times M_B)$ is a $\deltalog$-triple over $(A,I_A,M_A)$. And we have $(C_0,J,M_A\times M_B) \to (C_0/J,\N)$ is surjective. Then we can apply \cite[Proposition 3.6]{Koshikawa2021log-prism} to get a universal prelog prism $(C,I_C,M_C)$ over $(A,I_A,M_A)$ and $(B, I_B, M_B)$ and satisfies $(C,M_C)\to (C/J,\N)$ is exact surjective. Just recall in the proof of \cite[Proposition 3.6]{Koshikawa2021log-prism}, we first construct a $\deltalog$-triple $(C',J',M_C')$ which is universal in the sense that it is a $\deltalog$-triple over both $(A,I_A,M_A)$ and $(B, I_B, M_B)$ satisfying $C'/J'$ is over $A/I_A$ and $B/I_B$ as $R$-algebra and $(C',M_C')\to (C'/J',\N)$ is exact surjective. Then we take the prismatic envelope with respect to $(A,I_A) \to (C',J')$ to get $(C,I_C)$. Then we can check such $(C,I_C,M_C)$ satisfies the universal property. For (2), when $(C,I_C)$ is bounded, the fact that $(C,I_C,M_C)^a$ is the product of $(A,I_A,M_A)^a$ and $(B,I_B,M_B)^a$ inside $ (X,M_X)_{\Prism_{\log}}$ follows from Proposition 3.7 of $loc.cit.$. For (3), we have $(C_{\perf},I_C)$ is automatically bounded, and one can check $(C_{\perf},I_C)$ is universal using exactly the same proof of Proposition 3.7 of $loc.cit.$.
\end{proof}

We thank Koji Shimizu for the following lemma about $A^{(2)}_{\st}$. 

\begin{lemma}\label{lem-Ast2islogprism}
Let $(A,I,\N)^a$ be the Breuil--Kisin prism defined in $(1)$ of Example~\ref{exa-logprism}, then the self-product (resp. self-triple product) of $(A,I,\N)^a$ in $(X,M_X)_{\Prism_{\log}}$ exist. Moreover, if we let $(A^{\langle 2 \rangle},I,M^2)^a$ (resp. $(A^{\langle 3 \rangle},I,M^3)^a$) be self-product (resp. self-triple product) of $(A,I,\N)^a$, then $A^{\langle i \rangle}\simeq A^{(i)}_{\st}$ for $i=2,3$.
\end{lemma}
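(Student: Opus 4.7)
The plan is to show directly that $(A^{(2)}_{\st},(E),\N)^a$---equipped with the prelog structure $\N\to A^{(2)}_{\st}$, $1\mapsto x$---represents the self-product of $(A,(E),\N)^a$ in $(X,M_X)_{\Prism_{\log}}$, and analogously that $(A^{(3)}_{\st},(E),\N)^a$ represents the self-triple-product. Since both $(A^{(2)}_{\st},(E))$ and $(A^{(3)}_{\st},(E))$ are bounded by Proposition~\ref{prop-Ast-properties}(6), once the universal property is verified, existence and the identifications $A^{\langle i\rangle}\simeq A^{(i)}_{\st}$ follow simultaneously, bypassing the detailed exactification carried out inside Lemma~\ref{lem-log-nonemptyproduct}.

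I would proceed in three steps. First, check that $(A^{(2)}_{\st},(E),\N)^a$ lies in $(X,M_X)_{\Prism_{\log}}$: the only nontrivial condition is (3) of Definition~\ref{def-logprism}, which is immediate because $x\equiv \varpi\pmod{E}$, so the prelog structure on $A^{(2)}_{\st}/(E)\simeq R$ is exactly the one pulled back from $M_X$. Second, construct two structure morphisms $i_1,i_2:(A,(E),\N)^a\to (A^{(2)}_{\st},(E),\N)^a$: the first is the literal prelog map $u\mapsto x$, and the second is $u\mapsto y$ where $y:=x(1+\y)$ inside $A^{(2)}_{\st}$. The second is not a morphism of prelog prisms but it is a morphism of log prisms, because $\y=Ew\in (E)A^{(2)}_{\st}$ forces $1+\y\equiv 1\pmod{E}$, and $(p,E)$-completeness of $A^{(2)}_{\st}$ then makes $1+\y$ a unit, so $y=x(1+\y)$ represents a section of the associated log structure $M^a$. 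Third, verify universality: given any $(D,I_D,M_D)^a\in (X,M_X)_{\Prism_{\log}}$ with two morphisms $f_1,f_2$ from $(A,(E),\N)^a$, use Lemma~\ref{lem-Nchart} to fix a chart $\N\to D$, $1\mapsto u_D$, with $u_D\equiv\varpi\pmod{I_D}$. Writing $u_j:=f_j(u)$, comparing valuations modulo $I_D$ yields $u_j=u_D\epsilon_j$ with $\epsilon_j\in 1+I_D$, so there is a unique $\eta:=\epsilon_2\epsilon_1^{-1}\in 1+I_D$ with $u_2=u_1\eta$. Setting $\y_D:=\eta-1\in I_D$ and $w_D:=\y_D/f_1(E)\in D$ (well-defined because $I_D$ is invertible and generated by the distinguished element $f_1(E)$), the assignment $x\mapsto u_1$, $\y\mapsto\y_D$, $w\mapsto w_D$ extends uniquely, via the universal property of the prismatic envelope defining $A^{(2)}_{\st}$, to a $\delta$-ring map $A^{(2)}_{\st}\to D$, and this map sends both $x$ and $y$ into $M_D^a$, hence is a morphism of log prisms.

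The main obstacle is conceptual rather than computational: the second structure map $u\mapsto y$ is a morphism of log prisms but not of prelog prisms, and this is precisely why $A^{(2)}_{\st}$---rather than $\At$---is the correct self-product in the logarithmic setting; the logarithmic framework forces the two monoid generators of $M^a$ to differ by a unit, which is exactly the content of adjoining $w=\y/E$ rather than $z=(y-x)/E$. The one technical check in the universality step is $\delta$-compatibility of $\y\mapsto\y_D$, which reduces to verifying $\varphi(\eta)=\eta^p$, i.e., $\delta(\eta)=0$; this follows from $\delta(u_1)=\delta(u_2)=0$ together with the fact that $u_1$ is a non-zero-divisor in $D$, a standard consequence of $I_D$ being an invertible ideal and $R=\O_K$ being a domain. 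The self-triple-product case is formally identical: work with $A^{(3)}_{\st}=W(k)[\![x,\y,\z]\!]\{\y/E,\z/E\}_\delta^\wedge$, construct three structure maps $u\mapsto x$, $u\mapsto x(1+\y)$, $u\mapsto x(1+\z)$, and establish universality by producing two units $\eta,\eta'\in 1+I_D$ relating $f_1(u)$ to $f_2(u)$ and $f_3(u)$ respectively.
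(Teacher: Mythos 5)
The route you take is genuinely different from the paper's. The paper invokes the general machinery of Lemma~\ref{lem-log-nonemptyproduct} (which constructs binary products of log prisms through the prelog-prism/$\delta_{\log}$-triple framework and exactification) and then explicitly carries out the exactification $\N^2\leadsto M^2=\{(m,n):m+n\in\N\}$, landing on $W(k)[\![u,v/u-1]\!]\{\frac{v/u-1}{E}\}^\wedge_\delta\simeq A^{(2)}_{\st}$. You instead propose to verify the universal property of $(A^{(2)}_{\st},(E),\N)^a$ by hand, constructing the comparison map from an arbitrary test object via Lemma~\ref{lem-Nchart} and the prismatic envelope universal property. Your conceptual summary--that the log structure forces the two copies of $u$ to differ by a unit, which is precisely what adjoining $w=\y/E$ rather than $z=(y-x)/E$ encodes--is the right insight, and is the same insight the paper encodes in its exactification computation.

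However, there is a real gap. Several steps of your argument rest on the assertion that $u_1=f_1(u)$ (equivalently the chart element $u_D$) is a non-zero-divisor in $D$, which you call ``a standard consequence of $I_D$ being invertible and $R=\O_K$ being a domain.'' This is not justified and is not true for an arbitrary bounded log prism $(D,I_D,M_D)^a$ in $(X,M_X)_{\Prism_{\log}}$: nothing in Definition~\ref{def-logprism} forces $D/I_D$ to be $\varpi$-torsion-free, and if $\bar u_D=\varpi$ is a zero-divisor there, then $u_D$ can be a zero-divisor in $D$. You use this hypothesis (i) to get well-defined, unique $\epsilon_j$ with $u_j=u_D\epsilon_j$ and $\eta$ with $u_2=u_1\eta$; (ii) to deduce $\delta(\eta)=0$ from $u_1^p\delta(\eta)=\delta(u_2)-\eta^p\delta(u_1)-p\delta(u_1)\delta(\eta)=0$; and (iii) implicitly in the uniqueness half of the universal property, where one would need $u_1(g(\y)-g'(\y))=0\Rightarrow g(\y)=g'(\y)$. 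The correct fix in every case is to argue at the level of the monoid $M_D^a$ and the $\delta_{\log}$-structure, which is exactly the extra datum Koshikawa's framework carries to handle this: $\epsilon_j\in D^\times$ and $\eta$ are uniquely pinned down inside the log structure (where $D^\times\hookrightarrow M_D^a$ acts freely) without reference to $u_D$ being regular; and $\delta_{\log}(\eta)=0$, hence $\delta(\eta)=\eta^p\delta_{\log}(\eta)=0$, follows from $\delta_{\log}$-additivity applied to $m_2=m_1\cdot\eta$ together with $\delta_{\log}(m_j)=f_j(\delta_{\log}^A(1))=0$. This is precisely why the paper routes the argument through prelog prisms and $\delta_{\log}$-triples; once one makes the $\delta_{\log}$-structure explicit your direct verification can be completed, but as written the argument is incomplete.
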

\begin{proof}
By our construction in Lemma~\ref{lem-log-nonemptyproduct}, $(A^{\langle 2 \rangle},I,M)$ is the prelog prismatic envelope $(C,I_C,M_C)$ with respect to
    $$
    (A,(E),\N) \to (C_0,J,\N^2) \text{ and } (C_0/J,\N^2)\to (R,\N)
    $$
    where $C_0=W[\![u,v]\!]$, $J=(E(u),u-v)$ with the prelog structure given by $\beta: (1,0)\mapsto u, (0,1)\mapsto v$. The prelog prismatic envelope is constructed using the technique of exactification: consider $\pi: (C_0,\N^2)\to (R=C/J,\N)$ where the map between log structures is given by $\pi_{\log}: \N\times \N \to \N;(m,n)\mapsto m+n$, here $\pi_{\log}$ is surjective but not exact, so to construct the exactification of $\pi: (C,\N^2)\to (R,\N)$ (cf. \cite[Construction 2.18]{Koshikawa2021log-prism}), first we have the (complete) exactification of $\pi_{\log}$ is 
    $$
    \alpha: M^2 \to \N \quad \text{ given by } \quad (m,n) \mapsto m+n,
    $$
    where $M^2=\{(m,n)\in \Z\times \Z \,|\, m+n\in \N \}$. Since $M^2$ is generated by $(-1,1)$, $(1,-1)$, $(0,1)$ and $(1,0)$, one has the exactification of $\pi$ is 
    $$
    \Big( W(k)[\![u,v]\!]\big[\frac{v}{u},\frac{u}{v}\big]^\wedge_{(p,J')}, J',M^2; \alpha: (1,0)\mapsto {u}, (0,1)\mapsto v, (\pm 1,\mp1)\mapsto \pm\frac{u}{v} \Big)
    $$
    where $J':=\ker(W(k)[\![u,v]\!]\big[\frac{v}{u},\frac{u}{v}\big] \to R)$. 
    
    We have the $(p,J')$-adic completion of $W(k)[\![u,v]\!]\big[\frac{v}{u},\frac{u}{v}\big]$ is $W(k)[\![u,\frac{v}{u}-1]\!]$. Then take prismatic envelope of 
    $
    (A,(E))\to (W(k)[\![u,\frac{v}{u}-1]\!], (E,\frac{v}{u}-1)).
    $ One can check 
    $$ W(k)[\![u,\frac{v}{u}-1]\!]\big\{\frac{v/u-1}{E(u)}\big\}^\wedge_\delta \simeq A_{\st}^{(2)}$$ 
    directly from the definition of $A_{\st}^{(2)}$.
    
    Similarly, we can show $A^{\langle 3 \rangle}\simeq A^{(3)}_{\st}$ which is also bounded. 
\end{proof}

The following is one of our key observations.
\begin{lemma}\label{lem-perfectionofA2andAst2}
We have $(A^{\langle 2 \rangle})_{\perf} \simeq (A^{(2)})_{\perf}$.
\end{lemma}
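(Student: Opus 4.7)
The plan is to use the universal properties of the two perfections. By Lemma~\ref{lem-Ast2islogprism} together with Lemma~\ref{lem-log-nonemptyproduct}(3), $(A^{\langle 2 \rangle})_{\perf}$ represents the self-coproduct of $(A_{\perf}, (E), \N)^a$ in the perfect log prismatic site $(X, M_X)_{\Prism_{\log}}^{\perf}$, whereas $(A^{(2)})_{\perf}$ represents the self-coproduct of $A_{\perf}$ in the perfect prismatic site $X_\Prism^{\perf}$. The inclusion $\alpha: A^{(2)} \hookrightarrow A^{\langle 2 \rangle}$ constructed in Section~\ref{subsec-Ast} induces a morphism $\alpha_{\perf}: (A^{(2)})_{\perf} \to (A^{\langle 2 \rangle})_{\perf}$ of perfect prisms, and the goal is to produce its inverse.

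First I would equip $(A^{(2)})_{\perf}$ with a canonical log prism structure over $(A_{\perf}, (E), \N)^a$. By Lemma~\ref{lem-Nchart}, such a structure is specified by a chart $\N \to (A^{(2)})_{\perf}$ sending $1$ to a lift of $\varpi$, and the natural choice is $u_B = x = i_1(u)$, which tautologically makes $i_1$ a morphism of log prisms. The crux is then to verify that the second structure map $i_2: A \to (A^{(2)})_{\perf}$ is also a log morphism with respect to this log structure, which amounts to showing that $y := i_2(u)$ factors as $x \cdot \eta$ for some unit $\eta \in 1 + E(A^{(2)})_{\perf}$.

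Verifying this compatibility is the main obstacle. The expected argument exploits perfectness: writing $C = (A^{(2)})_{\perf}/p$, the isomorphism $(A^{(2)})_{\perf} \simeq W(C)$ combined with $\varphi(x) = x^p$ and $\varphi(y) = y^p$ identifies $x, y$ with Teichmüller lifts of elements $\bar x, \bar y \in C$ satisfying $\bar y - \bar x \in \bar E C$. In the perfect $\F_p$-algebra $C$ one should extract a unit $\bar\eta \in C^\times$ with $\bar y = \bar x \bar\eta$ and $\bar\eta \equiv 1$ modulo appropriate powers of $\bar E$, using perfectness and the distinguishedness of $\bar E$; its Teichmüller lift $\eta = [\bar\eta]$ is then a unit of $(A^{(2)})_{\perf}$, and the identity $y - x = Ez$ combined with $E$-completeness forces $\eta \in 1 + E(A^{(2)})_{\perf}$, as desired.

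With both $i_1, i_2$ verified to be log morphisms, the universal property of $(A^{\langle 2 \rangle})_{\perf}$ supplies an inverse $\beta_{\perf}: (A^{\langle 2 \rangle})_{\perf} \to (A^{(2)})_{\perf}$, and the compositions $\alpha_{\perf} \circ \beta_{\perf}$ and $\beta_{\perf} \circ \alpha_{\perf}$ are each the identity by uniqueness in the respective universal properties. As a consistency check, Lemma~\ref{lem-Aiperf} independently confirms that $(A^{(2)})_{\perf}[1/E]^\wedge_p \simeq (A^{\langle 2 \rangle})_{\perf}[1/E]^\wedge_p \simeq W((\wh K_\infty^{(2)})^\flat)$, since the log structure becomes invisible after inverting $\varpi$; the content of the lemma is that this agreement in fact holds before inverting $E$.
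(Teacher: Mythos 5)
The overall strategy is the same as the paper's: put a log structure on $(A^{(2)})_{\perf}$ so that both structure maps $i_1, i_2$ become log morphisms, and then use the universal properties of $(A^{\langle 2\rangle})_{\perf}$ (in $(X,M_X)_{\Prism_{\log}}^{\perf}$) and of $(A^{(2)})_{\perf}$ (in $X_{\Prism}^{\perf}$) to manufacture mutually inverse maps. You correctly identify that the whole matter reduces to showing that $u_2/u_1$ is a unit in $(A^{(2)})_{\perf}$, and you correctly point toward the mod-$p$/Teichmüller mechanism: since $(A^{(2)})_{\perf}\simeq W(C)$ with $C=(A^{(2)})_{\perf}/p$ perfect, and $x,y$ are Teichmüller lifts, everything can be read off in $C$.

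However, there is a genuine gap at the key step. You write that ``one should extract a unit $\bar\eta\in C^\times$ with $\bar y=\bar x\bar\eta$ \ldots using perfectness and the distinguishedness of $\bar E$,'' but you do not actually produce the unit or explain why it is invertible, and the appeal to ``distinguishedness of $\bar E$'' does not by itself give unit-ness. The issue is that from $y-x=E(x)z$ and $E(x)\equiv x^e \pmod p$, one gets $\bar y=\bar x(1+\bar x^{e-1}\bar z)$ in $C$, so a natural candidate is $\bar\eta=1+\bar x^{e-1}\bar z$; but $C$ is a large (non-local, non-noetherian) perfect ring, and it is not clear how to see directly that $1+\bar x^{e-1}\bar z$ is invertible there (e.g.\ when $e=1$ this is just $1+\bar z$ with no smallness in sight). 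The paper sidesteps this entirely: it first observes only that $u_2/u_1=[\bar\eta]$ \emph{lies in} $(A^{(2)})_{\perf}$ (which is immediate, since any Teichmüller lift of an element of $C$ lies in $W(C)$), and then invokes the symmetry between the two maps $i_1,i_2$ to conclude $u_1/u_2\in(A^{(2)})_{\perf}$ as well, whence $u_2/u_1$ is automatically a unit. Your proposal lacks this symmetry argument, which is precisely what closes the gap without any delicate unit computation. A secondary, cosmetic point: the paper equips $(A^{(2)})_{\perf}$ with the chart $M^2\to(A^{(2)})_{\perf}$ (with $M^2$ the exactified monoid from the proof of Lemma~\ref{lem-Ast2islogprism}) rather than your $\N$-chart through $u_1$; once $u_2/u_1$ is a unit, the two prelog structures induce the same log structure, so your choice is also legitimate, but the $M^2$-chart makes both $i_1$ and $i_2$ log morphisms tautologically and is better adapted to comparing with $A^{\langle 2\rangle}$. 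Your extra normalization $\eta\in 1+E(A^{(2)})_{\perf}$ is true (it is forced by $u_2\equiv u_1\bmod E$) but is not actually needed for the argument.
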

\begin{proof}
Let $u_1,u_2$ be the image of $u$ under the two natural maps $i_{j}: A_{\perf} \to (A^{(2)})_{\perf}$ for $j=1,2$. We claim that $u_2/u_1$ is inside $(A^{(2)})_{\perf}$.

Firstly, we have already shown $A_{\perf}\simeq W(\widehat{\O}_{K_\infty}^\flat)$ and $u=[\varpi^\flat]$, here $\varpi^\flat=(\varpi_n)$ with $\{\varpi_n\}_{n\geq 0}$ being a compatible system of $p^n$-th roots of $\varpi$ inside $\O_{\widehat{K}_\infty}$, and $(\varpi_n) \in \O_{\wh K_\infty}^\flat$ via the identification $\O_{\wh K_\infty}^\flat \simeq \lim_{x \mapsto x^p} \O_{\wh K_\infty}$. Let $S=(A^{(2)})_{\perf}/(E)$, this is an integral perfectoid ring over $\O_K$ in the sense of \cite{BMS1}. We have $S^\flat\simeq (A^{(2)})_{\perf}/(p)$. For $j=1,2$, define $\varpi_j^\flat=u_j \mod (p) \in S^\flat$, then we have $u_j=[\varpi_j^\flat]$ for $j=1,2$.

Recall in \S~\ref{subsrc-construct-A2}, we have $z = \frac{y -x}{E(x)}$ in $\At$. Since  $ E(x) \equiv  x^e \mod p$, we have $ x  (1 + x^{e-1} z) \equiv  y \mod p$. If we denote $\iota : \At \to (\At)_{\perf} $ the natural map, then $\iota(x)=u_1$ and $\iota(y)=u_2$ in our definition, and $u _1 (1 + u_1^{e-1} \iota(z)) \equiv u _2 \mod p$ inside $S^\flat=\At_{\perf}/(p)$. This is the same as $\varpi_1^\flat \mu =  \varpi_2^\flat$ with $\mu = (1 + u_1^{e-1} \iota(z)) \mod p$ in $S ^\flat$. So we have $ [\mu] u _1 = [\mu] [\varpi_1^\flat] = [\varpi_2^\flat]=  u_2$, which proves our claim.

Now by symmetry, $u_1/u_2$ is also inside $(A^{(2)})_{\perf}$, so $u_1/u_2$ is a unit in $(A^{(2)})_{\perf}$. So we can give $(A^{(2)})_{\perf}$ a prelog structure
$$
\alpha: M^2 \to (A^{(2)})_{\perf} \text{ with } (1,-1)\mapsto \frac{u_1}{u_2}, (-1,1)\mapsto \frac{u_2}{u_1}, (1,0)\mapsto {u_1}, (0,1)\mapsto {u_2}
$$
with the monoid $M^2$ defined as in the proof of Lemma~\ref{lem-Ast2islogprism}, then $((A^{(2)})_{\perf},(E),M^2)^a$ is in $X_{\Prism_{\log}}^{\perf}$. 

One can check the maps $i_1,i_2: (A,(E)) \to (A^{(2)},(E)) \to ((A^{(2)})_{\perf},(E))$ induce $i_1,i_2: (A_{\perf},(E),\N) \to ((A^{(2)})_{\perf},(E),M^2)$ of prelog prisms. So by Lemma~\ref{lem-Ast2islogprism}, there is a unique map $(A^{\langle 2 \rangle},I,M^2)\to ((A^{(2)})_{\perf},(E),M^2)$, this map will factor through $((A^{\langle 2 \rangle})_{\perf},(E),M^2)$, let $((A^{\langle 2 \rangle})_{\perf},(E),M^2) \to ((A^{(2)})_{\perf},(E),M^2)$ be the induced map in $X_{\Prism_{\log}}^{\perf}$. On the other hand, by the universal property of $A^{(2)}$, we know there is a map $(A^{(2)})_{\perf} \to (A^{\langle 2 \rangle})_{\perf}$ fits into the coproduct diagram in $X_{\Prism}^{\perf}$, which is the full subcategory of $X_\Prism$ containing perfect prisms.

We have the composition $\eta: ((A^{(2)})_{\perf},(E)) \to ((A^{\langle 2 \rangle})_{\perf},(E)) \to ((A^{(2)})_{\perf},(E))$ satisfies $\eta\circ i_j= i_j \circ \eta$ for $i_1,i_2:(A_{\perf},(E))\to ((A^{(2)})_{\perf},(E))$. Such a map is unique inside $X_{\Prism}^{\perf}$, so $\eta=\id_{((A^{(2)})_{\perf},(E))}$. 

On the other hand, the composition 
$$
\eta': ((A^{\langle 2 \rangle})_{\perf},(E),M^2)^a \to ((A^{(2)})_{\perf},(E),M^2)^a \to ((A^{\langle 2 \rangle})_{\perf},(E),M^2)^a
$$ satisfies $\eta\circ i'_j= i'_j \circ \eta$ for $i'_1,i'_2:(A_{\perf},(E),\N)^a\to ((A^{\langle 2 \rangle})_{\perf},(E),M^2)^a$ induced from $i'_1,i'_2:(A,(E),\N)\to (A^{\langle 2\rangle},(E),M^2)$. Such map is also unique inside $X_{\Prism_{\log}}^{\perf}$, so $\eta'=\id_{((A^{\langle 2 \rangle})_{\perf},(E),M^2)^a}$. So in particular we have $(A^{\langle 2 \rangle})_{\perf}\simeq (A^{(2)})_{\perf}$.
\end{proof}

Combining the above Lemma and our discussions in \S\ref{subsec-phi-tau}, we get the following logarithmic variant of prismatic $(\varphi,\tau)$-module theory.

\begin{theorem}\label{thm-log-phitau}
The category of \'etale $\varphi$-module over $A[1/E]^\wedge_p$ with a descent data over $A_{\st}^{(2)}[1/E]^\wedge_p$ is equivalent to the category of lattice in representations of $G_K$. Moreover, for all $\gamma\in\hat{G}$, we can define the evaluation map
$$
e_\gamma: A_{\st}^{(2)}[1/E]^\wedge_p \to W(\hat{L}^\flat)
$$
such that Lemma~\ref{lem-evaluation-1} is still valid. Moreover, the $\Q$-isogeney version of this theorem also holds.
\end{theorem}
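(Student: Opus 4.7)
The plan is to reduce the theorem to the already-established Theorem~\ref{thm-evaluation-1} and Lemma~\ref{lem-evaluation-1} via the identification of perfections proved in Lemma~\ref{lem-perfectionofA2andAst2}. First I would observe that by Lemma~\ref{lem-Ast2islogprism} we have $A^{\langle 2 \rangle} \simeq A_{\st}^{(2)}$, so that its perfection agrees with $(A^{(2)})_{\perf}$ by Lemma~\ref{lem-perfectionofA2andAst2}. In particular
\[
(A_{\st}^{(2)})_{\perf}[1/E]^\wedge_p \;\simeq\; (A^{(2)})_{\perf}[1/E]^\wedge_p \;\simeq\; W\bigl((\wh K_\infty^{(2)})^\flat\bigr) \;\simeq\; \Cont\bigl(\hat G, W(\hat L^\flat)\bigr)^{H_K^{2}},
\]
using Lemma~\ref{lem-Aiperf} and the discussion preceding Theorem~\ref{thm-evaluation-1}. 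Analogously one identifies the triple self-products after perfection, and the cocycle formalism developed for $A^{(2)}$ transports verbatim to $A_{\st}^{(2)}$.

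Next I would define the evaluation map for $\gamma \in \hat G$ as the composite
\[
e_\gamma : A_{\st}^{(2)}[1/E]^\wedge_p \longrightarrow (A_{\st}^{(2)})_{\perf}[1/E]^\wedge_p \simeq \Cont\bigl(\hat G, W(\hat L^\flat)\bigr)^{H_K^{2}} \xrightarrow{\;\tilde e_\gamma\;} W(\hat L^\flat).
\]
Combining with Proposition~\ref{prop-descentBsttoB2}, which asserts that base change along $B^{(2)} \to A_{\st}^{(2)}[1/E]^\wedge_p$ is an equivalence of categories of \'etale $\varphi$-modules (both with and without inverting $p$), any descent datum over $A_{\st}^{(2)}[1/E]^\wedge_p$ corresponds to a descent datum over $B^{(2)}$, hence via Theorem~\ref{thm-evaluation-1} to a $\Z_p$-representation of $G_K$; the associated $\hat G$-action on $\M \otimes W(\hat L^\flat)$ recovered by the family $\{e_\gamma\}$ is continuous, semilinear, trivial on $H_K$ when restricted to $\M$, and satisfies the cocycle identity $e_{\sigma\gamma}$-compatibility inherited from the cocycle condition over $A_{\st}^{(3)}[1/E]^\wedge_p$ (using the triple-product analog of Lemma~\ref{lem-perfectionofA2andAst2}). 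Conversely, given an \'etale $(\varphi,\tau)$-module, Theorem~\ref{thm-caruso} produces the corresponding representation, and the descent datum descends from $(A_{\st}^{(2)})_{\perf}[1/E]^\wedge_p$ to $A_{\st}^{(2)}[1/E]^\wedge_p$ by \cite[Theorem 4.6]{wu2021galois}, whose hypothesis $\varphi(E) \bmod p$ being a non-zero divisor in $A_{\st}^{(2)}/p$ holds since $A_{\st}^{(2)}$ is a domain (as a subring of $\Ainf$ by Corollary~\ref{cor-inj}).

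For the analog of Lemma~\ref{lem-evaluation-1}, the argument is identical once one has $(A_{\st}^{(2)})_{\perf}[1/E]^\wedge_p \simeq \Cont(\hat G, W(\hat L^\flat))^{H_K^{2}}$: the injectivity of the evaluation at $\tau$ on this ring of continuous functions reduces, via the $H_K^{2}$-equivariance formula $(\sigma_1,\sigma_2)(h)(\tau) = \sigma_2 h(\sigma_2^{-1}\tau\sigma_1)$, to the fact that $\hat G$ is topologically generated by $H_K$ together with $\tau$. The $\Q$-isogeny version follows by inverting $p$ and using that $\Cont(\hat G,-)$ commutes with $[\tfrac{1}{p}]$ thanks to the compactness of $\hat G$, exactly as in the proof of Lemma~\ref{lem-evaluation-2}.

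The main obstacle is the correct setup of the log structures when applying \cite[Theorem 4.6]{wu2021galois} and tracking that the descent datum over $A_{\st}^{(2)}[1/E]^\wedge_p$ descends from its perfection; however this is handled cleanly by Proposition~\ref{prop-descentBsttoB2}, whose proof already invokes Lemma~\ref{lem-perfectionofA2andAst2}. Thus the entire proof is essentially formal once the identification of perfections is in hand, and no new ring-theoretic input beyond \S\ref{subsec-phi-tau} is required.
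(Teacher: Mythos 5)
Your proposal is correct and follows the route the paper leaves implicit: reduce to the $A^{(2)}$-case via Proposition~\ref{prop-descentBsttoB2} (base-change equivalence along $B^{(2)} \to A_{\st}^{(2)}[1/E]^\wedge_p$), which itself rests on the identification of perfections $(A^{\langle 2\rangle})_{\perf}\simeq (A^{(2)})_{\perf}$ from Lemma~\ref{lem-perfectionofA2andAst2}, and then invoke Theorem~\ref{thm-evaluation-1} and Theorem~\ref{thm-caruso}. Your definition of $e_\gamma$ through the perfection and the transfer of the evaluation criterion are exactly what is needed.

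Two small points worth tightening. First, the cocycle condition requires the triple-product analogue $(A_{\st}^{(3)})_{\perf}\simeq(A^{(3)})_{\perf}$; you note this ``transports verbatim,'' which is true, but the paper only records the $i=2$ case in Lemma~\ref{lem-perfectionofA2andAst2}, so a one-line remark that the same argument (using the universal property of the triple coproduct in $X_{\Prism_{\log}}^{\perf}$ and $X_\Prism^{\perf}$, established via Lemma~\ref{lem-log-nonemptyproduct}) applies for $i=3$ would make the step self-contained. Second, your justification that $\varphi(E)\bmod p$ is a nonzero divisor in $A_{\st}^{(2)}/p$---``since $A_{\st}^{(2)}$ is a domain''---does not quite follow: a ring being a domain does not a priori make its reduction mod $p$ a domain. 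The correct reason is that $A_{\st}^{(2)}$ is $(p,E)$-completely flat over $A$ (Proposition~\ref{prop-Ast-properties}(6)), hence $A_{\st}^{(2)}/p$ is $u$-torsion free over $A/p=k[\![u]\!]$, and $\varphi(E)\equiv u^{pe}\cdot(\text{unit}) \bmod p$. This is the hypothesis \cite[Theorem~4.6]{wu2021galois} actually needs; the conclusion you reach is right, but the deduction should be fixed.
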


\begin{remark}
The above theorem should be related to the \'etale comparison theorem in the logarithmic prismatic settings, which has not been studied in \cite{Koshikawa2021log-prism} yet.
\end{remark}

Moreover, we have the logarithmic version of Lemma~\ref{lem-AEcoversfinal} holds. We thank Teruhisa Koshikawa for hints of the following result.

\begin{proposition}\label{prop-logcoverfinalobject}
The sheaf represented by $(A,(E),\N)^a$ covers the final object $\ast$ in $\Shv((X,M_X)_{\Prism_{\log}})$.
\end{proposition}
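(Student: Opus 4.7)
The proof will follow the template of Lemma~\ref{lem-AEcoversfinal}, adapted to the logarithmic setting. Given any $(B, J, M_B) \in (X, M_X)_{\Prism_{\log}}$, the plan is to exhibit a $(p,J)$-completely faithfully flat log cover $(\tilde B, J\tilde B, M_{\tilde B}) \to (B, J, M_B)$ admitting a morphism from $(A, (E), \N)^a$. By Lemma~\ref{lem-Nchart}, we first fix a chart $\N \to B,\ 1\mapsto u_B$ of $M_B^a$ with $u_B \equiv \varpi \bmod J$.

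To construct $\tilde B$, we enlarge $B/J$ to a quasi-syntomic perfectoid cover containing $\varpi^{1/p^\infty}$ (exactly as in the construction of $\tilde R_\infty$ in the proof of Lemma~\ref{lem-AEcoversfinal}, but starting from $B/J$ in place of $R$), and then lift back through Witt vectors: the associated perfect prism $\tilde B$ comes equipped with a $(p,J)$-completely faithfully flat map $f:B \to \tilde B$. We endow $\tilde B$ with the pullback log structure $M_{\tilde B} := f^\ast M_B$, so that $(B,J,M_B) \to (\tilde B, J\tilde B, M_{\tilde B})$ is tautologically a log cover, whose chart is still $1 \mapsto u_B$ viewed in $\tilde B$.

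In $\tilde B$, the image of $u_B$ acquires a compatible system $(u_B^{1/p^n})_{n \geq 0}$ of $p^n$-th roots inherited from the perfectoid quotient $\tilde B/J\tilde B$, so that $u_B = [u_B^\flat]$ in $\tilde B = W(\tilde B^\flat)$ for $u_B^\flat := (u_B^{1/p^n} \bmod p)_{n \geq 0}$; in particular $\delta(u_B) = 0$ in $\tilde B$. Consequently, the ring map $A = W(k)[\![u]\!] \to \tilde B$ sending $u \mapsto u_B$ is a $\delta$-map. It sends $(E)$ into $J\tilde B$ because $E(u_B) \in J\tilde B$, and matches charts tautologically, producing the required morphism of log prisms $(A, (E), \N)^a \to (\tilde B, J\tilde B, M_{\tilde B})$.

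The main technical point will be to verify the $(p,J)$-completely faithful flatness of $B \to \tilde B$ in spite of the forced relation $\delta(u_B) = 0$ in $\tilde B$. This is precisely where the quasi-syntomic arguments from Lemma~\ref{lem-AEcoversfinal} are needed: by constructing the perfectoid cover at the level of $B/J$ first and then promoting via Witt vectors, the $\delta$-compatibility obstructions get absorbed into the perfectoid base, paralleling the treatment for $R$ itself in Lemma~\ref{lem-AEcoversfinal}.
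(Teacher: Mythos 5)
There is a genuine gap at the heart of the argument. You claim that in the perfect prism $\tilde B$ the image of $u_B$ ``acquires a compatible system of $p^n$-th roots,'' so that $u_B = [u_B^\flat]$ and hence $\delta(u_B) = 0$. This is false in general. A morphism of $\delta$-rings $B \to \tilde B$ carries $\delta_B(u_B)$ to $\delta_{\tilde B}(u_B)$, so the $\delta$-value of $u_B$ in $\tilde B$ is already determined by its $\delta$-value in $B$ and does not vanish just because the quotient $\tilde B / J\tilde B$ is perfectoid. Indeed, for a generic $(B,J,M_B) \in (X,M_X)_{\Prism_{\log}}$, the chart element $u_B$ furnished by Lemma~\ref{lem-Nchart} has $\delta_B(u_B) \neq 0$, and no faithfully flat $\delta$-cover of $B$ will repair this. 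Consequently $u \mapsto u_B$ fails to be a $\delta$-map, and the crux of your argument does not go through. Even the natural repair --- sending $u$ to the Teichm\"uller lift $[\bar u_B^\flat]$ for a choice $\bar u_B^\flat$ of compatible $p$-power roots of $u_B \bmod J$ --- requires you to argue that $[\bar u_B^\flat]$ and $u_B$ differ by a unit in $\tilde B$ (so as to induce the same log structure), and a priori one only knows $[\bar u_B^\flat] - u_B \in J\tilde B$, which does not obviously yield $[\bar u_B^\flat]/u_B \in \tilde B^\times$. Furthermore, the $(p,J)$-complete faithful flatness of $B \to \tilde B$, which you acknowledge as ``the main technical point,'' is simply asserted; it is not a formal consequence of quasi-syntomicity of $R = \O_K$, since here $B/J$ is arbitrary.

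The paper sidesteps all of this by never trying to force $\delta(u_B) = 0$ in a cover of $B$. Instead, using Lemma~\ref{lem-Nchart} to reduce to the chart $1 \mapsto u_B$, it constructs the log-product $C = B[\![u]\!][\frac{u_B}{u},\frac{u}{u_B}]\{\frac{u_B/u - 1}{J}\}^{\wedge}_\delta$ of $(A,(E),\N)^a$ with $(B,J,\N)^a$, where $u$ is a \emph{fresh} $\delta$-constant variable and the relation $u_B/u \equiv 1 \bmod JC$ is imposed via the prismatic envelope; $u$ and $u_B$ then automatically generate the same log structure on $C$ because their ratio is a unit. The map $B \to C$ is $(p,J)$-completely flat by \cite[Prop.~3.13]{BS19}, and the faithfulness is established by an explicit computation of $C/(p,J)$ as a free $B/(p,J)$-module using Lemma~\ref{lem-delta-n}. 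Your approach is therefore genuinely different from the paper's --- it tries to import the perfectoid-cover strategy of Lemma~\ref{lem-AEcoversfinal} wholesale --- but the $\delta$-rigidity of $u_B$ blocks it as written.
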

\begin{proof}
For any log prism $(B,J,M_B)$, by Lemma~\ref{lem-Nchart}, we can assume $(B,J,M_B)^a=(B,J,\N)^a$, with prelog structure defined by $n \mapsto u_B^n$ with $u_B \equiv \varpi \mod J$.

Using deformation theory, we have there is a unique $W(k)$-algebra structure for $B$, and we define $C=B[\![u]\!][\frac{u_B}{u},\frac{u}{u_B}]\{\frac{u_B/u-1}{J}\}^\wedge_\delta$, where the completion is taken for the $(p,J)$-adic topology. Similar to the proof of Lemma~\ref{lem-Ast2islogprism}, we have $(C,JC,\N)^a$ is the product of $(A,(E),\N)^a$ and $(B,J,\N)^a$ inside $(X,M_X)_{\Prism_{\log}}$. Moreover, we have $B \to C$ is $(p, J)$-complete flat by \cite[Proposition 3.13]{BS19}. It remains to show that $(B,J) \to (C,J)$ is a covering, i.e., $B \to C$ is $(p, J)$-complete faithfully flat. Let 
$$
C^{nc}:=B[\![u]\!][\frac{u_B}{u},\frac{u}{u_B}]\{\frac{u_B/u-1}{J}\}_{\delta}
$$
be the non-complete version of $C$ that we have the $(p,J)$-adic completion of $C^{nc}$ is $C$. Now we just need to show the flat ring map $B/(p,J) \to C/(p,J)=C^{nc}/(p,J)$ is also faithful. 

We claim that $C/ (p , J)$ is free over $B / (p , J)$. One has $JC=E(u)C$, and $(p , J)= (p , E) = (p , J , E)$ in $C$. So $C/ (p , J)=C^{nc}/(p,J)$ is equal to 
\[ B [\![u ]\!][\frac{u_B}{u}, \frac{u}{u _B}][\delta^i(z), i \geq 0 ]/ \left (p , J , E , \delta^i (\frac{u_B}{u }-1))= \delta^i (Ez), i \geq 0 \right ).\]
After modulo $(p,J)$, the above is the direct limit of
\[B / (p , J)[\delta ^i (z)]/ \left (\delta^i (\frac{u_B}{u }-1))= \delta^i (Ez) \mod (p, E , J) \right )\]
for $i\geq 0$.

Now we use Lemma \ref{lem-delta-n} to compute $\delta^i (\frac{u_B}{u }-1)= \delta^i (Ez) \mod (p, E , J)$. We keep the notations in Lemma \ref{lem-delta-n}, by induction, we have $b_n = 0 \mod (p , E)$. Using that $a_p^{(j)}\in A_0^\times$, $\delta^i (\frac{u_B}{u }-1)= \delta^i (Ez) \mod (p, E , J)$ gives a relation $ (z_{i -1}) ^ p =   \sum\limits_{j= 0}^{p -1} \tilde a_j^{(i)} (z_{i-1}) ^j$ where $z_i = \z_i \mod (p , J , E)$ and $\tilde a_j^{(i)} \in B / (p , J)[z_0, z_1, \dots, z_{i-2}]$. In summary, we have 
$$C/ (p , J) = B/(p, J)[z_i, i \geq 0]\Bigg/ \left ((z_{i}) ^ p -   \sum\limits_{j= 0}^{p -1} \tilde a_j^{(i)} (z_{i}) ^j, i\geq 1 \right )$$
which is free over $B /(p, J)$. 
\end{proof}

\begin{definition}\label{def-logFcrystal}
\begin{enumerate}
\item A prismatic crystal over $(X,M_X)_{\Prism_{\log}}$ (in finite locally free $\O_{\Prism}$-modules) is a finite locally free $\O_{\Prism}$-module $\MM_{\Prism}$ such that for all morphisms $f: (A, I,M_A) \to (B, J,M_B)$ of log prisms over $(X,M_X)_{\Prism_{\log}}$, it induces an isomorphism:
$$
f^\ast \MM_{\Prism,A} := \MM_{\Prism}((A,I,M_A))\otimes_A B \simeq \MM_{\Prism,B} := \MM_{\Prism}((B,J,M_B));
$$

\item A prismatic $F$-crystal over $(X,M_X)_{\Prism_{\log}}$ of height $h$ (in finite locally free $\O_{\Prism}$-modules) is a prismatic crystal $\MM_{\Prism}$ in finite locally free $\O_{\Prism}$-modules together with a $\varphi_{\Prism}$-semilinear endomorphism $\varphi_{\MM_{\Prism}}$ of $\MM_{\Prism}$ whose linearization $\varphi_\Prism^\ast \MM_{\Prism} \to \MM_{\Prism}$ has cokernel killed by $\II_\Prism^h$.

\end{enumerate}
\end{definition}

In particular, with the help of Theorem~\ref{thm-log-phitau} and Proposition~\ref{prop-logcoverfinalobject}, a direct translation of proofs in \S\ref{subsec-pris-crystal-proof} with $\At$ replaced by $\At_{\st}$ shows the following theorem.
\begin{theorem}\label{thm-log-main-1}
The category of prismatic $F$-crystals over $(X,M_X)_{\Prism_{\log}}$ of height $h$ is equivalent to the category of lattices in semi-stable representations of $G_K$ with Hodge-Tate weights between $0$ and $h$.
\end{theorem}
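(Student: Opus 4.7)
The plan is to mimic the proof of Theorem~\ref{Thm-main-1} carried out in \S\ref{subsec-pris-crystal-proof}, with the ring $\At$ replaced systematically by $\At_{\st}$ and the absolute prismatic site replaced by its logarithmic variant. Proposition~\ref{prop-logcoverfinalobject} shows that the Breuil--Kisin log prism $(A,(E),\N)^a$ covers the final object of $\Shv((X,M_X)_{\Prism_{\log}})$, and Lemma~\ref{lem-Ast2islogprism} identifies its self-product and self-triple-product in $(X,M_X)_{\Prism_{\log}}$ with $\At_{\st}$ and $A^{(3)}_{\st}$ respectively, both bounded by Proposition~\ref{prop-Ast-properties}. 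Applying the proof of Proposition~\ref{prop-cover-final-object} in this setting yields a log-prismatic analog of Corollary~\ref{cor-crystal-descentdata}: a prismatic $F$-crystal over $(X,M_X)_{\Prism_{\log}}$ of height $h$ is the same datum as a Kisin module $(\MM,\varphi_\MM)$ of height $h$ together with a $\varphi$-compatible descent isomorphism
\[
f : \MM\otimes_{A,i_1}\At_{\st}\simeq\MM\otimes_{A,i_2}\At_{\st}
\]
satisfying the cocycle condition over $A^{(3)}_{\st}$.

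Next, given such a datum $(\MM,f)$, I would construct the functor $\mathcal T$ as in \S\ref{subsec-pris-crystal-proof}: for each $\gamma\in G_K$, the pair of maps $\iota,\gamma\circ\iota:(A,(E),\N)^a\to(\Ainf,\ker\theta,\N)^a$ induces via the universal property of the log self-product a map $\iota^{(2)}_\gamma:\At_{\st}\to\Ainf$, and base-changing $f$ along $\iota^{(2)}_\gamma$ produces a $\varphi$-equivariant $G_K$-action on $\Ainf\otimes_A\MM$. By Lemma~\ref{lem-properties-Ast}, $\At_{\st}\subset\Ainf$ is stable under $G_K$, this action factors through $\hat G$, and $\At_{\st}/I_+\At_{\st}=W(k)$. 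The crystal condition on $\MM_\Prism$, applied to the morphisms of log prisms that arise from $\gamma\in G_\infty$ (where $\iota^{(2)}_\gamma\circ i_1=\iota^{(2)}_\gamma\circ i_2$) and from reduction modulo $I_+$, then forces $\MM\subset\wh\MM^{H_K}$ and triviality of $\hat G$ on $\wh\MM/I_+\wh\MM$. Thus $(\MM,\varphi,\hat G)$ is a $(\varphi,\hat G)$-module in the sense of \S\ref{subsec-phiGhatmodules}, and Theorem~\ref{thm-2} delivers $\mathcal T(\MM_\Prism):=\wh T(\MM,\varphi,\hat G)$ as a lattice in a semi-stable representation with Hodge--Tate weights in $[0,h]$.

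Full faithfulness is established as in the crystalline case: morphisms reduce via \cite[Proposition 2.1.12]{KisinFcrystal} to morphisms of the underlying \'etale $\varphi$-modules, which automatically intertwine the descent data since these are determined by the $\tilde\tau$-action on $\MM$. For essential surjectivity, given a lattice $T$ in a semi-stable representation of weights in $[0,h]$, Theorem~\ref{thm-2} produces a $(\varphi,\hat G)$-module $(\MM,\varphi,\hat G)$, and Theorem~\ref{Thm-1} guarantees that the matrix $X_{\tilde\tau}$ of the $\tilde\tau$-action has entries in $\At_{\st}$, thereby defining a $\varphi$-equivariant isomorphism $f:\MM\otimes_{A,i_1}\At_{\st}\simeq\MM\otimes_{A,i_2}\At_{\st}$. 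Its base change to $\At_{\st}[\tfrac{1}{E}]^\wedge_p$ must coincide with the canonical descent datum attached to $T|_{G_K}$ via Theorem~\ref{thm-log-phitau} and the $\tau$-evaluation criterion (the log analog of Lemma~\ref{lem-evaluation-1}); in particular $f$ satisfies the cocycle condition over $A^{(3)}_{\st}[\tfrac{1}{E}]^\wedge_p$, which by the $A^{(3)}_{\st}$-analog of Lemma~\ref{lem-intersection}, combined with Proposition~\ref{prop-Ast-properties}, lifts to the required cocycle identity in $A^{(3)}_{\st}$.

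The main obstacle is not the final transposition but the input ingredients from \S\ref{sec-logprismandsemistablereps}---particularly Theorem~\ref{thm-log-phitau} and the identification $(\At)_{\perf}\simeq(\At_{\st})_{\perf}$ of Lemma~\ref{lem-perfectionofA2andAst2}, which is what reduces the log-prismatic \'etale $\varphi$-module picture to the non-logarithmic one and makes the $\tau$-evaluation comparison operational. A secondary subtlety is checking that the maps $\iota^{(2)}_\gamma$ are genuinely morphisms of \emph{log} prisms, so that the universal property of $\At_{\st}$ as a log self-product actually applies; this rests on the observation that the log structure on $\Ainf$ generated by $[\varpi^\flat]$ and that generated by $\gamma([\varpi^\flat])$ coincide. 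Once these pieces are in place, the remainder of the argument is a disciplined reshaping of \S\ref{subsec-pris-crystal-proof}, with $\At$, $A^{(3)}$, and the associated localizations systematically replaced by their $\st$-counterparts, and crystalline representations replaced by semi-stable ones.
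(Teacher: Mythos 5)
Your proposal is correct and is essentially a faithful, more fully spelled-out version of what the paper does: the paper's proof of Theorem~\ref{thm-log-main-1} is stated as "a direct translation of proofs in \S\ref{subsec-pris-crystal-proof} with $\At$ replaced by $\At_{\st}$," using Theorem~\ref{thm-log-phitau} and Proposition~\ref{prop-logcoverfinalobject} as inputs, which is exactly the route you take (including the reduction to $(\varphi,\hat G)$-modules via Theorem~\ref{thm-2}, the $\tau$-evaluation comparison, and the role of Lemma~\ref{lem-perfectionofA2andAst2}). Your additional remark about verifying that the $\iota^{(2)}_\gamma$ are genuinely log-prism morphisms, using that $\gamma([\varpi^\flat])$ and $[\varpi^\flat]$ differ by a unit, is a point the paper leaves implicit but is correct.
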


\appendix
\section{Some discussions on base rings}\label{subsec-baserings} In this appendix, we show that our base ring assumed at the beginning of \S \ref{sec-ring-strcuture} covers many situations of base rings used in \cite{Kim12} and \cite{Brinon}. 

Let $K$ be complete DVR with perfect residue field $k$, and let $K_0=W[\frac{1}{p}]$ with $W=W(k)$, fix a uniformizer $\varpi\in \O_K$ and $E(u)\in W[u]$ a minimal polynomial of $\varpi$ over $K_0$. Let $R$ be a normal domain and satisfies that $R$ is a $p$-complete flat $\O_K$-algebra that is complete with respect to $J$-adic topology, for an ideal  $J=(\varpi, {t_1},\ldots,{t_d})$ of $R$ containing $\varpi$. We also assume $\overline{R}=R/(\varpi)$ is a finite generated $k$-algebra with \emph{finite $p$-basis} discussed in \cite[\S 1.1]{deJong}.

\begin{lemma}[\cite{Kim12} Lemma 2.3.1 and lemma 2.3.4]\label{Kim-lemma}
\begin{enumerate}
    \item In the above setting, there is a $p$-adic formally smooth flat $W$-algebra $R_0$ equipped with a Frobenius lift $\varphi_0$ such that $\overline{R}: = R_0/(p)$. Moreover let $J_0$ be the preimage of $\overline{J}$ inside $R_0$, then $R_0$ is $J_0$-adically complete, and under this topology, $R_0$ is formally smooth. 
    \item $R_0/(p)\xrightarrow{\sim}R/(\varpi)$ lifts to a $W$-algebra morphism $R_0 \to R$ and the induced $\O_K$-algebra morphism $\O_K\otimes_W R_0 \to R$ is an isomorphism. Moreover this isomorphism is continuous with respect to the $J_0$-adic topology.
\end{enumerate}
\end{lemma}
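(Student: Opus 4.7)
The plan is to construct $R_0$ as a formally \'etale lift of $\overline R$ over a standard reference ring equipped with a canonical Frobenius, following the strategy used by de Jong \cite[\S 1]{deJong} for smooth rings with finite $p$-basis.

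First I would choose elements $t_1, \dots, t_d$ (eventually in $R_0$) whose reductions mod $p$ generate $\overline J$, and extend them by further $t_{d+1}, \dots, t_m$ so that the images of $\{t_1, \dots, t_m\}$ in $\overline R$ form a $p$-basis in the sense of \cite[Def.~1.1.1]{deJong}. Set $\breve R_0 := W\langle t_1, \dots, t_m\rangle$ with the canonical Frobenius $\breve \varphi (t_i)=t_i^p$. The reduction map $\breve R_0/(p)\to \overline R$ is then formally \'etale precisely because $\{\bar t_i\}$ is a $p$-basis, so by the standard deformation theory of formally \'etale maps it lifts uniquely to a $p$-adically formally \'etale, $p$-adically complete, $p$-torsion free $\breve R_0$-algebra $R_0$ with $R_0/(p)=\overline R$. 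The universal property of formally \'etale extensions transports $\breve\varphi$ to a unique Frobenius lift $\varphi_0$ on $R_0$, and $W$-flatness of $R_0$ follows from $\breve R_0$-flatness of $R_0$ combined with $W$-flatness of $\breve R_0$.

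For the $J_0$-adic properties, observe that $J_0=(p, t_1, \dots, t_d)R_0$ by construction (since $\varpi\in J$ is absorbed by $p$ after reduction). The ring $\breve R_0$ is manifestly $(p, t_1, \dots, t_d)$-adically complete, and because $R_0/\breve R_0$ is formally \'etale, the $J_0$-adic completeness of $R_0$ can be checked level by level: each $R_0/J_0^n$ is recovered from $\breve R_0/(p, t_1, \dots, t_d)^n$ via the unique lifting of the formally \'etale structure, and the inverse limit coincides with $R_0$. Formal smoothness under the $J_0$-adic topology then combines the $(p, t_1, \dots, t_d)$-adic formal smoothness of $\breve R_0$ over $W$ with the formal \'etaleness of $R_0$ over $\breve R_0$.

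For part (2), the lift $R_0\to R$ is produced by the infinitesimal lifting criterion for formal smoothness: starting from the identification $R_0/(p)\isom R/(\varpi)$, at each stage I lift a map $R_0\to R/(\varpi^n)$ to $R_0\to R/(\varpi^{n+1})$ along the square-zero extension with kernel $(\varpi^n)/(\varpi^{n+1})$, which is possible by the $p$-adic formal smoothness of $R_0$ over $W$, and passing to the limit using $\varpi$-completeness of $R$ produces the desired $W$-algebra morphism. The induced map $\O_K\otimes_W R_0\to R$ is an isomorphism modulo $\varpi$ by construction; since both sides are $\varpi$-adically complete and $\O_K$-flat (left-hand flatness from $W$-flatness of $R_0$), a complete Nakayama argument promotes this to a global isomorphism. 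Continuity with respect to the $J_0$-adic topology is then immediate, since the images of $t_1, \dots, t_d$ lie in $J$.

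The main obstacle I foresee is upgrading the $p$-adic formal smoothness that standard deformation machinery delivers to the stronger $J_0$-adic formal smoothness and completeness: a priori the $p$-adic completion of the \'etale extension of $\breve R_0$ need not be $J_0$-adically complete. One must carry both topologies simultaneously throughout the construction, exploiting the explicit local structure theorem for formally \'etale extensions of rings with a $p$-basis to compare the two completions directly. Since $\overline R$ need not be noetherian in general, one cannot appeal to generic Artin--Rees arguments, and the explicit $p$-basis presentation is what makes the compatibility go through.
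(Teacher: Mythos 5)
The paper itself offers no proof of this lemma: it is quoted directly from \cite{Kim12} (Lemmas 2.3.1 and 2.3.4), so there is no in-house argument to compare your proposal against. Your overall strategy --- lift $\overline R$ formally \'etalely over $\breve R_0 = W\langle t_1,\dots,t_m\rangle$ with the Frobenius $t_i\mapsto t_i^p$, using the finite $p$-basis hypothesis via de Jong \cite[\S 1.1]{deJong}, then transport the Frobenius by the universal property of formal \'etaleness --- is indeed the correct route and is the same circle of ideas the paper invokes when it proves the related Proposition~\ref{prop-fetale} by cotangent-complex vanishing via \cite[Lem.~1.1.2]{deJong}.

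However, the portion of your argument dealing with $J_0$-adic completeness contains a genuine error. You claim ``the ring $\breve R_0$ is manifestly $(p,t_1,\dots,t_d)$-adically complete.'' It is not: $\breve R_0 = W\langle t_1,\dots,t_m\rangle$ is the $p$-adic completion of $W[t_1,\dots,t_m]$, which is strictly smaller than the $(p,t_1,\dots,t_d)$-adic completion (for $d\geq 1$; compare $W\langle t\rangle$ against $W[\![t]\!]$). Likewise, your step of extending $t_1,\dots,t_d$ to a $p$-basis of $\overline R$ is not justified and need not be possible in general. Both issues are avoided by a more direct argument that you never quite reach: since $R$ is $J$-adically complete by hypothesis, $\overline R = R/(\varpi)$ is $\overline J$-adically complete; because $R_0$ is $p$-torsion free, $p$-adically complete, and $R_0/(p)=\overline R$, and because $p\in J_0$, it follows that $R_0$ is $J_0$-adically complete. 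This sidesteps both the false completeness claim for $\breve R_0$ and the unneeded $p$-basis extension, and is presumably closer to what Kim does. Your proof of the $J_0$-adic formal smoothness and of part~(2) (infinitesimal lifting to $R/(\varpi^{n+1})$, then a $\varpi$-adic complete d\'evissage plus torsion-freeness to promote the mod-$\varpi$ identification to $\O_K\otimes_W R_0\isoto R$) is fine.
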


Let $(R_0, \varphi_{R_0})$ denote a flat $W$-lift of $R/(\varpi)$ obtained from the above lemma. And we will have $J_0=(p, t_1, \ldots, t_d)\in R_0$, and we write $\overline{J}=(\overline{t_1},\ldots, \overline{t_d})\subset \overline{R}$. 

\begin{definition}\label{RAE}
Let $R_0$ be a $p$-complete $\Z_p$-algebra, we say $R_0$ satisfies the ``refined almost étalenes" assumption, or simply RAE assumption, if $\hat{\Omega}_{R_0}=\oplus_{i=1}^m R_0 dT_i$ with $T_i\in R_0^\times$. Where $\hat{\Omega}_{R_0}$ is the module of $p$-adically continuous K\"ahler differentials.
\end{definition}
The following are examples of $R_0$ and $R$ which satisfy assumptions of Lemma \ref{Kim-lemma} and RAE assumption. 
\begin{example}
\begin{enumerate}
    \item If $R/(\varpi)$ is a completed noetherian regular local ring with residue field $k$, then Cohen structure theorem implies
     $R/(\varpi)=k[\![\overline{x_1},\ldots,\overline{x_d}]\!]$. In this case, $R_0=W[\![x_1,\ldots, x_d]\!]$ and $J_0=(p,x_1,\ldots, x_d)$. Then $R=W[\![x_1,\ldots, x_d]\!][u]/E$, with $E\in W[u]$ is a Eisenstein polynomial.
    \item Let $R_0 = W(k) \langle t _1^{\pm 1} ,  \dots , t _m ^{\pm 1}\rangle$ and $J_0=(p)$, in this example, $\overline{R}=k[\overline{t}_1^{\pm 1} ,  \dots , \overline{t}_m ^{\pm 1}]$ is not local.
    \item An unramified complete DVR $(R_0 , p)$ with residue field $k$ so that $[k : k ^p]<\infty$. 
    \item Note the the Frobenius liftings in Lemma ~\ref{Kim-lemma} is not unique. In (2), we can choose $\varphi_{R_0}(t_i)=t_i^p$. In (1), we can choose the $\varphi_{R_0}(x_i)=x_i^p$ or $\varphi_{R_0}(x_i)=(x_i+1)^p-1$.
\end{enumerate}
\end{example}
Let $R_0$ be a $p$-complete algebra which satisfies the RAE assumption, we set $\breve R_0 = W\langle t_1 , \dots , t_m \rangle$ and $f : \breve R_0 \to R_0$ by sending $t_i$ to $T_i$. 
\begin{proposition}\label{prop-fetale} Assume that $R_0$ is a $p$-complete integral domain which admits finite $p$-basis and satisfies RAE assumption.  
Then $f$ is formally \'etale $p$-adically. 
\end{proposition}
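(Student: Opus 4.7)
The plan is to verify the two halves of formal \'etaleness in the $p$-adic topology: formal unramifiedness and formal smoothness.

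For formal unramifiedness I would compute $\hat\Omega_{R_0/\breve R_0}$ directly. The first fundamental exact sequence for $W\to \breve R_0\to R_0$ reads
\[
\hat\Omega_{\breve R_0/W}\otimes_{\breve R_0} R_0\longrightarrow \hat\Omega_{R_0/W}\longrightarrow \hat\Omega_{R_0/\breve R_0}\longrightarrow 0.
\]
Since $\breve R_0=W\langle t_1,\ldots,t_m\rangle$ we have $\hat\Omega_{\breve R_0/W}=\bigoplus_i \breve R_0\, dt_i$, and by the RAE assumption $\hat\Omega_{R_0/W}=\bigoplus_i R_0\, dT_i$ (recall that $k$ perfect makes $W/\Z_p$ \'etale, so continuous differentials over $\Z_p$ and over $W$ coincide). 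The left-hand arrow sends $dt_i\mapsto dT_i$ and is therefore an isomorphism, so $\hat\Omega_{R_0/\breve R_0}=0$. This gives formal unramifiedness and also uniqueness of lifts against any square-zero thickening.

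For formal smoothness the key input will be de Jong's theorem \cite[Lemma 1.2.2]{deJong}: since $R_0/pR_0$ has finite $p$-basis it is formally smooth over $\F_p$, and hence the $p$-torsion-free $p$-adically complete $\Z_p$-algebra $R_0$ is $p$-adically formally smooth over $\Z_p$; the same holds for $\breve R_0$. Given a $p$-adically complete $\breve R_0$-algebra $B$ with structure map $\iota:\breve R_0\to B$, a square-zero ideal $I\subset B$, and a $\breve R_0$-algebra map $\phi:R_0\to B/I$, formal smoothness of $\Z_p\to R_0$ produces some $\Z_p$-algebra lift $\tilde\phi:R_0\to B$ of $\phi$. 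The obstruction $\delta(x):=\tilde\phi(f(x))-\iota(x)\in I$ is a continuous $W$-derivation of $\breve R_0$ into $I$, giving an element of $\Hom_{\breve R_0}(\hat\Omega_{\breve R_0/W},I)$. Using the isomorphism $\hat\Omega_{\breve R_0/W}\otimes_{\breve R_0} R_0\simeq \hat\Omega_{R_0/W}$ from the previous paragraph, $\delta$ extends uniquely to a continuous $W$-derivation $\eta:R_0\to I$, and $\tilde\phi-\eta$ is then a genuine $\breve R_0$-algebra lift.

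The main obstacle is the appeal to de Jong's result that finite $p$-basis forces $p$-adic formal smoothness over $\Z_p$; once this input is granted, the RAE hypothesis does all the remaining work, since it arranges for the relevant map of continuous differentials to be not merely split but an honest isomorphism. A cleaner conceptual packaging would be to observe, via the transitivity triangle for $W\to \breve R_0\to R_0$ together with the computation of the first paragraph, that the $p$-completed cotangent complex $L^{\wedge}_{R_0/\breve R_0}$ vanishes; this simultaneously delivers both unramifiedness and smoothness, and hence the $p$-adic formal \'etaleness of $f$.
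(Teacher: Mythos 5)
Your proof is correct, but your primary argument takes a different route than the paper's. The paper invokes Illusie's criterion (formal \'etaleness $\iff$ acyclic cotangent complex), reduces to characteristic $p$ by $\Z_p$-flatness of both rings, and then applies the transitivity triangle for $k\to \breve R_1\to R_1$ together with de Jong's $\mathbb L_{R_1/k}\simeq\Omega_{R_1/k}$ to get $\mathbb L_{R_1/\breve R_1}=0$. You instead split $p$-adic formal \'etaleness into its two halves by hand: unramifiedness you get from the first fundamental exact sequence of continuous differentials plus RAE, which kills $\hat\Omega_{R_0/\breve R_0}$; smoothness you get by first producing a $\Z_p$-lift via de Jong's result that finite $p$-basis implies formal smoothness, and then correcting the lift by a derivation (using the isomorphism $\hat\Omega_{\breve R_0/W}\otimes_{\breve R_0}R_0\simeq\hat\Omega_{R_0/W}$ to transport the obstruction from $\breve R_0$ to $R_0$). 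This is a more hands-on argument that avoids invoking Illusie's theorem directly, at the modest cost of having to verify separately that the corrected lift is a $\breve R_0$-algebra map. Your closing observation --- that the transitivity triangle together with the differential computation yields $L^{\wedge}_{R_0/\breve R_0}\simeq 0$ --- is exactly the paper's argument, so you had both routes in hand; the one difference is that the paper passes to the mod $p$ fiber before computing, whereas you work with $p$-completed cotangent complexes in mixed characteristic. (Minor point: the paper cites \cite[Lem.\ 1.1.2]{deJong}, you cite Lemma 1.2.2; worth double-checking which statement of de Jong's you actually need, but this does not affect the mathematics.)
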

\begin{proof} We thank Wansu Kim for providing the following proof. By standard technique using \cite[Ch.III, Corollaire 2.1.3.3]{Illusie1} (e.g., see the proof in \cite[Lem. 2.3.1]{Kim12}), it suffices to show that the cotangent complex 
$\mathbb L_{R_0 / \breve R_0}$ is acyclic. Since both $R_0$ and $\breve R_0$ are $\Z_p$-flat, it suffice to show that $\mathbb L_{R_1 / \breve R_1}$ is acyclic where $R_1 = R_ 0 / p R_0$ and $\breve R_1 = \breve R_0 / p \breve R_0$. Since $R_0$ has finite $p$-basis, by \cite[Lem. 1.1.2]{deJong}, $\mathbb L_{R_1 /k}\simeq \Omega_{R_1/k}$. Note that maps $k \to \breve R_1 \to R_1$ induces a fiber sequence 
\[ \mathbb L_{\breve R_1 /k}\otimes^{\mathbb L}_{\breve R_1} R_1 \to \mathbb L _{R_1 / k} \to \mathbb L_{R_1 / \breve R_1}\]
Since that $ \mathbb L_{\breve R_1 /k} \simeq \Omega_{\breve R_1/k}$ and $\Omega_{\breve R_1/k}\simeq \Omega_{R_1/k}$ by RAE condition, we conclude that $\mathbb L_{R_1/ \breve R_1}= 0$ as required. 
\end{proof}
Let us end with a discussion about our base rings and the base rings used in \cite{Brinon}. As explained at the beginning of \cite[Chap. 2]{Brinon}, his base ring $R_0$ in \cite{Brinon} is  obtained from $W\langle t_1^{\pm 1}, \ldots, t_m^{\pm 1}\rangle$ by a finite number of iterations of certain  operations and is also assumed to satisfy certain properties.  By Prop. 2.0.2 \emph{loc. cit.}, we see that $R_0$ has finite $p$-basis and satisfies RAE assumption. So the base ring $R_0$ in \cite{Brinon} also satisfies the requirement that $f:W\langle t_1, \ldots, t_m\rangle \to R_0$ is formally \'etale by Proposition \ref{prop-fetale}.

\end{document}